\theoremstyle{definition}
\newtheorem{mydef}{Definition}[section]
\newtheorem{lem}[mydef]{Lemma}
\newtheorem{thm}[mydef]{Theorem}
\newtheorem{conjecture}[mydef]{Conjecture}
\newtheorem{cor}[mydef]{Corollary}
\newtheorem{hypothesis}[mydef]{Hypothesis}
\newtheorem{defin}[mydef]{Definition}
\newtheorem{remark}[mydef]{Remark}
\newtheorem{notation}[mydef]{Notation}
\newtheorem{fact}[mydef]{Fact}
\newcommand{\fct}[2]{{}^{#1}#2}
\newcommand{\ba}{\bar{a}}
\newcommand{\bb}{\bar{b}}
\newcommand{\bc}{\bar{c}}
\newcommand{\bx}{\bar{x}}
\newcommand{\by}{\bar{y}}
\newcommand{\bz}{\bar{z}}
\newcommand{\sea}{\mathfrak{C}}
\newcommand{\ran}[1]{\text{ran}(#1)}
\newcommand{\cf}[1]{\text{cf} (#1)}
\newcommand{\seq}[1]{\langle #1 \rangle}
\newcommand{\rest}{\upharpoonright}
\newcommand{\leap}[1]{\le_{#1}}
\newcommand{\geap}[1]{\ge_{#1}}
\newcommand{\lea}{\leap{\K}}
\newcommand{\gea}{\geap{\K}}
\def\lee{\preceq}
\newcommand{\K}{\mathbf{K}}
\newbox\noforkbox \newdimen\forklinewidth
\noforkbox\hbox{\lower 2pt\box1\lower
2pt\box0\relax}
\def\unionstick{\mathop{\copy\noforkbox}\limits}
\newcommand{\nf}{\unionstick}
\newcommand{\nfs}[4]{#2 \nf_{#1}^{#4} #3}
\def\1nf{\unionstick^{(1)}}
\def\2nf{\unionstick^{(2)}}
\def\3nf{\unionstick^{(3)}}
\def\nfm{\overline{\nf}}
\newcommand{\nfcl}[4]{#2 \overset{#4}{\underset{#1}{\overline{\nf}}} #3}
\newcommand{\tp}{\text{tp}}
\newcommand{\gtp}{\text{gtp}}
\newcommand{\gS}{\text{gS}}
\newcommand{\hanf}[1]{h (#1)}
\newcommand{\ehanf}[1]{\beth_{\left(2^{#1}\right)^+}}
\newcommand{\Axfr}{\text{AxFri}_1}
\newcommand{\Aut}{\operatorname{Aut}}
\newcommand{\EM}{\operatorname{EM}}
\newcommand{\Ll}{\mathbb{L}}
\newcommand{\otp}{\operatorname{otp}}
\newcommand{\pred}{\operatorname{pred}}
\newcommand{\topp}{\operatorname{top}}
\newcommand{\tlt}{\triangleleft}
\newcommand{\tleq}{\trianglelefteq}
\newcommand{\cl}{\operatorname{cl}}
\newcommand{\LS}{\text{LS}}
\newcommand{\BI}{\mathbf{I}}
\newcommand{\Av}{\text{Av}}
\title[Categoricity in universal classes. Part II]{Shelah's eventual categoricity conjecture in universal classes. Part II}
\date{\today\\
AMS 2010 Subject Classification: Primary 03C48. Secondary: 03C45, 03C52, 03C55, 03C75, 03E55.}
\keywords{Abstract elementary classes; Universal classes; Categoricity; Independence; Classification theory; Smoothness; Tameness; Prime models}
\author{Sebastien Vasey}
\email{sebv@cmu.edu}
\urladdr{http://math.cmu.edu/\textasciitilde svasey/}
\address{Department of Mathematical Sciences, Carnegie Mellon University, Pittsburgh, Pennsylvania, USA}
\thanks{This material is based upon work done while the author was supported by the Swiss National Science Foundation under Grant No.\ 155136.}
\begin{document}

\begin{abstract}
  We prove that a universal class categorical in a high-enough cardinal is categorical on a tail of cardinals. As opposed to other results in the literature, we work in ZFC, do not require the categoricity cardinal to be a successor, do not assume amalgamation, and do not use large cardinals. Moreover we give an explicit bound on the ``high-enough'' threshold:
  
  \begin{thm}\label{abstract-thm}
    Let $\psi$ be a universal $\Ll_{\omega_1, \omega}$ sentence (in a countable vocabulary). If $\psi$ is categorical in \emph{some} $\lambda \ge \beth_{\beth_{\omega_1}}$, then $\psi$ is categorical in \emph{all} $\lambda' \ge \beth_{\beth_{\omega_1}}$.
  \end{thm}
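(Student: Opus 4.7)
The plan is to work with the abstract elementary class $\K_\psi := (\mathrm{Mod}(\psi), \subseteq)$, which has $\LS(\K_\psi) = \aleph_0$. I would rely on the tools developed in Part~I: universal classes are highly structured---every subset of a model generates a substructure, Galois types are essentially syntactic (quantifier-free) types, and Part~I should supply full tameness and shortness, amalgamation above the Hanf number, and an Ehrenfeucht--Mostowski functor on linear orders of arbitrary size.

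With those inputs, I would prove the categoricity transfer in four steps. First, use categoricity in $\lambda \ge \beth_{\beth_{\omega_1}}$ together with EM arguments over linear orders of size $\ge \beth_{\omega_1}$ to derive superstability and the failure of the order property in the interval $[\beth_{\omega_1}, \lambda]$. Second, at some $\mu$ with $\beth_{\omega_1} \le \mu < \beth_{\beth_{\omega_1}}$, construct a good $\mu$-frame $\s$ on the class of saturated models of size $\mu$; the natural candidate forking relation is non-$\mu$-splitting of the syntactic type over a small subset. Third, apply the frame-extension machinery of Shelah (and its refinements by Boney--Vasey): since $\K_\psi$ is tame and amalgamates, $\s$ extends upward to a good $(\ge \mu)$-frame on the entire class of large-enough saturated models. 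Fourth, invoke the frame-theoretic categoricity transfer---categoricity in a single cardinal above $\mu$ propagates to all cardinals above $\mu$ in a good frame with tameness---to conclude categoricity at every $\lambda' \ge \beth_{\beth_{\omega_1}}$.

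The main obstacle will be step~two: constructing the good $\mu$-frame. One must simultaneously secure a downward categoricity transfer from $\lambda$ to $\mu$ (so that a saturated model of size $\mu$ exists and is unique), verify the local-character, uniqueness, symmetry, and extension axioms for the candidate forking relation, and identify a sufficiently rich class of basic types on which to run the frame. The two nested Hanf numbers in $\beth_{\beth_{\omega_1}}$ reflect precisely this double role: the inner $\beth_{\omega_1}$ is the Hanf number for the EM-based arguments that give stability, while the outer exponentiation is the room needed to descend from $\lambda$ to a cardinal $\mu$ where the frame construction can be carried out. The syntactic description of Galois types in universal classes should make the forking calculus itself tractable, but pinning down a working $\mu$, securing symmetry, and producing primes over independent sequences at that $\mu$ is where the essential work will lie.
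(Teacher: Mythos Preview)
Your proposal has a genuine gap at its starting point: you assume that Part~I already supplies amalgamation above the Hanf number for $(\mathrm{Mod}(\psi), \subseteq)$, and then build the rest of the argument on that. But obtaining amalgamation in ZFC, without assuming the categoricity cardinal has high cofinality and without large cardinals, is precisely the new content of this paper. Part~I only obtains amalgamation under one of those extra hypotheses (see Fact~\ref{univ-fact}); once amalgamation is in hand, the categoricity transfer is already known (Fact~\ref{categ-univ-ap}), and your Steps~2--4 would amount to re-deriving that black box rather than proving anything new. So the real obstacle is not the frame construction but getting amalgamation in the first place, and your outline does not address it.

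The paper's route is quite different from what you sketch. Rather than working directly in $(K,\subseteq)$, one uses failure of the order property (which does follow from categoricity) and Shelah's structure theory of universal classes to produce a \emph{different} ordering $\le$ on $K$ such that $(K,\le)$ satisfies $\Axfr$; this automatically gives amalgamation for $(K,\le)$. The difficulty is that $(K,\le)$ is a priori only a weak AEC: smoothness may fail. The technical heart of the paper (Section~\ref{enum-trees-sec}) shows that a failure of smoothness can be copied into an independent tree of models, forcing $(<\omega)$-instability in $(K,\le)$ at some cardinal below the categoricity cardinal; this contradicts a stability lemma (Lemma~\ref{stable-prop}) proved via EM models in the \emph{original} class $(K,\subseteq)$ and transferred to $(K,\le)$ through a compatibility argument. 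Once $(K,\le)$ is known to be an AEC with amalgamation and tameness, the existing categoricity transfers apply to it, and compatibility pushes the conclusion back to $(K,\subseteq)$. The two nested beths arise because the new ordering has L\"owenheim--Skolem--Tarski number below $\beth_{\omega_1}$, and one then needs a Hanf number above \emph{that}. None of this---the change of ordering, the smoothness problem, or the compatibility machinery---appears in your plan.
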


  As a byproduct of the proof, we show that a conjecture of Grossberg holds in universal classes:

  \begin{cor}\label{abstract-thm-2}
    Let $\psi$ be a universal $\Ll_{\omega_1, \omega}$ sentence (in a countable vocabulary) that is categorical in some $\lambda \ge \beth_{\beth_{\omega_1}}$, then the class of models of $\psi$ has the amalgamation property for models of size at least $\beth_{\beth_{\omega_1}}$.
  \end{cor}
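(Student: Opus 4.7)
The plan is to extract amalgamation as a byproduct of the machinery developed to prove Theorem~\ref{abstract-thm}. Write $\mu := \beth_{\beth_{\omega_1}}$. First, apply Theorem~\ref{abstract-thm} itself to upgrade the hypothesis to the much stronger statement that $\psi$ is categorical in \emph{every} $\lambda' \ge \mu$. So in particular there is a unique model of each such cardinality and the class has arbitrarily large models.

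The heart of the proof of Theorem~\ref{abstract-thm} must be the construction, on $\text{Mod}(\psi)$ restricted to models of size $\ge \mu$, of a \emph{good $\mu$-frame} in Shelah's sense (or a close variant). Since amalgamation at $\mu$ is part of the definition of a good $\mu$-frame, the existence of such a frame immediately yields amalgamation at $\mu$. The frame is built by leveraging three ingredients: the universal class structure, which makes syntactic $\Ll_{\omega_1,\omega}$-types available and provides Skolem-like closures under the basic functions, hence a candidate nonforking relation defined via Galois types; categoricity at some $\lambda \ge \mu$, to produce a canonical saturated reference model against which independence can be calibrated; and Shelah's presentation-theorem and omitting-types machinery for $\Ll_{\omega_1,\omega}$, which is exactly the reason the bound $\beth_{\beth_{\omega_1}}$ appears and is what is needed to propagate countable-level structural facts up to $\mu$.

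To lift amalgamation from $\mu$ to every $\lambda' \ge \mu$, I would argue by induction on $\lambda'$. At a successor $\lambda' = \lambda^+$, given $M_0 \lea M_1$ and $M_0 \lea M_2$ of size $\lambda^+$, write each $M_i$ as a continuous increasing $\lea$-chain of submodels of size $\lambda$ containing $M_0$, amalgamate step by step using amalgamation at $\lambda$ (invoking the inductive hypothesis and uniqueness up to isomorphism at each stage), and take the union at $\lambda^+$; the amalgam stays in the class because a universal class is closed under directed unions of $\lea$-chains. Limit stages are handled similarly, using that universal classes are automatically tame, so local amalgams fit together coherently. The main obstacle is of course the first step --- actually constructing the good $\mu$-frame: verifying its axioms (symmetry, local character, extension, uniqueness, and amalgamation) at the threshold $\mu$ \emph{below} the categoricity cardinal is what demands the full weight of the paper's machinery. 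Once the frame is in hand, the amalgamation corollary is essentially bookkeeping.
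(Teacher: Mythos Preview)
Your proposal misidentifies the mechanism by which amalgamation is obtained, and the gap is substantive. The paper does \emph{not} build a good $\mu$-frame (or close variant) on $\operatorname{Mod}(\psi)$ ordered by $\subseteq$. What actually happens is that Shelah's structure theory (Fact~\ref{axfr-fact}) produces a \emph{different} ordering $\le^{\chi^+,\mu^+}$ on the same underlying class, and the resulting weak AEC $\K^\ast$ satisfies $\Axfr$; amalgamation for $\K^\ast$ is immediate from the existence axiom of $\Axfr$. But this says nothing yet about amalgamation for $(K,\subseteq)$, which is what the corollary asserts. The key step you are missing is Lemma~\ref{equal-aecs-prop}: once categoricity on a tail is established for both $\K$ and $\K^\ast$ (via compatibility and the transfer in Theorem~\ref{abstract-compatible-categ}), one picks a categoricity cardinal $\chi$ with $\chi = \chi^{\LS(\K^\ast)}$ and uses Fact~\ref{elem-facts} to show that $\leap{\K}$ and $\leap{\K^\ast}$ coincide on $K_\chi$, hence on $K_{\ge \chi}$. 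Only then does amalgamation in $\K^\ast_{\ge \chi}$ transfer to $\K_{\ge \chi}$.

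Your inductive lifting argument is also broken independently of the above. In the successor step you write ``submodels of size $\lambda$ containing $M_0$'' when $M_0$ has size $\lambda^+$; presumably you mean simultaneous resolutions of $M_0, M_1, M_2$, but even then ``amalgamate step by step'' does not go through with amalgamation alone: to propagate the amalgam along the chain you need, at each stage, to embed the previous amalgam into the next one over a growing base, and nothing in bare $\lambda$-amalgamation guarantees these choices are coherent. In the paper this entire induction is unnecessary: amalgamation holds in $\K^\ast$ at \emph{all} cardinalities at once (from $\Axfr$), and is then inherited by $\K_{\ge \chi}$ in one stroke via the ordering-coincidence argument.
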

  
  We also establish generalizations of these two results to uncountable languages. As part of the argument, we develop machinery to transfer model-theoretic properties between two different classes satisfying a compatibility condition (agreeing on any sufficiently large cardinals in which either is categorical). This is used as a bridge between Shelah's milestone study of universal classes (which we use extensively) and a categoricity transfer theorem of the author for abstract elementary classes that have amalgamation, are tame, and have primes over sets of the form $M \cup \{a\}$.
\end{abstract}

\maketitle

\tableofcontents

\section{Introduction}

In 1965, Morley \cite{morley-cip} started what is now called stability theory by proving:

\begin{fact}
  If a countable first-order theory is categorical\footnote{We say that a class of structures is \emph{categorical} in a cardinal $\lambda$ if it has a unique (up to isomorphism) model of size $\lambda$. We say that a theory or sentence (in some logic) is categorical in $\lambda$ if its class of models is.} in \emph{some} uncountable cardinal, then it is categorical in \emph{all} uncountable cardinals.
\end{fact}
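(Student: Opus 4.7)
The plan is to follow Morley's original strategy, decomposing the theorem into downward and upward transfers around the given categoricity cardinal $\lambda$. The two central tools are $\omega$-stability of $T$ (countably many complete types over every countable parameter set) and the absence of Vaughtian pairs (proper elementary extensions $M \prec N$ for which some infinite definable set is the same in $M$ and $N$). I would show that categoricity in an uncountable $\lambda$ forces both phenomena, and then deduce that every uncountable model of $T$ is saturated in its own cardinality, which immediately yields categoricity in every uncountable cardinal.

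To derive $\omega$-stability, the strategy is to produce from failure two non-isomorphic models of size $\lambda$. If $T$ is not $\omega$-stable, a binary tree of mutually inconsistent consistent formulas yields a model of size $\lambda$ realizing $2^{\aleph_0}$ types over a fixed countable parameter set. On the other hand, an Ehrenfeucht--Mostowski construction over a linear order of indiscernibles of length $\lambda$ (in a Skolemized expansion) produces a model of the same cardinality in which every countable subset has only countably many types realized over it. These two models cannot be isomorphic, contradicting categoricity in $\lambda$.

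To rule out Vaughtian pairs, I would first invoke Vaught's two-cardinal theorem, sharpened under $\omega$-stability via an Erd\H{o}s--Rado argument, to transfer any Vaughtian pair to one of size $\lambda$; such a pair visibly yields a model of size $\lambda$ that is not $\aleph_1$-saturated. Meanwhile the $\omega$-stable prime model theory (Morley rank, atomic types, existence of prime models over arbitrary sets) produces a saturated model of size $\lambda$. The non-isomorphism of these two models contradicts categoricity. With both ingredients in hand, the finishing argument is: any uncountable model of $T$ omitting a type over a countable subset would, through a prime-model construction over that type, produce a Vaughtian pair; hence every uncountable model is saturated in its own cardinality, and since saturated models of the same cardinality are isomorphic, categoricity holds simultaneously in every uncountable cardinal.

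The most delicate step is expected to be the two-cardinal transfer of Vaughtian pairs in the case where $\lambda$ is a limit cardinal, since one cannot simply pass to a successor in which saturated models of size $\lambda$ are produced directly; there the Morley rank machinery, combined with the Erd\H{o}s--Rado partition theorem to extract sufficiently long indiscernible sequences inside the pair, must be used in full strength. Once the Vaughtian pair has been relocated to size $\lambda$, the remaining comparison against the saturated model is routine.
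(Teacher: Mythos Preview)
The paper does not prove this statement at all: it is stated as a \emph{Fact} (Morley's 1965 theorem) with only a citation to \cite{morley-cip}, so there is no proof in the paper to compare your proposal against. Your outline is a reasonable high-level sketch of Morley's original argument, but since the paper treats this purely as background motivation and provides no argument of its own, there is nothing further to compare.
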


In 1976, Shelah proposed \cite[Open Problem D.(3a)]{shelahfobook} the following far-reaching generalization:

\begin{conjecture}[Shelah's categoricity conjecture for $\Ll_{\omega_1, \omega}$]\label{categ-lomega1}
  Let $\psi$ be an $\Ll_{\omega_1, \omega}$ sentence. If $\psi$ is categorical in \emph{some} cardinal $\lambda \ge \beth_{\omega_1}$, then $\psi$ is categorical in \emph{all} cardinals $\lambda' \ge \beth_{\omega_1}$.
\end{conjecture}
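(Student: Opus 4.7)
The plan is to reduce Conjecture~\ref{categ-lomega1} to an AEC categoricity transfer. First, I would present $\K := \operatorname{Mod}(\psi)$ as an AEC with $\LS(\K) = \aleph_0$, via Shelah's presentation theorem applied to the countable fragment $\F \subseteq \Ll_{\omega_1, \omega}$ generated by $\psi$, with $\F$-elementary substructure as the strong embedding. This makes the machinery of Galois types, tameness, good frames, and prime models available, and the $\beth_{\omega_1}$ threshold appears naturally as the Hanf number of this AEC.

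Second, I would try to extract from categoricity in a single $\lambda \ge \beth_{\omega_1}$ enough structure to run a categoricity transfer of the same shape as in the universal case of Theorem~\ref{abstract-thm}. Concretely, the three target properties are: (a) the amalgamation property in a tail of cardinals below $\lambda$; (b) $\aleph_0$-tameness of Galois types; and (c) existence of prime models over sets of the form $M \cup \{a\}$. Granted (a)--(c), the AEC categoricity transfer used for the universal case (the result for tame AECs with amalgamation and primes mentioned in the abstract) would deliver categoricity on the full tail above $\beth_{\omega_1}$. Target (b) can plausibly be derived from (a) together with a good frame by Grossberg--VanDieren-style arguments, and (c) can be pursued by Henkin-style constructions inside the fragment $\F$, exploiting the fact that $\F$-definable types have enough syntactic structure to build minimal extensions.

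The main obstacle is target (a): obtaining amalgamation at countable and small uncountable cardinals in ZFC. Shelah's original route, via excellent classes, extracts (a) from categoricity in $\aleph_n$ for every $n < \omega$, but relies on the weak GCH hypothesis $2^{\aleph_n} < 2^{\aleph_{n+1}}$. The hard work is to remove this set-theoretic assumption: one would want to construct a nonforking-like independence relation directly from the monster model in the categoricity cardinal and descend it to all cardinals below $\beth_{\omega_1}$ by purely semantic, Hanf-type arguments, with no cardinal-arithmetic input. This paper avoids the problem by the universal-class hypothesis, which forces closure under substructure and thereby provides amalgamation semantically (at the cost of a worse threshold, $\beth_{\beth_{\omega_1}}$, coming from the use of an auxiliary class in the bridging construction). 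No analogous substitute is presently known for arbitrary $\Ll_{\omega_1, \omega}$ sentences, and this is where any serious attack on Conjecture~\ref{categ-lomega1} must concentrate its energy.

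Finally, the improved threshold $\beth_{\omega_1}$ (as opposed to the $\beth_{\beth_{\omega_1}}$ of Theorem~\ref{abstract-thm}) would have to be obtained by eliminating the bridging step between two distinct AECs that costs an extra $\beth_{\omega_1}$ in the universal-class proof: one would work with the AEC $\K$ directly, using $\beth_{\omega_1}$ as the single Hanf bound governing Galois-stability, uniqueness of limit models, and categoricity transfer. This is a secondary difficulty that seems approachable once (a)--(c) are in hand, but only once.
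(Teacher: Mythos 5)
The statement you were asked to prove is not a theorem of this paper: it is Conjecture \ref{categ-lomega1}, which the paper explicitly states (together with Conjecture \ref{categ-aec}) remains open. What the paper actually proves is the much weaker approximation in Theorem \ref{abstract-thm}/Theorem \ref{main-thm}, restricted to \emph{universal} sentences and with the worse threshold $\beth_{\beth_{\omega_1}}$. So there is no proof of Conjecture \ref{categ-lomega1} in the paper for you to match, and your text does not supply one either.

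Your proposal is a roadmap rather than a proof, and its essential steps are exactly the ones that are open. Target (a), amalgamation in ZFC from categoricity in a single cardinal, is precisely the known obstruction: for general $\Ll_{\omega_1,\omega}$ sentences the only known routes go through excellence (requiring $2^{\aleph_n}<2^{\aleph_{n+1}}$ and categoricity in every $\aleph_n$) or large cardinals, and you concede this without offering a mechanism to replace it. Targets (b) and (c) are also unestablished in your sketch: $\aleph_0$-tameness does not follow in ZFC from categoricity for arbitrary sentences (this is essentially why Grossberg--VanDieren and Boney need extra hypotheses), and primes over $M\cup\{a\}$ are a special feature of universal classes --- they come from closure under generated substructures (Fact \ref{univ-prime}), and your proposed ``Henkin-style construction inside the fragment $\F$'' is not an argument, since $\F$-elementary substructure is not preserved by taking generated substructures. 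In short, every load-bearing step of the plan is deferred to future work, so the proposal contains a genuine gap at each of (a), (b), (c); it cannot be counted as a proof of the conjecture, nor does it reproduce the paper's (necessarily weaker) result, which circumvents these issues only by exploiting the universal-class hypothesis via Shelah's $\Axfr$ machinery and the change-of-ordering argument of Sections \ref{compat-sec}--\ref{uc-structure-sec}.
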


This is now recognized as the central test question in nonelementary model theory. In 1977, Shelah introduced abstract elementary classes (AECs) \cite{sh88}, an abstract framework encompassing classes of models of an $\Ll_{\lambda^+, \omega} (Q)$ theory and several other examples of interest. Shelah has stated in \cite[N.4.2]{shelahaecbook2} the following version of the conjecture:

\begin{conjecture}[Shelah's eventual categoricity conjecture for AECs]\label{categ-aec}
  If an AEC is categorical in a high-enough cardinal, then it is categorical on a tail of cardinals.
\end{conjecture}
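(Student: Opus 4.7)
The plan is to reduce the general AEC $\K$ to a setting where a categoricity transfer theorem is already available, namely the author's theorem for AECs that are tame, have amalgamation, and admit prime models over sets of the form $M \cup \{a\}$. Given $\K$ categorical in some high-enough $\lambda$, I would first extract the Morley-style structural consequences: Shelah's Presentation Theorem presents $\K$ inside a pseudoelementary class, and Ehrenfeucht--Mostowski constructions show that categoricity in $\lambda \ge \beth_{\left(2^{\LS(\K)}\right)^+}$ yields EM-blueprints, stability in cardinals below $\lambda$, no Morley order property, and uniqueness of limit models in the categoricity cardinal. Iterating these should produce a superstability-like skeleton (uniqueness of limit models across cofinalities on a tail) together with a non-splitting independence notion of reasonable quality.

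The central task is then to establish amalgamation and tameness on a tail of cardinals. The proposed route for amalgamation runs through Shelah's good-frame machinery: working inside models of size $\lambda$, where amalgamation is automatic from categoricity, one tries to construct a good $\lambda_0$-frame at some $\lambda_0 < \lambda$ and propagate it upward via the successor construction of Chapter II of \cite{shelahaecbook2}. Each step of this propagation demands density of a suitable class of ``minimal'' types, which is delicate without further hypotheses. For tameness, the plan would be to use uniqueness of limit models and the EM-apparatus to argue that two global Galois types that differ must already differ on a bounded restriction; absent large cardinals, all known ZFC derivations of tameness in fact route through a syntactic closure property of the class (as in the present paper's universal case), so a purely AEC-internal argument is required here.

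Once amalgamation and tameness are in place, existence of prime models over $M \cup \{a\}$ should follow from superstability plus tameness by standard canonical constructions, and the author's categoricity transfer theorem then closes the argument. The hard part --- and the reason Conjecture \ref{categ-aec} remains open --- is the simultaneous derivation of amalgamation and tameness from categoricity in a single cardinal, with no syntactic, set-theoretic, or structural input to leverage. The paper's main theorem sidesteps this obstacle by exploiting closure under substructure in the universal-class case (which gives both amalgamation on a tail and tameness essentially for free); in an arbitrary AEC neither handle is available, and a genuinely new idea --- for instance, a canonical construction of amalgamation bases directly from the uniqueness witnessed at the categoricity cardinal, or a ZFC proof that categoricity forces $\K$ to be ``close to'' a universal class in the sense developed in this paper --- appears to be required.
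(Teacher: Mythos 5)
There is a fundamental mismatch here: the statement you were asked about is Conjecture \ref{categ-aec}, which this paper does not prove and explicitly describes as open. The paper only establishes the special case of universal classes (Theorem \ref{main-thm}), together with quoting the known partial results that assume amalgamation, tameness, primes, large cardinals, or set-theoretic hypotheses. So there is no ``paper's own proof'' to compare against, and no proof of the general statement exists in the literature.

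Your proposal, judged as an attempted proof, has genuine gaps at exactly the decisive points --- gaps you yourself flag in the last paragraph, which means what you have written is a survey of the obstacles rather than an argument. Concretely: (i) the good-frame route to amalgamation does not go through in ZFC from categoricity in a single cardinal; Shelah's construction and upward propagation of good frames (Chapter II of \cite{shelahaecbook}) need non-structure side conditions such as weak forms of GCH and few models in successor cardinals, and the ``density of minimal types'' step you mention is not available without them; (ii) there is no known ZFC derivation of tameness from categoricity for an arbitrary AEC --- the known sources are large cardinals (\cite{tamelc-jsl}) or structural/syntactic features of the class, as in the universal-class case here, and your plan offers no mechanism to replace them; (iii) even granting amalgamation, tameness, and superstability, existence of primes over $M \cup \{a\}$ is not a ``standard canonical construction'' --- it is an additional structural hypothesis in Fact \ref{categ-facts}.(\ref{categ-facts-2b}) and is precisely what universal classes supply for free (Fact \ref{univ-prime}) and general AECs need not satisfy; (iv) smaller inaccuracies compound this, e.g.\ uniqueness of limit models and saturation of the categorical model already lean on amalgamation-type hypotheses in the cited arguments (\cite{shvi635}, \cite{sh394}), so they cannot be used as ZFC inputs prior to establishing amalgamation. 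In short, the conjecture remains open, your outline reduces it to subproblems that are themselves open, and the paper's actual contribution is the universal-class case, where closure under substructure yields primes and, via Shelah's $\Axfr$ machinery and the new smoothness/independent-tree results of Section \ref{enum-trees-sec}, amalgamation and tameness for a compatible class $\K^\ast$.
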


While many pages of approximations exist (see the references given after the statement of the main theorem below) both conjectures are still open. 

In this paper, we prove an approximation of Conjecture \ref{categ-lomega1} when $\psi$ is a \emph{universal} (see Definition \ref{univ-def}) sentence ($\beth_{\omega_1}$ is replaced by $\beth_{\beth_{\omega_1}}$, see more below). More generally, we confirm Conjecture \ref{categ-aec} for universal classes: classes of models of a universal $\Ll_{\infty, \omega}$ theory, or equivalently classes of models $K$ in a fixed vocabulary $\tau (K)$ closed under isomorphisms, substructure, and unions of $\subseteq$-increasing chains.

\textbf{Main Theorem \ref{main-thm}.}
  Let $K$ be a universal class. If $K$ is categorical in \emph{some} $\lambda \ge \beth_{\beth_{\left(2^{|\tau (K)| + \aleph_0}\right)^+}}$, then $K$ is categorical in \emph{all} $\lambda' \ge \beth_{\beth_{\left(2^{|\tau (K)| + \aleph_0}\right)^+}}$.

  Let us compare the main theorem to earlier approximations to Shelah's eventual categoricity conjecture\footnote{We do not present a complete history or an exhaustive list of recent results here. See the introduction of \cite{ap-universal-v9} for the former and \cite{bv-survey-v4-toappear} for the latter.}: In a series of papers \cite{tamenessone, tamenesstwo, tamenessthree}, Grossberg and VanDieren isolated \emph{tameness}, a locality properties of AECs, and (using earlier work of Shelah \cite{sh394}) proved Shelah's eventual categoricity conjecture in tame AECs with amalgamation assuming that the starting categoricity cardinal is a successor. Boney \cite{tamelc-jsl} later showed (building on work of Makkai-Shelah \cite{makkaishelah}) that tameness (as well as amalgamation, if in addition categoricity in a high-enough cardinal is assumed) follows from a large cardinal axiom (a proper class of strongly compact cardinals exists). Therefore the eventual categoricity conjecture follows from the following two extra assumptions: the categoricity cardinal is a successor, and a large cardinal axiom holds. In \cite[IV.7.12]{shelahaecbook}, Shelah removes the successor hypothesis assuming amalgamation\footnote{By \cite[Theorem 7.6]{tamelc-jsl}, this can also be replaced by a large cardinal axiom.} and the generalized continuum hypothesis (GCH)\footnote{It is enough to assume the existence of a suitable family of cardinals $\theta$ such that $2^{\theta} < 2^{\theta^+}$.}. Shelah's proof is clarified in \cite[Section 11]{downward-categ-tame-v6-toappear}, but it relies on a claim which Shelah has yet to publish a proof of.

In any case, all known categoricity transfers (which do not make model-theoretic assumptions on the AEC) rely on the existence of large cardinals together with either GCH or the assumption that the categoricity cardinal is a successor.

In the prequel to this paper \cite{ap-universal-v9} we showed that some of these limitations could be overcome in the case of universal classes\footnote{In earlier versions of \cite{ap-universal-v9} we claimed to prove the main theorem here but a mistake was later discovered.}:

\begin{fact}[Corollary 5.27 in \cite{ap-universal-v9}]\label{univ-fact}
  Let $K$ be a universal class.
  \begin{enumerate}
    \item\label{univ-fact-1} If $K$ is categorical in cardinals of arbitrarily high cofinality, then $K$ is categorical on a tail of cardinals.
    \item\label{univ-fact-2} If $\kappa > |\tau (K)| + \aleph_0$ is a measurable cardinal and $K$ is categorical in \emph{some} $\lambda \ge \beth_{\beth_{\beth_\kappa}}$ then $K$ is categorical in \emph{all} $\lambda' \ge \beth_{\beth_{\beth_\kappa}}$.
  \end{enumerate}
\end{fact}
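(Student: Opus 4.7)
The plan is to deduce both parts from the combination of two ingredients: (a) Shelah's extensive structural theory of universal classes, which under categoricity in a sufficiently large cardinal produces a well-behaved independence notion (a good $\lambda$-frame) above a certain Hanf number, and (b) a tameness-based categoricity transfer theorem applicable to AECs that have amalgamation, are tame, and have prime models over sets of the form $M \cup \{a\}$. Universal classes are closed under substructure and unions of $\subseteq$-increasing chains, so they automatically admit prime models (generated by the relevant set inside a suitable monster) and enjoy quantifier-free syntactic control of Galois types; hence the only hypotheses of the transfer theorem that need work are amalgamation and tameness at the right cardinals.

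For part (\ref{univ-fact-2}), the measurable cardinal $\kappa > |\tau(K)| + \aleph_0$ supplies tameness via a Makkai--Shelah/Boney-style ultraproduct argument: a universal class is closed under the relevant $\kappa$-complete ultralimits, and this yields $(<\kappa)$-tameness once categoricity is available at a high enough cardinal. The triple iteration $\beth_{\beth_{\beth_\kappa}}$ reflects three successive Hanf-number applications that have to be chained: first push the given categoricity down to the range where Shelah's good-frame machinery for universal classes activates, then derive amalgamation above $\kappa$ from categoricity plus the frame, and finally feed amalgamation, tameness, and primes into the transfer theorem to propagate categoricity to every $\lambda' \ge \beth_{\beth_{\beth_\kappa}}$.

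For part (\ref{univ-fact-1}), no measurable is available, so high cofinality must do the job of Boney's tameness-from-large-cardinals result. The idea is that if $K$ is categorical in a $\lambda$ of very high cofinality $\cf{\lambda}$, then the unique model in $\lambda$ must be $\cf{\lambda}$-saturated in Galois types (built as a direct limit along a chain in a universal class, any missing type would be realized above a level of the chain), and letting $\cf{\lambda}$ range over arbitrarily large cardinals pushes the effective saturation threshold past any fixed cardinal. This reflection of saturation is exactly what is needed to extract tameness at unboundedly many cardinals, after which the same good-frame plus transfer-theorem pipeline as in part (\ref{univ-fact-2}) applies. The main obstacle in both parts, and particularly delicate in (\ref{univ-fact-1}), is coordinating the Hanf-number bounds so that each implication (categoricity $\Rightarrow$ good frame $\Rightarrow$ amalgamation $\Rightarrow$ tameness input to the transfer theorem) composes cleanly without circularity; the bookkeeping of exactly how large the starting cardinal must be is what forces the triple-$\beth$ bound in (\ref{univ-fact-2}) and dictates where the ``tail'' begins in (\ref{univ-fact-1}).
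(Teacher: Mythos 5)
Your proposal does not match the actual architecture of this result, and the mismatch is substantive rather than cosmetic. Note first that this Fact is quoted from the prequel (Corollary 5.27 of the Part I paper); the present paper does not reprove it, but it does say exactly how the hypotheses are used: ``These hypotheses were made to prove the amalgamation property, which is known to be the only obstacle,'' followed by Fact \ref{categ-univ-ap}. In a universal class, tameness is \emph{free}: universal classes are fully $\LS(\K)$-tame and short by a purely algebraic argument (Corollary 3.8 of the prequel; see also Remark \ref{main-thm-rmk}), and they have primes (Fact \ref{univ-prime}), with no categoricity or large-cardinal hypotheses whatsoever. So your allocation of the hypotheses is inverted: you spend the measurable (in part (\ref{univ-fact-2})) and the high-cofinality categoricity (in part (\ref{univ-fact-1})) on obtaining tameness, and then propose to extract amalgamation ``from categoricity plus the frame.'' That last step is the genuine gap: good-frame machinery in this setting does not produce amalgamation --- amalgamation is an input to it (compare Remark \ref{ap-rmk}: it is the existence axiom of the independence framework that yields AP, and such frameworks are only available once one already has the relevant structure). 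In the actual proof, part (\ref{univ-fact-2}) gets amalgamation from categoricity together with closure of the universal class under $\kappa$-complete ultraproducts (a Kolman--Shelah/Boney-style ultrapower-embedding argument --- this is where the measurable is spent), and part (\ref{univ-fact-1}) gets amalgamation from Shelah's Chapter IV analysis of categoricity in cardinals of high cofinality (the $\Ll_{\infty, \LS(\K)^+}$-elementarity arguments), which this paper explicitly says the prequel relies on in the proof of that part. Once amalgamation is in hand, tameness and primes being automatic, Fact \ref{categ-univ-ap} finishes both parts.

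Two further points where your sketch would break down as written. A measurable cardinal is not known to give tameness by ultraproduct arguments; Boney's tameness theorem needs strong compactness, and from a measurable one only gets much weaker partial statements --- so the step you lean on in part (\ref{univ-fact-2}) fails, and it is fortunate that it is not needed. In part (\ref{univ-fact-1}), your claim that the categoricity model is $\cf{\lambda}$-Galois-saturated and that this ``reflects'' to tameness at unboundedly many cardinals presupposes enough amalgamation to realize and compare Galois types along chains, which is precisely the property the hypothesis is there to establish; without AP the saturation and reflection arguments you invoke are not available, so the proposed route is circular at the very point where the real work lies.
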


Still, requirements on the categoricity cardinal in the first case and the existence of large cardinals in the second case could not be completely eliminated. These hypotheses were made to prove the amalgamation property, which is known to be the only obstacle:

\begin{fact}[Corollary 10.11 in \cite{downward-categ-tame-v6-toappear}]\label{categ-univ-ap}
  Let $K$ be a universal class with amalgamation. If $K$ is categorical in \emph{some} $\lambda \ge \beth_{\left(2^{|\tau (K)| + \aleph_0}\right)^+}$, then $K$ is categorical in \emph{all} $\lambda' \ge \beth_{\left(2^{|\tau (K)| + \aleph_0}\right)^+}$.
\end{fact}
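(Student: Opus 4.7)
The plan is to apply the author's categoricity transfer theorem for AECs that simultaneously have amalgamation, are $\LS(K)$-tame, and admit prime models over sets of the form $M \cup \{a\}$ --- this is the theorem referenced in the introduction as the ``target'' of the bridge being built in this paper. Amalgamation is given in the hypothesis, so the task reduces to verifying that a universal class has the remaining two properties and that the threshold $\beth_{(2^{|\tau(K)|+\aleph_0})^+}$ coincides with the Hanf number $\hanf{\LS(K)}$ at which the transfer theorem activates.

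For primes over $M \cup \{a\}$, given $M \lea N$ and $a \in |N|$, I would take $N_0$ to be the $\tau(K)$-substructure of $N$ generated by $M \cup \{a\}$. Closure of $K$ under substructure places $N_0$ in $K$ with $M \lea N_0 \lea N$, and the universal property of term-closures extends any $\lea$-embedding $f : M \to N'$ together with a realization $a' \in |N'|$ of $f(\gtp(a/M))$ uniquely to a $\tau(K)$-homomorphism $N_0 \to N'$ sending $a \mapsto a'$; this map factors through the closure $\cl(f[M] \cup \{a'\}) \lea N'$, making it a substructure embedding and hence (since substructure is strong substructure in a universal class) a $\lea$-embedding, so $N_0$ is prime. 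For $\LS(K)$-tameness, if $p \ne q$ in $\gS(M)$ are realized by $a, b$ inside a common amalgam, then no isomorphism $\cl(M \cup \{a\}) \to \cl(M \cup \{b\})$ fixing $M$ and sending $a \mapsto b$ can exist; the failure of every such candidate is witnessed by finitely many term equations or relation atoms failing, so all the witnesses already live in some $M_0 \lea M$ of size at most $\LS(K)$, yielding $p \rest M_0 \ne q \rest M_0$.

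With amalgamation, $\LS(K)$-tameness, and primes over $M \cup \{a\}$ in hand, the cited transfer theorem promotes categoricity in a single $\lambda \geq \hanf{\LS(K)}$ to categoricity in every $\lambda' \geq \hanf{\LS(K)}$, which is exactly Fact \ref{categ-univ-ap}. The main obstacle, in my view, is tameness: the slogan ``finitely many witnesses absorb into a small submodel'' is intuitive, but making it rigorous requires careful bookkeeping of the closure operation together with a genuine use of amalgamation to translate between the term-structure of hulls and Galois-type inequality over the small model, which is where the combinatorics actually hides.
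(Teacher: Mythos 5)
Your proposal is correct and takes essentially the route behind the cited result: the paper quotes this statement as a black box from \cite{downward-categ-tame-v6-toappear}, and its proof is exactly your reduction --- universal classes have primes (Fact \ref{univ-prime}, via substructures generated by $M \cup \{a\}$) and are $\LS (\K)$-tame (Galois types coincide with quantifier-free types, which have finite character), so the transfer for tame AECs with amalgamation and primes (Fact \ref{categ-facts}.(\ref{categ-facts-2b})) applies at the threshold $\hanf{\LS (\K)} = \beth_{\left(2^{|\tau (K)| + \aleph_0}\right)^+}$. Two small points to tidy: arbitrarily large models, which Fact \ref{categ-facts} requires, follow from categoricity above $\hanf{\K}$ by Fact \ref{hanf-arb-large}; and in both your primes and tameness arguments there is only \emph{one} candidate map, namely $t(\bar{m}, a) \mapsto t(\bar{m}, b)$ on the generated substructure, so well-definedness and the tameness obstruction both reduce cleanly to equality of quantifier-free types over $M$ (respectively over a small $M_0 \lea M$ containing the finitely many parameters of a distinguishing formula), rather than to a survey of many possible isomorphisms.
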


Note that (see \cite{ap-universal-v9, categ-primes-v4}) all the facts stated above hold in a much wider context than universal classes: tame AECs with primes. However for the specific case of universal classes there is a well-developed structure theory \cite{sh300-orig}. This paper uses it to remove the assumption of amalgamation from Fact \ref{categ-univ-ap} and prove the main theorem. Further, a conjecture of Grossberg \cite[Conjecture 2.3]{grossberg2002} says that any AEC categorical in a high-enough cardinal should have amalgamation on a tail. A byproduct of this paper is that Grossberg's conjecture holds in universal classes (see the proof of Theorem \ref{main-thm}). Note that the behavior of amalgamation in universal classes is nontrivial: Kolesnikov and Lambie-Hanson have shown \cite{coloring-classes-jsl} that for each $\alpha < \omega_1$, there is a universal class in a countable vocabulary that has amalgamation up to $\beth_\alpha$ but fails amalgamation everywhere above $\beth_{\omega_1}$ (the example is not categorical in any uncountable cardinal).

One might think that Grossberg's conjecture should be established \emph{before} transferring categoricity (in order to be able to assume amalgamation in the transfer), but our proof of Theorem \ref{main-thm} is more subtle. First we use Shelah's structure theory of universal classes to show that there exists an ordering $\le$ (potentially different from substructure) such that $(K, \le)$ has amalgamation and other structural properties. We then work inside $(K, \le)$ to transfer categoricity (proving Theorem \ref{main-thm} since its statement does not depend on the ordering of the class). It is only after that we are able to conclude that $\le$ is actually substructure (on a tail of cardinals), and hence that Grossberg's conjecture holds in universal classes.

The main difficulty in the argument just outlined is that it is unclear that $(K, \le)$ is an AEC (it may fail the smoothness axiom). The hard part of this paper is proving that it actually \emph{is} an AEC. This is done by working inside a framework for forking-like independence in $(K, \le)$ that Shelah calls $\Axfr$ and proving new results for that framework, including Theorem \ref{tree-constr-thm} telling us how to copy a chain witnessing the failure of smoothness into an independent tree of models.

It should be noted that these new results (in Section \ref{enum-trees-sec}) are really the only new pieces needed to prove the main theorem. The rest of the paper is about combining the structure theory of universal classes developed by Shelah \cite[Chapter V]{shelahaecbook2} with known categoricity transfers \cite{ap-universal-v9, categ-primes-v4, downward-categ-tame-v6-toappear}. Another contribution of this paper is Section \ref{compat-sec} which considers two weak AECs $\K^1, \K^2$ satisfying a compatibility condition (the isomorphism types of models in a categoricity cardinal is the same). The motivation here is the aforementioned change from $\K^1 = (K, \subseteq)$ to $\K^2 = (K, \le)$:  In general, we may want to study an AEC $\K^1$ by changing its ordering, giving a new class $\K^2$ which has certain properties $P$ of $\K^1$ together with some new properties $P'$ that $\K^1$ may not have. We may know a theorem telling us that a single class that has both $P$ and $P'$ is well-behaved. Section \ref{compat-sec} gives tools to generalize the original theorem to the case when we do \emph{not} have a single class (i.e.\ $\K^1 = \K^2$) but instead have potentially different classes $\K^1$ and $\K^2$.

Note in passing that this paper does not make \cite{ap-universal-v9} obsolete: the results there hold for a wider context than universal classes, whereas we do not know how to generalize the proof of the main theorem here. Furthermore, we rely heavily here on \cite{ap-universal-v9}.

A natural question is why, the threshold in Theorem \ref{abstract-thm} is $\beth_{\beth_{\omega_1}}$ and not $\beth_{\omega_1}$ as in Conjecture \ref{categ-lomega1}. The $\beth_{\beth_{\omega_1}}$ comes from the fact that, in the argument outlined in the second paragraph after Fact \ref{categ-univ-ap}, the class $(K, \le)$ has Löwenheim-Skolem-Tarski number $\chi$, for some $\chi < \beth_{\omega_1}$. After proving that it is an AEC, we apply known categoricity transfers to this class, hence the final threshold for categoricity is of order $\beth_{(2^{\chi})^+} \le \beth_{\beth_{\omega_1}}$ (a similar phenomenon occurs in \cite{sh394}, where Shelah proves that the class $\K$ is $\chi$-weakly tame for some $\chi < \beth_{(2^{\LS (\K)})^+}$ and then obtains a threshold of $\beth_{(2^{\chi})^+}$). We do not know whether the threshold in Theorem \ref{abstract-thm} can be lowered to $\beth_{\omega_1}$.

Let us discuss the background required to read this paper. It is assumed that the reader has a solid knowledge of AECs (including at minimum the material in \cite{baldwinbook09}). Still, except for the basic concepts, we have tried to explicitly state all the definitions and facts. Only little understanding of \cite{ap-universal-v9, categ-primes-v4, downward-categ-tame-v6-toappear} is required: they are used only as black boxes. While some results in \cite{ap-universal-v9} rely on deep results of Shelah from the first sections of Chapter IV of \cite{shelahaecbook}, we do not use them\footnote{The one exception is \cite[IV.1.12.(2)]{shelahaecbook} (see Fact \ref{elem-facts}), but the proof is short and elementary.}.  At one point (Lemma \ref{stable-prop}) we rely on Shelah's construction of a certain linear order \cite[IV.5]{shelahaecbook}. This can also be taken as a black box. Last but not least, we rely on part of Shelah's original study of universal classes \cite{sh300-orig} (we quote from the updated version in Chapter V of \cite{shelahaecbook2}). All the results that we use from there have full proofs. We do not rely on any of Shelah's nonstructure results. 

\subsection{Acknowledgments}

This paper was written while working on a Ph.D.\ thesis under the direction of Rami Grossberg at Carnegie Mellon University and I would like to thank Professor Grossberg for his guidance and assistance in my research in general and in this work specifically.

I thank John Baldwin for inviting me to visit UIC in Fall 2015 to present a preliminary version of \cite{ap-universal-v9}. The present paper is an answer to several questions he asked me. This paper was also presented at seminars in Harvard and Rutgers University. I thank the organizers of these seminars for showing interest in my work and inviting me to talk. I thank the participants of these seminars for helpful feedback that helped me refine the presentation and motivation for this paper. I thank the referee for a detailed report that helped me improve the presentation of this paper.

Finally, this paper would not exist without the constant support and encouragements of Samaneh. I would like to dedicate this work to her.

\section{Preliminaries}

We state definitions and facts that will be used later. All throughout this paper, we use the letters $M, N$ for models and write $|M|$ for the universe of a model $M$ and $\|M\|$ for the cardinality of its universe. We may abuse notation and write e.g.\ $a \in M$ when we really mean $a \in |M|$.

Recall the definition of a universal class (for examples, see e.g.\ \cite[Example 2.2]{ap-universal-v9}).

\begin{defin}[\cite{tarski-th-models-i, sh300-orig}]
  A class of structure $K$ is \emph{universal} if:
  
  \begin{enumerate}
    \item It is a class of $\tau$-structures for a fixed vocabulary $\tau = \tau (K)$, closed under isomorphisms.
    \item If $\seq{M_i : i < \delta}$ is $\subseteq$-increasing in $K$, then $\bigcup_{i < \delta} M_i \in K$.
    \item If $M \in K$ and $M_0 \subseteq M$, then $M_0 \in K$.
  \end{enumerate}
\end{defin}
\begin{remark}
  Notice the following fundamental property of a universal class $K$. Given a subset $A$ of $N \in K$, $\cl^N (A)$, the closure of $A$ under the functions of $N$ (or equivalently $\bigcap\{N_0 \in K \mid A \subseteq |N|, N_0 \subseteq N\}$) is in $K$.
\end{remark}

It is known that universal classes can be characterized syntactically. We will use the following definition.

\begin{defin}\label{univ-def}
  A sentence $\psi$ of $\Ll_{\infty, \omega}$ is \emph{universal} if it is of the form $\forall x_0 \forall x_1 \ldots \forall x_n \phi (x_0, x_1, \ldots, x_n)$, where $\phi$ is a quantifier-free $\Ll_{\infty, \omega}$ formula. An $\Ll_{\infty, \omega}$-theory is \emph{universal} if it consists only of universal $\Ll_{\infty, \omega}$ formulas.
\end{defin}

The following is essentially due to Tarski \cite{tarski-th-models-i}. Only ``(\ref{univ-charact-2}) implies (\ref{univ-charact-1})'' will be used. Tarski proved the result for $\Ll_{\omega, \omega}$, so we sketch a proof of the $\Ll_{\infty, \omega}$ case for the convenience of the reader.

\begin{fact}\label{univ-charact}
  Let $K$ be a class of structures in a fixed vocabulary $\tau = \tau (K)$. Set $\lambda := |\tau| + \aleph_0$. The following are equivalent.
  
  \begin{enumerate}
  \item\label{univ-charact-1} $K$ is a universal class.
  \item\label{univ-charact-2} $K = \operatorname{Mod} (T)$, for some universal $\Ll_{\lambda^+, \omega}$ theory $T$ with $|T| \le \lambda$.
  \end{enumerate}
\end{fact}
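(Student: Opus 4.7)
The direction actually used in the paper is (2) $\Rightarrow$ (1). I would verify directly that $\operatorname{Mod}(T)$ satisfies the three closure properties of a universal class whenever $T$ is a universal $\Ll_{\lambda^+,\omega}$-theory. Isomorphism closure is automatic. For substructure closure, let $M \models T$, $M_0 \subseteq M$, and $\forall \bar{x}\,\phi(\bar{x}) \in T$ with $\phi$ quantifier-free. For a finite $\bar{a} \in M_0$ we have $M \models \phi(\bar{a})$, and an induction on the construction of $\phi$ shows that quantifier-free $\Ll_{\lambda^+,\omega}$-formulas are preserved downward under substructure and upward under extensions: atomic formulas are preserved by the definition of substructure, and Boolean operations (even of size $\le \lambda$) preserve this property. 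Hence $M_0 \models \phi(\bar{a})$, so $M_0 \models T$. For chain closure, a finite tuple in $\bigcup_{i<\delta} M_i$ lies in some $M_i$, and upward preservation gives the chain union in $\operatorname{Mod}(T)$.

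For (1) $\Rightarrow$ (2), I would take $T = \{\psi_n : n < \omega\}$ with $\psi_n = \forall \bar{x}\,\phi_n(\bar{x})$, where $\phi_n(\bar{x})$ is a quantifier-free $\Ll_{\lambda^+,\omega}$-formula in $n$ variables expressing ``the quantifier-free type of $\bar{x}$ is realized in some member of $K$''. Since there are $\le \lambda$ atomic $\tau$-formulas in $n$ variables, one can build $\phi_n$ as a suitable Boolean combination of these atomics fitting inside $\Ll_{\lambda^+,\omega}$, giving $|T| \le \aleph_0 \le \lambda$. The inclusion $K \subseteq \operatorname{Mod}(T)$ is immediate. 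For the reverse, if $N \models T$, every finitely generated substructure of $N$ realizes a quantifier-free type realized in $K$, so is isomorphic to a finitely generated member of $K$ and hence belongs to $K$ by substructure closure. It then remains to show that a structure all of whose finitely generated substructures lie in $K$ itself lies in $K$. I would enumerate $N = \{a_\alpha : \alpha < \kappa\}$, set $N_\alpha := \cl^N(\{a_\beta : \beta < \alpha\})$, and argue by induction on $|N|$ using chain closure: the countable case uses that the $N_n$'s are finitely generated, and the case $\kappa > \lambda$ gives $|N_\alpha| < \kappa$ so the induction hypothesis applies.

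The step I expect to be the most delicate is ensuring that $\phi_n$ really fits inside $\Ll_{\lambda^+,\omega}$ with the claimed cardinality bound on $T$. The set $R_n$ of realized quantifier-free types may have cardinality up to $2^\lambda$, so the naïve disjunction $\bigvee_{p \in R_n} \bigwedge_{\alpha \in p} \alpha(\bar{x})$ is too wide to live in $\Ll_{\lambda^+,\omega}$; a more careful encoding---for instance via nested Boolean combinations organized by consistency of partial types with $R_n$, or by a well-founded decision process on a well-ordering of the atomics---is required. A secondary technical point is the inductive case $\kappa \le \lambda$, where $|N_\alpha|$ need not strictly decrease with $\alpha$; this can be handled by choosing an enumeration of $N$ in which each initial segment is already closed under $\tau$ (interleaving new generators with their closure completions), so that $|N_\alpha| = |\alpha|$ strictly grows.
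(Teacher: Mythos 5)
Your argument for (2) $\Rightarrow$ (1) is correct and is essentially the routine verification the paper leaves implicit. The gap is in (1) $\Rightarrow$ (2), and it sits exactly at the step you flag as delicate: no ``more careful encoding'' can produce your formulas $\phi_n$ inside $\Ll_{\lambda^+, \omega}$, because in general no quantifier-free $\Ll_{\lambda^+,\omega}$-formula expresses ``the quantifier-free type of $\bar{x}$ is realized in some member of $K$''. The obstruction is one of counting, not of syntax: up to renaming variables there are at most $2^{\lambda}$ sentences of $\Ll_{\lambda^+,\omega}$, hence at most $2^{\lambda}$ classes of the form $\operatorname{Mod} (T)$ with $T$ as in the statement, while there can be $2^{2^{\lambda}}$ universal classes in a vocabulary of size at most $\lambda$. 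Concretely, let $\tau = \{c_n : n < \omega\}$; for every set $\mathcal{E}$ of equivalence relations on $\omega$, the class $K_{\mathcal{E}}$ of all $\tau$-structures $M$ with $\{(m,n) : c_m^M = c_n^M\} \in \mathcal{E}$ is closed under isomorphism, substructure, and unions of $\subseteq$-increasing chains, and distinct $\mathcal{E}$ give distinct classes. Hence most $K_{\mathcal{E}}$ are not of the form $\operatorname{Mod}(T)$ for any countable $\Ll_{\omega_1,\omega}$-theory at all, so your $T$ cannot be formed for them; the condition ``the type of $\bar{x}$ is realized in $K$'' is simply not $\Ll_{\lambda^+,\omega}$-expressible, no matter how the disjunction over $R_n$ is reorganized.

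For comparison, the paper's sketch takes a different route: it forbids, for each finitely generated $\tau$-structure embeddable in no member of $K$, a copy of that structure, each exclusion being a single universal $\Ll_{\lambda^+,\omega}$-sentence (a finitely generated structure is described by one quantifier-free type of size at most $\lambda$); the closing step, that a structure all of whose finitely generated substructures lie in $K$ is itself in $K$, is the same directed-system argument you sketch, and that part of your proposal is fine. The real divergence is the cardinality bookkeeping: there can be $2^{\lambda}$ isomorphism types of finitely generated $\tau$-structures, not $\lambda$ (in the example above the identification pattern among the constants is already part of every finitely generated substructure; two unary function symbols similarly give $2^{\aleph_0}$ pairwise nonisomorphic $1$-generated structures), so the forbidden-structure route yields a universal theory of size at most $2^{\lambda}$, equivalently a single universal $\Ll_{(2^{\lambda})^+,\omega}$-sentence, and by the counting above the bound $|T| \le \lambda$ in the displayed statement is not attainable in general. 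With the bound $2^\lambda$ in place, your ``disjunction over the realized types'' and the paper's ``forbid the unrealized types'' are two packagings of the same axiomatization. The direction the main results rest on, (2) $\Rightarrow$ (1), is unaffected, though note that Remark \ref{univ-delta} does invoke (1) $\Rightarrow$ (2), where the corrected count only gives $\lambda (\K) \le 2^{|\tau (\K)| + \aleph_0}$.
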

\begin{proof}[Proof sketch]
  (\ref{univ-charact-2}) implies (\ref{univ-charact-1}) is straightforward. We show (\ref{univ-charact-1}) implies (\ref{univ-charact-2}). Note that for any fixed finitely generated $\tau$-structure $M$, the class $K_{\neg M}$ of $\tau$-structures that do not contain (as a substructure) a copy of $M$ is axiomatized by a universal $\Ll_{\lambda^+, \omega}$-sentence. Further, there are only $\lambda$-many isomorphism types of finitely generated $\tau$-structures.

  Now for any universal class $K$ in the vocabulary $\tau$, let $\Gamma$ be the class of finitely generated $\tau$-structures that are not contained in any member of $K$. With a directed system argument, one sees that $K$ is exactly the class of $\tau$-structures that do not contain a copy of a member of $\Gamma$.
\end{proof}
\begin{remark}
  Fact \ref{univ-charact} shows that $K$ is axiomatized by a single $\Ll_{\lambda^+, \omega}$-sentence (take the conjunctions of all the formulas in $T$). However it need \emph{not} be true that $K$ is axiomatized by a single \emph{universal} $\Ll_{\lambda^+, \omega}$-sentence: consider the class of directed graphs that do not contain a finite cycle. Confusingly, Malitz \cite{malitz-universal} calls a sentence universal (we will say it is \emph{Malitz-universal}) if it has no existential quantifiers and negations are only applied to atomic formulas. Thus the class of directed graphs without finite cycles is axiomatizable by a single Malitz-universal sentence but not by a single universal sentence. Even worse, the class of all finite sets is axiomatizable by a single Malitz-universal sentence but is not a universal class (it is not closed under unions).
\end{remark}

Universal classes are abstract elementary classes:

\begin{defin}[Definition 1.2 in \cite{sh88}]\label{aec-def}
  An \emph{abstract elementary class} (AEC for short) is a pair $\K = (K, \lea)$, where:

  \begin{enumerate}
    \item $K$ is a class of $\tau$-structures, for some fixed vocabulary $\tau = \tau (\K)$. 
    \item $\lea$ is a partial order (that is, a reflexive and transitive relation) on $K$. 
    \item $(K, \lea)$ respects isomorphisms: If $M \lea N$ are in $K$ and $f: N \cong N'$, then $f[M] \lea N'$. In particular (taking $M = N$), $K$ is closed under isomorphisms.
    \item If $M \lea N$, then $M \subseteq N$. 
    \item Coherence: If $M_0, M_1, M_2 \in K$ satisfy $M_0 \lea M_2$, $M_1 \lea M_2$, and $M_0 \subseteq M_1$, then $M_0 \lea M_1$;
    \item Tarski-Vaught axioms: Suppose $\delta$ is a limit ordinal and $\seq{M_i \in K : i < \delta}$ is an increasing chain. Then:

        \begin{enumerate}

            \item $M_\delta := \bigcup_{i < \delta} M_i \in K$ and $M_0 \lea M_\delta$.
            \item\label{smoothness-axiom}Smoothness: If there is some $N \in K$ so that for all $i < \delta$ we have $M_i \lea N$, then we also have $M_\delta \lea N$.

        \end{enumerate}

    \item Löwenheim-Skolem-Tarski axiom: There exists a cardinal $\lambda \ge |\tau(\K)| + \aleph_0$ such that for any $M \in K$ and $A \subseteq |M|$, there is some $M_0 \lea M$ such that $A \subseteq |M_0|$ and $\|M_0\| \le |A| + \lambda$. We write $\LS (\K)$ for the minimal such cardinal.
\end{enumerate}
\end{defin}
\begin{remark} \
  \begin{enumerate}
    \item When we write $M \lea N$, we implicitly also mean that $M, N \in K$.
    \item We write $\K$ for the pair $(K, \lea)$, and $K$ (no boldface) for the actual class. However we may abuse notation and write for example $M \in \K$ instead of $M \in K$ when there is no danger of confusion. Note that in this paper we will sometimes work with two AECs $\K^1$, $\K^2$ that happen to have the same underlying class $K$ but not the same ordering. 
  \end{enumerate}
\end{remark}

Notice that if $K$ is a universal class, then $\K := (K, \subseteq)$ is an AEC with $\LS (\K) = |\tau (K)| + \aleph_0$. Throughout this paper we will use the following notation:

\begin{notation}\label{univ-notation}
  Let $K$ be a universal class. We think of $K$ as the AEC $\K := (K, \subseteq)$, and may write ``$\K$ is a universal class'' instead of ``$K$ is a universal class''.
\end{notation}

We will also have to deal with AECs that may not satisfy the smoothness axiom:

\begin{defin}[I.1.2.(2) in \cite{shelahaecbook}]
  A \emph{weak AEC} is a pair $\K = (K, \lea)$ satisfying all the axioms of AECs except perhaps smoothness ((\ref{smoothness-axiom}) in Definition \ref{aec-def}).
\end{defin}

Shelah introduced the following parametrized version of smoothness:

\begin{defin}[V.1.18.(3) in \cite{shelahaecbook2}]\label{smooth-def}
  Let $\K$ be a weak AEC. Let $\lambda \ge \LS (\K)$ and let $\delta$ be a limit ordinal. We say that $\K$ is \emph{$(\le \lambda, \delta)$-smooth} if for any increasing chain $\seq{M_i : i \le \delta}$ with $\|M_i\| \le \lambda$ for all $i < \delta$ and $\|M_\delta\| \le \lambda + \delta$, we have that $\bigcup_{i < \delta} M_i \lea M_{\delta}$. $(\le \lambda, \le \kappa)$-smooth means $(\le \lambda, \delta)$-smooth for all $\delta \le \kappa$, and similarly for the other variations.
\end{defin}
\begin{remark}\label{smooth-equiv-rmk}
  Above, we could have allowed $\|M_{\delta}\| > \lambda + \delta$ and gotten an equivalent definition. Indeed, if $M_i \lea M_\delta$ for all $i < \delta$ and we want to see that $\bigcup_{i < \delta} M_i \lea M_\delta$, we can use the Löwenheim-Skolem-Tarski axiom to take $N \lea M_{\delta}$ containing $\bigcup_{i < \delta} |M_i|$ and having size at most $\lambda + \delta$. Then we can use coherence to see that $M_i \lea N$ for all $i < \delta$, hence by smoothness, $\bigcup_{i < \delta} M_i \lea N$ and so by transitivity of $\lea$, $\bigcup_{i < \delta} M_i \lea M_\delta$.
\end{remark}

We now list a several known facts about AECs that we will use. First, recall that an AEC $\K$ is determined by its restriction $\K_{\LS (\K)}$ to models of size $\LS (\K)$. More precisely:

\begin{fact}[II.1.23 in \cite{shelahaecbook}]\label{aec-bottom-det}
  Assume $\K^1$ and $\K^2$ are AECs with $\lambda := \LS (\K^1) = \LS (\K^2)$. If $\K_\lambda^1 = \K_\lambda^2$ (so also $\leap{\K^1}$ and $\leap{\K^2}$ coincide on the models of size $\lambda$), then $\K_{\ge \lambda}^1 = \K_{\ge \lambda}^2$.
\end{fact}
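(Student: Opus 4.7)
The plan is to prove the result by simultaneous induction on $\mu \geq \lambda$ of the two statements: (a) $\K^1_\mu = \K^2_\mu$; and (b) for all $M, N$ with $\lambda \le \|M\|, \|N\| \le \mu$, $M \leap{\K^1} N$ if and only if $M \leap{\K^2} N$. The base case $\mu = \lambda$ is the hypothesis.

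For the inductive step of (a), given $M \in \K^1_\mu$ with $\mu > \lambda$, I would apply the Löwenheim-Skolem-Tarski axiom for $\K^1$ to obtain a $\leap{\K^1}$-increasing continuous chain $\langle M_i : i < \mu \rangle$ with $M_i \leap{\K^1} M$, $\lambda \le \|M_i\| < \mu$, and $\bigcup_{i < \mu} M_i = M$. The inductive hypothesis (a) places each $M_i$ in $\K^2$, and the inductive hypothesis (b) promotes the chain to a $\leap{\K^2}$-increasing chain. Applying the chain axiom for $\K^2$ gives that $\bigcup_i M_i \in \K^2$, and this union is literally $M$. Symmetry (swapping $\K^1$ and $\K^2$) yields $\K^2_\mu \subseteq \K^1_\mu$.

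For the inductive step of (b), suppose $M \leap{\K^1} N$ with $\|N\| = \mu$ (if $\|N\| < \mu$, apply the IH directly). If $\|M\| < \|N\|$, I would use LS for $\K^1$ to find $N_0 \leap{\K^1} N$ of size $\|M\|$ with $M \subseteq N_0$; coherence then gives $M \leap{\K^1} N_0$. Extending $N_0$ to a $\leap{\K^1}$-resolution $\langle N_i : i < \mu \rangle$ of $N$ with $\lambda \le \|N_i\| < \mu$, the IH yields $M \leap{\K^2} N_0$ and $N_i \leap{\K^2} N_j$ for $i < j$, and then the $\K^2$ chain axiom plus transitivity give $M \leap{\K^2} N$. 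If $\|M\| = \|N\| = \mu$, the harder case, I would build simultaneous resolutions $\langle M_i \rangle$ of $M$ and $\langle N_i \rangle$ of $N$, both $\leap{\K^1}$-increasing continuous, with $M_i \subseteq N_i$ and $\lambda \le \|M_i\|, \|N_i\| < \mu$, by a standard back-and-forth using LS at each successor stage. Coherence in $\K^1$ gives $M_i \leap{\K^1} N_i$; the IH upgrades this to $M_i \leap{\K^2} N_i$, and the $\K^2$ chain axiom gives $N_i \leap{\K^2} N$. Thus $M_i \leap{\K^2} N$ for all $i$, and smoothness of $\K^2$ (an axiom, now applicable since $N \in \K^2$ by part (a) for size $\mu$) delivers $M = \bigcup_i M_i \leap{\K^2} N$. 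The reverse direction follows by symmetry.

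The main technical obstacle is the case $\|M\| = \|N\|$ in (b): coordinating the two resolutions so that the containments $M_i \subseteq N_i$ hold while also ensuring both chains cover their targets. This requires a careful bookkeeping at successor stages (alternately enlarging $N_{i+1}$ to absorb the chosen approximation of $M$, and using LS in $\K^1$ to realize the result as $\leap{\K^1}$-substructures), but the underlying combinatorics is routine. Everything else is a direct unwinding of the AEC axioms — most crucially, the smoothness axiom for $\K^2$, which is exactly what lets us pass from $M_i \leap{\K^2} N$ for all $i$ to $M \leap{\K^2} N$ without any further hypothesis.
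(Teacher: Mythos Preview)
The paper does not give its own proof of this statement: it is quoted as a black-box fact from \cite[II.1.23]{shelahaecbook}. Your argument is the standard one and is correct; the simultaneous induction on $\mu$ with parts (a) and (b), the use of resolutions via the L\"owenheim-Skolem-Tarski axiom, coherence to transfer $M_i \leap{\K^1} N_i$, and smoothness in $\K^2$ to conclude $M \leap{\K^2} N$ in the equal-cardinality case are exactly the ingredients of the proof in Shelah's book.
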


We will use the relationship between the ordering of any AEC and elementary equivalence in a sufficiently powerful infinitary logic:

\begin{fact}\label{elem-facts}
  Let $\K$ be an AEC and let $M, N \in \K$.

  \begin{enumerate}
    \item\cite[Theorem 7.2.(b)]{kueker2008}\label{elem-facts-1} If $M \lee_{\Ll_{\infty, \LS (\K)^+}} N$, then $M \lea N$.
    \item\cite[IV.1.12.(2)]{shelahaecbook}\label{elem-facts-2} Let $\lambda$ be an infinite cardinal such that $\K$ is categorical in $\lambda$ and $\lambda = \lambda^{\LS (\K)}$. If $M, N \in \K_{\lambda}$ and $M \lea N$, then $M \lee_{\Ll_{\infty, \LS (\K)^+}} N$.
  \end{enumerate}
\end{fact}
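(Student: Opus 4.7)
The plan is to handle the two parts by back-and-forth arguments adapted to the AEC setting. Both rely on the classical characterization that $M \equiv_{\Ll_{\infty, \kappa}} N$ iff there is a nonempty family $\mathcal{F}$ of partial isomorphisms $M \to N$ closed under extending the domain or range by sets of size ${<}\kappa$. With $\kappa = \LS (\K)^+$, one extends by sets of size $\LS (\K)$, which dovetails perfectly with the Löwenheim-Skolem-Tarski axiom: every small subset of a model sits inside some small $\lea$-submodel.

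For (1), suppose $M \lee_{\Ll_{\infty, \LS (\K)^+}} N$; in particular $M \subseteq N$. Write $M = \bigcup_{\alpha < \mu} M_\alpha$ as a $\lea$-increasing continuous union with $\|M_\alpha\| = \LS (\K)$ and each $M_\alpha \lea M$. I would show $M_\alpha \lea N$ for every $\alpha$; smoothness (applied inside $N$) then yields $M \lea N$. To get $M_\alpha \lea N$, apply the LS axiom in $N$ to obtain an LS-sized $N_\alpha \lea N$ with $M_\alpha \subseteq N_\alpha$, then use back-and-forth to transport $N_\alpha$ (as a substructure) into an LS-sized $\lea$-submodel of $M$ containing $M_\alpha$; by coherence in $\K$ applied inside $M$, this transported copy is $\lea$-above $M_\alpha$, and pulling back via the partial isomorphism and invoking coherence in $N$ yields $M_\alpha \lea N_\alpha \lea N$. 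The main obstacle is that $\lea$ is not itself an $\Ll_{\infty, \kappa}$-relation, so the transport relies on Shelah's presentation theorem (or an equivalent ``syntactic'' presentation of $\lea$), which guarantees that the ordering is determined by compatible expansions to a richer vocabulary and hence is ``readable'' from sufficient infinitary data.

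For (2), assume $\K$ is categorical in $\lambda$ with $\lambda = \lambda^{\LS (\K)}$ and $M \lea N$ in $\K_\lambda$. The strategy is to show the unique model of size $\lambda$ is $\LS (\K)^+$-homogeneous in an appropriate AEC sense: any isomorphism between LS-sized $\lea$-submodels extends, moving arbitrary additional LS-sized data. Concretely, I would build inside $N$ a $\lea$-increasing continuous chain $\seq{N^\alpha : \alpha < \lambda}$ of LS-sized $\lea$-submodels with $N = \bigcup_\alpha N^\alpha$, using the cardinal arithmetic $\lambda = \lambda^{\LS (\K)}$ to enumerate the $\lambda$-many possible ``small amalgamation patterns'' (isomorphism types of small configurations together with candidate extensions) cofinally often, so that each candidate is realized whenever compatible. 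Categoricity in $\lambda$ transports the resulting homogeneity from this distinguished construction to the pair $M \lea N$, producing a back-and-forth family of LS-sized partial isomorphisms covering the inclusion, which is precisely what witnesses $M \lee_{\Ll_{\infty, \LS (\K)^+}} N$. The hard part will be carrying out the bookkeeping without assuming amalgamation: the hypothesis $\lambda = \lambda^{\LS (\K)}$ is doing exactly this compensating work, allowing one to enumerate and realize small patterns in a model of size $\lambda$ even though no global amalgamation property is assumed.
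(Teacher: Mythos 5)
This statement is quoted in the paper as a Fact, with no proof given: part (1) is cited to Kueker \cite[Theorem 7.2.(b)]{kueker2008} and part (2) to Shelah \cite[IV.1.12.(2)]{shelahaecbook}, so there is no in-paper argument to compare against; judged on its own, your sketch has genuine gaps at exactly the points where these results are nontrivial.

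For (1), the reduction is fine: writing $M$ as a continuous $\lea$-increasing union of $\LS(\K)$-sized $M_\alpha \lea M$ and invoking smoothness, it suffices to show $M_\alpha \lea N$. The gap is the transport step. The back-and-forth family gives you $g \supseteq \operatorname{id}_{M_\alpha}$ with $N_\alpha \subseteq \operatorname{ran}(g)$, and $g^{-1}[N_\alpha]$ is indeed a substructure of $M$ containing $M_\alpha$ and isomorphic to a member of $\K$; but nothing tells you that $g^{-1}[N_\alpha] \lea M$, and without that coherence cannot be applied to get $M_\alpha \lea g^{-1}[N_\alpha]$. Being a substructure of $M$ that lies in $K$ does not imply being a $\lea$-submodel --- that implication is essentially the content of the theorem you are trying to prove. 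Your fallback, Shelah's presentation theorem, does not close this: the presentation theorem supplies expansions of each model of $\K$ to a vocabulary $\tau_1$ such that $\tau_1$-substructure implies $\lea$, but the maps in your back-and-forth family are only $\tau(\K)$-partial isomorphisms and need not be compatible with any choice of expansions of $M$ and $N$, so $\lea$ is not ``readable'' from the $\Ll_{\infty,\LS(\K)^+}$ data in the way you need. Kueker's actual proof is substantially more delicate (it builds coherent systems of small approximations and uses the chain axioms over club-many configurations), and the single-model transport as written does not reproduce it.

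For (2), the plan rests on an unproved homogeneity claim whose natural proof requires amalgamation, which is precisely what is not available. To ``realize each compatible small pattern'' over some $N^\alpha$ inside $N$ you must amalgamate a candidate extension with $N$ over $N^\alpha$; the hypothesis $\lambda = \lambda^{\LS(\K)}$ only bounds the number of patterns (it lets you enumerate filtrations and small partial isomorphisms), it provides no mechanism for realizing any of them, so it cannot do the ``compensating work'' you assign to it. The subsequent step --- that categoricity transports homogeneity of one specially constructed model to the arbitrary pair $M \lea N$ --- is also unjustified: an isomorphism of $N$ with the special model need not respect the distinguished submodel $M$, and making it do so is again an extension/amalgamation-type demand. (There is also a small slip: a $\lea$-increasing \emph{continuous} chain of length $\lambda$ consisting of $\LS(\K)$-sized models is impossible; the members must grow in size along the chain.) Shelah's argument for \cite[IV.1.12.(2)]{shelahaecbook} exploits categoricity and the cardinal arithmetic quite differently, via counting of filtrations rather than via constructing a model-homogeneous model, so both the key lemma and the transfer step of your outline would need to be replaced, not merely elaborated.
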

\begin{remark}
  Shelah's proof of Fact \ref{elem-facts}.(\ref{elem-facts-2}) is short and elementary but in \cite[Section IV.2]{shelahaecbook}, he attempts to remove the ``$\lambda = \lambda^{\LS (\K)}$'' restriction. We rely on parts of Shelah's argument to get amalgamation in \cite{ap-universal-v9} (e.g.\ in the proof of Fact \ref{univ-fact}.(\ref{univ-fact-1})), but in this paper we have a different strategy to get amalgamation and hence do not need to rely on the deep results from \cite[Chapter IV]{shelahaecbook}.
\end{remark}

We will also use that AECs have a Hanf number. Below, we write $\delta (\lambda)$ for the pinning down ordinal at $\lambda$: the first ordinal that is not definable in $\Ll_{\lambda^+, \omega}$. We will also deal with the more general $\delta (\lambda, \kappa)$ (the least ordinal not definable using a $\operatorname{PC}_{\lambda, \kappa}$ class, see \cite[VII.5.5.1]{shelahfobook} for a precise definition). Recall the following well-known facts about this ordinal (see e.g.\ \cite[VII.5]{shelahfobook}): 

\begin{fact}\label{delta-facts} \
  \begin{enumerate}
    \item (Lopez-Escobar) $\delta (\aleph_0) = \omega_1$.
    \item (Morley and C.C. Chang) For any infinite cardinals $\lambda$ and $\kappa$, $\delta (\lambda, \kappa) \le (2^{\lambda})^+$.
  \end{enumerate}
\end{fact}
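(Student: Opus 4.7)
\textbf{Plan for Fact \ref{delta-facts}.} For part (1), the proof splits into two directions. To show $\delta(\aleph_0)\ge\omega_1$, I would construct by transfinite recursion on $\alpha<\omega_1$ formulas $\phi_\alpha(x)$ of $\Ll_{\omega_1,\omega}$ in the vocabulary of a linear order such that in any linear order $(M,<)$, $\phi_\alpha(a)$ holds iff the initial segment $\{b:b<a\}$ has order type $\alpha$. The successor stage uses $\phi_{\alpha+1}(x):=\exists y\,(y<x\wedge \phi_\alpha(y)\wedge \forall z(z<x\to z\le y))$, and the limit stage $\alpha=\sup_n\alpha_n$ (with $\alpha_n$ cofinal of type $\omega$) uses a countable conjunction-disjunction saying that for each $n$ there is $y_n<x$ with $\phi_{\alpha_n}(y_n)$ and every $z<x$ lies below some $y_n$. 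Using these to code the order type of a definable linear order, every $\alpha<\omega_1$ is pinned down. For $\delta(\aleph_0)\le\omega_1$, I would invoke Lopez-Escobar's undefinability of well-foundedness: if $\alpha\ge\omega_1$ were pinned down by a PC class coming from an $\Ll_{\omega_1,\omega}$-sentence $\psi$, pass to a countable fragment $F$ containing $\psi$, introduce new constants $c_n$ ($n<\omega$), and consider $F\cup\{\psi\}\cup\{c_{n+1}<c_n:n<\omega\}$. Each finite subset is satisfiable inside a model of $\psi$ (since the distinguished order is infinite), so by the compactness theorem for countable fragments there is a model realizing the whole set, giving a descending $\omega$-sequence in the pinned-down order, a contradiction.

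For part (2), I would use an Erd\H{o}s-Rado argument. Suppose $\alpha\ge(2^\lambda)^+$ is pinned down by a $\operatorname{PC}_{\lambda,\kappa}$-class, realized by some model $M$ in the larger vocabulary whose distinguished order has type $\alpha$. Restricting to an initial segment $\seq{a_i:i<(2^\lambda)^+}$, the number of quantifier-free $\Ll_{\lambda^+,\omega}$-types over the empty set of finite tuples is at most $2^\lambda$, so by Erd\H{o}s-Rado applied with exponent $2$ to a long enough segment (after closing under Skolem-like functions coming from the $\operatorname{PC}$-expansion) one extracts an infinite order-indiscernible subsequence. From this indiscernible sequence one builds an EM-blueprint and then stretches it over the linear order $\mathbb{Q}$ (or any non-well-order) to obtain another model in the $\operatorname{PC}$-class whose distinguished order embeds a non-well-order, contradicting that the order type of every member of the class is the fixed ordinal $\alpha$. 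Hence $\delta(\lambda,\kappa)\le(2^\lambda)^+$.

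The main obstacle will be the second part, specifically being careful that the EM-construction for a $\operatorname{PC}_{\lambda,\kappa}$-class produces a model whose \emph{reduct} still satisfies the pinning-down sentence; one handles this by taking the indiscernibles in the expanded vocabulary (so that the existential witnesses are encoded by function symbols) and only at the end passing back to the reduct. In Shelah's setup \cite[VII.5]{shelahfobook} this is organized by treating the $\operatorname{PC}$-class directly as the projection of an $\Ll_{\lambda^+,\kappa}$-elementary class in a larger vocabulary, which is what makes the standard indiscernible-extraction argument applicable without further hypotheses.
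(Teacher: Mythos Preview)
The paper does not give a proof of this fact at all; it is quoted as a well-known result with a reference to \cite[VII.5]{shelahfobook}. So there is no ``paper's proof'' to compare against, and any correct argument you supply would already go beyond what the paper does.

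That said, your proposal for the upper bound $\delta(\aleph_0)\le\omega_1$ contains a genuine gap. You invoke ``the compactness theorem for countable fragments'' to realize the type $\{c_{n+1}<c_n:n<\omega\}$, but no such compactness theorem holds for $\Ll_{\omega_1,\omega}$: the standard counterexample is $\{\forall x\,\bigvee_{n<\omega} x=c_n\}\cup\{d\neq c_n:n<\omega\}$, every finite subset of which is satisfiable while the whole set is not. Barwise compactness only applies to $\Sigma_1$ theories over an admissible set, which is not the situation you describe. The correct and much simpler argument is downward L\"owenheim--Skolem for countable fragments: if $\psi\in\Ll_{\omega_1,\omega}$ pins down some $\alpha\ge\omega_1$, then $\psi$ has a model (with distinguished order of type $\alpha$, hence uncountable), so by L\"owenheim--Skolem for the countable fragment generated by $\psi$ there is a countable model of $\psi$; but its distinguished order is then a countable ordinal, contradicting that it must have type $\alpha\ge\omega_1$.

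Your sketch for part (2) via Erd\H{o}s--Rado and Ehrenfeucht--Mostowski models is the standard Morley-style argument and is essentially correct, including your remark that one must extract indiscernibles in the expanded vocabulary before reducting. The lower bound $\delta(\aleph_0)\ge\omega_1$ via the recursively defined formulas $\phi_\alpha$ is also fine.
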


\begin{defin}\label{hanf-aec-def}
  Let $\K$ be an AEC.

  \begin{enumerate}
    \item Let $\lambda (\K)$ be the least cardinal $\lambda \ge \LS (\K)$ such that there exists a vocabulary $\tau_1 \supseteq \tau (\K)$, a first-order $\tau_1$-theory $T_1$, and a set of $T_1$-types $\Gamma$ such that:
      \begin{enumerate}
        \item\label{k-cond} $\K = \text{PC} (T_1, \Gamma, \tau (\K))$.
        \item\label{order-cond} For $M, N \in \text{EC} (T_1, \Gamma)$, if $M \subseteq N$, then $M \rest \tau (\K) \lea N \rest \tau (\K)$.
        \item $|T_1| + |\tau_1| \le \LS (\K)$ and $|\Gamma| \le \lambda$.
      \end{enumerate}
    \item Let $\delta (\K) := \delta (\LS (\K), \lambda (\K))$.
    \item Let $\hanf{\K} := \beth_{\delta (\K)}$.
  \end{enumerate}
\end{defin}
\begin{remark}\label{univ-delta}
  By Chang's presentation theorem \cite{chang-pres}, if $\K$ is axiomatized by an $\Ll_{\lambda^+, \omega}$ sentence, and the ordering is just substructure (as for universal classes), then $\lambda (\K) \le \lambda$. In particular (see Fact \ref{univ-charact}) $\lambda (\K) = |\tau (\K)| + \aleph_0$ for any universal class $\K$.
\end{remark}

It makes sense to talk of $\lambda (\K)$ because of Shelah's presentation theorem:

\begin{fact}[I.1.9 in \cite{shelahaecbook}]\label{pres-thm}
  For any AEC $\K$, there exists a vocabulary $\tau_1 \supseteq \tau (\K)$, a first-order $\tau_1$-theory $T_1$, and a set of $T_1$-types $\Gamma$ such that  (\ref{k-cond}) and (\ref{order-cond}) in Definition \ref{hanf-aec-def} hold and $|T_1| + |\tau_1| \le \LS (\K)$. Thus $\lambda (\K) \le 2^{\LS (\K)}$.
\end{fact}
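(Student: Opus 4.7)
My plan is to execute Shelah's classical Skolemization argument. The idea is to adjoin to $\tau(\K)$ a family of $\LS(\K)$-many function symbols $F_i^n$ (each $n$-ary) whose values on a finite tuple $\bar{a}$ enumerate a chosen $\lea$-submodel of size at most $\LS(\K)$ containing $\bar{a}$; then to use $T_1$ to record the obvious first-order compatibilities of this Skolem-hull data, and to use $\Gamma$ to forbid the quantifier-free $\tau_1$-types that do not actually arise from any model of $\K$.

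In more detail, first I would set $\tau_1 := \tau(\K) \cup \{F_i^n : n < \omega,\ i < \LS(\K)\}$, so that $|\tau_1| \le \LS(\K)$. Given $M \in \K$, I would build, by induction on the length of $\bar{a}$, a submodel $N_{\bar{a}} \lea M$ of size $\le \LS(\K)$ containing $\bar{a}$, using the LST axiom and arranging that whenever $\bar{b}$ enumerates a subset of the entries of $\bar{a}$ we have $|N_{\bar{b}}| \subseteq |N_{\bar{a}}|$; the coherence axiom then yields $N_{\bar{b}} \lea N_{\bar{a}}$. I would then fix an enumeration $|N_{\bar{a}}| = \{F_i^{|\bar{a}|}(\bar{a}) : i < \LS(\K)\}$ with the convention $F_i^n(\bar{a}) = a_i$ for $i < n$, producing a $\tau_1$-expansion $M^*$.

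For $T_1$, I would take only the axioms $F_i^n(x_0, \dots, x_{n-1}) = x_i$ for $i < n < \omega$, so that $|T_1| \le \LS(\K)$. For $\Gamma$, let $\mathfrak{T}$ be the collection of complete quantifier-free $\tau_1$-types (in finitely many variables) realized in $M^*$ for some $M \in \K$; in a vocabulary of size $\LS(\K)$ there are at most $2^{\LS(\K)}$ such types, so $|\mathfrak{T}| \le 2^{\LS(\K)}$. I would then place every complete quantifier-free $\tau_1$-type outside $\mathfrak{T}$ into $\Gamma$, giving $|\Gamma| \le 2^{\LS(\K)}$.

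Finally I would verify the three presentation conditions. (i) Each $M^*$ as constructed satisfies $T_1$ and omits every $p \in \Gamma$ by construction. (ii) If $N^* \in \text{EC}(T_1, \Gamma)$, then for each $\bar{a} \in N^*$ the tuple $(F_i^{|\bar{a}|}(\bar{a}))_i$ carries a quantifier-free type in $\mathfrak{T}$, so the $\tau(\K)$-reduct of the substructure it generates is isomorphic to some $N_{\bar{a}_0}^{M_0}$ in $\K_{\le \LS(\K)}$; enumerating $N^*$ along a chain of finite sub-tuples then writes $N^* \rest \tau(\K)$ as a $\lea$-increasing union of $\K$-submodels, hence it lies in $\K$ by the chain axiom. (iii) If $M^* \subseteq N^*$ in $\text{EC}(T_1, \Gamma)$ then each value $F_i^n(\bar{a})$ agrees in $M^*$ and $N^*$; applying the chain-of-hulls argument in parallel yields $M^* \rest \tau(\K) \lea N^* \rest \tau(\K)$. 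Combining the bounds delivers $\lambda(\K) \le 2^{\LS(\K)}$. I expect the main difficulty to lie in step (ii): verifying that the coherent Skolem-hull data, encoded purely via a handful of first-order equations together with the omission of ``bad'' quantifier-free types, is strong enough to force the $\tau(\K)$-reduct of an arbitrary model of $\text{EC}(T_1, \Gamma)$ to actually sit in $\K$; all the nontrivial work of the AEC axioms gets funneled into this chain argument.
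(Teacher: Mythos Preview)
The paper does not actually prove this statement: it is quoted as a Fact with a citation to \cite[I.1.9]{shelahaecbook} and no proof is given. So there is nothing in the paper to compare your argument against.

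That said, your proposal is essentially Shelah's original argument, and the outline is correct. A couple of minor points you would want to tighten in a full write-up: in step (ii), to conclude that the $\tau(\K)$-reduct of the Skolem hull of a finite tuple $\bar a$ in an arbitrary $N^* \in \text{EC}(T_1,\Gamma)$ lies in $\K$, you need that the \emph{entire} quantifier-free diagram of that hull (not just the type of $\bar a$) matches one coming from some $M \in \K$; this works because every finite tuple from the hull is itself of the form $(F_{i_0}^{|\bar a|}(\bar a),\dots,F_{i_k}^{|\bar a|}(\bar a))$, so its quantifier-free type is determined by the quantifier-free type of $\bar a$ in the expanded language. Also, to get the $\lea$-increasing chain you need the coherence you arranged ($N_{\bar b} \lea N_{\bar a}$ when $\bar b$ is a sub-tuple of $\bar a$) to be reflected purely in the quantifier-free $\tau_1$-data; this is handled by recording enough compositional identities among the $F_i^n$'s (or equivalently by the type-omitting), which your sketch gestures at but does not spell out. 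None of this is a gap in the idea, just bookkeeping.
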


\begin{defin}\label{hanf-def}
  For an infinite cardinal $\lambda$, let $\hanf{\lambda} := \beth_{(2^{\lambda})^+}$. 
\end{defin}
\begin{remark}\label{hanf-bound}
  By Facts \ref{delta-facts} and \ref{pres-thm}, For any AEC $\K$, $\hanf{\K} \le \hanf{\LS (\K)}$.
\end{remark}

The reason $\hanf{\K}$ is interesting is because it is a Hanf number for $\K$ (this follows from Chang's result on the Hanf number of PC classes \cite{chang-pres}).

\begin{fact}\label{hanf-arb-large}
  Let $\K$ be an AEC. If $\K$ has a model of size $\hanf{\K}$, then $\K$ has arbitrarily large models. 
\end{fact}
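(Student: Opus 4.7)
The plan is to reduce the statement to the classical Hanf number result for $\operatorname{PC}$-classes, which is exactly what the definition of $\delta(\lambda,\kappa)$ was designed to capture.

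First I would apply Shelah's presentation theorem (Fact \ref{pres-thm}), refined by the definition of $\lambda (\K)$ (Definition \ref{hanf-aec-def}): there exists $\tau_1 \supseteq \tau (\K)$, a first-order $\tau_1$-theory $T_1$, and a set $\Gamma$ of $T_1$-types such that $\K = \operatorname{PC} (T_1, \Gamma, \tau (\K))$, condition (\ref{order-cond}) of Definition \ref{hanf-aec-def} holds, $|T_1| + |\tau_1| \le \LS (\K)$, and $|\Gamma| \le \lambda (\K)$. Thus membership in $\K$ is witnessed by an expansion to $\tau_1$ satisfying $T_1$ and omitting every type in $\Gamma$.

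Next I would invoke the classical Hanf-number computation for $\operatorname{PC}_{\LS (\K), \lambda (\K)}$-classes (Morley/Chang, see \cite[VII.5]{shelahfobook}): if an $\operatorname{EC} (T_1, \Gamma)$-class with $|T_1| + |\tau_1| \le \LS (\K)$ and $|\Gamma| \le \lambda (\K)$ contains a model of size $\beth_{\delta (\LS (\K), \lambda (\K))}$, then it contains models of arbitrarily large size. By hypothesis $\K$ contains a model $M$ of size $\hanf{\K} = \beth_{\delta (\K)} = \beth_{\delta (\LS (\K), \lambda (\K))}$, and any such $M$ lifts to some $M^+ \in \operatorname{EC}(T_1, \Gamma)$ of the same cardinality. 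Applying the $\operatorname{PC}$-Hanf number theorem to $M^+$ produces, for every cardinal $\mu$, a model $N^+ \in \operatorname{EC}(T_1, \Gamma)$ with $\|N^+\| \ge \mu$; the reduct $N := N^+ \rest \tau (\K)$ is then in $\K$ and has size at least $\mu$.

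The heart of the cited $\operatorname{PC}$-Hanf result is the usual Ehrenfeucht--Mostowski / Erdős--Rado argument: in $M^+$ (of size $\beth_{\delta (\LS (\K), \lambda (\K))}$) one extracts an infinite sequence of $\tau_1$-indiscernibles whose Skolem hull, in any order type, still omits every type in $\Gamma$; stretching the order type then yields arbitrarily large models. This is precisely the content of the bound $\delta (\lambda, \kappa)$. I expect the only point of care to be verifying that the bound $\hanf{\K} = \beth_{\delta (\K)}$ is exactly what Chang's theorem requires for the parameters $(\LS (\K), \lambda (\K))$ coming out of Shelah's presentation theorem, which is precisely the way $\delta (\K)$ was defined in Definition \ref{hanf-aec-def}. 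With this identification the proof is immediate, and no further work beyond citing the classical result is needed.
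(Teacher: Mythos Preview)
Your proposal is correct and matches the paper's approach: the paper does not give a proof but simply attributes the fact to Chang's result on the Hanf number of PC classes \cite{chang-pres}, which is exactly the reduction you describe via the presentation theorem and the definition of $\delta(\K)$.
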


In the rest of this section, we quote categoricity transfer results that we will use. We assume that the reader is familiar with notions such as amalgamation, joint embedding, Galois types, Ehrenfeucht-Mostowski models, and tameness (see for example \cite{baldwinbook09}). The notation we use is standard and is described in details at the beginning of \cite{sv-infinitary-stability-afml} (for Ehrenfeucht-Mostowski models, we use the notation in \cite[IV.0.8]{shelahaecbook}\footnote{For $\K$ an AEC, we call $\Phi$ an \emph{EM blueprint for $\K$} if $\Phi \in \Upsilon_{\K}^{\text{or}}$.}). For example, we write $\gtp (\bb / M; N)$ for the Galois type of $\bb$ over $M$, as computed in $N$. This assumes that we are working inside an AEC $\K$ that is clear from context. When we want to emphasize $\K$, we will write $\gtp_{\K} (\bb / M; N)$.

The following result is implicit in the proof of \cite[Corollary 4.3]{tamenesstwo}. For completeness, we sketch a proof.

\begin{fact}\label{categ-facts-ap}
  If $\K$ is an AEC with amalgamation and arbitrarily large models, then the categoricity spectrum (i.e.\ the class of cardinals $\lambda \ge \LS (\K)$ such that $\K$ is categorical in $\lambda$) is closed. That is, if $\lambda > \LS (\K)$ is a limit cardinal and $\K$ is categorical in unboundedly many cardinals below $\lambda$, then $\K$ is also categorical in $\lambda$.
\end{fact}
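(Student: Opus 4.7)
My plan is a back-and-forth argument using Ehrenfeucht--Mostowski models, as in \cite[Corollary 4.3]{tamenesstwo}. First I reduce to the joint embedding case. Pick any categoricity cardinal $\mu_0 \in (\LS(\K), \lambda)$ and let $\K^\ast$ be the JEP class containing the unique model of size $\mu_0$. Every $M \in \K_{\ge \mu_0}$ contains, by the Löwenheim--Skolem--Tarski axiom, a $\lea$-submodel of size $\mu_0$ that is isomorphic to that unique model, hence lies in $\K^\ast$; so $\K^\ast$ is a sub-AEC inheriting AP from $\K$, has JEP, has arbitrarily large models, and is categorical in exactly the same cardinals in $[\mu_0, \lambda)$ as $\K$. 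It therefore suffices to show $\K^\ast$ is categorical in $\lambda$.

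By Shelah's presentation theorem (Fact \ref{pres-thm}) combined with Fact \ref{hanf-arb-large}, fix an EM blueprint $\Phi$ for $\K^\ast$, so that for each linear order $I$ the model $M[I] := \EM_{\tau(\K)}(I, \Phi)$ lies in $\K^\ast$ and has size $|I| + \LS(\K)$, with order-preserving injections inducing $\lea$-embeddings. For each categoricity cardinal $\mu \in [\mu_0, \lambda)$, the unique model in $\K^\ast_\mu$ is isomorphic to $M[\mu]$. Passing to a cofinal subsequence, I select categoricity cardinals $\seq{\mu_i : i < \cf(\lambda)}$ at which the unique model in $\K^\ast_{\mu_i}$ is $\mu_i$-model-homogeneous. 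This uses the standard iterated-amalgamation construction of a $\mu_i$-model-homogeneous model of size $\mu_i$ (feasible under mild cardinal-arithmetic conditions such as $\mu_i^{<\mu_i} = \mu_i$, arrangeable cofinally in $\lambda$), together with categoricity at $\mu_i$.

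Now given $M, N \in \K^\ast_\lambda$, take continuous $\lea$-increasing resolutions $M = \bigcup_{i<\cf(\lambda)} M_i$ and $N = \bigcup_{i<\cf(\lambda)} N_i$ with $\|M_i\| = \|N_i\| = \mu_i$, and construct isomorphisms $f_i : M_i \cong N_i$ by induction with $f_i \subseteq f_{i+1}$. At a limit $i$, take $f_i := \bigcup_{j<i} f_j$, which is an isomorphism by continuity. At a successor $i+1$, pick any $g : M_{i+1} \cong N_{i+1}$ from categoricity at $\mu_{i+1}$; by coherence both $g \rest M_i$ and $f_i$ (composed with $N_i \lea N_{i+1}$) are $\lea$-embeddings of $M_i$ into $N_{i+1}$; apply $\mu_{i+1}$-model-homogeneity of $N_{i+1}$ to obtain an automorphism $h$ of $N_{i+1}$ with $h \circ (g \rest M_i) = f_i$, and set $f_{i+1} := h \circ g$. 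The union $f := \bigcup_i f_i$ is the required isomorphism $M \cong N$.

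I expect the model-homogeneity step to be the main obstacle: one must justify that the unique model in each chosen $\mu_i$ is $\mu_i$-model-homogeneous, which requires both enough cardinal arithmetic at $\mu_i$ to carry out the iterated amalgamation construction in $\mu_i$ many steps and care in selecting the cofinal subsequence within the given set of categoricity cardinals. Once this homogeneity is in hand, the reduction to JEP, the invocation of EM blueprints, and the back-and-forth induction are routine.
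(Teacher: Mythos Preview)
Your back-and-forth strategy is in the right spirit, but there is a genuine gap at the model-homogeneity step, and you have correctly identified it yourself. You need cofinally many categoricity cardinals $\mu_i < \lambda$ at which a $\mu_i$-model-homogeneous model of size $\mu_i$ can be built, and you propose to secure this via $\mu_i^{<\mu_i} = \mu_i$. But the categoricity cardinals below $\lambda$ are \emph{given}; nothing prevents them from all being, say, singular of cofinality $\omega$, in which case $\mu_i^{<\mu_i} > \mu_i$ for every one of them. You cannot simply ``arrange cofinally'' a cardinal-arithmetic condition on a prescribed set of cardinals.

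The paper's proof sidesteps this entirely by inserting one extra step: it first shows (via EM models and the standard Morley argument) that $\K$ is Galois-\emph{stable} in every $\mu \in [\LS(\K), \lambda)$. Stability is exactly what lets you build a $\mu_0$-saturated model of size $\mu$ for every $\mu_0 < \mu$ \emph{without} any cardinal arithmetic, so by categoricity the model of size $\mu$ is saturated (equivalently, $\mu$-model-homogeneous) at \emph{every} categoricity cardinal $\mu \in (\LS(\K), \lambda)$. With this in hand the paper does not even need your stagewise back-and-forth: it argues directly that any $M \in \K_\lambda$ is saturated, since a type over $N \lea M$ with $\|N\| < \lambda$ is realized inside any $N' \lea M$ of size a categoricity cardinal above $\|N\|$. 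Uniqueness of saturated models (with JEP coming for free from categoricity below $\lambda$, essentially as in your first paragraph) then finishes. If you patch your argument by proving stability first, your model-homogeneity obstacle dissolves and your back-and-forth goes through as well; but at that point the paper's direct saturation argument is shorter.
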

\begin{proof}
  Let $\lambda > \LS (\K)$ be a limit cardinal such that $\K$ is categorical in unboundedly many cardinals below $\lambda$. We show that $\K$ is categorical in $\lambda$. We proceed in several steps:

  \begin{enumerate}
  \item $\K$ is (Galois) stable in every $\mu \in [\LS (\K), \lambda)$.
    [Why? Pick $\mu' \in (\mu, \lambda)$ such that $\K$ is categorical in $\mu'$. Since $\K$ has arbitrarily large models, we can use Ehrenfeucht-Mostowski models and the standard argument of Morley (see e.g.\ the proof of \cite[Claim I.1.7]{sh394-updated}) to see that $\K$ is stable in $\mu$.].
  \item For every categoricity cardinal $\mu \in (\LS (\K), \lambda)$, the model of size $\mu$ is (Galois) saturated.
    [Why? Using stability we can build a $\mu_0$-saturated model of size $\mu$ for every $\mu_0 \in (\LS (\K), \mu)$, and then use categoricity.]
  \item Every model of size $\lambda$ is saturated.
    [Why? Let $M \in \K_\lambda$. Let $N \in \K_{<\lambda}$ be such that $N \lea M$. Let $p \in \gS (N)$. Let $\mu := \|N\|$ and let $\mu' \in (\mu, \lambda)$ be a categoricity cardinal. Let $N' \in \K_{\mu'}$ be such that $N \lea N' \lea M$. By the previous step, $N'$ is saturated, and therefore realizes $p$. Since $N' \lea M$, $M$ also realizes $p$.
    \item $\K$ is categorical in $\lambda$.
      [Why? By uniqueness of saturated models.]
  \end{enumerate}
\end{proof}

To state the next categoricity transfer, we first recall Shelah's notion of an AEC having primes. The intuition is that the AEC has prime models over every set of the form $M \cup \{a\}$, for $M \in \K$. This is described formally using Galois types.

\begin{defin}[III.3.2 in \cite{shelahaecbook}]
  Let $\K$ be an AEC.

  \begin{enumerate}
    \item $(a, M, N)$ is a \emph{prime triple} if $M \lea N$, $a \in |N| \backslash |M|$, and for every $N' \in \K$, $a' \in |N'|$, such that $\gtp (a / M; N) = \gtp (a' / M; N')$, there exists $f: N \xrightarrow[M]{} N'$ with $f (a) = a'$.
    \item $\K$ \emph{has primes} if for any nonalgebraic Galois type $p \in \gS (M)$ there exists a prime triple $(a, M, N)$ such that $p = \gtp (a / M; N)$.
  \end{enumerate}
\end{defin}

By taking the closure of the relevant set under the functions of an ambient model, we obtain:

\begin{fact}[Remark 5.3 in \cite{ap-universal-v9}]\label{univ-prime}
  Any universal class $\K = (K, \subseteq)$ has primes.
\end{fact}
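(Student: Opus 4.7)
\textbf{Proof proposal for Fact \ref{univ-prime}.}

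The plan is to use closure under functions as our candidate prime model. Let $M \in K$ and let $p \in \gS(M)$ be nonalgebraic. Pick any $N \in K$ with $M \subseteq N$ and $a \in |N| \setminus |M|$ realizing $p$. Set $N_0 := \cl^N(|M| \cup \{a\})$. By the fundamental property of universal classes noted after Definition \ref{univ-def}, $N_0 \in K$, and since the ordering on $\K$ is substructure we have $M \subseteq N_0 \subseteq N$, so $p = \gtp(a/M; N_0)$. I claim $(a, M, N_0)$ is a prime triple.

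Let $N' \in K$ and $a' \in |N'|$ satisfy $\gtp(a/M; N_0) = \gtp(a'/M; N')$. The key observation is that this Galois-type equality implies equality of the quantifier-free $\tau(K)$-types of $a$ over $M$ in $N_0$ and of $a'$ over $M$ in $N'$. Indeed, Galois-type equality is the transitive closure of the one-step relation given by the existence of an amalgam $N^*$ with embeddings $h_1 : N_0 \to N^*$, $h_2 : N' \to N^*$ fixing $M$ and identifying $a$ with $a'$. Since the $\K$-ordering is substructure, each $h_i$ is an actual $\tau(K)$-embedding, so it preserves atomic formulas and their negations in both directions; chaining through the zig-zag yields equality of quantifier-free types.

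Now define $f : N_0 \to N'$ by $f(\sigma(\bar m, a)) := \sigma(\bar m, a')$ for every $\tau(K)$-term $\sigma$ and tuple $\bar m$ from $M$ (every element of $N_0$ has such a form because $N_0$ is the term closure of $|M| \cup \{a\}$). Well-definedness and injectivity follow from applying the equality of quantifier-free types to the atomic formula $\sigma_1(\bar m_1, x) = \sigma_2(\bar m_2, x)$ and its negation; preservation of the remaining atomic formulas is immediate from the same equality. Thus $f$ is a $\tau(K)$-embedding, hence a $\K$-embedding since $K$ is universal. By construction $f$ fixes $M$ and sends $a$ to $a'$, so $(a, M, N_0)$ is prime, and $\K$ has primes.

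The only mildly subtle point is step two, handling the transitive-closure definition of Galois-type equality in the absence of amalgamation; but universality makes this painless because every intermediate embedding is automatically a substructure embedding and therefore preserves quantifier-free truth on the nose.
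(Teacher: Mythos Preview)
Your proof is correct and follows exactly the approach the paper indicates: take $N_0 = \cl^N(|M| \cup \{a\})$ and use that Galois-type equality in a universal class (ordered by $\subseteq$) collapses to quantifier-free-type equality, so the term-by-term map $\sigma(\bar m, a) \mapsto \sigma(\bar m, a')$ is a well-defined $\K$-embedding. The paper gives only the one-line hint ``take the closure of the relevant set under the functions of an ambient model''; you have simply filled in the details, including the careful handling of the transitive-closure definition of Galois-type equality in the absence of amalgamation.
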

\begin{remark}\label{prime-loss-rmk}
  Having primes is a property of the AEC $\K = (K, \lea)$, not just of the class $K$. Thus even though for any universal class $K$, $(K, \subseteq)$ has primes, changing the order may lead to an AEC $(K, \lea)$ that may \emph{not} have primes anymore.
\end{remark}

The following is a ZFC approximation of Shelah's eventual categoricity conjecture in tame AECs with amalgamation. It combines works of Makkai-Shelah \cite{makkaishelah}, Shelah \cite{sh394}, Grossberg and VanDieren \cite{tamenesstwo, tamenessthree}, and the author \cite{ap-universal-v9, categ-primes-v4, downward-categ-tame-v6-toappear}.

\begin{fact}\label{categ-facts}
  Let $\K$ be a $\LS (\K)$-tame AEC with amalgamation and arbitrarily large models. Let $\lambda > \LS (\K)$ be such that $\K$ is categorical in $\lambda$.

  \begin{enumerate}
    \item\label{categ-facts-1} \cite[Theorem 9.8]{downward-categ-tame-v6-toappear}\footnote{The version for classes of models axiomatized by an $\Ll_{\kappa, \omega}$ theory, $\kappa$ strongly compact, appears in \cite{makkaishelah}. It generalizes to AECs with amalgamation when the model in the categoricity cardinal is saturated (see \cite[Lemma II.1.6]{sh394} or \cite[Theorem 14.8]{baldwinbook09}). In the tame case, the model in the categoricity cardinal is always saturated (by the Shelah-Villaveces theorem \cite[Theorem 2.2.1]{shvi635} together with the upward superstability transfer of the author \cite[Proposition 10.10]{indep-aec-apal}). In all these arguments, it seems that the amalgamation property is used in a strong way.} If $\delta$ is a limit ordinal that is divisible by $\left(2^{\LS (\K)}\right)^+$, then $\K$ is categorical in $\beth_{\delta}$.
    \item\label{categ-facts-2} $\K$ is categorical in all $\lambda' \ge \min (\lambda, \hanf{\LS (\K)})$ when at least one of the following holds:
      \begin{enumerate}
        \item\label{categ-facts-2a} \cite[10.3, 10.6]{downward-categ-tame-v6-toappear}\footnote{The \emph{upward} part of this transfer (i.e.\ concluding categoricity in all $\mu' \ge \mu$ is due to Grossberg and VanDieren \cite{tamenessthree}).} There exists a \emph{successor} cardinal $\mu > \LS (\K)^+$ such that $\K$ is categorical in $\mu$.
        \item\label{categ-facts-2b} \cite[Theorem 10.9]{downward-categ-tame-v6-toappear}\footnote{The main ideas of the transfer with primes appear in \cite{ap-universal-v9, categ-primes-v4} but there the threshold is higher (around $\beth_{\hanf{\LS (\K)}}$). The improved threshold of $\hanf{\LS (\K)}$ can be obtained from Fact \ref{categ-facts}.(\ref{categ-facts-2a}).} $\K$ has primes.
      \end{enumerate}
  \end{enumerate}
\end{fact}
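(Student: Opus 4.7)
The statement in question is a ``Fact'' collecting results from several papers rather than a new theorem, so rather than attempting a self-contained proof I would sketch the pipeline the quoted works establish, indicating which ingredient does what.

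For part (\ref{categ-facts-1}), my plan would be to first upgrade categoricity in $\lambda$ to information about saturation in all cardinals below $\lambda$. The standard route goes: (i) use EM models and Morley's omitting-types argument to obtain stability in every $\mu \in [\LS (\K), \lambda)$; (ii) invoke the Shelah-Villaveces theorem (or its tame variant proved by the author via the upward superstability transfer in \cite{indep-aec-apal}) to conclude that $\K$ is superstable below $\lambda$ and that the model in $\lambda$ is saturated. Given (ii), one builds a model of size $\beth_\delta$ as $\EM$-hull of a sufficiently homogeneous linear order. The divisibility condition $\left(2^{\LS (\K)}\right)^+ \mid \delta$ is used precisely so that, for every $\mu \in (\LS (\K), \beth_\delta)$, one can arrange that the EM model realizes every Galois type over its submodels of size $<\mu$; since any two such models are isomorphic and contain a saturated copy, saturation in $\beth_\delta$ follows, and then uniqueness of saturated models yields categoricity in $\beth_\delta$.

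For part (\ref{categ-facts-2}), I would separate the upward and downward directions. Upward transfer from $\lambda$ to all $\lambda' \geq \lambda$ in case (\ref{categ-facts-2a}) is exactly the Grossberg-VanDieren theorem \cite{tamenessthree}: in a tame AEC with amalgamation, categoricity in a successor $\mu > \LS (\K)^+$ propagates upward. For case (\ref{categ-facts-2b}) the upward direction is the author's transfer using primes from \cite{ap-universal-v9, categ-primes-v4}, which eliminates the successor hypothesis by using prime triples to witness categoricity-preserving extensions. The downward direction from $\lambda$ down to $\hanf{\LS (\K)}$ is handled by the downward categoricity transfer in \cite{downward-categ-tame-v6-toappear}; the proof extracts a saturated model in each intermediate cardinal using tameness together with the already-established superstability from part (\ref{categ-facts-1}), and then uses the presence of a successor categoricity cardinal (respectively primes) to conclude uniqueness there.

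The main obstacle in such a proof is always the same one: establishing that the model in the categoricity cardinal is saturated, and then propagating that saturation downward under ZFC-only hypotheses. All known arguments seem to need amalgamation in an essential way at this step, which is exactly why the hypothesis appears in the Fact and why the rest of the present paper is devoted to producing amalgamation for a well-chosen reordering $(K,\le)$ of the universal class before Fact \ref{categ-facts} can be applied. Thus for the purposes of this paper the right attitude is to cite the Fact as a black box, and my ``proof proposal'' is essentially to record the citation chain above and verify that the hypotheses (tameness, amalgamation, arbitrarily large models, plus either a successor categoricity cardinal or primes) are the minimal ones under which this machinery is known to go through in ZFC.
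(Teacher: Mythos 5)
Your proposal is correct and matches the paper's treatment: the paper gives no proof of this Fact, but quotes it as a black box from the cited literature, with footnotes tracing exactly the chain you describe (Makkai--Shelah and Shelah--Villaveces plus the superstability transfer for saturation in part (\ref{categ-facts-1}), Grossberg--VanDieren for the upward successor case, and the author's downward transfer and primes papers for the rest). Recording the citation chain and checking that the hypotheses (tameness, amalgamation, arbitrarily large models, plus a successor categoricity cardinal or primes) are the ones under which the cited machinery applies is precisely what the paper does.
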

\begin{remark}
  In Fact \ref{categ-facts}, we do not use that $\K$ has joint embedding: we can find a sub-AEC $\K^0$ of $\K$ that has joint embedding and work within $\K^0$. See Definition \ref{k-ast-def}.
\end{remark}
\begin{remark}
  If in Fact \ref{categ-facts} we start instead with a $\chi$-tame AEC (with $\chi > \LS (\K)$), the same conclusions hold for $\K_{\ge \chi}$.
\end{remark}

\section{Compatible pairs of AECs}\label{compat-sec}

Let $K$ be a universal class. A central result of Shelah \cite[V.B]{shelahaecbook2} is that if $K$ does not have the order property, there is an ordering $\le$ such that $(K, \le)$ has several structural properties, including amalgamation. The downside is that $(K, \le)$ might loose the smoothness axiom, i.e.\ it may only be a weak AEC. We will give the precise statement of Shelah's result and discuss its implications in the next sections. 

Here, we look at the situation abstractly: we consider pairs of weak AECs $\K^1 = (K^1, \leap{\K^1})$ and $\K^2 = (K^2, \leap{\K^2})$ satisfying a compatibility condition. The case of interest is $\K^1 = (K, \subseteq)$ and $\K^2 = (K, \le)$.

\begin{defin}\label{compatible-def}
  For $\ell = 1,2$, let $\K^\ell = (K^\ell, \leap{\K^\ell})$ be weak AECs. $\K^1$ and $\K^2$ are \emph{compatible} if:

  \begin{enumerate}
    \item $\tau(\K^1) = \tau(\K^2)$.
    \item\label{cond-2} For any $\lambda > \LS (\K^1) + \LS (\K^2)$, if either $\K^1$ or $\K^2$ is categorical in $\lambda$, then $K^1_\lambda = K^2_{\lambda}$.
  \end{enumerate}

  We write $\LS (\K^1, \K^2)$ instead of $\LS (\K^1) + \LS (\K^2)$.
\end{defin}
\begin{remark}
  This definition is really only useful when one of the classes is categorical. Note that in (\ref{cond-2}), we only ask for $K^1_\lambda = K^2_\lambda$, i.e.\ the isomorphism type of the model of size $\lambda$ must be the same in both classes, but the orderings need not agree.
\end{remark}

For the rest of this section, we assume (and will emphasize the compatibility hypothesis again):

\begin{hypothesis}
  $\K^1 = (K^1, \leap{\K^1})$ and $\K^2 = (K^2, \leap{\K^2})$ are \emph{compatible} weak AECs. We set $\tau := \tau (\K^1) = \tau (\K^2)$.
\end{hypothesis}

Assume that $\K^1$ is categorical in a $\lambda > \LS (\K^1, \K^2)$. What can we say about $\K^2$? If $\K^1$ is a universal class and $\K^2$ is as above, $\K^1$ is an AEC, and one of our ultimate goal is to show that $\K^2$ is also an AEC. The following result will turn out to be key. Under some assumptions, $\K^2$ is stable below the categoricity cardinal.

\begin{lem}\label{stable-prop}
  Assume:
  \begin{enumerate}
    \item $\K^1$ is an AEC with arbitrarily large models.
    \item $\K^2$ has amalgamation and joint embedding.
    \item $\K^1$ and $\K^2$ are compatible.
  \end{enumerate}

  Let $\lambda > \LS (\K^1, \K^2)$. If $\K^2$ (and so by compatibility also $\K^1$) is categorical in $\lambda$, then $\K^2$ is $(<\omega)$-stable in all $\mu \in [\LS (\K^1, \K^2), \lambda)$ such that $\mu^+ < \lambda$. That is, for any such $\mu$ and any $M \in \K_\mu^2$, $|\gS^{<\omega}_{\K^2} (M)| \le \mu$
\end{lem}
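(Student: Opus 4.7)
The plan is to run a Morley-style stability calculation via Ehrenfeucht–Mostowski models built inside $\K^1$ and transfer the conclusion to $\K^2$ through the compatibility condition at $\lambda$. Fix $\mu \in [\LS (\K^1, \K^2), \lambda)$ with $\mu^+ < \lambda$ and suppose for contradiction that some $M \in \K^2_\mu$ admits a family $\seq{p_i : i < \mu^+}$ of pairwise distinct types in $\gS^{<\omega}_{\K^2} (M)$. Note first that by compatibility of $\K^1$ and $\K^2$ at $\lambda$ and $\K^2$-categoricity in $\lambda$, the class $\K^1$ is also categorical in $\lambda$: $K^1_\lambda = K^2_\lambda$ has a single isomorphism type.

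Since $\K^1$ is an AEC with arbitrarily large models, Shelah's presentation theorem (Fact \ref{pres-thm}) supplies an EM blueprint $\Phi$ for $\K^1$. Using Shelah's construction of a well-behaved linear order in \cite[IV.5]{shelahaecbook} (which is the only substantive ingredient and is invoked here as a black box), choose a linear order $I$ of cardinality $\lambda$ so that $N := \EM_{\tau} (I, \Phi) \in K^1_\lambda$ enjoys the following \emph{low-orbit property}: for every $A \subseteq |N|$ with $|A| \le \mu$, the group $\Aut (N / A)$ of $\tau$-automorphisms of $N$ fixing $A$ pointwise acts on finite tuples with at most $\mu$ orbits. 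By the previous paragraph $N$ represents the unique isomorphism type at $\lambda$ in both classes, so $N \in K^2_\lambda$ as well.

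Now transport the instability into $N$. By amalgamation and joint embedding in $\K^2$, there is $N^* \geap{\K^2} M$ of size $\mu^+$ in which every $p_i$ is realized. Since $\mu^+ < \lambda$, a standard amalgamation–JEP argument combined with the L\"owenheim–Skolem–Tarski axiom and $\K^2$-categoricity in $\lambda$ produces a $\K^2$-embedding $N^* \hookrightarrow N$; identify $N^*$ with its image and let $\ba_i \in |N|^{<\omega}$ witness $p_i$. The crucial observation is that every $\tau$-automorphism $f$ of $N$ fixing $|M|$ pointwise is automatically a $\K^2$-isomorphism $N \to N$ (because $K^2$ is closed under $\tau$-isomorphism and $N \in K^2_\lambda$), so the amalgam $(N, \id, f^{-1})$ witnesses $\gtp_{\K^2} (f (\ba) / M; N) = \gtp_{\K^2} (\ba / M; N)$ for every $\ba$. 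Consequently the $\mu^+$ tuples $\ba_i$ lie in pairwise distinct $\Aut (N / |M|)$-orbits, contradicting the low-orbit property of $N$ over the $\mu$-sized set $|M|$.

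The bulk of this is bookkeeping tying together compatibility, categoricity at $\lambda$, and the L\"owenheim–Skolem–Tarski axiom in $\K^2$. The main subtle step is the transfer of the low-orbit property — formulated in terms of $\tau$-automorphisms of $N$, hence controllable via Shelah's construction inside $\K^1$ — to a statement about $\K^2$-Galois types: this is exactly what the observation that $\tau$-automorphisms of the common $\lambda$-sized model are $\K^2$-isomorphisms delivers, and it is the place where the compatibility hypothesis is put to nontrivial use.
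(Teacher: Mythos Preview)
Your proposal is correct and follows essentially the same route as the paper: build an EM model for $\K^1$ over Shelah's linear order from \cite[IV.5]{shelahaecbook}, use compatibility and categoricity to place the putative many $\K^2$-types inside it, and then exploit that $\tau$-automorphisms of the model of size $\lambda$ preserve $\K^2$-Galois types. The only cosmetic difference is that you package the key consequence of Fact~\ref{nice-order} as a ``low-orbit property'' of $N$ over arbitrary small sets, whereas the paper works explicitly with automorphisms of $I$ fixing a small $I_0$ and the induced automorphisms of $\EM_\tau(I,\Phi)$; your formulation is a correct corollary of that argument (any $A$ of size $\le\mu$ lies in $\EM_\tau(I_0,\Phi)$ for some $I_0$ of size $\mu$, and orbits under the induced subgroup already number at most $\mu$), so no substantive difference.
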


Before starting the proof, a few comments are in order. First note that the case $\K^1 = \K^2$ is a classical result that can be traced back to Morley \cite[Theorem 3.7]{morley-cip}. It appears explicitly as \cite[Claim I.1.7]{sh394}. The proof uses Ehrenfeucht-Mostowski (EM) models. Here, we have additional difficulties since the EM models are well-behaved really only for $\K^1$ and not for $\K^2$ (in fact, $\K^2$ may be only a weak AEC, so may not have any suitable EM blueprint). More precisely, if $\Phi$ is an EM blueprint for $\K^1$ and $I \subseteq J$ are linear orders, then $\EM_\tau (I, \Phi) \leap{\K^1} \EM_\tau (J, \Phi)$ but possibly $\EM_\tau (I, \Phi) \not \leap{\K^2} \EM_\tau (J, \Phi)$. Thus a Galois type of $\K^2$ computed inside $\EM_\tau (I, \Phi)$ may not be the same as one computed in $\EM_\tau (J, \Phi)$. For this reason, we want to use only that Galois types are invariant under isomorphisms in the proof, and hence want to use the existence of certain linear orderings with many automorphisms. 

Fortunately, Shelah gives a proof of the case $\K^1 = \K^2$ in \cite[Claim I.1.7]{sh394-updated} (the online version of \cite{sh394}) that we can imitate. It uses the following fact:

\begin{fact}[IV.5.1.(2) in \cite{shelahaecbook}]\label{nice-order}
  Let $\theta < \lambda$ be infinite cardinals with $\theta$ regular. There exists a linear order $I$ of size $\lambda$ such that for every $I_0 \subseteq I$ of size less than $\theta$, there is $J \subseteq I$ such that:

  \begin{enumerate}
    \item $I_0 \subseteq J$.
    \item $\|J\| = \|I_0\| + \aleph_0$.
    \item For any $\ba \in \fct{<\omega}{I}$, there is $f \in \Aut_{I_0} (I)$ such that $f (\ba) \in \fct{<\omega}{J}$.
  \end{enumerate}
\end{fact}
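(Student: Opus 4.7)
The plan is to construct $I$ as a sufficiently homogeneous dense linear order of size $\lambda$ and then verify condition (3) via a back-and-forth argument. Specifically, I aim for $I$ to satisfy: every order-preserving partial bijection $g: I \to I$ with $|\dom{g}| < \theta$ extends to a full automorphism of $I$. This degree of $\theta$-homogeneity, combined with a suitable choice of $J$, will yield (1)--(3) directly.

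First, I would construct $I$ by a transfinite recursion of length $\lambda$. Start with $I^{(0)} := \mathbb{Q}$ and at each stage add one element realizing a prescribed cut type over a set of size less than $\theta$, driven by a bookkeeping enumeration of all ``extension tasks'' (pairs consisting of a partial iso built so far and a point whose image must be specified). The regularity of $\theta$ together with the specific construction of \cite[IV.5]{shelahaecbook} is used to keep $|I| = \lambda$ without invoking cardinal-arithmetic assumptions such as $\lambda^{<\theta} = \lambda$. Given any $I_0 \subseteq I$ with $|I_0| < \theta$, the set $I_0$ partitions $I \setminus I_0$ into at most $|I_0| + 1$ convex cells. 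I would then define $J$ to consist of $I_0$ together with, in each cell, a countable order-dense subset of order-type $\eta$. Each cell is densely ordered as a byproduct of the homogeneity of $I$, so such subsets exist; and $|J| = |I_0| + \aleph_0$, giving conditions (1) and (2).

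To verify condition (3), let $\ba = (a_1, \ldots, a_n) \in \fct{<\omega}{I}$. For each $i$, if $a_i \in I_0$ set $b_i := a_i$; otherwise use the countable density of $J$ in the cell of $a_i$ to choose $b_i \in J$ in the same cell, with the additional requirement that $(b_1, \ldots, b_n)$ and $(a_1, \ldots, a_n)$ realize the same order-type over $I_0$. This is possible because $\ba$ is finite, so only finitely many positions need to be filled inside the dense sets. The map $h := \id_{I_0} \cup \{(a_i, b_i) : 1 \le i \le n\}$ is then an order-preserving partial bijection of $I$ with $|\dom{h}| \le |I_0| + n < \theta$ (using the regularity of $\theta$ to absorb the finite term). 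By the homogeneity property built into $I$, $h$ extends to some $f \in \Aut_{I_0}(I)$, and $f(\ba) = (b_1, \ldots, b_n) \in \fct{<\omega}{J}$.

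The hard part will be the cardinality control in the construction of $I$: a naive back-and-forth construction of a $\theta$-homogeneous dense linear order requires $\lambda^{<\theta} = \lambda$ or equivalent, and the essential content of Shelah's proof is to circumvent this constraint, using the regularity of $\theta$ and carefully choosing which cut types to realize at each stage, so that the extension property needed in (3) (namely, extending partial isos of size $< \theta$ with only a \emph{finite} added piece) is achieved while the total number of elements added stays at most $\lambda$. Everything else --- the definition of $J$, the order-type matching for $\bb$, and the back-and-forth extension --- is straightforward once the homogeneity of $I$ is in place.
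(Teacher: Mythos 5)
There is a genuine gap, and it sits exactly where you yourself locate ``the hard part.'' Note first that the paper offers no proof of this statement: it is quoted as a black box from \cite[IV.5.1]{shelahaecbook}, so your argument has to stand on its own. It does not, because the object carrying all the weight --- a linear order of size exactly $\lambda$ in which \emph{every} order-preserving partial map of size $<\theta$ extends to a full automorphism --- is never constructed: your sketch explicitly appeals to ``the specific construction of \cite[IV.5]{shelahaecbook}'' to get the cardinality control, which is circular for a blind proof, since that construction \emph{is} the content of the fact. The parts you do carry out (taking $J$ to be $I_0$ together with countably many points in each of the $\le |I_0|+1$ convex cells, matching the order type of a finite tuple over $I_0$ inside $J$, and extending the resulting partial map of size $|I_0|+n$) are routine once the homogeneity is granted; the statement would be easy if such an $I$ were available.

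Moreover, the intermediate target you set is substantially stronger than what the fact requires, and for uncountable $\theta$ (which is the case used in Lemma \ref{stable-prop}, where $\theta=\mu^+$) there is no reason to believe it is attainable at an arbitrary $\lambda$ without cardinal arithmetic. Extending all partial isomorphisms of size $<\theta$ forces strong uniformity of the cut structure: for instance, a countable increasing sequence whose cut is filled and one whose cut is unfilled are isomorphic as suborders, yet no automorphism can carry one partial map onto the other, so all such cuts must behave identically; and the standard way to obtain this kind of strong homogeneity --- bookkeeping the $\lambda^{<\theta}$-many partial maps and extending each by a back-and-forth of length up to $\lambda$ --- is precisely where $\lambda^{<\theta}=\lambda$ (or more) is normally needed. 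Saying that regularity of $\theta$ and a careful choice of cut types circumvents this is an assertion, not an argument. Shelah's route is different in kind: he builds a concrete order with \emph{enough} automorphisms so that, for each small $I_0$, a single small $J$ absorbs every finite tuple up to an automorphism over $I_0$; no extension property for arbitrary $<\theta$-sized partial isomorphisms is claimed or needed. (As a minor point, $|I_0|+n<\theta$ uses only that $\theta$ is infinite; the regularity of $\theta$ is needed in the construction of $I$ itself, i.e.\ again in the part your proposal leaves out.)
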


\begin{proof}[Proof of Lemma \ref{stable-prop}]
  Since $\K^1$ has arbitrarily large models and is an AEC, it has an Ehrenfeucht-Mostowski blueprint $\Phi$. Let $\mu \in [\LS (\K^1, \K^2), \lambda)$ and let $M \in K_{\mu}^2$. We want to see that $|\gS_{\K^2} (M)| \le \mu$. Let $I$ be as described by Fact \ref{nice-order} (where $\theta$ there stands for $\mu^+$ here, we are using that $\mu^+ < \lambda$). Suppose for a contradiction that $|\gS_{\K^2}^{<\omega} (M)| > \mu$. Then using amalgamation we can find $N \in K^2$ with $M \leap{\K^2} N$ and a sequence $\seq{\ba_i \in \fct{<\omega}{|N|} : i < \mu^+}$ such that for $i < j < \mu^+$, $\gtp_{\K^2} (\ba_i / M; N) \neq \gtp_{\K^2} (\ba_j / M; N)$.

By joint embedding and categoricity, without loss of generality $N = \EM_\tau (I, \Phi)$. Now let $I_0 \subseteq I$ be such that $|I_0| = \mu$ and $M \subseteq \EM_{\tau} (I_0, \Phi)$. Let $J$ be as given by the definition of $I$ and let $M_1 := \EM_{\tau} (J, \Phi)$. We have that for each $i < \mu^+$, there is a finite linear order $I_i \subseteq I$ generating $\ba_i$, so pick $f_i \in \Aut_{I_0} (I)$ such that $f_i [I_i] \subseteq J$. Let $\widehat{f_i} \in \Aut_{M_1} (N)$ be the automorphism of $N = \EM_\tau (I, \Phi)$ naturally induced by $f_i$. Then $\widehat{f_i} (\ba_i) \in |M_1|$. By the pigeonhole principle, without loss of generality there is $\bb \in |M_1|$ such that for all $i < \mu^+$, $\widehat{f_i} (\ba_i) = \bb$. But this means that for $i < \mu^+$:

$$\gtp_{\K^2} (\ba_i / M; N) = \gtp_{\K^2} (\widehat{f_i} (\ba_i) / M; N) = \gtp_{\K^2} (\bb / M; N)$$

So for $i < j < \mu^+$, $\gtp_{\K^2} (\ba_i / M; N) = \gtp_{\K^2} (\ba_j / M; N)$, a contradiction.
\end{proof}
\begin{remark}\label{omega-stability-rmk}
  We emphasize that Lemma \ref{stable-prop} establishes stability for all finite types and not just stability for types of length one (in the framework of weak AECs we do not know if the two notions are the same). This slightly stronger statement will be used in the proof of Theorem \ref{universal-class-structure}. There we want to derive a contradiction with Theorem \ref{smoothness-unstable}, which only concludes unstability for finite types, not unstability for types of length one.
\end{remark}

For the rest of this section, we assume that $\K^1$ and $\K^2$ are both AECs and discuss categoricity transfers (generalizing Fact \ref{categ-facts}) to this setup. First, we show that categoricity in a suitable cardinal implies that the two classes (and their ordering) are equal on a tail.

\begin{lem}\label{equal-aecs-prop}
  Assume $\K^1$ and $\K^2$ are compatible AECs. Let $\lambda$ be an infinite cardinal such that:
  \begin{enumerate}
\item  $\K^1$ is categorical in $\lambda$.
\item $\lambda = \lambda^{\LS (\K^1, \K^2)}$.
  \end{enumerate}

  Then $\K_{\ge \lambda}^1 = \K_{\ge \lambda}^2$ (so also the orderings are equal).
\end{lem}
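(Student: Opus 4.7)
My plan is to reduce the statement to showing that the orderings $\leap{\K^1}$ and $\leap{\K^2}$ agree on $K^1_\lambda = K^2_\lambda$, after which Fact \ref{aec-bottom-det} finishes the job. Set $\chi := \LS (\K^1, \K^2)$, so $\chi \ge \LS (\K^\ell)$ for $\ell = 1, 2$ and hence $\lambda = \lambda^\chi \ge \lambda^{\LS (\K^\ell)}$. By compatibility and the categoricity of $\K^1$ in $\lambda$, we immediately have $K^1_\lambda = K^2_\lambda$, and in particular $\K^2$ is also categorical in $\lambda$. Thus the hypotheses of Fact \ref{elem-facts}.(\ref{elem-facts-2}) hold for both $\K^1$ and $\K^2$ at $\lambda$.

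The heart of the argument is a common infinitary characterization of the two orderings on models of size $\lambda$, namely
\[
  M \leap{\K^\ell} N \quad \Longleftrightarrow \quad M \lee_{\Ll_{\infty, \chi^+}} N,
\]
for $\ell \in \{1,2\}$ and $M, N \in K^\ell_\lambda$. The direction ($\Leftarrow$) is immediate from Fact \ref{elem-facts}.(\ref{elem-facts-1}): since $\LS (\K^\ell)^+ \le \chi^+$, we have $\Ll_{\infty, \LS (\K^\ell)^+} \subseteq \Ll_{\infty, \chi^+}$, so $\lee_{\Ll_{\infty, \chi^+}}$ is at least as strong as $\lee_{\Ll_{\infty, \LS (\K^\ell)^+}}$, and the latter already implies $\leap{\K^\ell}$.

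The direction ($\Rightarrow$) is the crux of the argument. Fact \ref{elem-facts}.(\ref{elem-facts-2}) only delivers $\Ll_{\infty, \LS (\K^\ell)^+}$-elementarity, which is strictly weaker than what is needed when $\LS (\K^\ell) < \chi$. My plan to close the gap is to strengthen Fact \ref{elem-facts}.(\ref{elem-facts-2}) by rerunning Shelah's short back-and-forth argument with $\chi$-sized submodels in place of $\LS (\K^\ell)$-sized ones. The hypothesis $\lambda = \lambda^\chi$ is exactly what is required to make this back-and-forth go through: it controls the number of $\chi$-sized substructures of any model of size $\lambda$, while categoricity in $\lambda$ together with Löwenheim-Skolem-Tarski provides the amalgamation needed at successor stages. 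The resulting strengthening reads: if $\K$ is an AEC categorical in $\lambda$ with $\chi \ge \LS (\K)$ and $\lambda = \lambda^\chi$, then for any $M \lea N$ in $\K_\lambda$ one has $M \lee_{\Ll_{\infty, \chi^+}} N$. Establishing this strengthening is the main (and essentially the only nontrivial) obstacle; I expect it to be a routine adaptation of Shelah's original argument, but one must check carefully that no use was made of the specific cardinal $\LS (\K)^+$ beyond the counting estimate that $\chi$ now supplies.

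Granted the characterization, the orderings $\leap{\K^1}$ and $\leap{\K^2}$ coincide on $K^1_\lambda = K^2_\lambda$, so $\K^1_\lambda = \K^2_\lambda$. Since $\LS (\K^\ell) \le \chi \le \lambda$, each truncation $\K^\ell_{\ge \lambda}$ is an AEC with Löwenheim-Skolem-Tarski number $\lambda$. Applying Fact \ref{aec-bottom-det} to these two truncations then yields $\K^1_{\ge \lambda} = \K^2_{\ge \lambda}$, as desired.
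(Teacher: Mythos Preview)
Your proposal is correct and follows the same overall strategy as the paper: reduce to showing the orderings agree on $K^1_\lambda = K^2_\lambda$, bridge the two orderings through $\Ll_{\infty,\chi^+}$-elementarity, and conclude with Fact~\ref{aec-bottom-det}.

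The one place where you work harder than necessary is what you flag as the ``main obstacle'': upgrading Fact~\ref{elem-facts}.(\ref{elem-facts-2}) from $\Ll_{\infty,\LS(\K^\ell)^+}$ to $\Ll_{\infty,\chi^+}$. You propose rerunning Shelah's back-and-forth with $\chi$-sized pieces, but the paper sidesteps this entirely by applying Fact~\ref{elem-facts}.(\ref{elem-facts-2}) not to $\K^\ell$ itself but to the truncation $\K^\ell_{\ge \chi}$, which is an AEC with $\LS(\K^\ell_{\ge \chi}) = \chi$. The hypothesis $\lambda = \lambda^{\chi}$ is then exactly the cardinal arithmetic the fact needs, and one obtains $M \lee_{\Ll_{\infty,\chi^+}} N$ in one line. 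So your strengthened statement is true, but it is a free consequence of truncation rather than something that requires re-examining the proof.
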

\begin{proof}
  By compatibility, $K_\lambda^1 = K_\lambda^2$. By Fact \ref{aec-bottom-det} (where $\K^1, \K^2$ there stand for $\K_{\ge \lambda}^1$, $\K_{\ge \lambda}^2$ here), it is enough to show that the orderings of $\K^1$ and $\K^2$ coincide on $K_\lambda^1$. So let $M, N \in K_{\lambda}^1$. We show that $M \leap{\K^1} N$ implies $M \leap{\K^2} N$ (the converse is symmetric).

  So assume that $M \leap{\K^1} N$. By Fact \ref{elem-facts}.(\ref{elem-facts-2}) (where $\K$, $\lambda$ there stand for $\K_{\ge \LS (\K^1, \K^2)}^1$, $\lambda$ here), $M \lee_{\Ll_{\infty, \LS (\K^1, \K^2)^+}} N$. By Fact \ref{elem-facts}.(\ref{elem-facts-1}) (where $\K$ there stands for $\K_{\ge \LS (\K^1, \K^2)}^2$ here), $M \leap{\K^2} N$, as desired.
\end{proof}

The next result shows that if one of the classes has amalgamation, we can find a categoricity cardinal satisfying the condition of the previous lemma.

\begin{thm}\label{ap-categ-equal}
  Assume $\K^1$ and $\K^2$ are compatible AECs categorical in a proper class of cardinals. If $\K^1$ has amalgamation, then there exists $\lambda$ such that $\K_{\ge \lambda}^1 = \K_{\ge \lambda}^2$ (so also the orderings are equal).
\end{thm}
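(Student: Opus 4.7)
My plan is to reduce the statement to Lemma \ref{equal-aecs-prop}: once I locate a categoricity cardinal $\lambda$ of $\K^1$ satisfying $\lambda = \lambda^{\LS(\K^1,\K^2)}$, that lemma immediately delivers $\K_{\ge \lambda}^1 = \K_{\ge \lambda}^2$. Writing $\chi := \LS(\K^1, \K^2)$, the task is therefore to find a categoricity cardinal for $\K^1$ which is ``closed under $\chi$-sequences.''

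First I would observe that $\K^1$ has arbitrarily large models (its categoricity spectrum is assumed to be a proper class) and has amalgamation by hypothesis, so Fact \ref{categ-facts-ap} applies: the categoricity spectrum $\Sigma$ of $\K^1$ is closed in the class of cardinals, and by assumption it is also unbounded. I then construct a strictly increasing sequence $\seq{\lambda_\alpha : \alpha < \chi^+}$ inside $\Sigma$ as follows: pick $\lambda_0 \in \Sigma$ with $\lambda_0 > \chi$; at successors, use unboundedness of $\Sigma$ to pick $\lambda_{\alpha+1} \in \Sigma$ with $\lambda_{\alpha+1} > \lambda_\alpha^\chi$; at limits, set $\lambda_\alpha := \sup_{\beta < \alpha} \lambda_\beta$, which lies in $\Sigma$ by closedness. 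Finally let $\lambda := \sup_{\alpha < \chi^+} \lambda_\alpha$, again in $\Sigma$ by closedness.

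To check that $\lambda = \lambda^\chi$, note $\cf{\lambda} = \chi^+ > \chi$, so every function from $\chi$ to $\lambda$ is bounded in $\lambda$; and for any $\mu < \lambda$, pick $\alpha < \chi^+$ with $\mu < \lambda_\alpha$, whence $\mu^\chi \le \lambda_\alpha^\chi < \lambda_{\alpha+1} < \lambda$. Standard cardinal arithmetic then gives $\lambda^\chi = \sup_{\mu < \lambda} \mu^\chi \le \lambda$. Applying Lemma \ref{equal-aecs-prop} finishes the argument. I do not expect any real obstacle: the proof only combines closure of the categoricity spectrum of $\K^1$ (Fact \ref{categ-facts-ap}) with the transfer lemma just proved, both of which are already in place; the mild subtlety is simply arranging the cofinality condition so that Lemma \ref{equal-aecs-prop} becomes applicable.
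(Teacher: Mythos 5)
Your proof is correct and follows essentially the same route as the paper: invoke Fact \ref{categ-facts-ap} to get a closed unbounded categoricity spectrum for $\K^1$, extract a categoricity cardinal $\lambda$ with $\lambda = \lambda^{\LS(\K^1,\K^2)}$, and finish with Lemma \ref{equal-aecs-prop}. The only difference is that you spell out the cardinal-arithmetic construction (a continuous increasing $\chi^+$-sequence in the spectrum with successor jumps above $\lambda_\alpha^{\chi}$) that the paper leaves implicit in the phrase ``one can find an infinite cardinal $\lambda$ such that $\K^1$ is categorical in $\lambda$ and $\lambda = \lambda^{\LS(\K^1,\K^2)}$.''
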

\begin{proof}
  Because $\K^1$ is categorical in a proper class of cardinals, it has arbitrarily large models, so by Fact \ref{categ-facts-ap}, $\K^1$ is categorical on a closed unbounded class of cardinals. In particular, one can find an infinite cardinal $\lambda$ such that $\K^1$ is categorical in $\lambda$ and $\lambda = \lambda^{\LS (\K^1, \K^2)}$. By Lemma \ref{equal-aecs-prop}, $\K_{\ge \lambda}^1 = \K_{\ge \lambda}^2$.
\end{proof}

We end this section with a categoricity transfer. Intuitively, this shows that if we start with an AEC $\K^1$ with primes, it is enough to change its ordering (getting an AEC $\K^2$) so that $\K^2$ has amalgamation and is tame (it may lose existence of primes, see Remark \ref{prime-loss-rmk}). This is especially relevant to universal classes, since they always have primes (Fact \ref{univ-prime}). Note that Fact \ref{categ-facts}.(\ref{categ-facts-2b}) is the case $\K^1 = \K^2$.

\begin{thm}\label{abstract-compatible-categ}
  Assume $\K^1$ and $\K^2$ are compatible AECs such that:

  \begin{enumerate}
    \item $\K^1$ has primes.
    \item $\K^2$ has amalgamation, arbitrarily large models, and is $\LS (\K^2)$-tame.
  \end{enumerate}

  If $\K^2$ is categorical in a $\lambda > \LS (\K^2)$, then $\K^2$ is categorical in all $\lambda' \ge \min (\lambda, \hanf{\LS (\K^2)})$.
\end{thm}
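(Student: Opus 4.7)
The plan is to apply Fact \ref{categ-facts}.(\ref{categ-facts-2a}) to $\K^2$ itself, once we have produced an auxiliary successor categoricity cardinal for $\K^2$ above $\LS(\K^2)^+$. The crucial observation is that the conclusion of Fact \ref{categ-facts}.(\ref{categ-facts-2a}) is phrased in terms of the originally given categoricity cardinal $\lambda$, so any additional successor categoricity cardinal (not necessarily close to $\lambda$) will automatically deliver the sharp bound $\min(\lambda, \hanf{\LS(\K^2)})$.

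First I would apply Fact \ref{categ-facts}.(\ref{categ-facts-1}) to $\K^2$ to obtain categoricity in $\beth_\delta$ for every limit ordinal $\delta$ divisible by $(2^{\LS(\K^2)})^+$. Setting $\delta_0 := (2^{\LS(\K^2)})^+ \cdot (\LS(\K^1,\K^2))^+$ and $\mu := \beth_{\delta_0}$, the ordinal $\delta_0$ is a limit ordinal divisible by $(2^{\LS(\K^2)})^+$ with $\cf{\delta_0} = (\LS(\K^1,\K^2))^+$, so $\K^2$ is categorical in $\mu$ and $\mu^{\LS(\K^1,\K^2)} = \mu$ (since $\mu$ is a strong limit of cofinality $> \LS(\K^1,\K^2)$). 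By compatibility $\K^1$ is also categorical in $\mu$, so Lemma \ref{equal-aecs-prop} gives $\K^1_{\ge\mu} = \K^2_{\ge\mu}$. Since $\K^1$ has primes and any prime triple $(a,M,N)$ with $\|M\| \ge \mu$ satisfies $\|N\| \ge \mu$, primes passes to $\K^1_{\ge\mu}$, and thus to $\K^2_{\ge\mu}$. The class $\K^2_{\ge\mu}$ is an AEC with $\LS$-number $\mu$ that inherits amalgamation, arbitrarily large models, and $\mu$-tameness from $\K^2$. Applying Fact \ref{categ-facts}.(\ref{categ-facts-2b}) to $\K^2_{\ge\mu}$ with the categoricity cardinal $\lambda_0 := \beth_{\delta_0 + (2^{\LS(\K^2)})^+} > \mu$ (again a categoricity cardinal of $\K^2$ by Fact \ref{categ-facts}.(\ref{categ-facts-1}), and strictly below $\hanf{\mu}$), I obtain that $\K^2_{\ge\mu}$, and hence $\K^2$, is categorical in every $\lambda' \ge \lambda_0$. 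In particular, $\K^2$ is categorical in $\lambda_0^+$, which is a successor cardinal $> \LS(\K^2)^+$.

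Finally, Fact \ref{categ-facts}.(\ref{categ-facts-2a}) applied to $\K^2$ with $\lambda$ as the principal categoricity cardinal and $\lambda_0^+$ as the auxiliary successor one yields categoricity of $\K^2$ in every $\lambda' \ge \min(\lambda, \hanf{\LS(\K^2)})$, as wanted. The main conceptual hurdle I expect is recognising the need for the bootstrap through Fact \ref{categ-facts}.(\ref{categ-facts-2a}): a direct use of Fact \ref{categ-facts}.(\ref{categ-facts-2b}) on $\K^2_{\ge\mu}$ only establishes categoricity for $\lambda' \ge \lambda_0$, which lies far above $\hanf{\LS(\K^2)}$ and so loses the sharp threshold whenever $\lambda < \hanf{\LS(\K^2)}$. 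Feeding the resulting successor categoricity back into Fact \ref{categ-facts}.(\ref{categ-facts-2a}) applied to the full $\K^2$ (whose $\LS$-number is $\LS(\K^2)$, not $\mu$) recovers both the bound $\hanf{\LS(\K^2)}$ and the role of the original $\lambda$.
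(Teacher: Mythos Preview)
Your proof is correct and follows essentially the same strategy as the paper: use Fact \ref{categ-facts}.(\ref{categ-facts-1}) to obtain many categoricity cardinals for $\K^2$, find one $\mu$ where $\K^1_{\ge\mu}=\K^2_{\ge\mu}$ (so primes transfers), apply Fact \ref{categ-facts}.(\ref{categ-facts-2b}) to $\K^2_{\ge\mu}$ to obtain categoricity on a tail and hence in a successor, and finally feed that successor back into Fact \ref{categ-facts}.(\ref{categ-facts-2a}) applied to the full $\K^2$ to recover the sharp threshold. The only difference is cosmetic: the paper packages the choice of $\mu$ into Theorem \ref{ap-categ-equal} (using Fact \ref{categ-facts-ap} to find a categoricity cardinal closed under $\LS(\K^1,\K^2)$-powers), whereas you construct $\mu=\beth_{\delta_0}$ explicitly so that $\cf{\mu}>\LS(\K^1,\K^2)$; both routes land on Lemma \ref{equal-aecs-prop}.
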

\begin{proof}
  By Fact \ref{categ-facts}.(\ref{categ-facts-1}), $\K^2$ is categorical in a proper class of cardinals. By Theorem \ref{ap-categ-equal} (where the role of $\K^1$ and $\K^2$ is switched), we can fix a cardinal $\lambda_0$ such that $\K^1_{\ge \lambda_0} = \K^2_{\ge \lambda_0}$. In particular, their orderings also coincide and so $\K^2_{\ge \lambda_0}$ has primes. By Fact \ref{categ-facts}.(\ref{categ-facts-2b}), $\K^2_{\ge \lambda_0}$ is categorical on a tail, and in particular in a successor cardinal. Applying Fact \ref{categ-facts}.(\ref{categ-facts-2a}) to $\K^2$, this implies that $\K^2$ is categorical in all $\lambda' \ge \min (\lambda, \hanf{\LS (\K^2)}$, as desired.
\end{proof}

\section{Independence in weak AECs}\label{axfr-sec}

$\Axfr$ is an axiomatic framework for independence in weak AECs that Shelah introduces in \cite{sh300-orig}. The main motivation for the axioms is that if $K$ is a universal class that does not have the order property, then there is an ordering $\le$ such that $(K, \le)$ satisfies $\Axfr$ (see Section \ref{uc-structure-sec}). Here, we repeat the definition and state some facts that we will use. We quote from Chapter V of \cite{shelahaecbook2}, an updated version of \cite{sh300-orig}.

\begin{defin}[$\Axfr$, V.B in \cite{shelahaecbook2}]\label{axfr-def}
  $(\K, \nf, \cl)$ satisfies\footnote{In order to be consistent with \cite{ap-universal-v9}, we write $\cl$ rather than Shelah's $\langle \rangle_{\text{gn}}$.} $\Axfr$ if:

  \begin{enumerate}
    \item $\K$ is a weak AEC.
    \item For each $N \in \K$, $\cl^N$ is a function from $\mathcal{P}(|N|)$ to $\mathcal{P}(|N|)$. Often, $\cl^N (A)$ induces a $\tau (\K)$-substructure $M$ of $N$. In this case, we identify $\cl^N (A)$ with $M$. We require $\cl$ to satisfy the following axioms: For $N, N' \in \K$, $A, B \subseteq |N|$:
      \begin{enumerate}
        \item Invariance: If $f: N \cong N'$, then $\cl^{N'} (f[A]) = f[\cl^N (A)]$.
        \item Monotonicity 1: If $A \subseteq B$, then $\cl^N (A) \subseteq \cl^N (B)$.
        \item\label{axfr-def-mon2} Monotonicity 2: If $N \lea N'$, then $\cl^N (A) = \cl^{N'} (A)$.
        \item Idempotence: $\cl^N (\cl^N (A)) = \cl^N (A)$.
      \end{enumerate}
    \item $\nf$ is a 4-ary relation on $\K$. We write $\nfs{M_0}{M_1}{M_2}{M_3}$ instead of $\nf (M_0, M_1, M_2, M_3)$. We require that $\nf$ satisfies the following axioms:

      \begin{enumerate}
        \item $\nfs{M_0}{M_1}{M_2}{M_3}$ implies that for $\ell = 1,2$, $M_0 \lea M_\ell \lea M_3$.
        \item Invariance: If $f: M_3 \cong M_3'$ and $\nfs{M_0}{M_1}{M_2}{M_3}$, then $\nfs{f[M_0]}{f[M_1]}{f[M_2]}{M_3'}$.
        \item Monotonicity 1: If $\nfs{M_0}{M_1}{M_2}{M_3}$ and $M_3 \lea M_3'$, then $\nfs{M_0}{M_1}{M_2}{M_3'}$.
        \item Monotonicity 2: If $\nfs{M_0}{M_1}{M_2}{M_3}$ and $M_0 \lea M_2' \lea M_2$, then $\nfs{M_0}{M_1}{M_2'}{M_3}$.
        \item Base enlargement: If $\nfs{M_0}{M_1}{M_2}{M_3}$ and $M_0 \lea M_2' \lea M_2$, then $\nfs{M_2'}{\cl^{M_3} (M_2' \cup M_1)}{M_2}{M_3}$.
        \item Symmetry: If $\nfs{M_0}{M_1}{M_2}{M_3}$, then $\nfs{M_0}{M_2}{M_1}{M_3}$.
        \item\label{axfr-def-existence} Existence: If $M_0 \lea M_\ell$, $\ell = 1,2$, then there exists $N \in \K$ and $f_\ell : M_\ell \xrightarrow[M_0]{} N$, $\ell = 1,2$, such that $\nfs{M_0}{f[M_1]}{f[M_2]}{N}$.
        \item Uniqueness: If for $\ell = 1,2$, $\nfs{M_0^\ell}{M_1^\ell}{M_2^\ell}{M_3^\ell}$ and $for i < 3$, $f_i: M_i^1 \cong M_i^2$ are such that $f_0 \subseteq f_1$, $f_0 \subseteq f_2$, then there exists $N \in \K$ with $M_3^2 \lea N$ and $h: M_3^1 \rightarrow N$ such that $f_1 \cup f_2 \subseteq h$.
        \item Finite character: If $\delta$ is a limit ordinal, $\seq{M_{2, i} : i \le \delta}$ is increasing and continuous, $M_0 \lea M_{1, 0}$, and $\nfs{M_0}{M_1}{M_{2, \delta}}{M_3}$, then $\cl^{M_3} (M_1 \cup M_{2, \delta}) = \bigcup_{i < \delta} \cl^{M_3} (M_1 \cup M_{2, i})$.
      \end{enumerate}
  \end{enumerate}

  We say that a weak AEC $\K$ \emph{satisfies $\Axfr$} if there exists $\nf$ and $\cl$ such that $(\K, \nf, \cl)$ satisfies $\Axfr$.
\end{defin}
\begin{remark}
  The definition we give is slightly different from Shelah's: Shelah does not assume that $\K$ has a Löwenheim-Skolem-Tarski number. We do not need the extra generality, although there are places (e.g.\ Section \ref{enum-trees-sec}) where the existence of a Löwenheim-Skolem-Tarski number is not used.
\end{remark}
\begin{remark}
  There is an example (derived from the class of metric graphs, see \cite[V.B.1.22]{shelahaecbook2}) of a triple ($\K, \nf, \cl)$ that satisfies $\Axfr$ but where $\K$ is not an AEC. 
\end{remark}
\begin{remark}\label{ap-rmk}
  If a weak AEC $\K$ satisfies $\Axfr$, then by the existence property for $\nf$, $\K$ has amalgamation.
\end{remark}

In the rest of this section, we assume:

\begin{hypothesis}
  $(\K, \nf, \cl)$ satisfies $\Axfr$.
\end{hypothesis}

The following is easy to see from the definition of the closure operator. %For completeness, we give a proof.

\begin{fact}\label{cont-of-cl}
  Let $N \in \K$ and let $\seq{A_i : i \in I}$ be a sequence of subsets of $|N|$, $I \neq \emptyset$. Then:

  \begin{enumerate}
    \item $\bigcup_{i \in I} \cl^N (A_i) \subseteq \cl^N (\bigcup_{i \in I} A_i)$.
    \item $\cl^N (\bigcup_{i \in I} A_i) = \cl^N (\bigcup_{i \in I} \cl^N (A_i))$.
  \end{enumerate}
\end{fact}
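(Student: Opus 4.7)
The plan is to derive both parts directly from the axioms of the closure operator $\cl$: Monotonicity~1 and Idempotence (together with the implicit extensivity $A \subseteq \cl^N(A)$, which is built into the intended interpretation of $\cl^N(A)$ as the substructure generated by $A$). No use of $\nf$ or of the weak AEC axioms beyond those for $\cl$ is needed, so this is really a statement about closure operators.

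For part (1), I would observe that for each fixed $i \in I$ we have $A_i \subseteq \bigcup_{j \in I} A_j$, so Monotonicity~1 gives $\cl^N(A_i) \subseteq \cl^N\!\left(\bigcup_{j \in I} A_j\right)$. Taking the union over $i \in I$ yields $\bigcup_{i \in I} \cl^N(A_i) \subseteq \cl^N\!\left(\bigcup_{i \in I} A_i\right)$, which is exactly (1).

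For part (2), I would prove the two inclusions separately. The inclusion $\cl^N\!\left(\bigcup_i A_i\right) \subseteq \cl^N\!\left(\bigcup_i \cl^N(A_i)\right)$ follows from extensivity $A_i \subseteq \cl^N(A_i)$, which gives $\bigcup_i A_i \subseteq \bigcup_i \cl^N(A_i)$; then Monotonicity~1 applied to this inclusion finishes it. For the reverse inclusion, apply $\cl^N$ to the inclusion proved in (1). By Monotonicity~1 we obtain
\[
\cl^N\!\left(\bigcup_{i \in I} \cl^N(A_i)\right) \subseteq \cl^N\!\left(\cl^N\!\left(\bigcup_{i \in I} A_i\right)\right),
\]
and Idempotence collapses the right-hand side to $\cl^N\!\left(\bigcup_{i \in I} A_i\right)$, completing (2).

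There is no substantive obstacle here; the only subtlety worth flagging is the use of extensivity, which is not listed as an explicit axiom in Definition~\ref{axfr-def} but is part of the intended reading of $\cl^N(A)$ (the closure of $A$ in $N$). Everything else is a one-line application of Monotonicity~1 or Idempotence.
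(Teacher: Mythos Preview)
Your argument is correct and is exactly the elaboration the paper has in mind; the paper itself gives no proof beyond the remark that the fact ``is easy to see from the definition of the closure operator.'' Your observation about extensivity is apt: $A \subseteq \cl^N(A)$ is not listed among the axioms in Definition~\ref{axfr-def}, yet one direction of (2) genuinely needs it (indeed, without extensivity one can cook up a monotone idempotent operator for which (2) fails), so it is being tacitly assumed---as is clear from the intended example where $\cl^N(A)$ is closure under the functions of $N$.
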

%% \begin{proof} \
%%   \begin{enumerate}
%%     \item For each $j \in I$, $\cl^N (A_j) \subseteq \cl^N (\bigcup_{i \in I} A_i)$ so the result follows.
%%     \item Clearly, the left hand side is contained in the right hand side. Conversely, $\cl^N (\bigcup_{i \in I} \cl^N (A_i)) \subseteq \cl^N (\cl^N (\bigcup_{i \in I} A_i))$ by the previous part. But by idempotence, the latter is just $\cl^N (\bigcup_{i \in I} A_i)$, as needed.
%%   \end{enumerate}
%% \end{proof}

The following are consequences of the axioms and will all be used in the rest of this paper (as forking calculus tools for Sections \ref{enum-trees-sec} and \ref{uc-structure-sec}).

\begin{fact}\label{basic-axfr-props} \
  \begin{enumerate}
    \item\label{basic-props-1} \cite[V.B.1.21.(1)]{shelahaecbook2} If $\nfs{M_0}{M_1}{M_2}{M_3}$, then $\cl^{M_3} (M_1 \cup M_2) \lea M_3$ and $\nfs{M_0}{M_1}{M_2}{\cl^{M_3} (M_1 \cup M_2)}$.
    \item\label{basic-props-2} \cite[V.C.1.3]{shelahaecbook2} Transitivity: If $\nfs{M_0}{M_1}{M_2}{M_3}$ and $\nfs{M_2}{M_3}{M_4}{M_5}$, then $\nfs{M_0}{M_1}{M_4}{M_5}$.
    \item\label{basic-props-3} \cite[V.C.1.6]{shelahaecbook2} Let $\delta$ be a limit ordinal. Let $\seq{M_i : i \le \delta}$, $\seq{N_i : i \le \delta}$ be $\subseteq$-increasing continuous chains such that for all $i < j < \delta$, $\nfs{M_i}{M_j}{N_i}{N_j}$. Then for all $i \le \delta$, $\nfs{M_i}{M_\delta}{N_i}{N_\delta}$.
    \item\label{basic-props-4} \cite[V.C.1.10.(1)]{shelahaecbook2} Let $\delta$ be a limit ordinal. Let $\seq{M_i : i \le \delta + 1}$, $\seq{N_i^a : i \le \delta}$, $\seq{N_i^b : i \le \delta}$ be increasing continuous chains such that for all $i < \delta$, $\nfs{M_i}{N_i^a}{M_{\delta + 1}}{N_i^b}$ and $N_i^b = \cl^{N_i^b} (M_{\delta + 1} \cup N_i^a)$. Then $\nfs{M_\delta}{N_\delta^a}{M_{\delta + 1}}{N_\delta^b}$.
    \item\label{basic-props-5} Let $\delta$ be a limit ordinal. Let $\seq{M_i : i \le \delta}$, $\seq{N_i : i \le \delta}$ be increasing continuous so that for $i, j < \delta$, $\nfs{M_i}{M_j}{N_i}{N_j}$. Let $M \in \K$ be such that $M_i \lea M$ for all $i < \delta$ (but possibly $M_\delta \not \lea M$). Then there exists $N \in \K$ and an embedding $f: M \xrightarrow[M_\delta]{} N$ such that for all $i < \delta$:
      \begin{enumerate}
        \item $N_i \lea N$.
        \item $\nfs{M_i}{f[M]}{N_i}{N}$.
        \item $N = \cl^N (f[M] \cup N_\delta)$.
      \end{enumerate}
  \end{enumerate}
\end{fact}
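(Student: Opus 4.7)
Items (1)--(4) are quoted directly from Chapter V of \cite{shelahaecbook2}, so only item (5) requires a fresh argument. My plan is to build $(N,f)$ as the direct limit of a coherent system constructed by transfinite induction on $i < \delta$. Concretely, I aim to produce a $\lea$-increasing continuous chain $\seq{N^*_i : i < \delta}$ in $\K$ together with embeddings $f_i : M \to N^*_i$ fixing $M_i$ pointwise, satisfying $N_i \lea N^*_i$, $\nfs{M_i}{f_i[M]}{N_i}{N^*_i}$, and $N^*_i = \cl^{N^*_i}(f_i[M] \cup N_i)$, plus a compatible system of identifications letting me set $N := \bigcup_{i<\delta} N^*_i$ (a member of $\K$ by the chain axiom for weak AECs) and $f := \bigcup_{i<\delta} f_i : M \to N$. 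Then $f$ fixes $M_\delta = \bigcup_{i<\delta} M_i$ pointwise; monotonicity~1 of $\nf$ upgrades each $\nfs{M_i}{f_i[M]}{N_i}{N^*_i}$ to $\nfs{M_i}{f[M]}{N_i}{N}$; and the closure condition $N = \cl^N(f[M] \cup N_\delta)$ reduces, via axiom (\ref{axfr-def-mon2}) for $\cl$ and Fact \ref{cont-of-cl}, to a continuity statement that I would derive from the finite character axiom of $\Axfr$ after first pulling $\nfs{M_0}{f[M]}{N_\delta}{N}$ out of the construction.

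The base case applies the existence axiom for $\nf$ to $M_0 \lea M$, $M_0 \lea N_0$, after which Fact \ref{basic-axfr-props}.(\ref{basic-props-1}) lets me shrink to $N^*_0 = \cl^{N^*_0}(f_0[M] \cup N_0)$. At the successor step $i \to i+1$, I first invoke existence at the new base $M_{i+1}$ to produce a fresh amalgam $(f', N')$ with $\nfs{M_{i+1}}{f'[M]}{N_{i+1}}{N'}$. Combining the hypothesis $\nfs{M_i}{M_{i+1}}{N_i}{N_{i+1}}$ with transitivity (Fact \ref{basic-axfr-props}.(\ref{basic-props-2})) and the monotonicity axioms, both $(f_i[M], N_i, N^*_i)$ and the appropriate submodel of $(f'[M], N_i, N')$ become nonforking amalgams of $f_i[M]$ with $N_i$ over $M_i$. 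The uniqueness axiom then provides $N^*_{i+1} \gea N^*_i$ with $N_{i+1} \lea N^*_{i+1}$ and an extension $f_{i+1}$ of $f_i$ that fixes $M_{i+1}$ and has the required independence; I shrink once more using Fact \ref{basic-axfr-props}.(\ref{basic-props-1}). Limit stages $i < \delta$ reduce to taking unions, with Fact \ref{basic-axfr-props}.(\ref{basic-props-3}) providing the independence at the limit.

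The main obstacle is precisely the successor step: the uniqueness axiom gives only an isomorphic identification of the two amalgams, so I will have to record the $f_i$'s modulo a coherent direct system of isomorphisms of the $N^*_i$'s and extract the final $f$ as a colimit rather than as a literal union of functions. This bookkeeping, standard in Shelah-style independence arguments, crucially uses that $f_i$ fixes $M_i$ (so $M_i$ lies inside both $f_i[M]$ and $N_i$, giving a genuinely common base for the two amalgams) and that $\nfs{M_i}{M_{i+1}}{N_i}{N_{i+1}}$ holds, which together with transitivity is what lets the restricted configurations match before uniqueness is invoked. Verifying the final closure condition is a secondary but nontrivial task: once the construction is complete I would apply Fact \ref{basic-axfr-props}.(\ref{basic-props-3}) to obtain $\nfs{M_0}{f[M]}{N_\delta}{N}$, and then use finite character to conclude $\cl^N(f[M] \cup N_\delta) = \bigcup_{i<\delta} \cl^N(f[M] \cup N_i) = \bigcup_{i<\delta} N^*_i = N$.
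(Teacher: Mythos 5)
Your construction skeleton (successor step via existence at base $M_{i+1}$, transitivity plus symmetry to realign both amalgams over $M_i$, then uniqueness, with a coherent system of renamings and a colimit) can be made to work, but your final verification of clause (c) is genuinely broken. You propose to first extract $\nfs{M_0}{f[M]}{N_\delta}{N}$ from item (\ref{basic-props-3}) and then apply the finite character axiom. Item (\ref{basic-props-3}) has the wrong shape for this: its hypotheses and conclusion only concern configurations in which the left-hand model is a \emph{later member of the same chain that supplies the bases}, so it cannot output a statement whose left side is the fixed model $f[M]$ and whose right side is the union $N_\delta$. More fundamentally, no statement of the form $\nfs{M'}{f[M]}{N_\delta}{N}$ is derivable in this setting: by the first clause in the definition of $\Axfr$ (Definition \ref{axfr-def}), it would force $N_\delta \lea N$, a smoothness-type conclusion about the chain $\seq{N_i : i < \delta}$ that may fail in a weak AEC (and taking the base to be $M_\delta$ is just as bad, since it would force $M_\delta \lea f[M]$, i.e.\ $M_\delta \lea M$, which the statement explicitly allows to fail). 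This is precisely why clause (a) of item (\ref{basic-props-5}) only claims $N_i \lea N$ for $i < \delta$. So finite character cannot be brought to bear this way. The repair is immediate from what you already maintain: since $N_i^* = \cl^{N}(f[M] \cup N_i)$ for all $i < \delta$ and $N = \bigcup_{i<\delta} N_i^*$, Fact \ref{cont-of-cl} gives $\cl^N(f[M] \cup N_\delta) = \cl^N\bigl(\bigcup_{i<\delta} (f[M] \cup N_i)\bigr) = \cl^N\bigl(\bigcup_{i<\delta} \cl^N(f[M] \cup N_i)\bigr) = N$, which is exactly how the paper verifies this clause.

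A secondary problem of the same kind occurs at your limit stages: there you need $\nfs{M_i}{f_i[M]}{N_i}{N_i^*}$ with the left side $f_i[M]$ \emph{fixed} while base, right side and ambient model grow along the chain. That is the shape of item (\ref{basic-props-4}) (whose closure hypothesis is the reason you must carry $N_j^* = \cl^{N_j^*}(f_j[M] \cup N_j)$ along, and whose hypothesis that the base chain sits below the fixed model is satisfied because $M_j \lea M$ for $j < \delta$), not of item (\ref{basic-props-3}), which you cite and which does not apply. For comparison, the paper organizes the proof dually: it keeps $M$ fixed and builds copies $N_i^a$ of $N_i$ via $f_i : N_i \cong_{M_i} N_i^a$ with $\nfs{M_i}{M}{N_i^a}{N_i^b}$ and $N_i^b = \cl^{N_i^b}(M \cup N_i^a)$, delegating that construction to the proof of Shelah's V.C.1.11 and then renaming only once at the end by an isomorphism extending $f_\delta^{-1}$; this sidesteps the coherent-colimit bookkeeping that your successor step forces on you, and its closure verification is the $\cl$-computation indicated above rather than any appeal to finite character.
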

\begin{proof}[Proof of (\ref{basic-props-5})]
  This is given by the proof of \cite[V.C.1.11]{shelahaecbook2}, but Shelah omits the end of the proof. We give it here. We build $\seq{N_i^a, N_i^b, f_i : i \le \delta}$ such that:

  \begin{enumerate}
    \item $\seq{N_i^x : i \le \delta}$ is increasing continuous for $x \in \{a, b\}$.
    \item For $i < \delta$, $\nfs{M_i}{M}{N_i^a}{N_i^b}$.
    \item For $i < \delta$, $N_i^b = \cl^{N_i^b} (M \cup N_i^a)$.
    \item For $i \le \delta$,  $f_i : N_i \cong_{M_i} N_i^a$.
  \end{enumerate}

  This is possible by the proof of \cite[V.C.1.11]{shelahaecbook2}. Let us see that it is enough. Find $N \in \K$ and $f: N_\delta^b \cong N$ that extends $f_\delta^{-1}$. We claim that this works. First observe that $f \rest M: M \xrightarrow[M_\delta] N$ as $f$ fixes $M_i$ for each $i < \delta$ and $M \lea N_0^b \lea N_\delta^b$. Now:
  
  \begin{enumerate}
    \item For all $i < \delta$, $N_i \lea N$, since $N_i^a \lea N_i^b \lea N_\delta^b$ and $f_i^{-1} : N_i^a \cong N_i$.
    \item For all $i < \delta$, we have that $\nfs{M_i}{M}{N_i^a}{N_i^b}$ by construction, so applying $f$ to this we get $\nfs{M_i}{f[M]}{f[N_i^a]}{f[N_i^b]}$, i.e.\ $\nfs{M_i}{f[M]}{N_i}{f[N_i^b]}$, so $\nfs{M_i}{f[M]}{N_i}{N}$ by monotonicity.
    \item $N = \cl^N (f[M] \cup N_\delta)$: Why? Note that by continuity $N_\delta^b = \bigcup_{i < \delta} N_i^b$ and the latter is $\bigcup_{i < \delta} \cl^{N_\delta^b} (M \cup N_i^a)$ by construction. Now, $N_\delta^b = \cl^{N_\delta^b} (N_\delta^b) = \cl^{N_\delta^b} (\bigcup_{i < \delta} \cl^{N_\delta^b} (M \cup N_i^a))$. By Fact \ref{cont-of-cl}, this is $\cl^{N_\delta^b} (\bigcup_{i < \delta} M \cup N_i^a) = \cl^{N_\delta^b} (M \cup N_\delta^a)$. We have shown that $N_\delta^b = \cl^{N_\delta^b} (M \cup N_\delta^a)$. Applying $f$ to this equation, we obtain $N = \cl^{N} (f[M] \cup N_\delta)$, as desired.
  \end{enumerate}
\end{proof}

The next notion is studied explicitly in \cite[[V.E.1.2]{shelahaecbook2} and \cite[Definition 3.4]{bgkv-apal} (where it is called the minimal closure of $\nf$). It is a way to extend $\nf$ to take sets on the left and right hand side.

\begin{defin}\label{nfm-def}
  We write $\nfcl{M_0}{A}{B}{M_3}$ if $M_0 \lea M_3$, $A \cup B \subseteq |M_3|$, and there exists $M_3' \gea M_3$, $M_1 \lea M_3'$, and $M_2 \lea M_3'$ such that $A \subseteq |M_1|$, $B \subseteq |M_2|$, and $\nfs{M_0}{M_1}{M_2}{M_3'}$.
\end{defin}
\begin{lem}\label{nfcl-very-basic} \
  \begin{enumerate}
  \item $\nfs{M_0}{M_1}{M_2}{M_3}$ if and only if $\nfcl{M_0}{M_1}{M_2}{M_3}$ and $M_0 \lea M_\ell \lea M_3$ for $\ell = 1,2$.
  \item Invariance: if $\nfcl{M_0}{A}{B}{M_3}$ and $f: M_3 \cong M_3'$, then $\nfcl{f[M_0]}{f[A]}{f[B]}{M_3'}$.
  \item Monotonicity: if $\nfcl{M_0}{A}{B}{M_3}$ and $A_0 \subseteq A$, $B_0 \subseteq B$, and $M_3 \lea M_3'$, then $\nfcl{M_0}{A_0}{B_0}{M_3'}$.
  \end{enumerate}
\end{lem}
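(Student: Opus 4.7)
The plan is to address the three parts in turn. All rely on the axioms of Definition \ref{axfr-def}, and part (3) additionally uses the amalgamation property derived from the existence axiom (Remark \ref{ap-rmk}). Parts (2) and (3) are bookkeeping with invariance and amalgamation; the genuine work is in part (1), where one must shrink the large ambient model witnessing $\nfcl$ back down to the given $M_3$.

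For part (1), the forward direction is immediate by taking $M_3' = M_3$, $M_1 = M_1$, $M_2 = M_2$ in Definition \ref{nfm-def}. For the backward direction, unpack the witnesses: there are $M_3' \gea M_3$ and $M_\ell^\ast \lea M_3'$ with $M_\ell \subseteq M_\ell^\ast$ and $\nfs{M_0}{M_1^\ast}{M_2^\ast}{M_3'}$. Since $M_\ell \lea M_3 \lea M_3'$ and $M_\ell \subseteq M_\ell^\ast \lea M_3'$, coherence yields $M_\ell \lea M_\ell^\ast$, so Monotonicity 2 of $\nf$ (with Symmetry) gives $\nfs{M_0}{M_1}{M_2}{M_3'}$. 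To descend from $M_3'$ to $M_3$, I would invoke Fact \ref{basic-axfr-props}.(\ref{basic-props-1}) to obtain $\nfs{M_0}{M_1}{M_2}{\cl^{M_3'}(M_1 \cup M_2)}$; Monotonicity 2 of $\cl$ identifies $\cl^{M_3'}(M_1 \cup M_2)$ with $\cl^{M_3}(M_1 \cup M_2)$; coherence (this closure is a $\lea$-substructure of $M_3'$ whose universe lies inside $|M_3|$) lifts it to a $\lea$-submodel of $M_3$; and Monotonicity 1 of $\nf$ then delivers $\nfs{M_0}{M_1}{M_2}{M_3}$.

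For part (2), extend $f \colon M_3 \cong M_3'$ to an isomorphism $\hat{f} \colon N \cong N^\ast$, where $N \gea M_3$ is the witness for $\nfcl{M_0}{A}{B}{M_3}$ and $N^\ast$ is constructed as an isomorphic copy of $N$ containing $M_3'$; invariance of $\nf$ then transports the witnessing decomposition to one for $\nfcl{f[M_0]}{f[A]}{f[B]}{M_3'}$. For part (3), given $M_3 \lea M_3'$ and witnesses $N, M_1, M_2$ for $\nfcl{M_0}{A}{B}{M_3}$, amalgamate $N$ with $M_3'$ over $M_3$ to obtain $N^\ast$ with $M_3' \lea N^\ast$ and an embedding $g \colon N \to N^\ast$ fixing $M_3$ pointwise; invariance applied to $g \colon N \cong g[N]$ followed by Monotonicity 1 of $\nf$ yields $\nfs{M_0}{g[M_1]}{g[M_2]}{N^\ast}$, and since $g$ fixes $A \cup B \subseteq |M_3|$ pointwise, the images $g[M_1], g[M_2]$ still contain $A_0, B_0$ and witness $\nfcl{M_0}{A_0}{B_0}{M_3'}$. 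The main obstacle throughout is the downward step in (1): Monotonicity 1 of $\nf$ goes only upward in the ambient model, and it is the combination of Fact \ref{basic-axfr-props}.(\ref{basic-props-1}), Monotonicity 2 of $\cl$, and coherence that enables the needed descent; everything else is routine.
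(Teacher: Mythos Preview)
Your proposal is correct. The paper's own proof is the single line ``Straight from the definitions,'' so your argument is not so much a different route as a complete unpacking of what that line conceals. In particular, you are right that the backward direction of (1) is where the real content lies: shrinking the ambient model from $M_3'$ down to $M_3$ is not covered by any single axiom of $\nf$, and your route through Fact~\ref{basic-axfr-props}.(\ref{basic-props-1}), Monotonicity~2 of $\cl$, and coherence is exactly how one does it. Parts (2) and (3) are, as you say, routine bookkeeping with invariance and amalgamation.
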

\begin{proof}
  Straight from the definitions.
\end{proof}

\begin{notation} \
  \begin{enumerate}
    \item When $N \in \K$, $M \lea N$, $B \subseteq |N|$, and $\ba \in \fct{<\infty}{|N|}$, we write $\nfcl{M}{\ba}{B}{N}$ for $\nfcl{M}{\operatorname{ran} (\ba)}{B}{N}$.
    \item For $p \in \gS^{<\infty} (B; N)$ and $M \lea N$, we say $p$ \emph{does not fork over $M$} if whenever $p = \gtp (\ba / B; N)$, we have that $\nfcl{M}{\ba}{B}{N}$. Note that this does not depend on the choices of representatives by Lemma \ref{nfcl-very-basic}.
  \end{enumerate}
\end{notation}

The following properties all appear either in \cite[Section 5.1]{bgkv-apal} or \cite[Sections 4,12]{indep-aec-apal}. We will use them without comments.

\begin{fact}\label{basic-indep-facts} \
  \begin{enumerate}
    \item Normality: If $\nfcl{M_0}{A}{B}{M_3}$, then $\nfcl{M_0}{A \cup |M_0|}{B \cup |M_0|}{M_3}$.
    \item Base monotonicity: if $\nfcl{M_0}{A}{B}{M_3}$ and $M_0 \lea M_0' \lea M_3$ is such that $|M_0'| \subseteq B$, then $\nfcl{M_0'}{A}{B}{M_3}$.
    \item Symmetry: If $\nfcl{M_0}{A}{B}{M_3}$, then $\nfcl{M_0}{B}{A}{M_3}$.
    \item Extension: Let $M \lea N$ and $B \subseteq C \subseteq |N|$ be given. If $p \in \gS^{<\infty} (B; N)$ does not fork over $M$, then there exists $N' \gea N$ and $q \in \gS^{<\infty} (C; N')$ extending $p$ and not forking over $M$.
    \item Uniqueness: Let $M \lea N$ and let $|M| \subseteq B \subseteq |N|$. If $p, q \in \gS^{<\infty} (B; N)$ do not fork over $M$ and $p \rest M = q \rest M$, then $p = q$.
    \item Transitivity: If $\nfcl{M_0}{A}{M}{N}$, $\nfcl{M}{A}{B}{N}$, and $M_0 \lea M$, then $\nfcl{M_0}{A}{B}{N}$.
  \end{enumerate}
\end{fact}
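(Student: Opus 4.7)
The unifying strategy is to unfold Definition \ref{nfm-def} and reduce each set-theoretic property of $\nfcl$ to the corresponding axiom for $\nf$ from Definition \ref{axfr-def}. Given a witness $\nfs{M_0}{M_1}{M_2}{M_3'}$ for $\nfcl{M_0}{A}{B}{M_3}$ (with $M_3 \lea M_3'$, $A \subseteq |M_1|$, $B \subseteq |M_2|$), one manipulates the configuration and then reads off the new conclusion. Normality is immediate because the witness models $M_1$ and $M_2$ both extend $M_0$, so $A \cup |M_0| \subseteq |M_1|$ and $B \cup |M_0| \subseteq |M_2|$. Symmetry follows at once from the symmetry axiom for $\nf$ by swapping the roles of $M_1$ and $M_2$.

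For base monotonicity, given a witness as above and $M_0 \lea M_0' \lea M_3$ with $|M_0'| \subseteq B$, I would first use coherence to see that $M_0' \lea M_2$ (both sit inside $M_3'$), and then apply base enlargement to obtain $\nfs{M_0'}{\cl^{M_3'}(M_0' \cup M_1)}{M_2}{M_3'}$; this is a valid witness for $\nfcl{M_0'}{A}{B}{M_3}$ since $A \subseteq |M_1| \subseteq \cl^{M_3'}(M_0' \cup M_1)$. For uniqueness, given $p, q \in \gS^{<\infty}(B; N)$ both nonforking over $M$ with $p \rest M = q \rest M$, I would write down witness configurations $\nfs{M}{M_1^\ell}{M_2^\ell}{N_\ell}$ for $\ell = 1,2$ realizing $p$ and $q$ respectively, note that $\gtp(M_1^1/M; N_1) = \gtp(M_1^2/M; N_2)$ since both restrict to the common type, amalgamate so both configurations share the same ``right'' side containing $B$, and then invoke the uniqueness axiom of $\nf$ to force $p = q$.

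Extension is handled by combining existence with uniqueness. Given $p \in \gS^{<\infty}(B; N)$ nonforking over $M$, realized by $\ba \in \fct{<\infty}{|M_1|}$ in a witness $\nfs{M}{M_1}{M_2}{N'}$ with $B \subseteq |M_2|$, I would enlarge $N$ to some $N'' \gea N$ containing $C$, select $M_2^* \lea N''$ with $C \subseteq |M_2^*|$, and then apply the existence axiom to produce an independent copy of $M_1$ over $M_2^*$ inside some larger model, giving the desired nonforking extension $q$ to $C$. Transitivity follows by using extension to place the two witness configurations for $\nfcl{M_0}{A}{M}{N}$ and $\nfcl{M}{A}{B}{N}$ inside a single common ambient model and then applying the transitivity property for $\nf$ from Fact \ref{basic-axfr-props}.(\ref{basic-props-2}).

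The main obstacle is extension: the witness for $\nfcl$ is allowed to live in an ambient model strictly larger than $M_3$, so one has to chase through amalgams to ensure the enlarged independence data can be arranged inside a single extension of the original $N$. This is precisely where uniqueness (already proved) is needed to match up the two Galois-type realizations and transport the new configuration back into a common extension of $N$.
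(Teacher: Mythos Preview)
The paper does not prove this statement at all: it is labeled a \emph{Fact} and simply cites \cite[Section 5.1]{bgkv-apal} and \cite[Sections 4,12]{indep-aec-apal}. Your plan is therefore not being compared against an in-paper argument but against the standard verification found in those references, and your outline is indeed the standard one: unfold Definition \ref{nfm-def} and reduce to the $\Axfr$ axioms. Normality, symmetry, and base monotonicity go through exactly as you describe.

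One imprecision worth flagging is in your uniqueness sketch. You write ``$\gtp(M_1^1/M; N_1) = \gtp(M_1^2/M; N_2)$ since both restrict to the common type'', but the hypothesis $p \rest M = q \rest M$ only gives $\gtp(\ba/M) = \gtp(\bb/M)$ for the realizing tuples, not equality of the Galois types of the full witness models $M_1^1$ and $M_1^2$ over $M$. The uniqueness axiom of $\nf$ requires isomorphisms $f_i : M_i^1 \cong M_i^2$ for $i < 3$, so you cannot feed the raw witnesses in directly. The fix is routine but should be made explicit: use existence to rebuild a witness for $q$ whose left side is an isomorphic copy of $M_1^1$ (transporting $\ba$ to a realization of $p \rest M$), and amalgamate on the right so that a single $M_2$ containing $B$ serves both configurations; then the uniqueness axiom applies with $f_0 = f_2 = \id$. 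Your extension and transitivity sketches are correct in spirit, and you correctly identify that extension needs uniqueness to transport the independently-placed copy back into an extension of the original $N$.
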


The following is a form of local character that $\nf$ may have:

\begin{defin}[V.C.3.7 in \cite{shelahaecbook2}]\label{based-def}
  We say that \emph{$\nf$ is $\chi$-based} if whenever $M \lea M^\ast$ and $A \subseteq |M^\ast|$ then there are $N_0$ and $N_1$ so that $\|N_1\| \le |A| + \chi$, $N_0 = M \cap N_1$, $A \subseteq |N_1|$, and $\nfs{N_0}{N_1}{M}{M^\ast}$.
\end{defin}

Interestingly, if $\nf$ is based then smoothness for small lengths implies smoothness for all lengths.

\begin{fact}[V.D.1.2 in \cite{shelahaecbook2}]\label{smoothness-upward-fact}
  If $\K$ is $(\le \LS (\K), \le \LS (\K)^+)$-smooth (recall Definition \ref{smooth-def}) and $\nf$ is $\LS (\K)$-based, then $\K$ is smooth, i.e.\ it is an AEC.
\end{fact}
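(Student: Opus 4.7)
Let $\chi := \LS(\K)$ and fix a $\lea$-increasing chain $\langle M_i : i < \delta \rangle$ with $M_i \lea N$ for all $i < \delta$; write $M_\delta := \bigcup_{i < \delta} M_i$. The plan is to combine a Löwenheim-Skolem-Tarski reduction to chains of small models with an induction on $\delta$ that uses the $\chi$-based property to interface with the local smoothness hypothesis. First, applying LST inside each $M_i$ and invoking coherence (in the spirit of Remark \ref{smooth-equiv-rmk}), one may reduce to the case $\|M_i\| \le \chi$ for all $i < \delta$: every small $A \subseteq M_\delta$ sits inside $M_i$ for some $i$, hence in a small $M^* \lea M_i$ which by coherence is a $\lea$-submodel of both $M_\delta$ and $N$, so proving smoothness for the directed family of such small submodels covering $M_\delta$ yields $M_\delta \lea N$. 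By passing to a cofinal subchain, one may further assume $\delta$ is a regular cardinal.

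Now induct on $\delta$. The base case $\delta \le \chi^+$ is exactly the $(\le \chi, \le \chi^+)$-smoothness hypothesis. For regular $\delta > \chi^+$, I would construct by recursion on $i \le \delta$ a $\lea$-increasing continuous chain $\langle N_i : i \le \delta \rangle$ of $\lea$-submodels of $N$ satisfying $\|N_i\| \le \chi + |i|$, $\bigcup_{i<\delta}|N_i| = |N|$ (enforced by interleaving a pre-chosen enumeration of $|N|$), together with the matched independence $\nfs{M_i \cap N_i}{N_i}{M_i}{N}$. At a successor stage $i = i'+1$, the $\chi$-based property applied to $M_i \lea N$ with input set $|N_{i'}|$ augmented by the next element of the enumeration supplies a small candidate with the required independence over $M_i$; one then glues this with $N_{i'}$ using symmetry and transitivity of $\nf$ (Fact \ref{basic-axfr-props}.(\ref{basic-props-2})) to obtain $N_i \gea N_{i'}$ retaining the independence. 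At a limit stage $j \le \delta$, one takes unions: the chain $\langle N_i : i < j \rangle$ has length $j < \delta$ with each $N_i \lea N$, so by the outer inductive hypothesis $N_j \lea N$, and continuity of $\nf$ along increasing chains (Fact \ref{basic-axfr-props}.(\ref{basic-props-3})) delivers the independence at stage $j$. Finally, at $j = \delta$, we have $N_\delta = N$ and the statement reads $\nfs{M_\delta}{N}{M_\delta}{N}$, whence $M_\delta \lea N$ by the first axiom of $\nf$.

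The main obstacle is the successor step of the construction: the $\chi$-based property yields \emph{some} small witness to independence over $M_i$, but it need not extend the previous $N_{i'}$; gluing the old and new witnesses coherently, and verifying that $M_i \cap N_i$ continues to describe the base of the new independence, is the crux of the work. A secondary subtlety is the interplay of the two inductions: the outer induction is on the length $\delta$ of the original chain, while the inner construction produces an auxiliary chain whose intermediate lengths $j$ may exceed $\chi^+$ (so the small smoothness hypothesis is not directly applicable to it). This forces the outer inductive hypothesis to be formulated for arbitrary chains of length $<\delta$ rather than only for small ones, which means the LST reduction at the start must be integrated into each invocation of the outer hypothesis rather than performed once and for all.
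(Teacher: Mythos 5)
First, a contextual remark: the paper does not prove this statement at all --- it is quoted as a Fact from Shelah (V.D.1.2 of \cite{shelahaecbook2}) and used as a black box --- so your proposal can only be judged on its own merits, and as it stands it has genuine gaps rather than being a complete argument. The first gap is the opening reduction. An increasing chain whose members have size $\le \LS(\K)$ has union of size at most $\LS(\K)^+$, so a large $M_\delta = \bigcup_{i<\delta}M_i$ can never be written as the union of an increasing chain of $\LS(\K)$-sized models; it can only be covered by a \emph{directed family} of small $\lea$-submodels of $N$, and the passage from ``each small piece is $\lea N$'' to ``$M_\delta \lea N$'' is itself a smoothness-type union principle that a weak AEC need not satisfy --- it is essentially the statement being proved. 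Hence ``WLOG $\|M_i\|\le\chi$'' is not available, the base case of your induction (``$\delta\le\chi^+$ is exactly the hypothesis'') is only valid for chains of small models, and since, as you note yourself, the outer inductive hypothesis must cover \emph{arbitrary} chains of length $<\delta$, the induction has no base: already a chain of length $\omega$ of large models lies outside the hypothesis. The same issue resurfaces in the endgame, where you need $\bigcup_{i<\delta}N_i$ to equal $N$ (or at least to contain $M_\delta$): since $\|N_i\|\le\chi+|i|$, this forces $\|M_\delta\|\le\delta$, which again holds only after the invalid reduction.

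The second gap is the one you flag as ``the crux of the work'' but do not carry out, and it is where the real content of V.D.1.2 lies. The $\LS(\K)$-based property (Definition \ref{based-def}) hands you at stage $i$ some small $N_1\lea N$ containing the prescribed data with $\nfs{M_i\cap N_1}{N_1}{M_i}{N}$, and coherence does give $N_{i'}\lea N_1$, so no ``gluing'' is needed to keep the chain increasing; the real problem is that nothing makes the independence statements at different stages cohere. The continuity facts you invoke, Fact \ref{basic-axfr-props}.(\ref{basic-props-3}) and (\ref{basic-props-4}), require an \emph{array} of statements $\nfs{M_i}{M_j}{N_i}{N_j}$ among the chain members themselves, with matching bases and (at limits) agreement such as $M_j\cap N_j=\bigcup_{i<j}(M_i\cap N_i)$; stage-by-stage independence inside the ambient $N$ with drifting bases $M_i\cap N_i$ is not enough, and the based property by itself gives no control ensuring, e.g., $M_{i'}\cap N_{i'}\lea M_i\cap N_i$ or that the new independence restricts to the old one. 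Engineering exactly this kind of coherent matched-chain construction is the substance of the result (compare the explicit bookkeeping in the proof of the Claim in Fact \ref{axfr-fact}, where $\nf$ is shown to be $\mu^+$-based). As written, your text identifies the two hard points --- the coherence of the array and the interaction of the two inductions --- as obstacles rather than resolving them, so it is a plausible plan but not a proof.
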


A consequence of $\nf$ being based is that the class is tame. The argument is folklore and appears already in \cite[p.~15]{superior-aec}.

\begin{lem}\label{basic-indep-if-based}
  Assume that $\nf$ is $\LS (\K)$-based.
  
  \begin{enumerate}
    \item\label{basic-indep-set-lc} Set local character: if $p \in \gS^{<\infty} (M)$, then there are $M_0 \lea M$ such that $\|M_0\| \le |\ell (p) | + \LS (\K)$ and $p$ does not fork over $M_0$.
    \item\label{basic-indep-tame} $\K$ is $\LS (\K)$-tame.
  \end{enumerate}
\end{lem}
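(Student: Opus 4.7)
The plan is to obtain both claims essentially as immediate consequences of the definition that $\nf$ is $\LS(\K)$-based (Definition \ref{based-def}), combined with the monotonicity properties of $\nfm$ collected in Lemma \ref{nfcl-very-basic} and the basic independence calculus in Fact \ref{basic-indep-facts}. No limit-stage argument or chain construction is needed, and I do not expect a substantive obstacle: the based hypothesis has been packaged precisely to yield (\ref{basic-indep-set-lc}), and (\ref{basic-indep-tame}) then reduces to the standard ``uniqueness kills tameness'' argument familiar from the first-order and stable AEC settings.

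For (\ref{basic-indep-set-lc}), I would fix a representative $p = \gtp(\ba / M; N)$ with $\ba \in \fct{<\infty}{|N|}$ and apply the based hypothesis to the pair $M \lea N$ and the set $A := \ran(\ba)$. This produces $N_0, N_1 \in \K$ with $N_0 = M \cap N_1$, $\ba \subseteq |N_1|$, $\|N_1\| \le |\ell(p)| + \LS(\K)$, and $\nfs{N_0}{N_1}{M}{N}$. The first axiom of $\nf$ in Definition \ref{axfr-def} gives $N_0 \lea M$, so setting $M_0 := N_0$ meets the size requirement. Lemma \ref{nfcl-very-basic}(1) upgrades $\nfs{M_0}{N_1}{M}{N}$ to $\nfcl{M_0}{N_1}{M}{N}$, and monotonicity (Lemma \ref{nfcl-very-basic}(3), applied with $\ran(\ba) \subseteq |N_1|$) then yields $\nfcl{M_0}{\ba}{M}{N}$; that is, $p$ does not fork over $M_0$.

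For (\ref{basic-indep-tame}), I would argue by contradiction: suppose $p, q \in \gS(M)$ are distinct but $p \rest M_0' = q \rest M_0'$ for every $M_0' \lea M$ with $\|M_0'\| \le \LS(\K)$. By part (\ref{basic-indep-set-lc}), pick $M_p, M_q \lea M$ of size at most $\LS(\K)$ over which $p$ and $q$, respectively, do not fork. By the Löwenheim-Skolem-Tarski axiom I can extend $|M_p| \cup |M_q|$ to some $M_0 \lea M$ with $\|M_0\| \le \LS(\K)$. Base monotonicity in Fact \ref{basic-indep-facts} gives that $p$ and $q$ both do not fork over $M_0$, and by assumption $p \rest M_0 = q \rest M_0$, so uniqueness of non-forking extensions (also in Fact \ref{basic-indep-facts}) forces $p = q$, a contradiction. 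Hence $\K$ is $\LS(\K)$-tame. The only point that requires care is verifying that the witnesses $N_0$ and $N_1$ delivered by Definition \ref{based-def} really witness non-forking in the sense of Definition \ref{nfm-def} inside the ambient $N$, but this is immediate from the way the based property is phrased (one may take $M_3' = N$ itself).
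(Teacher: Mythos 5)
Your proposal is correct and is essentially the paper's own argument spelled out: part (\ref{basic-indep-set-lc}) is read off directly from Definition \ref{based-def} (taking $M^\ast = N$, $A = \ran(\ba)$, and using the first $\nf$-axiom plus the monotonicity of $\nfm$ in Lemma \ref{nfcl-very-basic}), and part (\ref{basic-indep-tame}) combines (\ref{basic-indep-set-lc}) with base monotonicity and the uniqueness property of Fact \ref{basic-indep-facts}, exactly as the paper indicates (the only implicit step, realizing $p$ and $q$ in a common ambient model via amalgamation from Remark \ref{ap-rmk} and using coherence to get $M_p \lea M_0$, is routine).
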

\begin{proof} \
  \begin{enumerate}
    \item Straight from the definitions.
    \item Combine (\ref{basic-indep-set-lc}) with the uniqueness property. 
  \end{enumerate}
\end{proof}

\section{Enumerated trees and generalized symmetry}\label{enum-trees-sec}

\begin{hypothesis}\label{tree-hyp-0}
  $(\K, \nf, \cl)$ satisfies $\Axfr$. Eventually, we will also assume Hypotheses \ref{tree-hyp} and \ref{good-indep-hyp}
\end{hypothesis}

Consider a minimal failure of smoothness: an increasing chain $\seq{M_i : i \le \delta}$ that is continuous below $\delta$ but so that $\bigcup_{i < \delta} M_i \not \lea M_\delta$. We would like to copy this chain into a tree indexed by $\fct{\le \delta}{\lambda}$. The branches of the tree should be as independent as possible. The main theorem of this section, Theorem \ref{tree-constr-thm}, shows that it can be done. We show in Theorem \ref{smoothness-unstable} that the resulting tree of failures witnesses unstability.

The main difficulty in the proof of Theorem \ref{tree-constr-thm} is that we cannot assume smoothness when we construct the tree, so we have difficulties at limits (because, to quote the referee, the tree is ``wider than it is high''). We work around this by studying trees enumerated in some order, giving a definition of a closed subset of such tree (Definition \ref{closed-def}) and proving a generalized symmetry theorem for these sets (Theorem \ref{generalized-sym}). Generalized symmetry says intuitively (as in \cite{sh87a,sh87b}) that whether a tree is independent does not depend on its enumeration, so closed sets will be as independent of each other as possible. Once generalized symmetry is proven, the construction of the desired tree can be carried out.

This section draws a lot of inspiration from \cite[V.C.4]{shelahaecbook2}, where Shelah defines a notion of stable construction which is supposed to accomplish similar goals than here. Shelah even states Theorem \ref{tree-constr-thm} as an exercise \cite[V.C.4.14]{shelahaecbook2}. However, we cannot solve it when smoothness fails. It seems that clause (vi) in \cite[Definition V.C.4.2]{shelahaecbook2} is too restrictive and precisely prevents us from copying a non-smooth chain into a tree.

We start by setting up the notation of this section for trees. The universe of the trees we will use is always an ordinal $\alpha$, and we think of $(\alpha, \le)$ as giving the order in which the tree is enumerated and $(\alpha, \tleq)$ as being the tree order.

\begin{defin}
  An \emph{enumerated tree} is a pair $(\alpha, \tleq)$, where $\alpha$ is an ordinal and $\tleq$ is a partial order on $\alpha$ such that for all $i, j < \alpha$:

  \begin{enumerate}
    \item $0 \tleq i$ (i.e.\ $0$ is the root of the tree).
    \item $i \tleq j$ implies $i \le j$ (i.e.\ if $j$ is above $i$ in the tree, then it is enumerated later).
    \item $(\{k < \alpha \mid k \tleq i\}, \tleq)$ is a well-ordering.
  \end{enumerate}
\end{defin}

\begin{defin}
  Let $(\alpha, \tleq)$ be an enumerated tree.
  
  \begin{enumerate}
    \item For $i < \alpha$, and $R \in \{\tlt, \tleq\}$, let $\pred_R (i) := \{k \le i \mid k R i\}$. When $R = \tlt$, we omit the subscript.
    \item A \emph{branch} of $(\alpha, \tleq)$ is a set $b \subseteq \alpha$ such that:
      \begin{enumerate}
        \item $\tleq$ linearly orders $b$.
        \item $i \in b$ implies $\pred (i) \subseteq b$.
      \end{enumerate}
    \item A branch $b \subseteq \alpha$ is \emph{bounded (in $(\alpha, \tleq)$)} if either it has a maximum or $b = \pred (i)$ for some $i < \alpha$. It is unbounded otherwise. We say that a set $u \subseteq \alpha$ is \emph{bounded} if any branch $b \subseteq u$ is bounded.
    \item We say that $(\alpha, \tleq)$ is \emph{continuous} when for any $i, j < \alpha$, if $\pred (i) = \pred (j)$ and $\pred (i)$ does not have a maximum, then $i = j$.
    \item When $(\alpha, \tleq)$ is continuous and $b \subseteq \alpha$ is a bounded branch, we let: 

      $$
      \topp (b) := \begin{cases}
        \max (b) & \text{if } b \text{ has a }\tleq\text{-maximum} \\
        \text{The unique }i < \alpha \text{ such that } b = \pred (i) & \text{otherwise.}
        \end{cases}
      $$
    \item When $u \subseteq \alpha$, let:

      $$
      B (u) := \{b \subseteq u \mid b \text{ is a branch and for any branch } b', b \subseteq b' \subseteq u\text{ implies } b' = b\}
      $$

      be the set of branches in $u$ that are maximal in $u$.
    \item When $(\alpha, \tleq)$ is continuous and $u \subseteq \alpha$ is a bounded set, we let $\topp (u) := \sup_{b \in B (u)} \topp (b)$ (this will only be used when $B (u)$ is finite, so in that case the supremum is actually a maximum).
  \end{enumerate}
\end{defin}

\begin{lem}\label{branch-injection}
  If $u \subseteq v$ and $b \in B (u)$, then there is a branch $b' \in B (v)$ such that $b \subseteq b'$. Consequently, $|B (u)| \le |B (v)|$.
\end{lem}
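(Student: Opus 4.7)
The plan is to combine a Zorn's lemma extension argument with a uniqueness observation exploiting maximality in $u$. First I would check the key closure property: that the union of a $\subseteq$-chain of branches is again a branch. Given a chain $\seq{b_\gamma : \gamma < \mu}$ of branches, its union $b^\ast$ is still linearly ordered by $\tleq$ (any two elements $i, j \in b^\ast$ lie in a common $b_\gamma$) and still satisfies $i \in b^\ast \Rightarrow \pred(i) \subseteq b^\ast$ (any $i \in b^\ast$ lies in some $b_\gamma$, whose downward closure is inside $b_\gamma \subseteq b^\ast$).

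Next, fix $b \in B(u)$. Since $u \subseteq v$, the set $b$ is a branch contained in $v$. Consider the poset $P := \{c \mid c \text{ a branch}, b \subseteq c \subseteq v\}$ ordered by $\subseteq$. It is nonempty (contains $b$) and by the closure property above it is closed under unions of chains. Apply Zorn's lemma to obtain a maximal element $b' \in P$. Then $b'$ is a branch with $b \subseteq b' \subseteq v$, and maximality in $P$ is exactly the definition of $b' \in B(v)$. Using the axiom of choice, select one such $b'$ for each $b \in B(u)$; call the resulting map $\Phi : B(u) \to B(v)$.

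Finally, I would verify that $\Phi$ is injective, which gives the cardinality inequality. Suppose $b_1, b_2 \in B(u)$ and $\Phi(b_1) = \Phi(b_2) =: b'$. Then $b_1 \cup b_2 \subseteq b'$, so $b_1 \cup b_2$ is linearly ordered by $\tleq$. Moreover, for $i \in b_1 \cup b_2$, we have $i \in b_\ell$ for some $\ell \in \{1, 2\}$, so $\pred(i) \subseteq b_\ell \subseteq b_1 \cup b_2$. Therefore $b_1 \cup b_2$ is a branch in $u$ containing $b_1$, so by maximality of $b_1$ in $B(u)$ we get $b_1 \cup b_2 = b_1$, i.e.\ $b_2 \subseteq b_1$. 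By symmetry $b_1 = b_2$.

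No step is really the main obstacle here; the lemma is a short set-theoretic bookkeeping argument. The only point worth care is that the definition of $B(u)$ requires maximality \emph{among branches contained in $u$}, so the two invocations of branch-hood (the Zorn extension in $v$, and the union argument in $u$) must each be checked against the correct ambient set.
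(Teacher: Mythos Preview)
Your proof is correct and follows the same approach as the paper, which simply declares the existence of $b'$ ``straightforward from the definition of $B(u)$'' and asserts that $b \mapsto b'$ is an injection without further comment. You have supplied the Zorn's lemma argument for existence and the maximality-based argument for injectivity that the paper leaves implicit; both are exactly the natural details.
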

\begin{proof}
  Straightforward from the definition of $B (u)$. The last sentence is because the map $b \mapsto b'$ (for some choice of $b'$) is an injection from $B (u)$ to $B (v)$.
\end{proof}

We now define a tree of structures coming from the class $\K$. Note that continuity of chains of models is only required when the chain is smooth (see (\ref{cont-tree-def-4}) below).

\begin{defin}\label{cont-tree-def}
  A \emph{continuous enumerated tree of models} is a tuple $(\seq{M_i :i < \alpha}, N, \alpha, \tleq)$ satisfying:

  \begin{enumerate}
  \item $(\alpha, \tleq)$ is a continuous enumerated tree.
    \item $N \in \K$.
    \item\label{tree-def-2} For all $i < \alpha$, $M_i \lea N$.
    \item For all $i, j < \alpha$, $i \tleq j$ implies $M_i \lea M_j$.
    \item\label{cont-tree-def-4} For all $i < \alpha$, if $\pred (i)$ has no maximum \emph{and} $\bigcup_{j \tlt i} M_j \lea N$, then $M_i = \bigcup_{j \tlt i} M_j$.
  \end{enumerate}
\end{defin}

\begin{remark}
  By coherence, for all $i < \alpha$, $\bigcup_{j \tlt i} M_j \lea N$ if and only if $\bigcup_{j \tlt i} M_j \lea M_i$
\end{remark}
\begin{remark}
  In Definition \ref{cont-tree-def}, $N$ is just an ambient model. Eventually, we will want to also ensure that it satisfies a minimality condition (see the conclusion of Theorem \ref{tree-constr-lem}).
\end{remark}

From now on until Lemma \ref{kb-constr}, we assume:

\begin{hypothesis}\label{tree-hyp}
   $\mathcal{T} := (\seq{M_i :i < \alpha}, N, \alpha, \tleq)$ is a continuous enumerated tree of models.
\end{hypothesis}

The following is a key definition. Intuitively, a set is closed if it is closed under initial segments \emph{and} all its branches smoothly embed inside $N$.

\begin{defin}\label{closed-def}
  $u \subseteq \alpha$ is \emph{closed} if:

  \begin{enumerate}
    \item\label{closed-1} $i \in u$ implies $\pred (i) \subseteq u$.
    \item\label{closed-2} $b \in B (u) \backslash \{\emptyset\}$ implies $\bigcup_{i \in b} M_i \lea N$.
  \end{enumerate}
\end{defin}

\begin{lem}\label{closed-closure} \
  \begin{enumerate}
    \item An arbitrary intersection of closed sets is closed.
    \item A finite union of closed sets is closed.
  \end{enumerate}
\end{lem}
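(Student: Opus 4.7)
The plan is to reduce condition (\ref{closed-2}) of Definition \ref{closed-def} in each case to a purely combinatorial statement about the enumerated tree $(\alpha, \tleq)$. Two preliminary observations will do most of the work. First, I would establish that every branch $b$ of an enumerated tree is well-ordered by $\tleq$: any nonempty $S \subseteq b$ has a minimum, obtained by picking any $i \in S$ and taking the $\tleq$-minimum of $\pred_{\tleq}(i) \cap S$ inside the well-ordered $\pred_{\tleq}(i)$ (which is well-ordered by the definition of enumerated tree). Second, I would prove the following extension lemma: whenever $b \subsetneq b'$ are branches, well-orderedness of $b'$ produces $j^* := \min_{\tleq}(b' \setminus b)$, and $\pred(j^*) = b$. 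The inclusion $\pred(j^*) \subseteq b$ follows from minimality of $j^*$ in $b' \setminus b$, while $b \subseteq \pred(j^*)$ follows because any $i \in b$ lies in $b'$ and cannot be $\tgt j^*$ (else $j^* \in \pred(i) \subseteq b$). In particular, when $b$ has no $\tleq$-maximum, $\pred(j^*) = b$ also lacks a maximum, so by continuity of the tree $j^*$ is the unique $j < \alpha$ with $\pred(j) = b$.

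For part 1, set $u := \bigcap_{k \in K} u_k$; condition (\ref{closed-1}) is immediate. For (\ref{closed-2}), fix nonempty $b \in B(u)$. If $b$ has a $\tleq$-maximum $i_0$, then $\bigcup_{i \in b} M_i = M_{i_0} \lea N$ by Definition \ref{cont-tree-def}, so assume $b$ has no maximum. Using Lemma \ref{branch-injection}, extend $b$ to some $b'_k \in B(u_k)$ for each $k$. If $b = b'_k$ for some $k$, closedness of $u_k$ gives $\bigcup_{i \in b} M_i \lea N$ directly. Otherwise the extension lemma yields $j^*_k \in b'_k \subseteq u_k$ with $\pred(j^*_k) = b$ for every $k$; by the uniqueness just noted these all coincide in a common $j^* \in \bigcap_{k} u_k = u$. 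Then $b \cup \{j^*\}$ is a branch in $u$ strictly extending $b$ (linearly ordered because $j^* \tgt i$ for every $i \in b$, and downward-closed because $\pred(j^*) = b$), contradicting $b \in B(u)$.

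For part 2, write $u := u_1 \cup \cdots \cup u_n$; condition (\ref{closed-1}) is clear since each $u_k$ satisfies it. For (\ref{closed-2}), fix nonempty $b \in B(u)$ with no $\tleq$-maximum (the maximum case is as before). Since $b$ is well-ordered of limit order type and $b = \bigcup_{k = 1}^n (b \cap u_k)$, pigeonhole produces some $k_0$ with $c := b \cap u_{k_0}$ cofinal (hence nonempty) in $b$; then $c$ is a branch of $u_{k_0}$ with no $\tleq$-maximum and $\bigcup_{i \in c} M_i = \bigcup_{i \in b} M_i$. It suffices to show $c \in B(u_{k_0})$, since closedness of $u_{k_0}$ then gives the required $\lea N$. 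If instead $c \subsetneq b'_{k_0}$ for some $b'_{k_0} \in B(u_{k_0})$, the extension lemma produces $j^* \in u_{k_0} \subseteq u$ with $\pred(j^*) = c$, and cofinality of $c$ in $b$ implies that every $i \in b$ satisfies $i \tleq i' \tlt j^*$ for some $i' \in c$, so $b \cup \{j^*\}$ is a branch in $u$ properly containing $b$, contradicting $b \in B(u)$.

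The main obstacle is precisely the absence of smoothness: the chains $\seq{M_i : i \in b}$ do not automatically satisfy $\bigcup_{i \in b} M_i \lea N$, so the entire argument must stay combinatorial on $(\alpha, \tleq)$. The extension lemma does the heavy lifting in both parts, forcing any proper enlargement of a would-be maximal branch of $u$ to involve a concrete tree element $j^*$ that in fact already lies in $u$, yielding the required contradiction.
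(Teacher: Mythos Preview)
Your proof is correct. For Part~1 it is essentially the paper's argument, with two improvements: you make explicit the ``extension lemma'' (that for branches $b \subsetneq b'$ one has $\pred(j^*) = b$ where $j^* := \min_{\tleq}(b' \setminus b)$), and you dispose of the case where $b$ has a $\tleq$-maximum up front. The paper's proof leaves both implicit; in particular its final line ``so $k \in b$, a contradiction'' only makes sense once the maximum case has been excluded (otherwise $k = \max(b) \in b$ is no contradiction at all, though in that case $\bigcup_{i \in b} M_i = M_k \lea N$ is immediate anyway).

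For Part~2 you take a genuinely more elaborate route than the paper. The paper observes that since each $u_k$ is downward closed under $\tleq$, the set $b \cap u_k$ is an \emph{initial segment} of the linear order $(b, \tleq)$; initial segments of a linear order are totally ordered by inclusion, so $b = \bigcup_k (b \cap u_k)$ forces $b = b \cap u_{k_0}$ outright for some $k_0$, whence $b \in B(u_{k_0})$ immediately and closedness of $u_{k_0}$ finishes. Your cofinality pigeonhole followed by a second invocation of the extension lemma reaches the same destination but in effect re-derives $c = b$ indirectly (your chain $i \tleq i' \tlt j^*$ yields $b \subseteq \pred(j^*) = c \subseteq b$). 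The paper's shortcut is worth knowing: it handles Part~2 in one line and never needs the extension lemma there.
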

\begin{proof} \
  \begin{enumerate}
    \item Let $\seq{u_i : i < \gamma}$ be closed, $\gamma > 0$. Let $u := \bigcap_{i < \gamma} u_i$. We show that $u$ is closed. It is easy to check that $u$ satisfies (\ref{closed-1}) from the definition of a closed set. We check (\ref{closed-2}). Let $b \in B (u) \backslash \{\emptyset\}$. We want to see that $\bigcup_{j \in b} M_j \lea N$. By Lemma \ref{branch-injection}, for each $i < \gamma$ there exists $b_i \in B (u_i)$ such that $b \subseteq b_i$. Since $u_i$ is closed, we have that $\bigcup_{j \in b_i} M_j \lea N$. If there exists $j < \gamma$ such that $b = b_j$, we are done so assume that this is not the case. This implies that $b$ is bounded. Let $k := \topp (b)$. We know that $b \subsetneq b_j$ for all $j < \gamma$, so by downward closure we must have that $k \in b_j$ for all $j < \gamma$. But then this means that $k \in u$, so $k \in b$, a contradiction. 
    \item Let $u, v$ be closed. We show that $u \cup v$ is closed. As before, (\ref{closed-1}) is straightforward to see. As for (\ref{closed-2}), let $b \in B (u \cup v)$. It is straightforward to see that either $b \in B (u)$ or $b \in B (v)$. In either case we get that $\bigcup_{i \in b} M_i \lea N$, as desired.
  \end{enumerate}
\end{proof}
\begin{remark}
  Lemma \ref{closed-closure} almost tells us that closed sets induce a topology on $\alpha$. While it is easy to check that the empty set is closed, $\alpha$ itself may not be closed (think of a chain $\seq{M_i : i \le \delta}$ where $\bigcup_{i < \delta} M_i \not \lea M_\delta$. The tree could consist of $\seq{M_i : i < \delta}$ and $N = M_\delta$). However $\alpha$ \emph{will} be closed when all the maximal branches of the tree have a maximum (e.g.\ if $(\alpha, \tleq)$ looks like $\fct{\le \delta}{\lambda}$ for some cardinal $\lambda \ge 2$ and limit ordinal $\delta$).
\end{remark}

The next definition describes the model $M^u$ generated by a set $u \subseteq \alpha$. Typically, $u$ will be closed and in case the tree is sufficiently independent (see Definition \ref{indep-def}), $M^u$ will be in $\K$.

\begin{defin}
  For $u \subseteq \alpha$, $M^u := \cl^{N} (|M_0| \cup \bigcup_{i \in u} |M_i|)$.
\end{defin}
\begin{lem}\label{basic-mu-calculus}
Let $u, v \subseteq \alpha$. $M^{u \cup v} = \cl^{N} (M^u \cup M^v)$.
\end{lem}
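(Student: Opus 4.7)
The plan is to unwind both sides of the equation using the definition of $M^u$ and then apply the second part of Fact \ref{cont-of-cl} to conclude. There is no serious obstacle here; this is really just a bookkeeping lemma. Concretely, set $A_u := |M_0| \cup \bigcup_{i \in u} |M_i|$ and $A_v := |M_0| \cup \bigcup_{i \in v} |M_i|$, so that by definition $M^u = \cl^N(A_u)$, $M^v = \cl^N(A_v)$, and also $M^{u \cup v} = \cl^N(A_u \cup A_v)$, using that $|M_0|$ appears in both terms and that $\bigcup_{i \in u \cup v} |M_i| = \bigcup_{i \in u} |M_i| \cup \bigcup_{i \in v} |M_i|$.

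Next I would apply Fact \ref{cont-of-cl}(2) to the two-element family $\{A_u, A_v\}$ (which is nonempty, as required by that fact) to get
\[
\cl^N(A_u \cup A_v) = \cl^N\bigl(\cl^N(A_u) \cup \cl^N(A_v)\bigr) = \cl^N(M^u \cup M^v).
\]
Combining the two displayed equalities yields $M^{u \cup v} = \cl^N(M^u \cup M^v)$, as desired. The only subtlety worth checking is the degenerate case when one of $u, v$ is empty, but since $|M_0|$ is explicitly included in the definition of $M^u$ for every $u$, the argument goes through uniformly; idempotence of $\cl^N$ (part of the axioms in Definition \ref{axfr-def}) handles the outer closure gracefully.
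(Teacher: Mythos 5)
Your proof is correct and is exactly the paper's argument: the paper proves this lemma simply by citing Fact \ref{cont-of-cl}, and your writeup just spells out the routine unwinding of the definition of $M^u$ before applying part (2) of that fact.
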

\begin{proof}
By Fact \ref{cont-of-cl}.
\end{proof}

\begin{lem}\label{cb-union}
  If $b$ is a closed and bounded branch, then $M^b = M_i$, where $i := \topp (b)$.
\end{lem}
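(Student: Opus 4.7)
The plan is to case-split on whether the bounded branch $b$ has a $\tleq$-maximum, using the definition of $\topp(b)$ and the continuity clause (\ref{cont-tree-def-4}) of Definition \ref{cont-tree-def} exactly once (in the second case). Throughout, I will use the fact that $\cl^N(|M_j|) = M_j$ whenever $M_j \lea N$: by monotonicity 2 of $\cl$ we have $\cl^N(|M_j|) = \cl^{M_j}(|M_j|)$, and this equals $|M_j|$ under the standing identification of $\cl^{M_j}(A)$ with the substructure of $M_j$ it induces.

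\emph{Case 1: $b$ has a $\tleq$-maximum.} Then $i = \max(b)$. Since $b$ is a branch, $\pred(i) \subseteq b$; conversely, any $j \in b$ satisfies $j \tleq i$ by maximality. Hence $b = \pred_\tleq(i)$. For every $j \in b$ we have $j \tleq i$, so $M_j \lea M_i$ and $|M_j| \subseteq |M_i|$; in particular $|M_0| \subseteq |M_i|$ (since $0 \tleq i$ by definition of an enumerated tree, and also trivially if $i = 0$). Therefore
$$
M^b \;=\; \cl^N\Bigl(|M_0| \cup \bigcup_{j \in b} |M_j|\Bigr) \;=\; \cl^N(|M_i|) \;=\; M_i,
$$
as desired.

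\emph{Case 2: $b$ has no $\tleq$-maximum.} Then by definition of boundedness, $b = \pred(i)$ for a unique $i < \alpha$ (using continuity of the enumerated tree), and this $i$ is $\topp(b)$. If $b = \emptyset$ then $i = 0$ and $M^b = \cl^N(|M_0|) = M_0$, finishing this subcase. Otherwise $b$ is a nonempty branch which is maximal in itself, so $b \in B(b) \setminus \{\emptyset\}$. Since $b$ is closed, condition (\ref{closed-2}) of Definition \ref{closed-def} gives $\bigcup_{j \in b} M_j \lea N$. Now apply the continuity clause (\ref{cont-tree-def-4}) of Definition \ref{cont-tree-def} to $i$: since $\pred(i) = b$ has no maximum and $\bigcup_{j \tlt i} M_j = \bigcup_{j \in b} M_j \lea N$, we conclude $M_i = \bigcup_{j \in b} M_j$. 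Combined with $|M_0| \subseteq |M_i|$, this yields
$$
M^b \;=\; \cl^N\Bigl(|M_0| \cup \bigcup_{j \in b} |M_j|\Bigr) \;=\; \cl^N(|M_i|) \;=\; M_i.
$$

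There is no real obstacle here; the only point requiring attention is lining up the definition of $\topp$ with the cases where the branch has a top element versus being equal to $\pred(i)$, and invoking the continuity of the tree precisely in the second case to identify $M_i$ with the union along $b$. The identification $\cl^N(|M_i|) = M_i$ is used tacitly but follows immediately from monotonicity 2 and idempotence of $\cl$ together with the convention identifying closures with induced substructures.
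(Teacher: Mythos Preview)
Your proof is correct and follows essentially the same approach as the paper's: a case split on whether $b$ has a maximum (or is empty), invoking closedness and the continuity clause (\ref{cont-tree-def-4}) of Definition \ref{cont-tree-def} in the non-trivial case. The paper's proof is more terse and appeals to Fact \ref{cont-of-cl} to identify $\cl^N(\bigcup_{j \tlt i} M_j)$ with $M^b$, whereas you simplify the union to $|M_i|$ first and then apply $\cl^N(|M_i|) = M_i$; these are equivalent routes to the same conclusion.
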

\begin{proof}
  If $i$ is a maximum of $b$ or $b$ is empty, this is clear. If not, we know since $b$ is closed that $\bigcup_{j \tlt i} M_j = \bigcup_{j \in b} M_j \lea N$. By (\ref{cont-tree-def-4}) in Definition \ref{cont-tree-def}, $M_i = \bigcup_{j \tlt i} M_j$. Note that $\bigcup_{j \tlt i} M_j = \cl^N (\bigcup_{j \tlt i} M_j)$ and by Fact \ref{cont-of-cl}, this is equal to $M^b$. So $\bigcup_{j \tlt i} M_j = M^b$, as desired.
\end{proof}

The next definition describes when two (typically closed) sets $u$ and $v$ are ``as independent as possible'', i.e.\ the model generated by $u$ is independent of the one generated by $v$ over the model generated by $u \cap v$. There are two variations depending on whether the ambient model is $N$ or the model generated by $u \cup v$.

Generalized symmetry (Theorem \ref{generalized-sym}) will say that under appropriate conditions, if the tree is independent then any closed sets $u$ and $v$ are as independent as possible.

\begin{defin}
  Let $u, v \subseteq \alpha$.
  
  \begin{enumerate}
  \item We write $uv$ for $u \cup v$.
  \item We write $u \nf v$ if $\nfs{M^{u \cap v}}{M^u}{M^v}{M^{u v}}$.
  \item We write $u \nf^N v$ if $\nfs{M^{u \cap v}}{M^u}{M^v}{N}$.
  \end{enumerate}
\end{defin}

Note that to make the notation lighter we omit the base and write $u \nf v$ instead of e.g.\ $u \nf_{u \cap v} v$.

The following will be used without comment.

\begin{lem}
  $u \nf^N v$ if and only if [$u \nf v$ and $M^{uv} \lea N$].
\end{lem}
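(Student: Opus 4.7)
The plan is to prove both directions directly from the axioms of $\Axfr$ combined with the identity $M^{uv} = \cl^N(M^u \cup M^v)$, which is Lemma \ref{basic-mu-calculus} applied to the subsets $u$ and $v$ of $\alpha$.

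For the forward direction, I would assume $u \nf^N v$, i.e.\ $\nfs{M^{u \cap v}}{M^u}{M^v}{N}$, and invoke Fact \ref{basic-axfr-props}.(\ref{basic-props-1}) with $M_0 = M^{u \cap v}$, $M_1 = M^u$, $M_2 = M^v$, $M_3 = N$. This yields both $\cl^{N}(M^u \cup M^v) \lea N$ and $\nfs{M^{u \cap v}}{M^u}{M^v}{\cl^N(M^u \cup M^v)}$. Rewriting $\cl^N(M^u \cup M^v)$ as $M^{uv}$ via Lemma \ref{basic-mu-calculus} gives at once $M^{uv} \lea N$ and $u \nf v$.

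For the backward direction, I would assume $u \nf v$ and $M^{uv} \lea N$. Since by definition $u \nf v$ means $\nfs{M^{u \cap v}}{M^u}{M^v}{M^{uv}}$, an application of Monotonicity~1 for $\nf$ (with $M_3 = M^{uv}$ and $M_3' = N$) immediately yields $\nfs{M^{u \cap v}}{M^u}{M^v}{N}$, i.e.\ $u \nf^N v$.

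There is essentially no obstacle: the lemma is a routine unwinding of the definitions, with the only slightly nontrivial ingredient being the identification of $M^{uv}$ with $\cl^N(M^u \cup M^v)$. The ambient containments $M^u, M^v \lea M^{uv}$ and $M^u, M^v \lea N$ needed to make the $\nf$-assertions legitimate are automatic from the definition of $\nf$ in either direction, so no extra verification is required.
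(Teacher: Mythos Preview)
Your proof is correct and follows essentially the same approach as the paper: the forward direction via Fact \ref{basic-axfr-props}.(\ref{basic-props-1}) and the backward direction via monotonicity in the ambient model. Your citation of Monotonicity~1 for the converse is in fact the right axiom (the paper's reference to ``monotonicity 2'' appears to be a slip), and your explicit invocation of Lemma \ref{basic-mu-calculus} to identify $M^{uv}$ with $\cl^N(M^u \cup M^v)$ makes the routine step a bit more transparent.
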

\begin{proof}
  If $u \nf^N v$, then by Fact \ref{basic-axfr-props}.(\ref{basic-props-1}), $M^{uv} \lea N$ and $u \nf v$. The converse is by the monotonicity 2 property of $\nf$ in Definition \ref{axfr-def}.
\end{proof}

If $u \subseteq v$, there is an easy way to determine whether $u \nf v$.

\begin{lem}\label{trivial-subset-result}
  If $u \subseteq v$, $M^u \lea N$, and $M^v \lea N$, then $u \nf^N v$.
\end{lem}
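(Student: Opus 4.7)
The key reduction is that, since $u \subseteq v$, we have $u \cap v = u$, hence $M^{u \cap v} = M^u$. Furthermore, monotonicity of $\cl^N$ gives $M^u \subseteq M^v$, and since both are $\lea N$, coherence yields $M^u \lea M^v$. Thus $u \nf^N v$ boils down to $\nfs{M^u}{M^u}{M^v}{N}$, and all the required ``$\lea$''-inclusions are already in hand.

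The heart of the argument is therefore a reflexivity property of $\nf$: namely, $\nfs{M_0}{M_0}{M_2}{M_3}$ whenever $M_0 \lea M_2 \lea M_3$. To prove it, I would apply the existence axiom with $M_0$ doubling as $M_1$: this yields some $N^\flat$ and an embedding $f : M_2 \to N^\flat$ fixing $M_0$, together with $\nfs{M_0}{M_0}{f[M_2]}{N^\flat}$ (the other embedding, out of $M_0$, is just the identity). Extending $f^{-1}$ to an isomorphism $g : N^\flat \cong N^\sharp$ and applying invariance pulls $f[M_2]$ back to $M_2$, producing $\nfs{M_0}{M_0}{M_2}{N^\sharp}$ for some $N^\sharp$ with $M_2 \lea N^\sharp$.

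To transfer this to the prescribed ambient $M_3$, read the statement $\nfs{M_0}{M_0}{M_2}{N^\sharp}$ as witnessing $\nfcl{M_0}{M_0}{M_2}{M_2}$ directly from Definition \ref{nfm-def}. Since $M_2 \lea M_3$, the monotonicity part of Lemma \ref{nfcl-very-basic} then gives $\nfcl{M_0}{M_0}{M_2}{M_3}$, and Lemma \ref{nfcl-very-basic}.(1) converts this back to $\nfs{M_0}{M_0}{M_2}{M_3}$. Specializing $M_0 := M^u$, $M_2 := M^v$, $M_3 := N$ completes the proof.

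The main obstacle is that the $\Axfr$ axioms include monotonicity of the ambient model only \emph{upward} (``Monotonicity 1''); one cannot directly restrict an independence statement from $N^\sharp$ down to the given $N$. Routing through $\nfcl$, whose definition is insensitive to the precise ambient model up to extensions, is exactly what closes this gap.
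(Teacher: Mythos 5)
Your proof is correct, but it is worth noting how it sits relative to the paper: the paper's entire proof is ``straight from the definition,'' i.e.\ the author treats the required instance $\nfs{M^u}{M^u}{M^v}{N}$ (a reflexivity/triviality statement, since $u \cap v = u$) as immediate, whereas you actually supply the missing derivation. Your reduction is right: monotonicity of $\cl$ plus coherence give $M^u \lea M^v \lea N$, so everything hinges on $\nfs{M_0}{M_0}{M_2}{M_3}$ for $M_0 \lea M_2 \lea M_3$, which is not literally one of the $\Axfr$ axioms. Your route --- existence with $M_1 = M_0$, a renaming via invariance to get $\nfs{M_0}{M_0}{M_2}{N^\sharp}$ with $M_2 \lea N^\sharp$, then passing through $\nfcl{M_0}{M_0}{M_2}{M_2}$ and the monotonicity and equivalence clauses of Lemma \ref{nfcl-very-basic} to land in the prescribed ambient model --- is valid, and since Lemma \ref{nfcl-very-basic} precedes this lemma there is no circularity (its backward direction does the ambient-restriction work for you). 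An alternative that stays closer to the raw axioms: from $\nfs{M_0}{M_0}{M_2}{N^\sharp}$, apply Fact \ref{basic-axfr-props}.(\ref{basic-props-1}) to restrict the ambient model to $\cl^{N^\sharp}(M_0 \cup M_2)$, observe that in the intended application this equals $M^v$ (by idempotence of $\cl$ and Monotonicity 2, since $M^v$ is itself a closure in $N$), and then use Monotonicity 1 with $M^v \lea N$ to reach $\nfs{M^u}{M^u}{M^v}{N}$. Either way, your argument fills in detail the paper elides rather than taking a genuinely different path, and you correctly identified the one non-trivial point, namely getting the independence statement into the specific ambient model $N$.
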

\begin{proof}
  Straight from the definition.
\end{proof}

We now translate the properties of Section \ref{axfr-sec} into properties of the relations $u \nf v$ and $u \nf^N v$.

\begin{lem}\label{basic-uv-props}
  Let $u,v, w \subseteq \alpha$ be closed.

  \begin{enumerate}
    \item Symmetry: If $u \nf v$, then $v \nf u$. If $u \nf^N v$, then $v \nf^N u$.
    \item\label{base-enlarg} Base enlargement: If $u \nf^N v$, $u \cap v \subseteq w \subseteq v$, and $M^w \lea M^v$, then $uw \nf^N v$.
    \item\label{trans} Transitivity: If $u \nf^N v$, $uv \nf^N w$, $uv \cap w = u$, and $M^{v \cap w} \lea M^w$, then $v \nf^N w$.
  \end{enumerate}
\end{lem}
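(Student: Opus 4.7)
The plan is to treat each clause as an instance of the corresponding property of $\nf$ (or its set-valued extension $\nfcl$) from Section \ref{axfr-sec}, after a short set-theoretic calculation on the index sets. All required $\lea$-relations between $M^x$'s will come from coherence, since every model of the form $M^x$ that appears lies below $N$ or below some common $M^y$.

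Symmetry is immediate from the Symmetry axiom of $\nf$ in Definition \ref{axfr-def} together with the identities $u \cap v = v \cap u$ and $uv = vu$.

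For base enlargement, I would first observe that $(u \cup w) \cap v = w$, since by hypothesis $u \cap v \subseteq w \subseteq v$; hence the desired conclusion reads $\nfs{M^w}{M^{uw}}{M^v}{N}$. The inclusions $u \cap v \subseteq w \subseteq v$ and coherence (everything sits below $N$ via $M^v \lea N$) yield $M^{u \cap v} \lea M^w \lea M^v$, so the Base enlargement axiom of $\Axfr$ applied to $\nfs{M^{u \cap v}}{M^u}{M^v}{N}$ delivers $\nfs{M^w}{\cl^N(M^w \cup M^u)}{M^v}{N}$, and Lemma \ref{basic-mu-calculus} identifies $\cl^N(M^w \cup M^u)$ with $M^{uw}$.

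For transitivity, the key set-theoretic observation is that $u \cap v = v \cap w$: since $u \subseteq uv$ and $uv \cap w = u$ we get $u \subseteq w$, so $u \cap v \subseteq v \cap w$; conversely, $v \subseteq uv$ gives $v \cap w \subseteq uv \cap w = u$, so $v \cap w \subseteq u \cap v$. Hence it suffices to establish $\nfs{M^{u \cap v}}{M^v}{M^w}{N}$. From $u \nf^N v$ and Symmetry we have $\nfcl{M^{u \cap v}}{M^v}{M^u}{N}$; from $uv \nf^N w$ (which gives $\nfs{M^u}{M^{uv}}{M^w}{N}$) together with $\nfcl$-monotonicity (Lemma \ref{nfcl-very-basic}) applied to shrink $M^{uv}$ to $M^v$, one obtains $\nfcl{M^u}{M^v}{M^w}{N}$. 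Because $M^{u \cap v} \lea M^u$, the $\nfcl$-transitivity of Fact \ref{basic-indep-facts} then gives $\nfcl{M^{u \cap v}}{M^v}{M^w}{N}$. Coherence supplies the remaining $\lea$-relations (in particular $M^{v \cap w} \lea M^v$ from $|M^{v \cap w}| \subseteq |M^v|$ together with both sitting $\lea N$), so Lemma \ref{nfcl-very-basic} lets us rewrite this as $\nfs{M^{v \cap w}}{M^v}{M^w}{N}$, i.e.\ $v \nf^N w$.

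There is no serious obstacle here: the only real care needed is bookkeeping the set identities (most importantly $u \cap v = v \cap w$ in (\ref{trans})) and toggling between the $\nf$-form and the $\nfcl$-form when invoking monotonicity or transitivity, because $\nf$ only takes models on its two sides whereas we often need to replace $M^{uv}$ by the non-$\lea$-comparable $M^v$.
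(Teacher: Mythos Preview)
Your proof is correct and follows essentially the same plan as the paper: translate each clause into the corresponding $\Axfr$ axiom after a short index-set computation. Parts (1) and (2) match the paper's argument exactly.

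For part (3) there is a minor technical difference worth noting. You observe (correctly) that the hypothesis $uv \cap w = u$ forces $u \cap v = v \cap w$ exactly, then pass to the set-valued relation $\nfm$: you shrink $M^{uv}$ to $M^v$ via $\nfm$-monotonicity and invoke the $\nfm$-transitivity of Fact \ref{basic-indep-facts}. The paper instead stays with the model-valued $\nf$ throughout: it applies the $\nf$-transitivity of Fact \ref{basic-axfr-props}.(\ref{basic-props-2}) with $M^{uv}$ as the intermediate model (using that $u \nf^N v$ gives $M^{uv} \lea N$), obtaining $\nfs{M^{u\cap v}}{M^v}{M^w}{N}$; it then only records $u \cap v \subseteq v \cap w$ and closes the gap with a final application of base enlargement. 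Your sharper set identity makes that last base-enlargement step unnecessary, while the paper's route avoids the detour through $\nfm$. Both are equally short and neither requires any idea the other lacks.
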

\begin{proof} \
  \begin{enumerate}
    \item Straightforward from the symmetry axiom.
    \item Directly from the base enlargement axiom (note that $uw \cap v = w$), see Definition \ref{axfr-def}.
    \item Let $M_0 := M^{u \cap v}$, $M_1 := M^v$, $M_2 := M^u$, $M_3 := M^{uv}$, $M_4 := M^w$, $M_5 := N$. We know that $u \nf^N v$, so $M^{uv} \lea N$ and $u \nf v$, hence $\nfs{M_0}{M_1}{M_2}{M_3}$ holds. We know that $uv \nf^N w$ (so in particular $M^{uvw} \lea N$) and $uv \cap w = u$, i.e.\ $M^{uv \cap w} = M^u = M_2$, so $\nfs{M_2}{M_3}{M_4}{M_5}$ holds. Applying Fact \ref{basic-axfr-props}.(\ref{basic-props-2}), we obtain $\nfs{M_0}{M_1}{M_4}{M_5}$, i.e.\ $\nfs{M^{u \cap v}}{M^v}{M^w}{N}$. Now since $uv \cap w = u$, we must have that $u \subseteq w$ and $v \cap w \subseteq u$. Therefore $u \cap v \subseteq w \cap v$. By coherence, $M^{u \cap v} \lea M^{v \cap w} \lea M^w$. By base enlargement, $\nfs{M^{v \cap w}}{M^v}{M^w}{N}$, i.e.\ $v \nf^N w$.
  \end{enumerate}
\end{proof}

A key part of the proof of generalized symmetry is a concatenation property telling us when $uv \nf^N w$ if we know something about $u$ and $v$ separately. We start with the following result:

\begin{lem}\label{pre-concat-lem}
  Let $u, v, w \subseteq \alpha$ be closed. If:
  
  \begin{enumerate}
  \item\label{hyp-1} $u(v \cap w) \nf ^N w$.
  \item\label{hyp-2} $v \nf^N uw$.
  \item\label{hyp-3} $M^{u(v \cap w)} \lea M^{uw}$.
  \item\label{hyp-4} $M^{uv \cap w} \lea M^{uv}$.
  \end{enumerate}

  Then $uv \nf^N w$.
\end{lem}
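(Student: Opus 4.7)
The plan is to reach the conclusion in two moves: first derive an intermediate forking statement $uv \nf^N uw$ from hypotheses (2) and (3) by base enlargement, and then combine this intermediate with hypotheses (1) and (4) via transitivity to trim the right-hand side from $uw$ down to $w$. Both moves are applications of parts of Lemma \ref{basic-uv-props}, and the bulk of the work is set-theoretic bookkeeping.

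For the first move I would apply Lemma \ref{basic-uv-props}.(\ref{base-enlarg}) to hypothesis (2), $v \nf^N uw$, choosing the enlargement set to be $u(v \cap w)$. The required chain of inclusions $v \cap uw \subseteq u(v \cap w) \subseteq uw$ follows from $v \cap uw = (v \cap u) \cup (v \cap w) \subseteq u \cup (v \cap w) \subseteq u \cup w$, and the model-ordering premise $M^{u(v \cap w)} \lea M^{uw}$ is precisely hypothesis (3). The lemma then yields $v \cdot u(v \cap w) \nf^N uw$; since $v \cup u \cup (v \cap w) = uv$, this simplifies to the intermediate statement
\[
(\ast)\qquad uv \nf^N uw.
\]

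For the second move I would apply Lemma \ref{basic-uv-props}.(\ref{trans}) with the substitution $u \mapsto u(v \cap w)$, $v \mapsto w$, $w \mapsto uv$. The first premise, $u(v \cap w) \nf^N w$, is exactly hypothesis (1). The second premise, $u(v \cap w) \cdot w \nf^N uv$, reduces to $uw \nf^N uv$, which is $(\ast)$ together with symmetry. The intersection condition reads $uw \cap uv = (u \cup w) \cap (u \cup v) = u \cup (v \cap w) = u(v \cap w)$, as required. Finally, the model-ordering premise $M^{w \cap uv} \lea M^{uv}$ is exactly hypothesis (4). The lemma concludes $w \nf^N uv$, and one more application of symmetry delivers the desired $uv \nf^N w$.

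The main obstacle I would anticipate is simply locating the right intermediate target $(\ast)$ and matching the roles to the substitution templates of the base-enlargement and transitivity lemmas; once those choices are made, nothing more is needed beyond the symmetry of $\nf^N$ and the elementary set-theoretic identities noted above.
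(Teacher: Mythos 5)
Your proof is correct and matches the paper's argument step for step: the same application of Lemma \ref{basic-uv-props}.(\ref{base-enlarg}) with $(v, uw, u(v\cap w))$ to get $uv \nf^N uw$, followed by the same application of Lemma \ref{basic-uv-props}.(\ref{trans}) with $(u(v\cap w), w, uv)$ and symmetry. The set-theoretic checks you spell out (e.g.\ $v \cap uw \subseteq u(v\cap w)$ and $uw \cap uv = u(v\cap w)$) are exactly the ones the paper leaves implicit or notes in passing.
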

\begin{proof}
  We apply base enlargement with $u, v, w$ in \ref{basic-uv-props}.(\ref{base-enlarg}) standing for $v, uw, u(v \cap w)$ here. The hypotheses hold by (\ref{hyp-2}) and (\ref{hyp-3}). We obtain $uv \nf^N uw$. We want to apply transitivity, where $u, v, w$ in \ref{basic-uv-props}.(\ref{trans}) stand for $u(v \cap w)$, $w$, $uv$ here. The conditions there are:
      \begin{itemize}
        \item $u \nf^N v$, which translates to $u(v \cap w) \nf^N w$ here (holds by (\ref{hyp-1})).
        \item $uv \nf^N w$, which translates to $uw \nf^N uv$ here (holds by the paragraph above and symmetry).
        \item $uv \cap w = u$, which translates to $u(v \cap w)w \cap uv = u (v \cap w)$, i.e.\ $uw \cap uv = u (v \cap w)$, which is true.
        \item $M^{v \cap w} \lea M^w$, which translates to $M^{w \cap uv} \lea M^{uv}$, which is true by (\ref{hyp-4}).
      \end{itemize}

      Therefore the conclusion of transitivity holds. In our case, this means that $w \nf^N uv$. By symmetry, $uv \nf^N w$, as desired.
\end{proof}

\begin{lem}\label{pre-concat-2-lem}
  Let $u, v, w \subseteq \alpha$ be closed. If:
  \begin{enumerate}
  \item\label{pre-concat-2-1} $u \nf^N w$.
  \item\label{pre-concat-2-2} $M^{uv \cap w} \lea N$.
  \item\label{pre-concat-2-3} $M^{u (v \cap w)} \lea M^{u(v \cap w)}$.
  \end{enumerate}
  
  Then $u (v \cap w) \nf^N w$.  
\end{lem}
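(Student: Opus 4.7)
The plan is to obtain $u(v \cap w) \nf^N w$ by a single application of base enlargement (Lemma \ref{basic-uv-props}.(\ref{base-enlarg})) to the independence $u \nf^N w$ given by hypothesis (\ref{pre-concat-2-1}), enlarging the base from $u \cap w$ to the closed set $t := uv \cap w = (u \cap w) \cup (v \cap w)$.

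First I would verify the set-theoretic setup needed to invoke base enlargement. Recall that the statement requires $u \cap w \subseteq t \subseteq w$ (both of which are immediate from the definition of $t$) together with $M^t \lea M^w$. To establish the latter, note that $M^w \lea N$ holds by hypothesis (\ref{pre-concat-2-1}) (any instance of $\nf^N$ produces $\lea N$ for each side), while $M^t \lea N$ is precisely hypothesis (\ref{pre-concat-2-2}). Since $t \subseteq w$ entails $M^t \subseteq M^w$ as subsets of $N$, coherence inside $N$ gives $M^t \lea M^w$ as required.

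Applying base enlargement to $u \nf^N w$ with the enlarged base $t$ now yields $ut \nf^N w$. A direct calculation of unions gives $u \cup t = u \cup (u \cap w) \cup (v \cap w) = u \cup (v \cap w) = u(v \cap w)$, and likewise $u(v \cap w) \cap w = (u \cap w) \cup (v \cap w) = t$, so the base of the resulting relation is indeed $M^{u(v \cap w) \cap w}$. Thus $u(v \cap w) \nf^N w$, as desired.

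There is no real obstacle here: the argument is a direct application of base enlargement, with hypothesis (\ref{pre-concat-2-2}) supplying precisely the coherence that lets us slide from the original base $u \cap w$ up to $uv \cap w$. Hypothesis (\ref{pre-concat-2-3}) as printed ($M^{u(v \cap w)} \lea M^{u(v \cap w)}$) is a trivial self-inclusion and so plays no role in the proof; I read it as a typographical artifact for some inequality such as $M^{u(v \cap w)} \lea N$, which in any case is recovered automatically from the conclusion $u(v \cap w) \nf^N w$.
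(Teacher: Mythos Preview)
Your argument is correct and is in fact more direct than the paper's. The paper proves the lemma by invoking Lemma~\ref{pre-concat-lem} with $(u,v,w)$ there standing for $(w\cap v,\, u,\, w)$ here, which internally performs a base enlargement followed by a transitivity step. You instead apply base enlargement once to $u \nf^N w$, enlarging the base from $u\cap w$ to $t = uv\cap w$, and observe that $ut = u(v\cap w)$; this is exactly the base-enlargement step hidden inside the paper's citation of Lemma~\ref{pre-concat-lem}, while the transitivity step there collapses (in this instance $uw = w$ and $uv = u(v\cap w)$, so it re-derives the conclusion from itself). Your route makes clear that hypothesis~(\ref{pre-concat-2-3}) is not needed: as printed it is trivially true, and even its intended reading $M^{uv\cap w} \lea M^{u(v\cap w)}$ is only used by the paper to feed condition~(\ref{hyp-4}) of Lemma~\ref{pre-concat-lem}, which your direct argument never invokes. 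The paper's approach has the advantage of reusing a single concatenation lemma uniformly, at the cost of carrying an extra hypothesis; yours is shorter and shows the lemma holds from~(\ref{pre-concat-2-1}) and~(\ref{pre-concat-2-2}) alone.
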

\begin{proof}
  We use Lemma \ref{pre-concat-lem} with $u, v, w$ there standing for $w \cap v$, $u$, $w$ here. Let us check the hypotheses:
  
  \begin{itemize}
    \item (\ref{hyp-1}) there translates to $(w \cap v) (u \cap w) \nf^N w$ here. So it is enough to see that $M^w \lea N$ and $M^{uv \cap w} \lea N$. This holds by (\ref{pre-concat-2-1}) and (\ref{pre-concat-2-2}).
    \item (\ref{hyp-2}) there translates to $u \nf^N w$ here, which is (\ref{pre-concat-2-1}).
    \item (\ref{hyp-3}) there translates to $M^{uv \cap w} \lea M^w$ here. This holds by (\ref{pre-concat-2-1}), (\ref{pre-concat-2-2}), and coherence.
    \item (\ref{hyp-4}) there translates to $M^{uv \cap w} \lea M^{u (v \cap w)}$ here. This holds by (\ref{pre-concat-2-3}).
  \end{itemize}

  The hypotheses hold, so we obtain that $u (v \cap w) \nf^N w$, as needed.
\end{proof}

Finally, we obtain a usable concatenation property.

\begin{lem}[Concatenation]\label{concat-lem}
  Let $u, v, w \subseteq \alpha$ be closed. If:
  
  \begin{enumerate}
  \item\label{concat-hyp-1} $u \nf ^N w$.
  \item\label{concat-hyp-2} $v \nf^N uw$.
  \item\label{concat-hyp-3} $M^{uv \cap w} \lea N$.
  \item\label{concat-hyp-4} $M^{u (v \cap w)} \lea N$.
  \item\label{concat-hyp-5} $M^{uv} \lea N$.
  \end{enumerate}

  Then $uv \nf^N w$.
\end{lem}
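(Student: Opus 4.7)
The plan is to reduce to Lemma \ref{pre-concat-lem} (which is the previous ``pre-concatenation'' result), using Lemma \ref{pre-concat-2-lem} to provide the first of its four hypotheses. Concretely, I intend to invoke Lemma \ref{pre-concat-lem} with the same $u,v,w$ as here, and verify each of its hypotheses (\ref{hyp-1})--(\ref{hyp-4}) in turn from (\ref{concat-hyp-1})--(\ref{concat-hyp-5}) together with coherence (recalling that for $A\subseteq B\subseteq |N|$ with $M^A,M^B\lea N$ and $M^A\subseteq M^B$, coherence yields $M^A\lea M^B$).

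First I would observe the easy set-theoretic containments $uv\cap w=(u\cap w)\cup(v\cap w)\subseteq u(v\cap w)\subseteq uw$ and $uv\cap w\subseteq uv$. From (\ref{concat-hyp-1}) we get $M^{uw}\lea N$; from (\ref{concat-hyp-3}), (\ref{concat-hyp-4}), (\ref{concat-hyp-5}) we get $M^{uv\cap w}\lea N$, $M^{u(v\cap w)}\lea N$, and $M^{uv}\lea N$. By Lemma \ref{basic-mu-calculus} each of the corresponding inclusions $M^{\bullet}\subseteq M^{\bullet\bullet}$ holds, so coherence gives
\[
M^{uv\cap w}\lea M^{u(v\cap w)},\qquad M^{u(v\cap w)}\lea M^{uw},\qquad M^{uv\cap w}\lea M^{uv}.
\]
This already supplies hypotheses (\ref{hyp-3}) and (\ref{hyp-4}) of Lemma \ref{pre-concat-lem}; hypothesis (\ref{hyp-2}) of that lemma is just (\ref{concat-hyp-2}).

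The only nontrivial step is establishing hypothesis (\ref{hyp-1}) of Lemma \ref{pre-concat-lem}, i.e.\ $u(v\cap w)\nf^N w$. For this I apply Lemma \ref{pre-concat-2-lem}: hypothesis (\ref{pre-concat-2-1}) there is our (\ref{concat-hyp-1}); hypothesis (\ref{pre-concat-2-2}) is our (\ref{concat-hyp-3}); and (\ref{pre-concat-2-3}) reduces to $M^{uv\cap w}\lea M^{u(v\cap w)}$, which we just established via coherence from (\ref{concat-hyp-3}) and (\ref{concat-hyp-4}). Thus Lemma \ref{pre-concat-2-lem} yields $u(v\cap w)\nf^N w$, completing the verification of (\ref{hyp-1}) of Lemma \ref{pre-concat-lem}.

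With all four hypotheses of Lemma \ref{pre-concat-lem} satisfied, its conclusion gives $uv\nf^N w$, as desired. The main obstacle is purely bookkeeping: tracking the many set identities of the form $uv\cap w=(u\cap w)\cup(v\cap w)$ and $(u(v\cap w))\cap w=u(v\cap w)$ to make sure the intersections in the two preceding lemmas line up correctly with what is available from the hypotheses. No further independence-calculus is required beyond symmetry, base enlargement, transitivity, and coherence, all of which were already used in Lemmas \ref{pre-concat-lem} and \ref{pre-concat-2-lem}.
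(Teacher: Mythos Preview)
Your proposal is correct and follows essentially the same route as the paper: invoke Lemma~\ref{pre-concat-lem} with the same $u,v,w$, supply its hypothesis~(\ref{hyp-1}) via Lemma~\ref{pre-concat-2-lem}, and verify the remaining hypotheses by coherence from (\ref{concat-hyp-1})--(\ref{concat-hyp-5}). The paper's proof is organized the same way, with the same bookkeeping of which hypothesis yields which $M^{\bullet}\lea N$ statement needed for coherence.
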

\begin{proof}
  We use Lemma \ref{pre-concat-lem}. Let us check the hypotheses:
  
  \begin{itemize}
    \item (\ref{hyp-1}) says $u(v \cap w) \nf^N w$. This holds by Lemma \ref{pre-concat-2-lem}. Note that (\ref{pre-concat-2-1}) there holds by (\ref{concat-hyp-1}), (\ref{pre-concat-2-2}) there holds by (\ref{concat-hyp-3}), and (\ref{pre-concat-2-3}) there holds by (\ref{concat-hyp-3}), (\ref{concat-hyp-4}), and coherence.
    \item (\ref{hyp-2}) there is (\ref{concat-hyp-2}) here.
    \item (\ref{hyp-3}) there is given by (\ref{concat-hyp-1}), (\ref{concat-hyp-4}), and coherence.
    \item (\ref{hyp-4}) there is given by (\ref{concat-hyp-3}), (\ref{concat-hyp-5}), and coherence.
  \end{itemize}

  The hypotheses hold, so we obtain that $uv \nf^N w$, as needed.
\end{proof}

Another key ingredient of the proof of generalized symmetry is a continuity property that tells us how to deal with increasing chains $\seq{u_i : i < \delta}$ of closed sets. At that point, the following hypothesis will appear in some of the statements (we do \emph{not} assume it globally).

\begin{defin}\label{cl-algebraic}
  We say that \emph{$\cl$ is algebraic} if for any $M, N \in \K$ with $M \subseteq N$ and any $A \subseteq |M|$, $\cl^M (A) = \cl^N (A)$.
\end{defin}

Recall that we are working under Hypothesis \ref{tree-hyp-0}, so $\cl$ is in particular a fixed operator satisfying Monotonicity 2 (Definition \ref{axfr-def}.(\ref{axfr-def-mon2})). The difference here is that we assume that closure is the same whenever $M \subseteq N$ (not only under the stronger condition $M \lea N$).

Note that if $\cl^N (A)$ is the closure of $A$ under the functions of $N$, then $\cl$ is algebraic. This will be the closure operator when we study universal classes, so we do not lose much by assuming it here. In fact, we could have assumed from the beginning that $\cl^N (A) $ was the closure of $A$ under the functions of $N$. For the purpose of proving the main result of this paper, we would not lose anything.

\begin{lem}\label{basic-mu-calculus-cont}
  Assume that $\cl$ is algebraic. Let $\delta$ be a limit ordinal and let $\seq{u_i : i \le \delta}$ be an increasing continuous chain of closed sets. If for all $i < \delta$, $M^{u_i}$ is a $\tau (\K)$-structure, then $M^{u_\delta} = \bigcup_{i < \delta} M^{u_i}$.
\end{lem}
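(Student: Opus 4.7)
The plan is to pass the closure across the union by exploiting algebraicity. First I would use continuity of $\seq{u_i : i \le \delta}$ to write $u_\delta = \bigcup_{i < \delta} u_i$, and unravel the definition of $M^{u_\delta}$ to get $M^{u_\delta} = \cl^N(\bigcup_{i < \delta} A_i)$, where $A_i := |M_0| \cup \bigcup_{j \in u_i} |M_j|$. An application of Fact \ref{cont-of-cl}.(2) rewrites this as $\cl^N(\bigcup_{i < \delta} \cl^N(A_i)) = \cl^N(\bigcup_{i < \delta} M^{u_i})$. Thus, writing $M^\ast := \bigcup_{i < \delta} M^{u_i}$, it suffices to show $\cl^N(M^\ast) = M^\ast$ (as subsets of $|N|$).

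Next I would argue that $M^\ast$ is actually in $\K$. By hypothesis each $M^{u_i}$ is a $\tau(\K)$-substructure of $N$ (identified with a member of $\K$ satisfying $M^{u_i} \lea N$, as is standard in $\Axfr$). For $i \le j < \delta$, monotonicity of $\cl^N$ gives $M^{u_i} \subseteq M^{u_j}$, so coherence yields $M^{u_i} \lea M^{u_j}$. Hence $\seq{M^{u_i} : i < \delta}$ is a $\lea$-increasing chain in $\K$, and the Tarski-Vaught axiom (which holds in any weak AEC) gives $M^\ast \in \K$ with $M^{u_i} \lea M^\ast$ for each $i < \delta$.

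The subtle point—and the main obstacle—is that in a weak AEC smoothness may fail, so we cannot in general conclude $M^\ast \lea N$. This is exactly where the algebraicity of $\cl$ enters: since $M^\ast \subseteq N$ with both $M^\ast, N \in \K$, algebraicity gives $\cl^N(|M^\ast|) = \cl^{M^\ast}(|M^\ast|)$, and idempotence of $\cl$ collapses the right-hand side to $|M^\ast|$. Combined with the first paragraph this yields $M^{u_\delta} = \cl^N(M^\ast) = M^\ast = \bigcup_{i < \delta} M^{u_i}$, as desired. In short, algebraicity is the device that replaces the missing smoothness and lets the closure on $N$ be computed inside the (merely substructure) union $M^\ast$.
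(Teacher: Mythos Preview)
Your proof follows essentially the same route as the paper's: reduce $M^{u_\delta}$ to $\cl^N(M^\ast)$ via Fact~\ref{cont-of-cl}, then use algebraicity of $\cl$ to conclude $\cl^N(M^\ast) = M^\ast$. The paper does these two steps in the reverse order but the content is identical.

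One small point: your justification that each $M^{u_i} \lea N$ (``as is standard in $\Axfr$'') overreaches. The stated hypothesis only says $M^{u_i}$ is a $\tau(\K)$-structure, i.e.\ a $\tau(\K)$-substructure of $N$; nothing in the $\Axfr$ axioms promotes this to $M^{u_i} \lea N$ or even $M^{u_i} \in \K$. So your coherence-plus-Tarski--Vaught argument for $M^\ast \in \K$ is not fully grounded. The paper's proof sidesteps this by simply noting $M_\delta \subseteq N$ as $\tau(\K)$-structures and invoking algebraicity directly --- which, strictly speaking, also needs $M_\delta \in \K$ per Definition~\ref{cl-algebraic}. Both proofs are really leaning on the concrete situation (see the remark after Definition~\ref{cl-algebraic}) where $\cl$ is closure under the functions of $N$, in which case $\cl^N$ fixes any $\tau(\K)$-substructure and the issue evaporates. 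Your extra care is commendable, but the specific claim $M^{u_i} \lea N$ is not what the hypothesis gives you.
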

\begin{proof}
  Let $M_\delta := \bigcup_{i < \delta} M^{u_i}$.

  First observe that $M_\delta \subseteq N$, because for all $i < \delta$, $M^{u_i} \subseteq N$ (as we are assuming it is a $\tau (\K)$-structure and by definition it must inherit the function symbols from $N$). Therefore because $\cl$ is algebraic, $\cl^{N} (M_\delta) = \cl^{M_\delta} (M_\delta) = M_\delta$. But $\cl^N (M_\delta) = \cl^N \left(\bigcup_{i < \delta} \cl^{N} (M_0 \cup \bigcup_{j \in u_i} M_j)\right)$. By Fact \ref{cont-of-cl}, this is just $\cl^N \left(\bigcup_{i < \delta} (M_0 \cup \bigcup_{j \in u_i} M_i)\right) = \cl^N (M_0 \cup \bigcup_{i \in u_\delta} M_i) = M^{u_\delta}$. Combining the chains of equalities, we have the result.
\end{proof}

\begin{lem}[Continuity]\label{cont-lemma}
  Assume that $\cl$ is algebraic.
  
  Let $\delta$ be a limit ordinal and let $\seq{u_i : i \le \delta}$, $\seq{v_i : i \le \delta}$ be increasing continuous chains of closed sets. If for all $i, j < \delta$: 

  \begin{enumerate}
    \item\label{cont-hyp-1} $u_i \nf v_j$.
    \item\label{cont-hyp-2} $u_\delta \cap v_i \nf v_j$.
    \item\label{cont-hyp-3} $v_\delta \cap u_i \nf u_j$.
  \end{enumerate}

  Then $u_\delta \nf v_\delta$.
\end{lem}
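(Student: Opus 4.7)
The plan is to deduce $u_\delta \nf v_\delta$ from the local independences by two successive applications of the chain continuity principle, Fact \ref{basic-axfr-props}.(\ref{basic-props-4}): first fixing the right side as $M^{v_k}$ and growing the left along the $u$-chain, then fixing the left as $M^{u_\delta}$ and growing the right along the $v$-chain. Hypotheses (2) and (3) will supply the $\lea$-coherence at the limit that a weak AEC does not provide automatically.

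For Stage 1, I would fix $k < \delta$ and establish $u_\delta \nf v_k$, i.e.\ $\nfs{M^{u_\delta \cap v_k}}{M^{u_\delta}}{M^{v_k}}{M^{u_\delta v_k}}$. Apply Fact \ref{basic-axfr-props}.(\ref{basic-props-4}) with $M_i := M^{u_i \cap v_k}$, $N_i^a := M^{u_i}$, $N_i^b := M^{u_i v_k}$ for $i \le \delta$, and top $M_{\delta+1} := M^{v_k}$. The forking instance $\nfs{M_i}{N_i^a}{M_{\delta+1}}{N_i^b}$ at each $i<\delta$ is precisely hypothesis (1) ($u_i \nf v_k$), and the closure identity $N_i^b = \cl^{N_i^b}(M_{\delta+1} \cup N_i^a)$ follows from Lemma \ref{basic-mu-calculus} together with the algebraicity of $\cl$. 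Continuity at $\delta$ comes from Lemma \ref{basic-mu-calculus-cont}, and the $\lea$-increasing character of the three chains (including the top-$\lea$ requirement $M_\delta \lea M_{\delta+1}$) is derived by coherence from the degenerate instance $u_\delta \cap v_k \nf v_k$ of hypothesis (2), which forces $M^{u_\delta \cap v_k} \lea M^{v_k}$. The output of Fact \ref{basic-axfr-props}.(\ref{basic-props-4}) is then $u_\delta \nf v_k$.

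Stage 2 is the symmetric application. Using the symmetry axiom of $\nf$ to interchange left and right, I apply Fact \ref{basic-axfr-props}.(\ref{basic-props-4}) again with $M_k := M^{u_\delta \cap v_k}$, $N_k^a := M^{v_k}$, $N_k^b := M^{u_\delta v_k}$ for $k \le \delta$, and top $M_{\delta+1} := M^{u_\delta}$. The forking instance $v_k \nf u_\delta$ at each stage $k < \delta$ is Stage 1 composed with symmetry, the closure identity is verified as in Stage 1, and continuity at $\delta$ follows from Lemma \ref{basic-mu-calculus-cont}. The conclusion of Fact \ref{basic-axfr-props}.(\ref{basic-props-4}) is $v_\delta \nf u_\delta$, which is $u_\delta \nf v_\delta$ by one last application of symmetry.

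The main obstacle is verifying $M^{u_\delta \cap v_\delta} \lea M^{u_\delta}$, which is the $\lea$-chain condition $M_\delta \lea M_{\delta+1}$ in the Stage 2 setup. Since $\K$ is only a weak AEC, smoothness may fail, and so the individual $\lea$-relations $M^{u_\delta \cap v_k} \lea M^{u_\delta}$ coming out of Stage 1 do not automatically ``sum up'' to this at the limit. This is precisely what hypothesis (3) is designed to supply: a Stage-1-style application of Fact \ref{basic-axfr-props}.(\ref{basic-props-4}) to hypothesis (3), with the roles of the $u$- and $v$-chains swapped, uses the degenerate instances $v_\delta \cap u_i \nf u_i$ (which force $M^{v_\delta \cap u_i} \lea M^{u_i}$) to produce $M^{u_\delta \cap v_\delta} \lea M^{u_\delta}$ directly, bypassing smoothness of $\K$. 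With this in hand, the two applications of Fact \ref{basic-axfr-props}.(\ref{basic-props-4}) go through and deliver $u_\delta \nf v_\delta$, as required.
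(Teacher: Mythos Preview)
Your two-stage architecture is correct and is essentially the mirror image of the paper's own proof: the paper first proves $u_j \nf v_\delta$ for each $j<\delta$ (your Stage 1 with $u,v$ swapped), then grows the $u$-side to get $u_\delta \nf v_\delta$ (your Stage 2 with $u,v$ swapped), using the limit condition $M^{u_\delta \cap v_\delta} \lea M^{v_\delta}$ (your obstacle, symmetrically). So the plan is right.

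There is, however, a small but genuine gap in how you resolve the obstacle $M^{u_\delta \cap v_\delta} \lea M^{u_\delta}$. You propose a ``Stage-1-style application of Fact \ref{basic-axfr-props}.(\ref{basic-props-4})'' using hypothesis (3). But Fact \ref{basic-axfr-props}.(\ref{basic-props-4}) requires as \emph{input} that $\seq{M_i : i \le \delta+1}$ is increasing; in particular $M_\delta \lea M_{\delta+1}$ must already be known. In any setup where the output would yield $M^{u_\delta \cap v_\delta} \lea M^{u_\delta}$, this is precisely the relation you are trying to establish, so the application is circular. The paper avoids this by using Fact \ref{basic-axfr-props}.(\ref{basic-props-3}) instead (its Claim 1): with $M_i := M^{u_\delta \cap v_i}$ and $N_i := M^{v_i}$, hypothesis (\ref{cont-hyp-2}) supplies the square $\nfs{M_i}{M_j}{N_i}{N_j}$ for $i<j<\delta$, and Fact \ref{basic-axfr-props}.(\ref{basic-props-3}) then outputs $\nfs{M_\delta}{M_\delta}{N_\delta}{N_\delta}$, which contains $M^{u_\delta \cap v_\delta} \lea M^{v_\delta}$. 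Symmetrically, your obstacle is handled by Fact \ref{basic-axfr-props}.(\ref{basic-props-3}) applied with $M_i := M^{v_\delta \cap u_i}$, $N_i := M^{u_i}$, using hypothesis (\ref{cont-hyp-3}). Once you replace Fact (\ref{basic-props-4}) by Fact (\ref{basic-props-3}) at that one point, your proof goes through and matches the paper's.
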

\begin{proof}
  \underline{Claim 1}: $M^{u_\delta \cap v_\delta} \lea M^{v_\delta}$. 

  \underline{Proof of Claim 1}: We use Fact \ref{basic-axfr-props}.(\ref{basic-props-3}) where $M_i, N_i$ there stand for $M^{u_\delta \cap v_i}$, $M^{v_i}$ here. Why is $\seq{M^{u_\delta \cap v_i} : i \le \delta}$ $\subseteq$-increasing and continuous? Note that $M^{u_\delta \cap v_i}$ is a member of $\K$ for each $i < \delta$ (by (\ref{cont-hyp-2})), and the chain is increasing by definition of $M^{u_\delta \cap v_i}$. The continuity is because $\seq{v_i : i \le \delta}$ is itself continuous (use Lemma \ref{basic-mu-calculus-cont}). Similarly, $\seq{M^{v_i} : i \le \delta}$ is $\subseteq$-increasing continuous. Also, (\ref{cont-hyp-2}) ensures that the independence hypothesis of Fact \ref{basic-axfr-props}.(\ref{basic-props-3}) is satisfied. Therefore we have in particular that $M_\delta \lea N_\delta$ there. That is, $M^{u_\delta \cap v_\delta} \lea M^{v_\delta}$. $\dagger_{\text{Claim}}$.

  \underline{Claim 2}: For all $j < \delta$, $u_j \nf v_\delta$.
  
  \underline{Proof of Claim 2}: Fix $j < \delta$. We will show that $v_\delta \nf u_j$. For this, we use Fact \ref{basic-axfr-props}.(\ref{basic-props-4}) where $M_i$, $M_{\delta + 1}$, $N_i^a$, $N_i^b$ there stand for $M^{u_j \cap v_{j + i}}$, $M^{u_j}$, $M^{v_{j + i}}$, $M^{u_j v_{j + i}}$ here (so we see $M_{\delta + 1}$ as really the ``fixed'' part and the $N_i^a$'s as the ``growing'' part). All the hypotheses of Fact \ref{basic-axfr-props}.(\ref{basic-props-4}) are satisfied. In detail, we have to check that there $M_\delta \lea M_{\delta + 1}$, which here translates to $M^{u_j \cap v_{\delta}} \lea M^{u_j}$, but this holds by (\ref{cont-hyp-3}). Also, $N_i^b = \cl^{N_i^b} (M_{\delta + 1} \cup N_i^a)$ there translates to $M^{u_j v_{j + i}} = \cl^{M_{u_j v_{j + i}}} (M_{u_j} \cup M^{v_{j + i}})$. This holds because $M^{u_j v_{j + i}} \subseteq N$ (by (\ref{cont-hyp-1}), $M^{u_j v_{j + i}} \in \K$, and hence by definition it must be a substructure of $N$), and hence because $\cl$ is algebraic, $\cl^N (A) = \cl^{M^{u_j v_{j + i}}} (A)$ for any set $A$. The other conditions are checked similarly. Applying Fact \ref{basic-axfr-props}.(\ref{basic-props-4}), we obtain that $v_\delta \nf u_j$, and hence by symmetry $u_j \nf v_\delta$ as desired. $\dagger_{\text{Claim 2}}$.

  To prove that $u_\delta \nf v_\delta$, we use Fact \ref{basic-axfr-props}.(\ref{basic-props-4}) again where $M_i, M_{\delta  + 1}$, $N_i^a$, $N_i^b$ there stand for $M^{u_i \cap v_i}$, $M^{v_\delta}$, $M^{u_i}$, $M^{u_i v_\delta}$ here. We need to know there that $M_\delta \lea M_{\delta + 1}$, i.e.\ $M^{u_\delta \cap v_\delta} \lea M^{v_\delta}$, but this is given by Claim 1. Further by Claim 2, $u_i \nf v_\delta$ for every $i < \delta$, so the hypotheses of Fact \ref{basic-axfr-props}.(\ref{basic-props-4}) hold.
\end{proof}

With the forking calculus out of the way, we are ready to start proving generalized symmetry. First, we state what it means for a tree to be independent. The intuition is that for any $i \tleq j$, $M_j$ is independent over $M_i$ of as much as possible that comes before $j$ in the enumeration of the tree. We use a slightly different notation than in e.g.\ \cite{sh87a, sh87b} but the notion described is the same.

\begin{defin}\label{indep-def}
  $\mathcal{T}$ is \emph{independent} if for any $i \tleq j < \alpha$: 

  $$
  \nfcl{M_i}{M_j}{\bigcup_{k \in A_{i, j}} M_k}{N}
  $$

  where $\nfm$ is from Definition \ref{nfm-def} and:

  $$
  A_{i, j} := \{k < j \mid \pred_{\tleq} (k) \cap \pred_{\tleq} (j) \subseteq \pred_{\tleq} (i)\}
  $$
\end{defin}

From now on until Lemma \ref{kb-constr}, we assume:

\begin{hypothesis}\label{good-indep-hyp}
  $\mathcal{T}$ (from Hypothesis \ref{tree-hyp}) is independent.
\end{hypothesis}

Our aim is to prove Theorem \ref{generalized-sym} which gives conditions under which $u \nf v$ for any closed sets $u$ and $v$. We prove increasingly stronger approximations to this result, each time using the previously proven approximations. First, we prove it when $u$ and $v$ are closed bounded branches.

\begin{lem}\label{two-branches}
  If $a$ and $b$ are closed bounded branches, then $a \nf^N b$.
\end{lem}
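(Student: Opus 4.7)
The plan is to reduce the statement, via Lemma \ref{cb-union}, to a question about three specific models in the tree, and then to invoke the independence hypothesis directly. Set $j := \topp(a)$ and $k := \topp(b)$, so that $M^a = M_j$ and $M^b = M_k$. If $a \subseteq b$ (or symmetrically $b \subseteq a$), then $a \cap b = a$, so $M^{a \cap b} = M^a$, and the conclusion follows immediately from Lemma \ref{trivial-subset-result}. In the remaining case, $c := a \cap b$ (which is a branch by general nonsense and is closed by Lemma \ref{closed-closure}) satisfies $c \subsetneq a$ and $c \subsetneq b$.

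The first key step in this main case is to show that $c$ has a $\tleq$-maximum. Suppose for contradiction that it does not. Since $a$ is well-ordered by $\tleq$ (being contained in $\pred_\tleq(\topp(a))$) and $c$ is a proper downward-closed subset of $a$, the set $a \setminus c$ has a $\tleq$-minimum $j^*$. Using that $\pred(j^*) \subseteq a$ (as $a$ is a branch) together with the choice of $j^*$, one verifies $\pred(j^*) = c$. Symmetrically, $\pred(k^*) = c$ for $k^* := \min_\tleq(b \setminus c)$. The continuity of $(\alpha, \tleq)$ (Definition \ref{cont-tree-def}) then forces $j^* = k^* =: i$, so $i \in a \cap b = c$ --- contradicting $i = j^* \in a \setminus c$. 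Hence $c$ has a maximum $i$, and $M^c = M_i$ by Lemma \ref{cb-union}.

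Since $i = \max(c) \in a \cap b$, we have $i \tleq j$ and $i \tleq k$. By the symmetry of the conclusion we may assume $k \le j$ as ordinals, and in fact $k < j$: equality would force $a, b \in \{\pred(j), \pred_\tleq(j)\}$ and hence comparable, contradicting our current assumption that neither of $a, b$ contains the other. To apply the independence of $\mathcal{T}$ at $(i, j)$, I verify $k \in A_{i, j}$: given $l \tleq j$ and $l \tleq k$, a short case split (the only delicate point being to rule out $l = k$ when $b$ has no max, which would give $b = \pred(k) \subseteq a$ and hence $c = b$, contradicting $c \subsetneq b$) shows $l \in a \cap b = c$, and therefore $l \tleq i$. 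Definition \ref{indep-def} then yields $\nfcl{M_i}{M_j}{\bigcup_{l \in A_{i,j}} M_l}{N}$; monotonicity of $\nfcl$ (Lemma \ref{nfcl-very-basic}) restricts this to $\nfcl{M_i}{M_j}{M_k}{N}$, which, together with $M_i \lea M_j \lea N$ and $M_i \lea M_k \lea N$, gives $\nfs{M_i}{M_j}{M_k}{N}$, i.e., $a \nf^N b$. The main obstacle is the continuity argument in the second paragraph, which uses the continuity axiom of the tree precisely to rule out a ``limit-typed'' intersection of the two branches; once this is in hand, the rest is an essentially one-line application of the independence hypothesis.
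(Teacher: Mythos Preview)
Your proof is correct and follows the same approach as the paper: reduce via Lemma \ref{cb-union} to the models $M_{\topp(a)}$, $M_{\topp(b)}$, $M_{\topp(a\cap b)}$, then invoke the independence hypothesis (Definition \ref{indep-def}) and monotonicity. The paper is terser---it applies $\topp$ to $a\cap b$ directly (without first arguing that $a\cap b$ has a maximum) and leaves the verifications $k \tleq i$ and $j \in A_{k,i}$ implicit---whereas you spell out the continuity argument and the membership check explicitly; the substance is the same.
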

\begin{proof}
  Let $i := \topp (a)$, $j := \topp (b)$. By Lemma \ref{cb-union}, $M^a = M_i$, $M^b = M_j$. By Definition \ref{cont-tree-def}.(\ref{tree-def-2}), $M^a \lea N$ and $M^b \lea N$. Note that $a \cap b$ is also a closed bounded branch so $M^{a \cap b} \lea N$ also. By coherence, $M^{a \cap b} \lea M^x$ for $x \in \{a, b\}$. By symmetry, we can assume without loss of generality that $j \le i$. Furthermore, if $i = j$ then Lemma \ref{trivial-subset-result} gives the result, so assume $j < i$. Let $k := \topp (a \cap b)$. By Lemma \ref{cb-union} again, $M^{a \cap b} = M_k$. Now by Definition \ref{indep-def}, we must have that $\nfcl{M_k}{M_i}{M_j}{N}$. By what we have argued, we must actually have $\nfs{M_k}{M_i}{M_j}{N}$, i.e.\ $a \nf^N b$, as needed.
\end{proof}

Next, we prove it when $u$ is a closed and bounded branch and $v$ is a bounded finite union of closed branches that comes before $u$ in the enumeration of the tree (see Condition (\ref{top-cond}) below).

\begin{lem}\label{one-branch-one-ordered}
  If:
  
  \begin{enumerate}
    \item $a$ is a closed and bounded branch.
    \item $v$ is a closed and bounded set with $B (v)$ finite.
    \item\label{top-cond} $\topp (a) \ge \topp (v)$.
  \end{enumerate}

  Then $a \nf^N v$.
\end{lem}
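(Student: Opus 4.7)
The plan is to proceed by induction on $n := |B (v)|$. For $n = 0$, $v$ is empty (every element of $v$ lies in some maximal branch), and $a \nf^N v$ follows from a trivial application of Lemma~\ref{trivial-subset-result}. For $n = 1$, the set $v$ coincides with its unique maximal branch, so $v$ is itself a closed bounded branch, and Lemma~\ref{two-branches} applies directly.

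For the inductive step, assuming the claim holds whenever $|B(v')| < n$, pick $b_0 \in B (v)$ maximizing $\topp (b_0)$ and set $v_0 := b_0$, $v_1 := \bigcup \{b \in B (v) : b \ne b_0\}$. By Lemma~\ref{closed-closure} both sets are closed; $v_0$ is a closed bounded branch, $|B (v_1)| < n$, and $v_0 \cap v_1$ is an initial segment of $v_0$, hence itself a closed bounded branch. The choice of $b_0$ gives $\topp (v_0) \ge \topp (v_1)$, and the hypothesis of the lemma gives $\topp (a) \ge \topp (v) \ge \topp (v_1)$. The inductive hypothesis therefore supplies $a \nf^N v_1$ and $v_0 \nf^N v_1$, while Lemma~\ref{two-branches} applied to the closed bounded branches $a$ and $v_0$ supplies $a \nf^N v_0$.

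To combine these three pairwise independences into the desired joint independence $a \nf^N v = v_0 v_1$, I would invoke the concatenation lemma (Lemma~\ref{concat-lem}) with $u := v_0$, $v := v_1$, $w := a$, which yields $v \nf^N a$ and hence $a \nf^N v$ by symmetry. The coherence conditions (3)--(5) of Lemma~\ref{concat-lem}—namely that $M^{v \cap a}$, $M^{v_0 (v_1 \cap a)}$, and $M^v$ all lie $\lea N$—reduce via Lemma~\ref{cb-union} (applied to the closed bounded branches $v \cap a$ and $v_1 \cap a$, both initial segments of $a$) and Fact~\ref{basic-axfr-props}.(\ref{basic-props-1}) (applied to the three pairwise independences above) to routine verifications.

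The main obstacle is hypothesis~(2) of Lemma~\ref{concat-lem}, the intermediate independence $v_1 \nf^N v_0 a$. A naive second application of concatenation is circular, because it would require the very statement we are trying to prove. The plan is to break the circularity by exploiting an additional structural feature of the decomposition: the sets $\{a \cap b : b \in B (v)\}$ are initial segments of the branch $a$ and hence totally ordered by inclusion, so one can refine the choice of $b_0$ so as to also maximize the overlap $a \cap b_0$ among branches of maximal $\topp$-value. This refinement ensures $a \cap v_1 \subseteq a \cap v_0$, so that $a$ meets the decomposition only through $v_0$; the required $v_1 \nf^N v_0 a$ then reduces, via base enlargement (Lemma~\ref{basic-uv-props}.(\ref{base-enlarg})) applied to $v_1 \nf^N v_0$ together with the coherence facts above, to a strictly smaller instance already delivered by the inductive hypothesis. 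Handling this combinatorial refinement—and checking that the two maximality requirements on $b_0$ can be met compatibly (or, failing that, via a nested parallel induction on the number of $\topp$-maximal branches)—is the core of the argument.
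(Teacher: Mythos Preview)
Your approach via concatenation is workable in spirit but is significantly harder than the paper's route, and the place where you flag the difficulty is a genuine gap that your proposed refinement does not close. The condition ``maximize $a \cap b_0$ among branches of maximal $\topp$-value'' does \emph{not} guarantee $a \cap v_1 \subseteq a \cap v_0$: take a tree with branches $b_0 = \{0,1,10\}$, $b' = \{0,1,2,5\}$, and $a = \{0,1,2,20\}$ (so $0 \tlt 1$, then $1$ splits into $2$ and $10$, and $2$ splits into $5$ and $20$). Here $\topp(b_0) = 10 > 5 = \topp(b')$, yet $a \cap b' = \{0,1,2\} \supsetneq \{0,1\} = a \cap b_0$. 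If instead you maximize $a \cap b_0$ outright, you lose $\topp(b_0) \ge \topp(v_1)$ and the inductive hypothesis no longer yields $v_0 \nf^N v_1$. Your fallback nested induction might be made to work, but you have not shown how, and even if it can be, you would essentially be reproving Lemma~\ref{concat-step-3} without its key input.

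The insight you are missing is that Hypothesis~\ref{good-indep-hyp} (the tree $\mathcal{T}$ is independent) should be used \emph{directly}, not only through Lemma~\ref{two-branches}. The paper's proof inducts on $n = |B(v)|$ only to establish $M^v \lea N$ (via $b_{n-1} \nf^N b_0 \cdots b_{n-2}$ from the inductive hypothesis). Once that is known, set $j := \topp(a)$ and $i := \topp(a \cap v)$; the hypothesis $\topp(a) \ge \topp(v)$ forces every $k \in v$ to satisfy $k < j$ and $\pred_{\tleq}(k) \cap \pred_{\tleq}(j) \subseteq v \cap a \subseteq \pred_{\tleq}(i)$, so $v \subseteq A_{i,j}$. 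Tree independence (Definition~\ref{indep-def}) then gives $\nfcl{M_i}{M_j}{\bigcup_{k \in A_{i,j}} M_k}{N}$, hence by monotonicity $\nfcl{M^{a \cap v}}{M^a}{M^v}{N}$, and since all four models are already known to sit $\lea N$, Lemma~\ref{nfcl-very-basic} yields $a \nf^N v$ immediately. The concatenation machinery is reserved for the next step (Lemma~\ref{concat-step-3}), where the present lemma is what supplies the troublesome hypothesis~(2).
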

\begin{proof}
  Let $n := |B (v)|$. We work by induction on $n$. If $n = 1$, the result holds by Lemma \ref{two-branches}. Otherwise, say $B(v) = \{b_0, \ldots, b_{n - 1}\}$, where without loss of generality $\topp (b_0) < \topp (b_1) < \ldots < \topp (b_{n - 1})$. By the induction hypothesis, $b_{n - 1} \nf^N b_0 \ldots b_{n - 2}$. In particular, $M^v = M^{b_0 \ldots b_{n - 1}} \lea N$. Now using Definition \ref{indep-def} (or Lemma \ref{trivial-subset-result} if $\topp (a) = \topp (v)$, so $a \subseteq v$), it is easy to check that $\nfcl{M^{a \cap v}}{M^a}{M^v}{N}$, so the result follows.
\end{proof}

Next, we can show that $M^u \lea N$ when $u$ is a bounded finite union of closed branches.

\begin{lem}\label{lt-finite}
  If $u$ and $v$ are bounded closed sets with $B (u)$ and $B (v)$ both finite, then:
  \begin{enumerate}
    \item $M^u \lea N$.
    \item $u \subseteq v$ implies $M^u \lea M^v$.
  \end{enumerate}
\end{lem}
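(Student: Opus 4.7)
The plan is to prove (1) by induction on $n := |B(u)|$, using Lemma \ref{one-branch-one-ordered} as the engine of the induction step, and then to derive (2) from (1) using coherence.

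Before starting the induction, I would record three structural facts about a closed bounded set $u$ with $B(u)$ finite. First, each $b \in B(u)$ is itself a closed bounded branch in the sense needed by Lemma \ref{cb-union}: it is a branch contained in $u$, so it inherits boundedness from $u$, and since $B(b) = \{b\}$, clause (\ref{closed-2}) of Definition \ref{closed-def} applied to $b$ reduces to a single instance that follows from the closedness of $u$. Second, $u = \bigcup B(u)$: for every $i \in u$ the set $\pred_{\tleq}(i) = \pred(i) \cup \{i\}$ is a bounded branch contained in $u$ (using Definition \ref{closed-def}.(\ref{closed-1})), and it extends by a Zorn argument to some element of $B(u)$. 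Third, distinct elements of $B(u)$ have distinct tops: this is a short case analysis on whether $\topp(b)$ is the $\tleq$-maximum of $b$ or not, combined with the continuity clause of Definition \ref{cont-tree-def}. These observations let us enumerate $B(u) = \{b_0, \ldots, b_{n-1}\}$ with $\topp(b_0) < \cdots < \topp(b_{n-1})$.

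For the base case $n = 1$, $u$ equals its unique maximal branch, which is closed and bounded, so $M^u = M_{\topp(u)} \lea N$ by Lemma \ref{cb-union} and Definition \ref{cont-tree-def}.(\ref{tree-def-2}). For the inductive step, set $u' := b_0 \cup \cdots \cup b_{n-2}$. By Lemma \ref{closed-closure}, $u'$ is closed; it is plainly bounded (any branch in $u'$ is a branch in $u$), and a short argument using maximality shows $B(u') = \{b_0, \ldots, b_{n-2}\}$, so $|B(u')| = n - 1$ and the induction hypothesis yields $M^{u'} \lea N$. The ordering of tops gives $\topp(b_{n-1}) \ge \topp(u')$, so Lemma \ref{one-branch-one-ordered} applies with $a = b_{n-1}$ and $v = u'$ to produce $b_{n-1} \nf^N u'$; in particular $M^{b_{n-1} \cup u'} \lea N$. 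Since $u = \bigcup B(u) = b_{n-1} \cup u'$, this gives $M^u \lea N$. For (2), if $u \subseteq v$ then $M^u \subseteq M^v$ by monotonicity of $\cl$, and part (1) together with coherence in the weak AEC $\K$ yields $M^u \lea M^v$.

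The main obstacle I anticipate is the combinatorial bookkeeping around the tree rather than any forking calculus: checking that $u = \bigcup B(u)$, that tops of distinct elements of $B(u)$ are distinct, and that passing from $u$ to $u'$ strictly decreases $|B(u)|$ while preserving closedness and boundedness. Once these tree-theoretic facts are laid out, the rest is a direct appeal to Lemma \ref{one-branch-one-ordered}.
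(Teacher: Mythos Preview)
Your proposal is correct and follows essentially the same approach as the paper: reduce (2) to (1) via coherence, and for (1) peel off the branch with largest top and apply Lemma~\ref{one-branch-one-ordered} with $a = b_{n-1}$ and $v = b_0 \cup \cdots \cup b_{n-2}$. The paper's proof is slightly leaner in that it does not set up an explicit induction on $|B(u)|$: the hypotheses of Lemma~\ref{one-branch-one-ordered} do not require $M^{u'} \lea N$, so your induction hypothesis is stated but never actually used (the induction is already hidden inside the proof of Lemma~\ref{one-branch-one-ordered}). Your additional bookkeeping about $u = \bigcup B(u)$ and distinctness of tops is a welcome clarification of details the paper leaves implicit.
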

\begin{proof}
  The second part follows from the first and coherence. For the first part, let $n := |B (u)|$ and write $B(u) = \{b_0, \ldots, b_{n - 1}\}$ with $\topp (b_0) < \ldots < \topp (b_{n - 1})$. If $n = 1$, the result follows from Lemma \ref{two-branches} (where $a, b$ there stand for $u, u$ here) so assume that $n \ge 2$. Apply Lemma \ref{one-branch-one-ordered} where $a, v$ there stand for $b_{n - 1}$, $b_0 \ldots b_{n - 2}$ here.
\end{proof}

We now use the previous result together with concatenation to show that $u \nf^N v$ when $u$ and $v$ are bounded finite union of closed branches.

\begin{lem}\label{concat-step-3}
  If $u$ and $v$ are closed bounded sets with $B (u)$ and $B (v)$ both finite, then $u \nf^N v$.
\end{lem}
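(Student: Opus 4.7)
The plan is to proceed by induction on $n + m := |B(u)| + |B(v)|$. The base case $n = m = 1$ is exactly Lemma \ref{two-branches}. For the inductive step I distinguish three sub-cases, depending on whether $u$ and $v$ share a dominated branch.

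First, suppose some $a \in B(u)$ satisfies $a \subseteq v$. Writing $u = a \cup u'$ with $u' := \bigcup (B(u) \setminus \{a\})$, I apply the concatenation Lemma \ref{concat-lem} with its $u, v, w$ instantiated as $a, u', v$. Hypothesis (\ref{concat-hyp-1}), $a \nf^N v$, holds by Lemma \ref{trivial-subset-result} together with Lemma \ref{lt-finite}; hypothesis (\ref{concat-hyp-2}), $u' \nf^N (a \cup v) = u' \nf^N v$, holds by the inductive hypothesis since $|B(u')| + |B(v)| = (n-1) + m < n + m$. The closure conditions (\ref{concat-hyp-3})--(\ref{concat-hyp-5}) are handled by Lemma \ref{lt-finite}, using the identity $a \cup (u' \cap v) = u \cap v$ (which holds because $a \subseteq v$ forces $a \cap v = a$). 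The symmetric case, some $b \in B(v)$ contained in $u$, is handled by invoking symmetry of $\nf^N$ (Lemma \ref{basic-uv-props}) and applying the previous case.

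The main sub-case remains: no branch of $u$ is contained in $v$ and no branch of $v$ is contained in $u$. Let $b^* \in B(u) \cup B(v)$ realize the maximum value of $\topp$. If $b^* \in B(u) \cap B(v)$ then $b^* \in B(u)$ is contained in $v$, contradicting the sub-case hypothesis, so by symmetry I may assume $b^* \in B(u) \setminus B(v)$. Writing $u = u' \cup b^*$ with $u' := \bigcup (B(u) \setminus \{b^*\})$, I apply concatenation with its $u, v, w$ instantiated as $u', b^*, v$: hypothesis (\ref{concat-hyp-1}) is $u' \nf^N v$, which is the inductive hypothesis; hypothesis (\ref{concat-hyp-2}) is $b^* \nf^N (u' \cup v)$, which follows from Lemma \ref{one-branch-one-ordered} because $\topp(b^*) \ge \topp(u' \cup v)$ by the choice of $b^*$.

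The main obstacle I anticipate is closure condition (\ref{concat-hyp-4}), $M^{u' \cup (b^* \cap v)} \lea N$: the set $b^* \cap v$ need not itself be closed in the sense of Definition \ref{closed-def}, as it may be a limit branch of $b^*$ whose supremum lies outside $v$. The workaround is the set-theoretic identity $u' \cup (b^* \cap v) = u \cap (u' \cup v)$, which exhibits this set as the intersection of the closed sets $u$ and $u' \cup v$; by Lemma \ref{closed-closure} it is therefore closed, and by Lemma \ref{branch-injection} its set of maximal branches is finite (as it is contained in $u$). Lemma \ref{lt-finite} then delivers $M^{u' \cup (b^* \cap v)} \lea N$, and the other closure conditions are routine applications of the same. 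Concatenation now yields $u = u' \cup b^* \nf^N v$, completing the induction.
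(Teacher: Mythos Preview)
Your proof is correct and follows essentially the same strategy as the paper: induction on $|B(u)| + |B(v)|$, peeling off the branch with maximal $\topp$, and invoking the concatenation lemma together with Lemma \ref{one-branch-one-ordered} and Lemma \ref{lt-finite}. However, your organization is more elaborate than needed. Sub-cases 1 and 2 are redundant: your sub-case 3 works verbatim without its extra hypothesis, since the only role of ``no branch of $u$ lies in $v$'' is to argue $b^* \notin B(u)\cap B(v)$, but the argument goes through regardless (just take $b^* \in B(u)$ by symmetry and proceed). The paper does exactly this: assume $\topp(u)\ge\topp(v)$, let $a_{n-1}$ be the top branch of $u$, and apply concatenation with $u' = a_0\ldots a_{n-2}$ --- no case split on containment.

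One small wrinkle: when $|B(u)| = 1$ in your sub-case 3, you set $u' = \emptyset$ and justify $\emptyset \nf^N v$ as ``the inductive hypothesis,'' but $|B(\emptyset)| = 1$ (the empty branch is maximal in $\emptyset$), so $|B(\emptyset)| + |B(v)| = 1 + m = n + m$ is not strictly smaller. The statement $\emptyset \nf^N v$ is nonetheless trivially true (it unwinds to $\nfs{M_0}{M_0}{M^v}{N}$), so the argument survives; the paper avoids this by handling $n=1$ directly via Lemma \ref{one-branch-one-ordered}. Your workaround for hypothesis (\ref{concat-hyp-4}) via the identity $u' \cup (b^*\cap v) = u \cap (u'\cup v)$ is exactly what the paper's invocation of Lemma \ref{lt-finite} implicitly relies on.
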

\begin{proof}
  Work by induction on $|B (u)| + |B (v)|$. By symmetry, without loss of generality $\topp (u) \ge \topp (v)$. Let $n := |B (u)|$. Write $B (u) = \{a_0 , \ldots, a_{n - 1}\}$ with $\topp (a_0) < \ldots < \topp (a_{n - 1})$.  If $n = 1$, the result is given by Lemma \ref{one-branch-one-ordered}, so assume now that $n \ge 2$. We use concatenation (Lemma \ref{concat-lem}) with $u, v, w$ there standing for $a_0 \ldots a_{n - 2}$, $a_{n - 1}$, $v$ here. Let us check the hypotheses:

  \begin{itemize}
    \item (\ref{concat-hyp-1}) there translates to $a_0 \ldots a_{n - 2} \nf^N v$ here. This holds by the induction hypothesis.
    \item (\ref{concat-hyp-2}) there translates to $a_{n - 1} \nf^N a_0 \ldots a_{n - 2} v$ here. This holds by Lemma \ref{one-branch-one-ordered}.
    \item (\ref{concat-hyp-3})-(\ref{concat-hyp-5}) there hold by Lemma \ref{lt-finite}.
  \end{itemize}

  The hypotheses hold, so we obtain $a_0 \ldots a_{n - 1} \nf^N v$, as desired.
\end{proof}

Next, we can use the continuity property to prove generalized symmetry for all closed bounded sets.

\begin{lem}\label{concat-step-4}
  Assume that $\cl$ is algebraic. If $u$ and $v$ are closed bounded sets, then $u \nf v$.
\end{lem}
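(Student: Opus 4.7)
My plan is to induct on the cardinal $\kappa := |B(u)| + |B(v)|$. When $\kappa < \omega$ the result is exactly Lemma~\ref{concat-step-3}, so I focus on the case $\kappa \geq \omega$. I will enumerate $B(u) = \{a_\eta : \eta < \kappa\}$ and $B(v) = \{b_\eta : \eta < \kappa\}$, padding with repetitions to give both enumerations length $\kappa$, and set $u_\xi := \bigcup_{\eta < \xi} a_\eta$ and $v_\xi := \bigcup_{\eta < \xi} b_\eta$ for $\xi \leq \kappa$. These chains are increasing and continuous at limits, with $u_\kappa = u$, $v_\kappa = v$, and $|B(u_\xi)|, |B(v_\xi)| \leq |\xi| < \kappa$ for $\xi < \kappa$.

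The central verification will be that each $u_\xi$ is closed in the sense of Definition~\ref{closed-def}. Downward closure under $\pred$ is automatic since each $a_\eta$ is a branch. For the branch condition, given $b \in B(u_\xi) \setminus \{\emptyset\}$, Lemma~\ref{branch-injection} produces some $a_{\eta_0} \in B(u)$ with $b \subseteq a_{\eta_0}$. The easy case is $\eta_0 < \xi$: then $a_{\eta_0} \subseteq u_\xi$, so maximality of $b$ in $u_\xi$ forces $b = a_{\eta_0}$, and closedness of $u$ yields $\bigcup_{i \in b} M_i \lea N$. The subtle case is $\eta_0 \geq \xi$ with $a_{\eta_0} \not\subseteq u_\xi$: here $b = a_{\eta_0} \cap u_\xi$ is a proper initial segment which, by continuity of the tree $(\alpha, \tleq)$, must equal $\pred(j)$ for some $j \in u \setminus u_\xi$. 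I will preempt this by choosing the enumeration carefully so that, for each such ``limit'' node $j \in u$, a branch of $B(u)$ through $j$ appears early enough that $j \in u_\xi$ whenever $\pred(j) \subseteq u_\xi$; since $u$ is bounded there are at most $\kappa$ such nodes and this can be arranged without lengthening the chain.

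With closedness established, I will invoke the induction hypothesis on the pairs $(u_\xi, v_\eta)$, $(u \cap v_\eta, v_\xi)$, and $(v \cap u_\eta, u_\xi)$ for $\xi, \eta < \kappa$, using Lemma~\ref{branch-injection} to bound $|B(u \cap v_\eta)| \leq |B(v_\eta)| < \kappa$ and symmetrically. This supplies the three independence hypotheses of Lemma~\ref{cont-lemma}, whose application --- where the assumption that $\cl$ is algebraic enters --- then yields $u \nf v$.

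The main obstacle will be precisely the closedness bookkeeping at limit stages of the construction: guaranteeing that proper initial-segment branches of the partial unions do not violate clause~(\ref{closed-2}) of Definition~\ref{closed-def}. Once the enumeration is suitably arranged, everything else is a routine ordinal induction driven by the forking calculus (in particular Lemmas~\ref{basic-uv-props} and~\ref{cont-lemma}) developed earlier in the section.
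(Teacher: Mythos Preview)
Your approach is essentially the paper's: induct on the cardinality of the branch set, dispatch the finite case via Lemma~\ref{concat-step-3}, and in the infinite case build increasing continuous resolutions $u_\xi, v_\xi$ and feed them to Lemma~\ref{cont-lemma}. The paper inducts on $|B(u\cup v)|$ rather than $|B(u)|+|B(v)|$, but this is cosmetic.

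Where you diverge is in your treatment of the closedness of the partial unions $u_\xi$. The paper simply asserts ``each member of the chain is a closed bounded set'' and moves on; you correctly isolate the delicate case: a maximal branch $b\in B(u_\xi)$ with no maximum may equal $\pred(j)$ for a limit node $j\in u\setminus u_\xi$, and nothing so far guarantees $\bigcup_{i\tlt j}M_i\lea N$. Your instinct to preempt this by interleaving into the enumeration, for each such $j$, a branch of $B(u)$ through $j$, is the right idea.

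However, your stated justification for the bookkeeping --- ``since $u$ is bounded there are at most $\kappa$ such nodes'' --- does not stand on its own. Boundedness controls the maximal branches, not the limit nodes along them: a single bounded branch can pass through arbitrarily many limit nodes, and without further hypotheses arbitrarily many of these can be non-smooth. What does give you the bound is \emph{resolvability} (Definition preceding Lemma~\ref{resolution-lem}): under that hypothesis each branch meets only finitely many non-smooth limit nodes, so there are at most $|B(u)|\cdot\aleph_0\le\kappa$ problematic nodes in $u$, and your interleaving goes through. Since resolvability is assumed in the only place this lemma is invoked (Theorem~\ref{generalized-sym}), adding it as a hypothesis here costs nothing.
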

\begin{proof}
  Let $\lambda := |B (u \cup v)|$. We work by induction on $\lambda$. If $\lambda < \aleph_0$, then this is taken care of by Lemma \ref{concat-step-3}. Otherwise, say $B (u) = \seq{a_i : i < \lambda}$ and $B (v) = \seq{b_i : i < \lambda}$ (we allow repetition in the enumerations). For $i \le \lambda$, let $u_i := \bigcup_{j < i} b_j$ and $v_i := \bigcup_{j < i} b_j$. It is easy to check that $\seq{u_i : i \le \lambda}$, $\seq{v_i : i \le \lambda}$ are increasing continuous resolutions of $u$ and $v$ respectively. Moreover, each member of the chain is a closed bounded set. We apply Lemma \ref{cont-lemma} (where $\delta$ there stands for $\lambda$ here). Its hypotheses hold by the induction hypothesis. We obtain that $u_\lambda \nf v_\lambda$, as desired.
\end{proof}

When $u$ or $v$ is not bounded, we will make an additional hypothesis which says that branches do not have too many non-smooth points. In the case we are interested in (see Theorem \ref{tree-constr-thm}), each branch will have at most one nonsmooth point, so this hypothesis is reasonable. Note again that we \emph{do not} assume this globally, only in some statements. 

\begin{defin}
  $\mathcal{T}$ is \emph{resolvable} if for any branch $b \subseteq \alpha$, $\{i \in b \mid \bigcup_{j \tlt i} M_j \not \lea N\}$ is finite.
\end{defin}

\begin{defin}
  For $u \subseteq \alpha$, let $B' (u) := \{b \in B (u) \mid b \text{ is unbounded}\}$.
\end{defin}

Assuming that $\mathcal{T}$ is resolvable, we show that every closed set has a resolution with fewer unbounded branches than the original set. This will allow us to do a proof by induction on $|B' (u)|$.

\begin{lem}\label{resolution-lem}
  Assume that $\mathcal{T}$ is resolvable. 

  \begin{enumerate}
    \item Let $b$ be a closed branch. Then there is a limit ordinal $\delta$ and an increasing continuous sequence of closed bounded branches $\seq{b_i : i \le \delta}$ such that $b = b_\delta$.
    \item Let $u$ be a closed unbounded set. Then there is a limit ordinal $\delta$ and an increasing continuous sequence $\seq{u_i : i \le \delta}$ of closed sets such that $u_\delta = u$ and for all $i < \delta$, $|B' (u_i)| < |B' (u)|$.
  \end{enumerate}
\end{lem}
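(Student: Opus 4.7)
For part~(1), if $b$ is bounded then the constant sequence $b_i := b$ for $i \le \omega$ already works (it is increasing, continuous, and made of closed bounded branches). Otherwise $\beta := \otp(b, \tleq)$ is a limit ordinal and we enumerate $b = \seq{k_\eta : \eta < \beta}$. Resolvability applied to $b$ tells us that the set of ``non-smooth indices'' along $b$ (those $\eta < \beta$ at which the relevant union of models below fails to be $\lea N$) is finite. Pick $\xi_0 < \beta$ strictly above this finite set, and let $\seq{\xi_i : i \le \delta}$ be any strictly increasing continuous cofinal sequence in $[\xi_0, \beta]$ with $\xi_\delta = \beta$. Setting $b_i := \{k_\eta : \eta < \xi_i\}$ then yields the required sequence: each $b_i$ with $i < \delta$ is bounded (it has maximum $k_{\xi_i-1}$ when $\xi_i$ is a successor, and equals $\pred(k_{\xi_i})$ when $\xi_i$ is a limit), and closedness follows from the choice of $\xi_0$.

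For part~(2), fix $b^* \in B'(u)$ and apply part~(1) to obtain $\seq{b_i^* = \{k_\eta : \eta < \xi_i\} : i \le \delta}$. Since $b^*$ is maximal in $u$, for each $l \in u \setminus b^*$ the intersection $\pred(l) \cap b^*$ is a proper initial segment $\{k_\eta : \eta < \zeta_l\}$ of $b^*$ with $\zeta_l < \beta$. Define
\[
u_i := b_i^* \cup \{l \in u \setminus b^* : \zeta_l < \xi_i\}.
\]
Downward closure of $u_i$ is routine (using $\zeta_{l'} \le \zeta_l$ for $l' \in \pred(l) \setminus b^*$); the \emph{strict} inequality $\zeta_l < \xi_i$ gives continuity at limits, and $u_\delta = u$ because $\zeta_l < \beta$ always. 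For the cardinality bound, if $b' \in B'(u_i)$ is unbounded in $\alpha$ then no $\tleq$-upper bound for $b'$ exists in $\alpha$ (else $b'$ would coincide with $\pred(l)$ for some $l$, contradicting unboundedness), so no element of $u$ extends $b'$, giving $b' \in B'(u)$; since $k_{\xi_i} \in b^* \setminus u_i$, we conclude $B'(u_i) \subseteq B'(u) \setminus \{b^*\}$.

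The main obstacle is closedness of each $u_i$. For $b' \in B(u_i)$ nonempty: if $b'$ has a maximum then $\bigcup_{m \in b'} M_m = M_{\max b'} \lea N$; if $b'$ is unbounded in $\alpha$ then by the previous paragraph $b' \in B'(u)$ and closedness of $u$ applies. The delicate case is $b'$ bounded with no maximum, where $b' = \pred(l^*)$ for a unique $l^*$ and we need $l^* \notin S$. A case analysis on $l^*$ gives three sub-cases: if $l^* \notin u$, the continuity axiom of the enumerated tree forces $\pred(l^*) \in B(u)$, so $u$-closedness gives $l^* \notin S$; if $l^* \in b^*$, downward closure of $b'$ in $u_i$ forces $l^* = k_{\xi_i}$, handled by $\xi_i$ avoiding the non-smooth indices from part~(1); and if $l^* \in u \setminus b^*$, one shows necessarily $\zeta_{l^*} = \xi_i$, and then a direct calculation using that every $m \in \pred(l^*) \setminus b^*$ must satisfy $m \tlt k_\eta$ for some $\eta < \xi_i$ (the opposite possibility is ruled out by $\zeta_m < \xi_i$) together with Definition~\ref{cont-tree-def}.(4) collapses $\bigcup_{m \tlt l^*} M_m$ to $\bigcup_{\eta < \xi_i} M_{k_\eta}$, so that closedness of $b_i^*$ already established in part~(1) yields $l^* \notin S$.
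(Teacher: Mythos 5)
Your part (1) is correct and is essentially the paper's own argument: beyond the finitely many non-smooth points of the branch, every proper initial segment of $b$ is a closed bounded branch, and a club of such segments resolves $b$ continuously. Your finite-case treatment of part (2) --- resolving a single $b^\ast \in B'(u)$ and slicing the rest of $u$ by the ordinal $\zeta_l$ at which $\pred(l)$ leaves $b^\ast$ --- is also correct; in particular your three-case verification that each $u_i$ is closed (the cases $l^\ast \notin u$, $l^\ast \in b^\ast$, $l^\ast \in u \setminus b^\ast$ for $b' = \pred(l^\ast)$) is sound, and is in fact more detailed than what the paper writes for its finite case.

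The gap is the case where $B'(u)$ is infinite, which the statement of the lemma does not exclude. Your counting argument only yields $B'(u_i) \subseteq B'(u) \setminus \{b^\ast\}$, and when $B'(u)$ is infinite this gives $|B'(u_i)| \le |B'(u)|$, not the required strict inequality. Moreover the failure is not just in the argument but in the construction: suppose the other unbounded maximal branches of $u$ all split off from $b^\ast$ near the root, say $b_j \cap b^\ast = \{k_0\}$ for every $b_j \in B'(u) \setminus \{b^\ast\}$, with $B'(u)$ infinite. Then for every $l \in b_j \setminus b^\ast$ one has $\zeta_l \le 1$, so for every $i < \delta$ with $\xi_i \ge 2$ (hence for all sufficiently large $i < \delta$) we get $b_j \subseteq u_i$, and since $b_j$ is maximal in $u$ it is maximal in $u_i$, so $b_j \in B'(u_i)$. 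Thus $|B'(u_i)| = |B'(u)|$ for those $i$, violating the conclusion. The paper avoids this by a case split: when $\lambda := |B'(u)|$ is infinite it enumerates $B'(u) = \{b_j : j < \lambda\}$ and takes $u_i := v \cup \bigcup_{j < i} b_j$ with $v$ the bounded remainder, i.e.\ it adds \emph{whole} unbounded branches one at a time so that each $u_i$ omits a tail of them, and only for finite $\lambda$ does it resolve the branches themselves (by concatenating the part-(1) resolutions), which is the role your construction plays. So your proof covers exactly the finite case (which is, incidentally, the only case invoked in the proof of Theorem \ref{generalized-sym}, since the infinite case of that induction is handled separately), but as stated the lemma requires the infinite case as well, and your construction must be replaced or supplemented there.
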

\begin{proof} \
  \begin{enumerate}
    \item If $b$ is bounded, we can take $b = b_i$ for all $i \le \delta$, so assume that $b$ is unbounded. Since $\mathcal{T}$ is resolvable, we know that there exists $i \in b$ such that for all $i' \ge i$, $\bigcup_{j \tlt i'} M_j \lea N$. In other words, $\pred (i')$ is closed. So let $\delta := \otp (b)$ and write $b \backslash i = \seq{i_j : j < \delta}$. For $j < \delta$, let $b_j := \pred (i_j)$.
    \item Say $B' (u) = \{b_i : i < \lambda\}$. Let $v := u \backslash \bigcup_{i < \lambda} b_i$. Note that $v$ is closed and bounded. If $\lambda$ is infinite, we can let $\delta := \lambda$ and for $i \le \delta$, $u_i := v \cup \bigcup_{j < i} b_j$. So assume that $\lambda$ is finite. By the first part, for each $i < \lambda$ there exists a limit ordinal $\delta_i$ and a resolution $\seq{b_i^j : j < \delta_i}$ of $b_i$ into closed bounded branches. Let $\delta := \sum_{i < \lambda} \delta_i$. Now for $j < \delta$, there are unique $i < \lambda$ and $k < \delta_i$ such that $j = \sum_{i_0 < i} \delta_{i_0} + k$. Set $u_j := \bigcup_{i_0 < i} b_{i_0} \cup b_i^{k}$. It is straightforward to check that this works.
  \end{enumerate}
\end{proof}

\begin{thm}[Generalized symmetry]\label{generalized-sym}
  Assume that $\mathcal{T}$ is resolvable and $\cl$ is algebraic. If $u$ and $v$ are closed sets, then $u \nf v$.
\end{thm}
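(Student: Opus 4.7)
The proof will proceed by induction on the cardinal sum $|B'(u)| + |B'(v)|$. The base case $|B'(u)| = |B'(v)| = 0$---that is, both $u$ and $v$ are bounded---is handled directly by Lemma~\ref{concat-step-4}, whose hypothesis that $\cl$ is algebraic is among our standing assumptions.

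For the inductive step, I may assume by symmetry that $v$ is unbounded. Using resolvability of $\mathcal{T}$, I invoke Lemma~\ref{resolution-lem}(2) to obtain an increasing continuous chain of closed sets $\seq{v_i : i \le \delta}$ with $v_\delta = v$ and $|B'(v_i)| < |B'(v)|$ for all $i < \delta$. When $|B'(u)| < |B'(v)|$ I take $u_i := u$ for all $i \le \delta$; when $|B'(u)| = |B'(v)|$ (so $u$ is also unbounded) I likewise resolve $u$, matching the two chain lengths to a common ordinal $\delta$ by suitable padding with intermediate repetitions, never repeating the target value itself so that $|B'(u_i)| < |B'(u)|$ remains strict for all $i < \delta$. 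Applying Lemma~\ref{cont-lemma} to these two chains will then yield $u_\delta \nf v_\delta$, which is precisely $u \nf v$.

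The three hypotheses of Lemma~\ref{cont-lemma} are to be verified via the induction hypothesis, using the elementary observation that if $w \subseteq w'$ are closed then every unbounded branch of $w$ extends to an unbounded maximal branch of $w'$, so $|B'(w)| \le |B'(w')|$. Condition~(1), $u_i \nf v_j$, follows from $|B'(u_i)| + |B'(v_j)| < |B'(u)| + |B'(v)|$; condition~(2), $u_\delta \cap v_i \nf v_j$, follows from $|B'(u_\delta \cap v_i)| \le |B'(v_i)| < |B'(v)|$, which makes the total strictly smaller; and condition~(3), $v_\delta \cap u_i \nf u_j$, is handled symmetrically.

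The main obstacle is condition~(3) in the regime $|B'(u)| = |B'(v)|$: if $u$ were held constant at $u_i = u$ rather than resolved, the pair $(v \cap u, u)$ could have $|B'|$-sum equal to $|B'(u)| + |B'(v)|$ and the induction would fail to close. Resolving $u$ in tandem with $v$, and reconciling the two resolution lengths, is what allows condition~(3) to drop strictly in the inductive measure; this is the only delicate piece of bookkeeping in the argument.
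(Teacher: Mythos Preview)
Your overall strategy matches the paper's: induct on $|B'(u)|+|B'(v)|$, invoke Lemma~\ref{concat-step-4} for the bounded base case, and feed resolutions from Lemma~\ref{resolution-lem} into Lemma~\ref{cont-lemma} for the step. You also correctly isolate the real obstruction, namely condition~(3) of Lemma~\ref{cont-lemma} when $|B'(u)|=|B'(v)|$.

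The gap is the padding. When $|B'(u)|=|B'(v)|$ is finite, the two resolution ordinals $\delta_1,\delta_2$ produced by Lemma~\ref{resolution-lem} are sums of order types of the unbounded branches, and there is no reason for $\operatorname{cf}(\delta_1)=\operatorname{cf}(\delta_2)$. But any \emph{continuous} increasing chain $\seq{u_i':i\le\delta}$ with $u_\delta'=u$ and $|B'(u_i')|<|B'(u)|$ for $i<\delta$ must have $\operatorname{cf}(\delta)$ equal to the cofinality of (the smooth tail of) one of $u$'s unbounded branches: if, say, that cofinality is $\omega$ and you attempt $\delta=\omega_1$, then by continuity some $u_{i^*}'$ with $i^*<\omega_1$ already equals $u$, contradicting strictness. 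So ``suitable padding with intermediate repetitions'' cannot in general reconcile the two lengths.

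The paper sidesteps this by not padding at all. It first separates out the case where $|B'(u)|+|B'(v)|$ is infinite (handled by an argument parallel to the infinite case of Lemma~\ref{concat-step-4}). In the finite nonzero case it applies Lemma~\ref{cont-lemma} \emph{twice}: once with both resolutions run only up to $\min(\delta_1,\delta_2)$, and then---if the lengths differ---a second time with one chain held constant at $u$ and the other running to $\delta_2$. The first pass and the induction hypothesis (crucially using finiteness) supply the hypotheses for the second pass. Your diagnosis of the difficulty is right; the remedy is two applications rather than one padded application.
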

\begin{proof}
  Work by induction on $\lambda := |B' (u)| + |B' (v)|$. If $\lambda = 0$, this is given by Lemma \ref{concat-step-4}. If $\lambda$ is infinite, we can use an argument analogous to the proof of Lemma \ref{concat-step-4}, so assume that $\lambda$ is finite and non-zero.

  By Lemma \ref{resolution-lem}, we can find limit ordinals $\delta_1, \delta_2$ and $\seq{u_i : i \le \delta_1}$, $\seq{v_i : i \le \delta_2}$ that are increasing continuous resolutions of $u$ and $v$ respectively so that each member in the chain is closed, and for all $i < \delta_1$, $|B' (u_i)| < |B' (u)|$, and similarly for $v$.
  
  By symmetry, without loss of generality, $\delta_1 \le \delta_2$. We first use Lemma \ref{cont-lemma} with $\delta$ there standing for $\delta_1$ here. The hypotheses hold by the induction hypothesis. So we obtain $u \nf v_{\delta_1}$. If $\delta_1 = \delta_2$, we are done. Otherwise by the induction hypothesis (using that $\lambda$ is finite) we have that $u \nf v_i$ for all $i < \delta_2$. So we use Lemma \ref{cont-lemma} a second time with $\delta$, $u_i$, $v_i$ there standing for $\delta_2$, $u$, $v_i$ here. We obtain that $u \nf v_{\delta_2}$, as desired.
\end{proof}

For the remainder of this section, we focus on building independent trees. We ``start from scratch'' and drop Hypotheses \ref{tree-hyp} and \ref{good-indep-hyp}. It will be convenient to have the tree enumerated in a particular order:

\begin{defin}\label{kb-def}
  An enumerated tree $(\alpha, \tleq)$ is \emph{in pre-order} if for any $i < \alpha$ and any $b \in B (i)$, either $b = \pred (i)$ or $b \in B (\alpha)$.
\end{defin}

The idea is that (Lemma \ref{kb-lem}) if the tree is in pre-order, then the set $A_{i, j}$ from Definition \ref{indep-def} is closed, so we can use generalized symmetry (Theorem \ref{generalized-sym}) on it. Before proving this, we show that the tree we care about has an enumeration in pre-order. For this, we simply keep building the same branch until it becomes maximal, then start a different branch.

\begin{lem}\label{kb-constr}
  Let $\delta$ be a limit ordinal and let $\lambda$ be a cardinal with $\lambda \ge 2$. Then there exists an enumeration $\seq{\eta_i : i < \alpha}$ of $\fct{\le \delta}{\lambda}$ such that defining $i \tleq j$ if and only if $\eta_i$ is an initial segment of $\eta_j$, we have that $(\alpha, \tleq)$ is a continuous enumerated tree which is in pre-order.
\end{lem}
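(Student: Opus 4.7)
I will enumerate the maximal branches (leaves) of $\fct{\le \delta}{\lambda}$ -- i.e., the functions $f \colon \delta \to \lambda$ -- as $\seq{f_\xi : \xi < \mu}$ with $\mu := |\fct{\delta}{\lambda}|$, in a carefully chosen order. Setting $b_\xi := \{f_\xi \rest \gamma : \gamma \le \delta\}$ and $A_\xi := \bigcup_{\xi' < \xi} b_{\xi'}$, the nodes will then be enumerated round by round: in round $\xi$ we enumerate $b_\xi \setminus A_\xi$ in order of increasing length. Since $b_\xi \cap A_\xi$ is a downward-closed subset of the well-ordered chain $b_\xi$, it is an initial segment, so $b_\xi \setminus A_\xi$ is an end-segment of $b_\xi$ and the intra-round order is well-defined; furthermore $\eta_0 = \emptyset$ is the first node enumerated (in round $0$, since $A_0 = \emptyset$).

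The order of the $f_\xi$'s will be chosen by transfinite recursion so as to maintain the invariant: \emph{at the start of each round $\xi$, there is at most one ``pending'' limit node}, where a pending limit node is some $g \in \fct{\le \delta}{\lambda}$ of limit length $\le \delta$ such that every proper initial segment of $g$ lies in $A_\xi$ but $g \notin A_\xi$. If a pending $g$ exists at the start of round $\xi$, I pick $f_\xi$ to be any leaf extending $g$ (so $f_\xi \rest |g| = g$), which makes $\gamma_\xi = |g|$ and forces round $\xi$ to begin by enumerating $g$ itself; otherwise I pick $f_\xi$ to be a fresh unenumerated leaf, chosen so that at most one new pending is introduced. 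Each round then ``resolves'' its current pending (if any) before possibly introducing the next, so the invariant persists through the recursion.

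The enumerated-tree axioms are routine: $\eta_i \subseteq \eta_j$ forces $|\eta_i| \le |\eta_j|$, and combining this with the round structure (earlier rounds enumerate only what is not in later rounds, and within a round nodes are ordered by length) gives $i \le j$; $\pred(i)$ consists of the proper initial segments of $\eta_i$, well-ordered by length. Continuity holds because a node of limit length is uniquely determined by its set of proper initial segments, as their union. The crux is pre-order. Fix $i$, let $\xi_i$ be the round containing $\eta_i$, and write $u := \{\eta_j : j < i\} = A_{\xi_i} \cup \{\eta_i \rest \gamma : \gamma < |\eta_i|\}$. Let $b \in B(u)$. If $b$ has a maximum $\sigma^*$, then either (a) $\sigma^*$ is a leaf, in which case $\sigma^* = f_{\xi'}$ for some $\xi' < \xi_i$ and $b = b_{\xi'} \in B(\alpha)$, or (b) $\sigma^*$ is internal; maximality rules out a successor of $\sigma^*$ lying in $u$, but whenever $\sigma^* \in A_{\xi_i}$ the branch $b_{\xi'}$ containing $\sigma^*$ would supply such a successor, so $\sigma^* \notin A_{\xi_i}$, which pins $\sigma^*$ down as the parent of $\eta_i$ (with no sibling enumerated), giving $b = \pred(i)$. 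If instead $b$ has no maximum, then $b$ is a limit branch aimed at some $g \notin u$; the invariant forces $g = \eta_i$, hence $b = \pred(i)$.

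The main obstacle will be rigorously verifying that the greedy choice of $f_\xi$'s preserves the ``at most one pending'' invariant across all transfinite stages -- especially at limit stages of the recursion, where the set of potential pendings is determined by the closure of $\{f_{\xi'} : \xi' < \xi\}$ in $\fct{\delta}{\lambda}$ and its cofinality structure must be handled with care.
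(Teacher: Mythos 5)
Your plan is essentially the paper's: the paper also builds the enumeration depth-first, as a sequence of ``excursions'' (a greedy recursion over a master enumeration of $\fct{\le\delta}{\lambda}$ respecting initial segments: keep extending the currently open branch one step at a time until a leaf is reached, then start afresh), and this is exactly your one-leaf-per-round structure. The genuine gap is precisely the point you defer: the invariant that at the start of every round there is at most one pending limit node is never established, and the greedy rule you propose has no mechanism to establish it. Indeed the clause ``pick a fresh leaf chosen so that at most one new pending is introduced'' is vacuous: if $g$ has limit length, $g \notin A_{\xi+1}$, and every proper initial segment of $g$ lies in $A_{\xi+1} = A_\xi \cup b_\xi$, then (if $g \subseteq f_\xi$ it would lie in $b_\xi$; otherwise, looking at the first coordinate where $g$ and $f_\xi$ disagree and using that $A_\xi$ is closed under initial segments) every proper initial segment of $g$ already lies in $A_\xi$. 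So successor rounds never create pendings at all; pendings appear only at limit rounds, where they are determined by the cofinal behaviour of the entire preceding sequence of leaves, which no individual choice of $f_\xi$ controls.

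Worse, for your bookkeeping the invariant is not just unverified but can fail for \emph{every} ordering of the leaves. Take $\delta = \omega$, $\lambda = 2$. A pending at the start of round $\theta$ is exactly a point of $\overline{F_\theta} \setminus F_\theta$, where $F_\theta := \{f_\xi : \xi < \theta\}$ and the closure is taken in Cantor space, so your invariant demands $|\overline{F_\theta} \setminus F_\theta| \le 1$ at every limit $\theta < \mu$. If $2^{\aleph_0} > \aleph_1$ this fails at $\theta = \omega_1$: $F_{\omega_1}$ is uncountable, hence $\overline{F_{\omega_1}}$ contains a perfect set and has size $2^{\aleph_0}$, so continuum many leaves are pending simultaneously; each pending yields a leafless maximal branch of the enumerated part which is not in $B(\alpha)$, and at most one of these can equal $\pred(i)$. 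An elementary-submodel/closure argument rules out the case $2^{\aleph_0} = \aleph_1$ as well (an increasing $\omega_1$-chain of countable closed subsets of Cantor space cannot cover it, or even a perfect set, minus one point). So the difficulty you flag as ``the main obstacle'' is where the entire content of the lemma lies, and it cannot be discharged by handling the limit stages more carefully within your scheme; as it stands the proposed proof is incomplete. (For comparison, the paper's proof follows the same greedy construction and asserts the verification is straightforward without discussing the limit-round pendings; your write-up makes the real issue visible, but it does not resolve it, and the obstruction above indicates that resolving it requires revisiting the notion of pre-order or the statement itself rather than a cleverer choice of the $f_\xi$'s.)
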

\begin{proof}
  Let $\seq{\nu_j : j < \beta}$ be an enumeration (without repetitions) of $\fct{\le \delta}{\lambda}$ such that if $\nu_j$ is an initial segment of $\nu_{j'}$, then $j \le j'$. We define $\alpha$ and $\seq{\eta_i : i < \alpha}$ by induction on $i$ such that:

  \begin{enumerate}
    \item $(i, \tleq)$ is a continuous enumerated tree.
    \item If $b \in B (i)$, then either there is $j \in b$ such that $\eta_j \in \fct{\delta}{\lambda}$, or $b = \pred (i)$.
  \end{enumerate}

  There are three cases:

  \begin{itemize}
  \item $\{\eta_j : j < i\} = \{\nu_j : j < \beta\}$. Then we are done and let $\alpha := i$.
    \item If there is $b \in B (i)$ such that for some $j < \beta$, $\bigcup_{k \in b} \eta_k$ is an initial segment of $\nu_j$ but $\nu_j \notin \{\eta_k \mid k \in b\}$, then pick any such $b$ and the least such $j$, and let $\eta_i := \nu_j$.
    \item Otherwise, let $j < \beta$ be least such that $\nu_j \neq \eta_k$ for any $k < i$. Let $\eta_i := \nu_j$.
  \end{itemize}

  It is straightforward to see that this works.  
\end{proof}

We can now prove that $A_{i,j}$ is closed:

\begin{lem}\label{kb-lem}
  Let $\mathcal{T} := (\seq{M_i : i < \alpha}, N, \alpha, \tleq)$
  be a continuous enumerated tree of models. If:

  \begin{enumerate}
    \item $(\alpha, \tleq)$ is in pre-order.
    \item For any $b \in B (\alpha)$, $b$ is bounded.
  \end{enumerate}

  Then for any $i \tleq j < \alpha$, $A_{i, j} = \{k < j \mid \pred_{\tleq} (k) \cap \pred_{\tleq} (j) \subseteq \pred_{\tleq} (i)\}$ (see Definition \ref{indep-def}) is closed (see Definition \ref{closed-def}).
\end{lem}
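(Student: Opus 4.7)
The plan is to verify both conditions of Definition \ref{closed-def}. The downward closure condition is immediate: if $k' \tlt k \in A_{i,j}$, then $k' < k < j$ and $\pred(k') \subseteq \pred(k)$, so $\pred(k') \cap \pred(j) \subseteq \pred(k) \cap \pred(j) \subseteq \pred(i)$.

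For the branch condition I focus on the case $i \tlt j$ (the boundary case $i = j$ is analogous or handled directly since $A_{j,j} = \{k : k < j\}$). Let $b \in B(A_{i,j}) \setminus \{\emptyset\}$. If $b$ has a $\tleq$-maximum $k^*$, then $\bigcup_{k \in b} M_k = M_{k^*} \lea N$ by Definition \ref{cont-tree-def}.(3), so it suffices to show that $b$ must have a max. Assume toward contradiction that $b$ is maxless and set $\beta := \sup b$ as an ordinal. Since $k \tleq k'$ implies $k \le k'$, any hypothetical extension of $b$ inside $\{k : k < \beta\}$ would have to be $\tgt$ all of $b$ and hence $\ge \sup b = \beta$, which is absurd; so $b \in B(\beta)$. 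The same extension argument gives $b \in B(\alpha)$ whenever $\beta = \alpha$, at which point hypothesis 2 forces $b = \pred(k^*)$ for some $k^* < \alpha$ (``no max'' rules out the first bounded alternative), yielding $\beta \le k^* < \alpha$, a contradiction. So $\beta < \alpha$, and pre-order applies.

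By pre-order, $b = \pred(\beta)$ or $b \in B(\alpha)$. The second alternative is impossible: hypothesis 2 together with ``no max'' gives $b = \pred(k^*)$ for some $k^* < \alpha$, but then $b \cup \{k^*\}$ properly extends $b$ in $\alpha$, contradicting $b \in B(\alpha)$. So $b = \pred(\beta)$. Maximality of $b$ inside $A_{i,j}$ then forces $\beta \notin A_{i,j}$ (otherwise $b \cup \{\beta\}$ properly extends $b$ in $A_{i,j}$). Since all elements of $b$ are $< j$, we have $\beta \le j$. If $\beta = j$ then $b = \pred(j) \subseteq A_{i,j}$; but for any $k \tlt j$, membership in $A_{i,j}$ reduces to $\pred(k) \subseteq \pred(i)$, i.e.\ $k \tleq i$, so $b \subseteq \pred(i) \cup \{i\}$ and hence $\pred(j) = \pred(i) \cup \{i\}$, giving $\pred(j)$ the $\tleq$-maximum $i$, which contradicts ``no max''. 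So $\beta < j$, and then $\beta \notin A_{i,j}$ supplies $k^* \in \pred(\beta) \cap \pred(j) \setminus \pred(i)$. Since $k^* \in b \subseteq A_{i,j}$ and $k^* \tlt j$, the $A_{i,j}$-condition forces $k^* \tleq i$, and $k^* \not\tlt i$ then forces $k^* = i$. So $i \in b$. Finally, any $k \in b$ with $k \tgt i$ would have $i \in \pred(k) \cap \pred(j)$ but $i \notin \pred(i)$, violating $k \in A_{i,j}$; therefore $b \subseteq \pred(i) \cup \{i\}$, so $b$ has the maximum $i$, a contradiction.

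The main obstacle will be the case analysis at $b = \pred(\beta)$: the trick is to use $\beta \notin A_{i,j}$ together with the defining condition of $A_{i,j}$ to force $i$ itself into $b$, which then collapses $b$ inside $\pred(i) \cup \{i\}$ and supplies the forbidden maximum $i$. Pre-order is essential here to relocate a maxless branch of $A_{i,j}$ as a branch of the form $\pred(\beta)$ in the ambient tree, while hypothesis 2 rules out the alternative $b \in B(\alpha)$.
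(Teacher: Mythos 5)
Your overall plan for the main case $i \tlt j$ --- rule out a maxless $b \in B(A_{i,j})$ by identifying $b$ with $\pred(\beta)$ for $\beta := \sup b$, using pre-order and the boundedness of $B(\alpha)$ --- is sound and runs parallel to the paper's proof (which instead splits on whether $b \subseteq \pred_{\tleq}(j)$ and relocates $b$ into $B(j)$). The problem is the last stretch of your $\beta < j$ case. First, $\beta \notin A_{i,j}$ only supplies a witness $k^\ast \in \pred_{\tleq}(\beta) \cap \pred_{\tleq}(j) \setminus \pred_{\tleq}(i)$ --- Definition \ref{indep-def} uses the \emph{non-strict} predecessor sets --- and this witness may be $\beta$ itself, in which case $k^\ast \notin b$ and your step ``$k^\ast \in b \subseteq A_{i,j}$, hence $k^\ast = i$'' never gets started. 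Second, and more seriously, the closing claim that any $k \in b$ with $k \tgt i$ ``has $i \in \pred(k) \cap \pred(j)$ but $i \notin \pred(i)$, violating $k \in A_{i,j}$'' misreads the definition: the right-hand side of the defining inclusion is $\pred_{\tleq}(i)$, which \emph{does} contain $i$, so the presence of $i$ among the common predecessors of $k$ and $j$ is harmless. In fact $A_{i,j}$ is designed to contain many nodes strictly tree-above $i$ (everything branching away from $j$ above $i$), so the asserted violation simply is not there and your final contradiction collapses.

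The split that does work (and essentially recovers the paper's argument) is on the genuine witness. If some $m \tlt \beta$ satisfies $m \tleq j$ and $m \not\tleq i$, then $m \in b \subseteq A_{i,j}$ and $m \tlt j$; but for $k \tlt j$, membership in $A_{i,j}$ is equivalent to $k \tleq i$, so you get an immediate contradiction --- no detour through $k^\ast = i$ is needed once you keep the non-strict condition $m \not\tleq i$. Otherwise the only witness is $\beta$ itself, so $\beta \tlt j$ and $\beta \not\tleq i$; then every $k \in b = \pred(\beta)$ is $\tlt j$, hence $k \tleq i$, so $b \subseteq \pred_{\tleq}(i)$; since $i \tlt j$, $\pred_{\tleq}(i)$ is a branch contained in $A_{i,j}$, so maximality of $b$ forces $b = \pred_{\tleq}(i)$, which has maximum $i$ --- contradicting maxlessness. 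A further caveat: your dismissal of $i = j$ as ``handled directly'' is not right. There $A_{j,j} = \{k : k < j\}$, and if $\pred(j)$ is maxless and maximal in $j$ (e.g.\ when the tree below $j$ is a single chain), closedness would require $\bigcup_{k \tlt j} M_k \lea N$, which is exactly what can fail when smoothness fails; Definition \ref{cont-tree-def} only gives this conditionally. The paper's own proof tacitly covers only $i \tlt j$, and that is the only case used later (in Lemma \ref{tree-constr-lem} the first index is always a strict tree-predecessor), so the lemma should be read with $i \tlt j$ --- but you should not claim the $i = j$ case follows.
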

\begin{proof}
  Let $b \in B (A_{i, j})$. We have to see that $\bigcup_{k \in b} M_k \lea N$. Now either $b = \pred_{\tleq} (i)$, in which case $\bigcup_{k \in b} M_k = M_i \lea N$, or $b \not \subseteq \pred_{\tleq} (j)$. In this case, it is easy to check that $b \in B (j)$ (otherwise we could just extend the branch), so since $(\alpha, \tleq)$ is in pre-order, either $b = \pred (j)$ or $b \in B (\alpha)$. The first case was dealt with before and in the second case, $b$ is bounded so has a maximum $j'$ (otherwise it would not be in $B(\alpha)$) and so $\bigcup_{k \in b} M_k = M_{j'} \lea N$.
\end{proof}

We can now prove that any reasonable tree can be ``made independent'' (and further, it will generate its ambient model $N$). This can be seen as a generalization of the existence axiom (see Definition \ref{axfr-def}.(\ref{axfr-def-existence})). Note that generalized symmetry is used in the proof.

\begin{lem}\label{tree-constr-lem}
  Assume that $\cl$ is algebraic and we are given a resolvable continuous enumerated tree of models $\mathcal{T}^0 := (\seq{M_i^0 : i < \alpha}$, $N^0$, $\alpha, \tleq)$. If:

  \begin{enumerate}
    \item $(\alpha, \tleq)$ is in pre-order.
    \item For any $b \in B (\alpha)$, $b$ is bounded.
  \end{enumerate}

  Then we can find $\seq{M_i : i < \alpha}$, $N$, and $\seq{f_i : i < \alpha}$ such that:

  \begin{enumerate}
    \item $\mathcal{T} := (\seq{M_i : i < \alpha}, N, \alpha, \tleq)$ is a resolvable \emph{independent} continuous enumerated tree.
    \item\label{tree-constr-2} For all $i, j < \alpha$, $f_i : M_i^0 \cong M_i$ and $i \tleq j$ implies $f_i \subseteq f_j$.
    \item $N = M^\alpha := \cl^N (\bigcup_{i < \alpha} M_i)$.
  \end{enumerate}
\end{lem}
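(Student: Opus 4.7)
The plan is to construct $\seq{M_i, f_i, \bar{N}_i : i < \alpha}$ by recursion on $i$, maintaining the invariant that $f_i : M_i^0 \cong M_i$ with $f_j \subseteq f_i$ whenever $j \tleq i$, that $\bar{N}_i \in \K$ is increasing in $i$ with $\bar{N}_i = \cl^{\bar{N}_i}(\bigcup_{j \le i} M_j)$, and that the partial tree $\mathcal{T}_i := (\seq{M_j : j \le i}, \bar{N}_i, i + 1, \tleq)$ is a resolvable independent continuous enumerated tree of models. Set $\bar{N}_0 := M_0 := f_0[M_0^0]$ for any isomorphism $f_0$. For a successor step where $\pred(i)$ has a maximum $i^-$, apply the existence axiom for $\nf$ (Definition \ref{axfr-def}.(\ref{axfr-def-existence})) over $M_{i^-}$ to amalgamate $M_i^0$ (via $f_{i^-}$) with $\bar{N}_{<i} := \bigcup_{j < i} \bar{N}_j$, then close down using Fact \ref{basic-axfr-props}.(\ref{basic-props-1}); this yields $f_i \supseteq f_{i^-}$ and $\bar{N}_i$ with $\nfs{M_{i^-}}{M_i}{\bar{N}_{<i}}{\bar{N}_i}$. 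For a limit step at which $\bigcup_{j \tlt i} M_j^0 \lea N^0$, Definition \ref{cont-tree-def}.(\ref{cont-tree-def-4}) forces $M_i^0 = \bigcup_{j \tlt i} M_j^0$; take $M_i := \bigcup_{j \tlt i} M_j$, $f_i := \bigcup_{j \tlt i} f_j$, and $\bar{N}_i := \bar{N}_{<i}$, using the Tarski-Vaught axioms and coherence to stay within the invariant.

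The main obstacle is the non-smooth limit step, where $\bigcup_{j \tlt i} M_j^0 \not\lea N^0$ and no model in $\K$ realizes the limit of the partial branch below $i$, so the existence axiom is not directly applicable. Fact \ref{basic-axfr-props}.(\ref{basic-props-5}) is designed for exactly this situation. First, rename the elements of $\bigcup_{j \tlt i} M_j^0$ inside $M_i^0$ by applying $\bigcup_{j \tlt i} f_j$, producing an isomorphic copy $M'$ of $M_i^0$ whose $\lea$-submodels include each $M_j$ for $j \tlt i$. Then feed $M'$, the chain $\seq{M_j : j \in \pred(i)}$, and a corresponding chain of ambients $\seq{\bar{N}_{j_\beta} : \beta < \delta}$ drawn from the previously-built $\bar{N}_j$ along the branch into Fact \ref{basic-axfr-props}.(\ref{basic-props-5}). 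This returns $\bar{N}_i \gea \bar{N}_{<i}$ and an embedding $f' : M' \to \bar{N}_i$ fixing $\bigcup_{j \in \pred(i)} M_j$ such that $M_i := f'[M']$ satisfies the required independence statements along the branch and $\bar{N}_i = \cl^{\bar{N}_i}(M_i \cup \bar{N}_{<i})$; composing with the renaming gives $f_i$ extending each $f_j$. Non-smoothness of the new chain below $i$ is transferred from $\mathcal{T}^0$ through the isomorphisms, so clause (\ref{cont-tree-def-4}) of Definition \ref{cont-tree-def} is vacuously satisfied at $i$.

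To complete the induction we must verify Definition \ref{indep-def} for each newly added node. Here the pre-order hypothesis is crucial: by Lemma \ref{kb-lem}, together with the assumption that every $b \in B(\alpha)$ is bounded, each set $A_{i^-, i}$ is a closed subset of $\mathcal{T}$. Combining the inductive hypothesis that the partial tree is independent with generalized symmetry (Theorem \ref{generalized-sym}, applicable because $\cl$ is algebraic) gives that $M^{A_{i^-, i}}$ is independent over $M_{i^-}$ from $M^{\pred(i)}$ inside $\bar{N}_{<i}$. Combined with the independence of $M_i$ from $\bar{N}_{<i}$ over $M_{i^-}$ secured at step $i$, transitivity (Fact \ref{basic-axfr-props}.(\ref{basic-props-2})) and base monotonicity deliver $\nfcl{M_{i^-}}{M_i}{\bigcup_{k \in A_{i^-, i}} M_k}{\bar{N}_i}$; the general case $(j, i)$ with $j \tleq i$ follows by transiting through intermediate predecessors. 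Resolvability of $\mathcal{T}$ is inherited from $\mathcal{T}^0$ through the $f_j$. Finally, set $N := \bigcup_{i < \alpha} \bar{N}_i$ if $\alpha$ is a limit ordinal and $N := \bar{N}_{\alpha - 1}$ otherwise; the invariant $\bar{N}_i = \cl^{\bar{N}_i}(\bigcup_{j \le i} M_j)$ together with Fact \ref{cont-of-cl} then gives $N = \cl^N(\bigcup_{i < \alpha} M_i) = M^\alpha$, as required.
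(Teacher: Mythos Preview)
Your overall architecture matches the paper's: build $\seq{M_i, f_i, N_i}$ by recursion, use existence at successors, use Fact \ref{basic-axfr-props}.(\ref{basic-props-5}) at limits, and invoke generalized symmetry (Theorem \ref{generalized-sym}) to keep the partial tree independent. The paper does not split the limit case into smooth and non-smooth; it treats every limit uniformly via Fact \ref{basic-axfr-props}.(\ref{basic-props-5}), but that is only a cosmetic difference.

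There is, however, a real gap in your non-smooth limit step. You feed into Fact \ref{basic-axfr-props}.(\ref{basic-props-5}) the chain $\seq{\bar{N}_{j_\beta} : \beta < \delta}$ of ambients \emph{along the branch} $\pred(i)$. Two problems arise. First, you never verify the hypothesis $\nfs{M_{j_\beta}}{M_{j_{\beta'}}}{\bar N_{j_\beta}}{\bar N_{j_{\beta'}}}$; this is not automatic from the inductive independence of $\mathcal{T}_{<i}$ and requires exactly an application of Theorem \ref{generalized-sym} (to the closed sets $\pred_\tleq(j_{\beta'})$ and $\{0,\dots,j_\beta\}$). You only invoke generalized symmetry later, in the independence verification, but it is needed already here. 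Second, and more seriously, the ordinals $\{j_\beta : \beta < \delta\}$ need not be cofinal in $i$: the pre-order hypothesis allows entire maximal branches of $(\alpha,\tleq)$ to be enumerated between consecutive members of $\pred(i)$. Thus $\bigcup_\beta \bar{N}_{j_\beta}$ can be strictly smaller than $\bar{N}_{<i}$, and Fact \ref{basic-axfr-props}.(\ref{basic-props-5}) only returns $N$ with $N = \cl^N(f[M'] \cup \bigcup_\beta \bar N_{j_\beta})$. You then cannot conclude that $\bar{N}_{<i} \leap{\K} \bar N_i$ or that $\bar N_i = \cl^{\bar N_i}(\bigcup_{j\le i} M_j)$, so both the monotonicity of the ambient chain and your closure invariant break.

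The paper's fix is precisely to replace $\bar N_{j_\beta}$ by $M^{u_\beta}$ with $u_\beta := A_{i_\beta, i}$ (the sets from Definition \ref{indep-def}). By Lemma \ref{kb-lem} each $u_\beta$ is closed, one checks directly that $\bigcup_{\beta<\delta} u_\beta = i$ (so the union of the chain is all of $\bar N_{<i}$), and Theorem \ref{generalized-sym} applied to the partial tree gives $\nfs{M_{i_\beta}}{M_{i_{\beta'}}}{M^{u_\beta}}{M^{u_{\beta'}}}$, exactly the hypothesis of Fact \ref{basic-axfr-props}.(\ref{basic-props-5}). As a bonus, the conclusion $\nfs{M_{i_\beta}}{M_i}{M^{u_\beta}}{N_i}$ is already the independence statement required by Definition \ref{indep-def} for every pair $(i_\beta, i)$, so no separate post-hoc verification is needed at limit steps. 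Your proposal is easily repaired along these lines.
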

\begin{proof}
  We build $\seq{N_i : i < \alpha}$, $\seq{M_i : i < \alpha}$, $\seq{f_i : i < \alpha}$ such that:

  \begin{enumerate}
    \item $\seq{N_i : i \le \alpha}$ is increasing.
    \item $\seq{f_i : i < \alpha}$ satisfies (\ref{tree-constr-2}).
    \item For all $i \in (0, \alpha)$, $\mathcal{T}_i := (\seq{M_j : j < i}, \bigcup_{j < i} N_j, i, \tleq)$ is a resolvable independent continuous enumerated tree of models.
    \item For all $i < \alpha$, $N_i = \cl^{N_i} (M_0 \cup \bigcup_{j < i} M_j)$ ($ = M^i$).
  \end{enumerate}

  This is enough, as we can then take $N := \bigcup_{i < \alpha} N_i$. This is possible. When $i = 0$, set $N_0 := M_0 := M_0^0$, $f_0 := \text{id}_{M_0^0}$. Now assume that $i > 0$. Let $N_i' := \bigcup_{j < i} N_j$. There are two cases:

  \begin{itemize}
    \item \underline{Case 1: $\pred (i)$ has a maximum}: Let $j := \max (\pred (i))$. Use the existence axiom (Definition \ref{axfr-def}.(\ref{axfr-def-existence})) to find $f_i$ extending $f_j$ and $N_i \gea N_i'$ so that $f_i : M_i^0 \cong M_i$, $\nfs{M_j}{M_i}{N_i'}{N_i}$, and $N_i = \cl^{N_i} (M_i \cup N_i')$. It is easy to check that this works.
    \item \underline{Case 2: $\pred (i)$ does not have a maximum}: Let $M_i' := \bigcup_{j \tlt i} M_j$, $(M_i^0)' := \bigcup_{j \tlt i} M_j^0$, $f_i' := \bigcup_{j \tlt i} f_j$. Let $M_i''$, $g: M_i^0 \cong M_i''$ be such that $g$ extends $f_i'$.

      Let $\delta := \otp (\pred (i))$. Note that $\delta$ is a limit ordinal. Let $\seq{i_j : j < \delta}$ list $\pred (i)$ in increasing order. For $j < i$, let $u_j := A_{i,j}$, where $A_{i,j}$ is as in Definition \ref{indep-def}. Note that $\bigcup_{j < \delta} u_j = i$. By Lemma \ref{kb-lem}, $u_j$ is closed in $\mathcal{T}^0$, hence (taking the image of $\mathcal{T}^0$ by $\bigcup_{j < i} f_j$) in $\mathcal{T}_i$. We use Fact \ref{basic-axfr-props}.(\ref{basic-props-5}) with $M_j$, $N_j$, $M$ there standing for $M_{i_j}$, $M^{u_{i_j}}$, $M_i''$ here. The hypotheses are satisfied by Theorem \ref{generalized-sym} (applied to $\mathcal{T}_i$) and monotonicity. We obtain $N_i \in \K$ and a map $f: M_i'' \xrightarrow[M_i']{} N_i$ such that for all $j < \delta$:

      \begin{enumerate}
      \item $N_{i_j} \lea N_i$.
      \item $\nfs{M_{i_j}}{f[M_i'']}{M^{u_{i_j}}}{N_i}$.
      \item $N_i = \cl^{N_i} (f[M_i''] \cup M^i)$.
      \end{enumerate}

      Let $f_i := f \circ g$ and let $M_i := f[M_i'']$. This works by the above properties.      
  \end{itemize}
\end{proof}

A specialization of Lemma \ref{tree-constr-lem} yields the main theorem of this section.

\begin{thm}[Tree construction]\label{tree-constr-thm}
  Assume that $\cl$ is algebraic. Let $\delta$ be a limit ordinal and let $\lambda \ge 2$ be a cardinal. Let $\seq{M_i : i \le \delta}$ be an increasing chain (we do \emph{not} need to assume that the models have size $\lambda$).

  If $\seq{M_i : i < \delta}$ is continuous but $\bigcup_{i < \delta} M_i \not \lea M_\delta$ (so $\delta$ is the least failure of smoothness for the chain $\seq{M_i : i < \delta}$), then there is $\seq{M_\eta \mid \eta \in \fct{\le \delta}{\lambda}}$, $\seq{f_\eta \mid \eta \in \fct{\le \delta}{\lambda}}$ and $N \in \K$ such that for all $\eta, \nu \in \fct{\le \delta}{\lambda}$:

  \begin{enumerate}
    \item\label{tree-constr-01} $M_\eta \lea N$, $f_\eta : M_{\ell (\eta)} \cong M_\eta$.
    \item\label{tree-constr-02} If $\eta$ is an initial segment of $\nu$, then $M_{\eta} \lea M_{\nu}$ and $f_\eta \subseteq f_\nu$.
    \item\label{tree-constr-03} If $\eta \neq \nu$ have length $\delta$ and $\alpha < \delta$ is least such that $\eta \rest (\alpha + 1) \neq \nu \rest (\alpha + 1)$, then $\nfs{M_{\nu \rest \alpha}}{M_{\eta}}{M_{\nu}}{N}$.
  \end{enumerate}
\end{thm}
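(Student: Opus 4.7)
The plan is to apply Lemma~\ref{tree-constr-lem} to a base tree built directly from $\seq{M_i : i \le \delta}$, indexed by $\fct{\le\delta}{\lambda}$ with the initial-segment order and enumerated in pre-order. Concretely, I would first invoke Lemma~\ref{kb-constr} to get an enumeration $\seq{\eta_i : i < \alpha}$ of $\fct{\le\delta}{\lambda}$ such that, defining $i \tleq j$ iff $\eta_i$ is an initial segment of $\eta_j$, $(\alpha, \tleq)$ is a continuous enumerated tree in pre-order. Set $M_i^0 := M_{\ell(\eta_i)}$ and $N^0 := M_\delta$, and let $\mathcal{T}^0 := (\seq{M_i^0 : i < \alpha}, N^0, \alpha, \tleq)$.

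Next I would verify that $\mathcal{T}^0$ satisfies the hypotheses of Lemma~\ref{tree-constr-lem}. The only nontrivial point is the continuity clause at an index $i$ where $\pred(i)$ has no maximum, i.e.\ $\eta_i$ has limit length. If $\ell(\eta_i) < \delta$, continuity of the given chain gives $\bigcup_{j \tlt i} M_j^0 = M_{\ell(\eta_i)} = M_i^0$; if $\ell(\eta_i) = \delta$, then $\bigcup_{j \tlt i} M_j^0 = \bigcup_{\beta<\delta} M_\beta \not\lea M_\delta = N^0$, so the hypothesis of the continuity clause fails and the clause is vacuous. The same dichotomy shows that along any branch of $(\alpha, \tleq)$ the only possible smoothness failure occurs at the unique node of length $\delta$ (if any), so $\mathcal{T}^0$ is resolvable. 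Since $\lambda \ge 2$, every maximal branch reaches a leaf of length $\delta$ and is therefore bounded.

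Applying Lemma~\ref{tree-constr-lem} yields $\seq{M_i : i < \alpha}$, $N$, and $\seq{f_i : i < \alpha}$ such that $\mathcal{T} := (\seq{M_i : i < \alpha}, N, \alpha, \tleq)$ is a resolvable independent continuous enumerated tree with $f_i : M_i^0 \cong M_i$. Define $M_\eta := M_{i(\eta)}$ and $f_\eta := f_{i(\eta)}$, where $i(\eta)$ is the index with $\eta_{i(\eta)} = \eta$. Conditions (\ref{tree-constr-01}) and (\ref{tree-constr-02}) are then immediate. For (\ref{tree-constr-03}), given distinct $\eta,\nu$ of length $\delta$ and $\alpha$ least with $\eta \rest (\alpha+1) \neq \nu \rest (\alpha+1)$, set $u := \{j : \eta_j \text{ is an initial segment of } \eta\}$ and $v := \{j : \eta_j \text{ is an initial segment of } \nu\}$. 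Both $u$ and $v$ are closed bounded branches with tops $i(\eta)$ and $i(\nu)$, and $u \cap v$ is the closed bounded branch with top $i(\eta \rest \alpha) = i(\nu \rest \alpha)$. By Lemma~\ref{cb-union}, $M^u = M_\eta$, $M^v = M_\nu$, and $M^{u \cap v} = M_{\nu \rest \alpha}$. Since $\cl$ is algebraic and $\mathcal{T}$ is resolvable and independent, generalized symmetry (Theorem~\ref{generalized-sym}) gives $u \nf v$, and Lemma~\ref{lt-finite} gives $M^{uv} \lea N$; together these yield $u \nf^N v$, which is precisely $\nfs{M_{\nu \rest \alpha}}{M_\eta}{M_\nu}{N}$.

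The main obstacle is really just bookkeeping: the substantive work is delivered by Lemma~\ref{tree-constr-lem} (whose own proof uses generalized symmetry to choose the ambient models at limit stages) together with one final application of generalized symmetry to upgrade the bare tree-independence of $\mathcal{T}$ to the independence of the two arbitrary closed branches demanded by (\ref{tree-constr-03}). The care needed is in arranging the base tree so that the unique failure of smoothness in the original chain sits exactly at the leaves of level $\delta$, and in checking that every branch is therefore bounded with at most one smoothness-failure point.
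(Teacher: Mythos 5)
Your proposal is correct and follows essentially the same route as the paper: enumerate $\fct{\le\delta}{\lambda}$ in pre-order via Lemma~\ref{kb-constr}, form the base tree with $M_i^0 := M_{\ell(\eta_i)}$ and $N^0 := M_\delta$, check the hypotheses of Lemma~\ref{tree-constr-lem} (your verification of continuity, resolvability, and boundedness of branches is exactly the ``straightforward'' check the paper leaves implicit), and then read off the independence condition for the branches of the resulting independent tree. The only cosmetic difference is that for clause~(\ref{tree-constr-03}) the paper cites Lemma~\ref{two-branches} directly, whereas you re-derive the same fact from Theorem~\ref{generalized-sym} together with Lemmas~\ref{cb-union} and~\ref{lt-finite}; this is equivalent, just slightly heavier than needed.
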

\begin{proof}
  By Lemma \ref{kb-constr}, we can find an enumeration $\seq{\eta_i : i < \alpha}$ of $\fct{\le \delta}{\lambda}$ such that $(\alpha, \tleq)$ is a continuous enumerated tree in pre-order and $i \tleq j < \alpha$ implies that $\eta_i$ is an initial segment of $\eta_j$. For $i < \alpha$, let $M_i^0 := M_{\ell (\eta_i)}$ and let $N^0 := M_{\delta}$. Then it is straightforward to check that $\mathcal{T}^0 := (\seq{M_i^0 : i < \alpha}, N^0, \alpha, \tleq)$ satisfies the hypotheses of Lemma \ref{tree-constr-lem}. We obtain $\seq{M_i : i < \alpha}$, $N$, and $\seq{f_i : i < \alpha}$ there that correspond to $\seq{M_{\eta_i} : i < \alpha}$, $N$, and $\seq{f_{\eta_i} : i < \alpha}$ here. Since the resulting tree is independent, we obtain the independence condition via Lemma \ref{two-branches}.
\end{proof}

\section{Structure theory of universal classes}\label{uc-structure-sec}

In this section, we precisely state a result of Shelah saying that for a universal class $K$ which does not have the order property there is an ordering $\le$ so that $\K^0 := (K, \le)$ is a weak AEC satisfying $\Axfr$ (see Definition \ref{k-0-def}). To simplify matters, we partition $\K^0$ into disjoint classes, each of which has joint embedding, pick an appropriate such class and name it $\K^\ast$ (Definition \ref{k-ast-def}). We then use the tree construction theorem (Theorem \ref{tree-constr-thm}) to show that failure of smoothness in $\K^\ast$ implies unstability at certain cardinals (see Theorem \ref{smoothness-unstable}).

We start by specializing the order property from \cite[Definition V.A.1.1]{shelahaecbook2} to the quantifier-free version for universal classes:

\begin{defin}\label{op-def}
  A universal class $\K$ has the \emph{order property of length $\chi$} if there exists a quantifier-free first-order formula $\phi (\bx, \by, \bz)$, a model $M \in K$, a sequence $\bc \in \fct{\ell (\bz)}{|M|}$, and sequences $\seq{\ba_i : i < \chi}$, $\seq{\bb_i : i < \chi}$ from $M$ (with $\ell (\ba_i) = \ell (\bx)$, $\ell (\bb_i) = \ell (\by)$ for all $i < \chi$) so that for all $i, j < \chi$, $M \models \phi[\ba_i; \bb_j; \bc]$ if and only if $i < j$. We say that $\K$ has the \emph{order property} if it has the order property of length $\chi$ for all cardinals $\chi$.
\end{defin}
\begin{remark}
In the next section, we will show (Lemma \ref{no-op}) that categoricity in some $\lambda > \LS (\K)$ implies failure of the order property.
\end{remark}

The following result is proven (in a more general form) in \S2 of \cite{grsh222}.

\begin{fact}\label{op-length}
  Let $\K$ be a universal class. If $\K$ does not have the order property, then there exists $\chi < \hanf{\K}$ (recall Definition \ref{hanf-aec-def}) such that $\K$ does not have the order property of length $\chi$.
\end{fact}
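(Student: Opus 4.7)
The plan is to argue by contrapositive: assuming $\K$ has the order property of length $\chi$ for every $\chi < \hanf{\K}$, I will show that $\K$ has the order property of every length. The argument is a standard Hanf-number / Morley-stretching argument applied to a PC presentation of $\K$.

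First, I would normalize the witnesses. Since $\K$ is universal, Remark \ref{univ-delta} gives $\lambda (\K) = |\tau (\K)| + \aleph_0 \le \LS (\K)$, and there are at most $\LS (\K)$ quantifier-free first-order $\tau (\K)$-formulas $\phi (\bx, \by, \bz)$ (with finite $\ell (\bx), \ell (\by), \ell (\bz)$). Since $\LS (\K) < \hanf{\K}$, a pigeonhole argument lets me fix a single such $\phi$ that witnesses the order property of length $\chi$ for unboundedly many $\chi < \hanf{\K} = \beth_{\delta (\K)}$.

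Second, I would encode ``there is a $\phi$-witness of the order property of length $\alpha$'' as a PC class. Expand $\tau (\K)$ to $\tau^+$ by adding a binary relation symbol $<$, new constants for the entries of the parameter tuple $\bc$, and, for each coordinate of $\bx$ and $\by$, new unary function symbols coding the indexing maps $i \mapsto \ba_i$ and $i \mapsto \bb_i$. Let $T^+$ be a $\tau^+$-first-order theory saying that the $\tau (\K)$-reduct is a model of $\K$ (using the presentation supplied by Fact \ref{pres-thm} together with a suitable set $\Gamma$ of omitted types), that $<$ is a linear order, and that the encoded tuples satisfy $\phi [\ba_i, \bb_j, \bc]$ if and only if $i < j$. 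By Fact \ref{pres-thm} and the definition of $\lambda (\K)$, this yields a $\text{PC}_{\LS (\K), \lambda (\K)}$-class whose reducts carry linear orders of the desired lengths.

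Third, by the choice of $\phi$ this PC class has a model in which $<$ realizes a linear order of cardinality $\chi$ for unboundedly many $\chi < \beth_{\delta (\K)}$. By the very definition of $\delta (\K) = \delta (\LS (\K), \lambda (\K))$ as a pinning-down ordinal (Fact \ref{delta-facts} and Definition \ref{hanf-aec-def}), together with the classical Morley-Chang Ehrenfeucht-Mostowski stretching applied to this PC class, the class must in fact have models in which $<$ realizes a linear order of every cardinality. Taking the $\tau (\K)$-reduct and reading off the witness tuples $\ba_i, \bb_i$ then yields the order property of every length for $\K$, as required.

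The main obstacle is the bookkeeping in the second step, namely verifying that $T^+$ (together with an appropriate $\Gamma^+$ extending the omitted-types data from the presentation of $\K$) really does yield a $\text{PC}_{\LS (\K), \lambda (\K)}$-class, so that $\beth_{\delta (\K)}$ is the correct Hanf number to apply. Once the PC encoding is set up correctly, the stretching step is a direct application of the well-known pinning-down / Hanf-number theorem for PC classes that underlies Fact \ref{delta-facts}.
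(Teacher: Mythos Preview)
Your proposal is correct and follows the standard Hanf-number/Morley-stretching argument that the paper invokes by citing \cite{grsh222} (the paper gives no proof of its own here). The pigeonhole reduction to a single formula, the PC-encoding of the witnessing order, and the appeal to the definition of $\delta(\K)$ for the stretching step are exactly the ingredients of the Grossberg--Shelah argument, and your identification of the bookkeeping in the PC encoding as the only nontrivial verification is accurate.
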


From failure of the order property, Shelah shows that there exists a certain ordering $\le^{\chi^+, \mu^+}$ on $K$ such that $(K, \le^{\chi^+, \mu^+})$ satisfies $\Axfr$ (recall Definition \ref{axfr-def}). We now proceed to define this ordering.

\begin{defin}[Averages, V.A.2 in \cite{shelahaecbook2}]
  Let $\K$ be a universal class. Let $M \in \K$, let $I$ be an index set, and let $\BI := \seq{\ba_i : i \in I}$ be a sequence of elements of $M$ of the same finite arity $n < \omega$. Let $\chi \le \mu$ be infinite cardinals such that\footnote{We sometimes think of $\BI$ as just the set of its elements (i.e.\ as if it was only $\ran{\BI}$), e.g.\ we write $|\BI|$ instead of $|\ran{\BI}|$ and $\BI \subseteq \fct{n}{A}$ instead of $\ran{\BI} \subseteq \fct{n}{A}$.} $|\BI| \ge \chi$.

  \begin{enumerate}
  \item For $A \subseteq |M|$, we let $\Av_{\chi} (\BI / A; M)$ (the $\chi$-average of $\BI$ over $A$ in $M$) be the set of quantifier-free first-order formulas $\phi (\bx)$ over $A$ such that $\ell (\bx) = n$ and $|\{i \in I \mid M \models \neg \phi[\ba_i]\}| < \chi$.
  \item We say that $\BI$ is \emph{$(\chi, \mu)$-convergent in $M$} if $|\BI| \ge \mu$ and for every $A \subseteq |M|$, $p := \Av_{\chi} (\BI / A; M)$ is complete over $A$ (i.e.\ for every quantifier-free formula $\phi (\bx)$ over $A$ with $\ell (\bx) = n$, either $\phi (\bx) \in p$ or $\neg \phi (\bx) \in p$).
  \item Let $A, B \subseteq |M|$ and let $p$ be a set of quantifier-free formulas over $B$ (all of the same arity $n < \omega$). We say that $p$ is \emph{$(\chi, \mu)$-averageable over $A$ in $M$} if there exists a sequence $\BI \subseteq \fct{n}{A}$ that is $(\chi, \mu)$-convergent in $M$ and with $p = \Av_{\chi} (\BI / B; M)$.
\end{enumerate}
\end{defin}
\begin{remark}
  In the above notation, the usual notion of average from the first-order framework \cite[Definition III.1.5]{shelahfobook} can be written $\Av_{\aleph_0} (\BI / A; \sea)$, modulo the fact that here all the formulas are quantifier-free.
\end{remark}
\begin{remark}[Monotonicity]\label{monot-average-rmk} \
  \begin{enumerate}
  \item\label{monot-rmk-1} Since the formulas under consideration are quantifier-free, we have the following monotonicity properties: if $M_0 \subseteq M$ and $A, \BI \subseteq |M_0|$, then $\Av_{\chi} (\BI / A; M_0) = \Av_{\chi} (\BI / A; M)$. Similarly, if $\BI$ is $(\chi, \mu)$-convergent in $M$, then it is $(\chi, \mu)$-convergent in $M_0$, and if $A \subseteq B \subseteq |M_0|$ and $p$ is a quantifier-free type over $B$ that is $(\chi, \mu)$-averageable over $A$ in $M$, then it is $(\chi, \mu)$-averageable over $A$ in $M_0$.
  \item If $p$ over $B$ as in (\ref{monot-rmk-1}) is $(\chi, \mu)$-averageable over $A$ in $M$, then whenever $A \subseteq A' \subseteq B_0 \subseteq B$, we have that $p \rest B_0$ is $(\chi, \mu)$-averageable over $A'$ in $M$.
  \end{enumerate}
\end{remark}

\begin{defin}[V.A.4.1 in \cite{shelahaecbook2}]\label{average-order}
  Let $\K$ be a universal class and let $\chi \le \mu$ be infinite cardinals. For $M, N \in \K$, we write $M \le^{\chi, \mu} N$ if $M \subseteq N$ and for every $\bc \in \fct{<\omega}{|N|}$, the quantifier-free type of $\bc$ over $M$ in $N$, $\tp_{\text{qf}} (\bc / M; N)$, is $(\chi, \mu)$-averageable over $M$ .
\end{defin}

Note that if $M, N \in \K_{<\mu}$, then we never have $M \le^{\chi, \mu} N$. From now on we assume:

\begin{hypothesis}\label{structure-sec-hyp} \
  \begin{enumerate}
    \item $\K = (K, \subseteq)$ is a universal class with arbitrarily large models.
    \item $\chi \ge \LS (\K)$ is such that $\K$ does not have the order property of length $\chi^+$.
    \item Set $\mu := 2^{2^{\chi}}$.
  \end{enumerate}
\end{hypothesis}

\begin{defin}\label{k-0-def}
  Let $\K^0 := (K, \le^{\chi^+, \mu^+})$.
\end{defin}

The following is the key structure theorem for universal classes: from failure of the order property, Shelah \cite[Chapter V.B]{shelahaecbook2} shows that one can make $\K^0$ into a weak AEC satisfying $\Axfr$. Note that by Fact \ref{op-length} one can take $\chi, \mu < \ehanf{\LS (\K)}$.

\begin{fact}\label{axfr-fact} \
  \begin{enumerate}
    \item $\K^0$ is a weak AEC with $\LS (\K^0) \le \mu^+$.
    \item For $M \in K$ and $A \subseteq |M|$, let $\cl^M (A)$ be the closure of $A$ under the functions of $M$. We can define a $4$-ary relation $\nf$ on $K$ by $\nfs{M_0}{M_1}{M_2}{M_3}$ if and only if all of the following conditions are satisfied:
      \begin{enumerate}
      \item $M_0 \leap{\K^0} M_1$ and $M_0 \leap{\K^0} M_2$.
      \item $M_1 \subseteq M_3$ and $M_2 \subseteq M_3$.
      \item $\cl^{M_3} (M_1 \cup M_2) \leap{\K^0} M_3$.
      \item For any $\bc \in \fct{<\omega}{|M_1|}$, $\tp_{\text{qf}} (\bc / M_2; M_3)$ is $(\chi^+, \mu^+)$-averageable over $M_0$.
      \end{enumerate}

      We then have that $(\K^0, \nf, \cl)$ satisfies $\Axfr$. Moreover $\cl$ is algebraic (see Definition \ref{cl-algebraic}) and $\nf$ is $\mu^+$-based (see Definition \ref{based-def}).
  \end{enumerate}
\end{fact}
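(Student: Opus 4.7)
The plan is to follow the classical stability-theoretic blueprint adapted to the quantifier-free setting. The starting technical input is a convergence theorem derived from failure of the order property of length $\chi^+$: any sequence of tuples of bounded arity and length at least $\mu^+ = (2^{2^\chi})^+$ from a model $M \in K$ contains a $(\chi^+, \mu^+)$-convergent subsequence. This is proved by a counting/Erdős--Rado argument using that there are at most $2^\chi$ quantifier-free formulas over a set of size $\chi$, so a sufficiently long non-convergent sequence would code an order of length $\chi^+$. From convergence, one derives the basic calculus of averages: if $\BI$ is $(\chi^+, \mu^+)$-convergent in $M$ then $\Av_{\chi^+}(\BI/A;M)$ is a complete quantifier-free type whose realization in any extension can itself be computed by the same sequence; and averages over larger bases agree with averages over smaller ones whenever both are defined. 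The monotonicity observations in Remark \ref{monot-average-rmk} make all of this insensitive to the ambient model.

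With this in hand, I would first verify that $\K^0$ is a weak AEC. The partial order and invariance axioms are immediate from the definition of $\le^{\chi^+,\mu^+}$. For Löwenheim--Skolem--Tarski with bound $\mu^+$, given $M \in K$ and $A \subseteq |M|$, one iteratively closes $A$ under (i) the functions of $M$ and (ii) $\mu^+$-many witnesses to $(\chi^+, \mu^+)$-averageability of $\tp_{\text{qf}}(\bc/M_0; M)$ for each finite $\bc$ generated so far, terminating at a model $M_0 \leap{\K^0} M$ of size $|A| + \mu^+$. The chain axiom requires that if $\seq{M_i : i < \delta}$ is $\leap{\K^0}$-increasing and $M_\delta := \bigcup_{i < \delta} M_i$, then $M_i \leap{\K^0} M_\delta$ for every $i$; given $\bc \in \fct{<\omega}{|M_\delta|}$, the tuple lies in some $M_j$, and the $(\chi^+, \mu^+)$-convergent sequence witnessing averageability of $\tp_{\text{qf}}(\bc/M_i; M_j)$ continues to witness averageability over $M_i$ inside $M_\delta$ by the monotonicity of averages. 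Coherence is similar. Note that smoothness is \emph{not} claimed --- and indeed will generally fail, which is the whole motivation for working with weak AECs.

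Next I would verify the $\Axfr$ axioms for $(\K^0, \nf, \cl)$. The axioms on $\cl$ (invariance, both monotonicities, idempotence, and the algebraicity of Definition \ref{cl-algebraic}) are all immediate since $\cl^N(A)$ is just the substructure generated by $A$. For $\nf$, invariance and the two monotonicity axioms reduce directly to monotonicity of averages; base enlargement uses that restricting an average to a larger base still gives an average of the same sequence over that base; and finite character is forced by the fact that each quantifier-free formula in an average mentions only finitely many parameters. Uniqueness of $\nf$ reduces to the fact that two convergent sequences over $M_0$ with the same average over $M_0$ have the same average over any extension, combined with amalgamation inside $K$ (which universal classes have via $\cl$). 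Existence is proved by picking a $(\chi^+,\mu^+)$-convergent sequence inside $M_1$ realizing the quantifier-free type of an enumeration of $M_1$ over $M_0$, then realizing its $M_2$-average over $M_2$ inside some $N \supseteq M_2$, and finally embedding $M_1$ into $N$ accordingly.

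The two steps I expect to be genuinely delicate are symmetry of $\nf$ and the $\mu^+$-basedness of $\nf$. Symmetry, as in first-order stability theory, is proved by constructing a two-dimensional array indexed by two long convergent sequences and swapping rows with columns --- the swap is legitimate precisely because failure of the order property prevents the quantifier-free diagram of the array from distinguishing the two orderings; the cardinal $\mu^+ = (2^{2^\chi})^{++}$ is sized exactly to run this argument. For $\mu^+$-basedness, given $M \leap{\K^0} M^*$ and $A \subseteq |M^*|$, one closes $A$ under functions together with $\mu^+$-many parameters witnessing averageability of each $\tp_{\text{qf}}(\ba/M; M^*)$ for $\ba$ finite from $A$, producing $N_1 \leap{\K^0} M^*$ of size $|A| + \mu^+$; setting $N_0 := M \cap N_1 \leap{\K^0} M$ gives the required decomposition, with $\nfs{N_0}{N_1}{M}{M^*}$ holding since the relevant averages are already determined over $N_0$. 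Once symmetry is in hand, the remaining axioms --- especially the uniqueness/existence interplay --- fall into place by standard manipulations.
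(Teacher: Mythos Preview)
Your overall blueprint for the $\Axfr$ axioms is the right one and matches Shelah's development (which the paper largely cites rather than reproves). The one place where the paper supplies its own argument is $\mu^+$-basedness, and there your sketch has a real gap. You propose to close $A$ under functions and under $\mu^+$-many averaging witnesses to obtain $N_1 \leap{\K^0} M^\ast$, and then set $N_0 := M \cap N_1$. But nothing in a single closure pass guarantees that $N_0 \leap{\K^0} M$: for this you would need that every finite tuple from $M$ has its quantifier-free type over $N_0$ averageable over $N_0$, and your construction only controls averageability of tuples from $N_1$ over $M$. Iterating the closure does not by itself fix this, because at the limit you need the union of a $\leap{\K^0}$-increasing chain to remain $\leap{\K^0}$-below $M$ --- and that is precisely the smoothness axiom, which you correctly note can fail in general.

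The paper's argument supplies the missing ingredient: it first proves as a subclaim that $\K^0$ is $(\le \lambda, \mu^+)$-smooth for every $\lambda$, using Shelah's characterization of $\leap{\K^0}$ as $\lee_\Delta$ for a certain fragment $\Delta \subseteq \Ll_{\mu^+,\mu^+}$ (so smoothness for chains of length exactly $\mu^+$ follows from the usual Tarski--Vaught behaviour of $\Delta$-elementarity). With that in hand, the paper builds two increasing chains $\seq{M_i^0 : i < \mu^+}$ inside $M$ and $\seq{M_i^1 : i < \mu^+}$ inside $M^\ast$, alternately absorbing $|M| \cap |M_i^1|$ into $M_{i+1}^0$ and the averaging sequences $\BI^{\bc}$ into $M_{i+1}^0$; the $\mu^+$-smoothness subclaim then gives $N_0 := \bigcup_i M_i^0 \leap{\K^0} M$ and $N_1 := \bigcup_i M_i^1 \leap{\K^0} M^\ast$ simultaneously, with $N_0 = M \cap N_1$ and $\nfs{N_0}{N_1}{M}{M^\ast}$ by construction. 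Your sketch should isolate and prove this $\mu^+$-smoothness first; without it the basedness argument does not close. (Minor note: $\mu^+ = (2^{2^\chi})^+$, not $(2^{2^\chi})^{++}$.)
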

\begin{proof}
  That $(\K^0, \nf, \cl)$ satisfies $\Axfr$ and has Löwenheim-Skolem-Tarski number bounded by $\mu^+$ is the content of \cite[V.B.2.9]{shelahaecbook2}. Since $\cl$ is just closure under the functions, it is clearly algebraic. That $\nf$ is $\mu^+$-based is observed (but not explicitly proven) in \cite[V.C.5.7]{shelahaecbook2}. We give the proof here.

  \underline{Claim}: $\nf$ is $\mu^+$-based.
  
  \underline{Proof of Claim}:
  
  First, we show:

  \underline{Subclaim}: For any cardinal $\lambda$, $\K^0$ is $(\le \lambda, \mu^+)$-smooth. That is, if $\seq{M_i : i < \mu^+}$ is increasing in $\K^0$ and $M \in \K^0$ is such that $M_i \leap{\K^0} M$ for all $i < \mu^+$, then $\bigcup_{i < \mu^+} M_i \leap{\K^0} M$.

  \underline{Proof of Subclaim}: 

  In \cite[V.A.4.4]{shelahaecbook2}, it is shown that for any $N, N' \in \K^0$, $N \leap{\K^0} N'$ if and only if $N \lee_{\Delta} N'$, where $\Delta$ is a certain fragment of $\Ll_{\mu^+, \mu^+}$. The result now follows from the basic properties of $\Delta$-elementary substructure. $\dagger_{\text{Subclaim}}$

  Let $M \leap{\K^0} M^\ast$ and let $A \subseteq |M^\ast|$ be given. By definition of $\leap{\K^0} = \le^{\chi^+, \mu^+}$, for each $\bc \in \fct{<\omega}{|M^\ast|}$ there exists $\BI^{\bc} \subseteq \fct{\ell (\bc)}{|M|}$ that is $(\chi^+, \mu^+)$-convergent and so that $\Av (\BI^{\bc} / M; M^\ast) = \tp_{\text{qf}} (\bc / M; M^\ast)$. Without loss of generality, $|\BI^{\bc}| \le \mu^+$. 

  We build increasing $\seq{M_i^0 : i < \mu^+}$, $\seq{M_i^1 : i < \mu^+}$ such that for all $i < \mu^+$:

  \begin{enumerate}
    \item\label{based-claim-1} $M_i^0 \leap{\K^0} M$.
    \item $M_i^0 \leap{\K^0} M_i^1 \leap{\K^0} M^\ast$.
    \item $\|M_i^1\| \le |A| + \mu^+$.
    \item\label{based-claim-4} $|M| \cap |M_i^1| \subseteq |M_0^{i + 1}|$.
    \item\label{based-claim-5} For all $\bc \in \fct{<\omega}{M_i^1}$, $\BI^{\bc} \subseteq |M_{i + 1}^{0}|$.
  \end{enumerate}

  This is enough: let $N_0 := \bigcup_{i < \mu^+} M_i^0$, $N_1 := \bigcup_{i < \mu^+} M_i^1$. By the claim, $N_0 \leap{\K^0} N_1 \leap{\K^0} M^\ast$ and by requirements (\ref{based-claim-1}) and (\ref{based-claim-4}), $M \cap N_1 = N_0$. Finally, $\nfs{N_0}{N_1}{M}{M^\ast}$ by definition of $\BI^{\bc}$ and requirement (\ref{based-claim-5}).

  This is possible: assume that $\seq{M_j^\ell : j < i}$ have been defined for $\ell = 0,1$. Let $M_{i, 0}^0 := \bigcup_{j < i} M_j^0$, $M_{i, 0}^1 := \bigcup_{j < i} M_j^1$. Use that $\LS (\K^0) \le \mu^+$ to pick $M_i^0$ such that $M_i^0 \leap{\K^0} M$, $|M| \cap M_{i, 0}^1 \subseteq |M_i^0|$, $\BI^{\bc} \subseteq |M_i^0|$ for all $\bc \in \fct{<\omega}{|M_{i, 0}^1|}$, and $\|M_i^0\| \le |A| + \mu^+$. Note that by coherence, $M_j^0 \leap{\K^0} M_i^0$. Now pick $M_i^1$ such that $M_i^1 \leap{\K^0} M^\ast$, $A \cup |M_{i, 0}^1| \cup |M_i^0| \subseteq M_i^1$, and $\|M_i^1\| \le |A| + \mu^+$. It is easy to check that this satisfies all the requirements. $\dagger_{\text{Claim}}$
\end{proof}

Note that $\K^0$ has amalgamation (Remark \ref{ap-rmk}). However, we do not know if it satisfies joint embedding, so we partition $\K^0$ into disjoint AECs, each of which has joint embedding. We will then concentrate on just one of these AECs. This trick appears in \cite[Section II.3]{sh300-orig}.

\begin{defin}\label{k-ast-def}
  For $M, N \in \K^0$, write $M \sim N$ if they can be $\leap{\K^0}$-embedded inside a common model. This is an equivalence relation and the equivalence classes partition $\K^0$ into disjoint weak AECs $\seq{\K_i^0 : i \in I}$ that have amalgamation and joint embedding. There is only a set of such classes, so there exists\footnote{There could be many and for our purpose the choice of $i$ does not matter. Moreover $i$ is unique if $\K$ is categorical in some $\lambda \ge \mu^+$.} $i \in I$ such that $\K_i^0$ has arbitrarily large models. Let $\K^\ast := \left(\K_i^0\right)_{\ge \mu^+}$.
\end{defin}

From now on, we will work with $\K^\ast$. We note a few trivial properties of independence there:

\begin{lem}\label{k-ast-prop} \
  \begin{enumerate}
    \item\label{k-ast-prop-1} $\K^\ast$ is a weak AEC with amalgamation, joint embedding, and arbitrarily large models.
    \item\label{k-ast-prop-2} $\LS (\K^\ast) = \mu^+$.
    \item\label{k-ast-prop-3} $\K$ and $\K^\ast$ are compatible (recall Definition \ref{compatible-def}).
    \item\label{k-ast-prop-4} $(\K^\ast, \nf \rest \K^\ast, \cl)$ satisfies $\Axfr$, where for $M \in \K^\ast$, $\cl^M$ is closure under the functions of $M$ and $\nf \rest \K^\ast$ is the natural restriction of $\nf$ (from Fact \ref{axfr-fact}) to $\K^\ast$.
  \end{enumerate}
\end{lem}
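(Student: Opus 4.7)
My plan is to check each of the four clauses in turn, essentially unpacking the definition of $\K^\ast$ and transferring each property from $\K^0$ through the partition and through the truncation at $\mu^+$.

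For clauses \eqref{k-ast-prop-1} and \eqref{k-ast-prop-2}: First I would observe that the equivalence classes $\K_i^0$ of $\sim$ are full weak sub-AECs of $\K^0$ (the axioms of a weak AEC restrict because $\sim$ is closed under $\leap{\K^0}$-extensions and $\leap{\K^0}$-unions of chains). Truncation to $\K^\ast := (\K_i^0)_{\ge \mu^+}$ then preserves being a weak AEC, because $\mu^+ \ge \LS(\K^0)$ by Fact~\ref{axfr-fact}, so every submodel guaranteed by the Löwenheim--Skolem--Tarski axiom of $\K^0$ can be enlarged to size exactly $\mu^+$ inside $\K^\ast$. This gives $\LS(\K^\ast) \le \mu^+$, and the reverse inequality is forced since $\K^\ast$ contains no model of size below $\mu^+$. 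Joint embedding inside $\K_i^0$ is immediate from the definition of $\sim$, amalgamation is inherited from $\K^0$ via Remark~\ref{ap-rmk}, and arbitrarily large models in $\K^\ast$ hold by the very choice of $i$ in Definition~\ref{k-ast-def}.

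For clause \eqref{k-ast-prop-3}: The vocabularies agree trivially since $\K^\ast$ consists of structures from $K$. For the categoricity condition, suppose $\lambda > \LS(\K, \K^\ast) = \mu^+$ and $\K$ is categorical in $\lambda$. Since $\K^\ast$ has arbitrarily large models, pick $N \in K^\ast_\lambda$. Then $N \in K_\lambda$, and by categoricity of $\K$ every $M \in K_\lambda$ is isomorphic to $N$, hence lies in the $\sim$-class $\K_i^0$ and has size $\lambda \ge \mu^+$, so $M \in K^\ast_\lambda$. Thus $K_\lambda = K^\ast_\lambda$. Conversely, if $\K^\ast$ is categorical in $\lambda$, any $M \in K_\lambda$ has size $\lambda > \mu^+ = \LS(\K^0)$ so it belongs to some $\K_j^0$; but then $M$ and the unique $N \in K^\ast_\lambda$ are both models of size $\lambda$ in $\K^0$, and I would argue that they must be $\sim$-equivalent (hence $M \in K^\ast$). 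This is the one slightly delicate point: one needs to know that distinct $\sim$-classes cannot both contain a model of the fixed size $\lambda$ once $\K^\ast$ is categorical in $\lambda$. The cleanest route is to note that categoricity of $\K^\ast$ in $\lambda$ combined with amalgamation in $\K^0$ (Remark~\ref{ap-rmk}) forces any $M \in \K^0_\lambda$ to be $\leap{\K^0}$-embeddable into a sufficiently large extension of $N$, placing it in the same $\sim$-class.

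For clause \eqref{k-ast-prop-4}: All axioms of $\Axfr$ from Definition~\ref{axfr-def} are local in nature: they quantify over models lying between an arbitrary base and an arbitrary ambient model, and they are preserved under restriction to a subclass closed under $\leap{\K^0}$-extensions within itself. Since $\K^\ast \subseteq \K^0$ is closed under $\leap{\K^0}$ (by construction as a truncated equivalence class) and $\cl$ is just closure under the functions — hence unchanged by the restriction — invariance, the two monotonicity axioms for $\cl$, idempotence, and the structural/monotonicity/symmetry axioms for $\nf$ transfer directly. The existence axiom requires a small check: given $M_0 \leap{\K^\ast} M_1, M_2$, one applies existence in $\K^0$ to produce $N$ and the embeddings, and since $M_0 \in K^\ast$ has size $\ge \mu^+$ and lies in $\K_i^0$, the resulting $N$ is automatically in $\K_i^0$ (by $\sim$) and of size $\ge \mu^+$, so $N \in K^\ast$. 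Uniqueness and finite character transfer in the same way. The only step I expect to require care is this verification that the witness models produced by existence and uniqueness land back inside $\K^\ast$ rather than escaping to another $\sim$-class or dropping below $\mu^+$, but both issues are handled by the definition of $\sim$ and by enlarging witnesses using $\LS(\K^0) \le \mu^+$.
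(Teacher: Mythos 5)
Your verification of clauses (\ref{k-ast-prop-1}), (\ref{k-ast-prop-2}) and (\ref{k-ast-prop-4}) is correct, and so is the half of clause (\ref{k-ast-prop-3}) where $\K$ is categorical in $\lambda$; this is exactly the routine unpacking that the paper's one-word proof leaves to the reader (the key observations being that $\sim$-classes are isomorphism-closed and closed under $\leap{\K^0}$-extensions and unions, and that the witnesses produced by the $\Axfr$ axioms are $\leap{\K^0}$-extensions of models of $\K^\ast$, hence stay in $\K^\ast$). The genuine gap is at the point you flag yourself, and your proposed repair does not work. Amalgamation in $\K^0$ amalgamates two extensions of a \emph{common base}; it can never place two models lying in distinct $\sim$-classes inside a common model, since by the very definition of $\sim$ such models admit no common $\leap{\K^0}$-extension. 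So ``categoricity of $\K^\ast$ in $\lambda$ plus amalgamation in $\K^0$'' cannot force an arbitrary $M \in K_\lambda$ into $\K_i^0$. Worse, no argument from the construction of $\K^\ast$ alone can: take $\tau = \{P\}$ with $P$ unary and let $K$ be the universal class of $\tau$-structures in which $P$ is interpreted as either the whole universe or the empty set. This class has arbitrarily large models and no order property; embeddings preserve $P$ and $\neg P$, so the $P$-full and $P$-empty models of size $\ge \mu^+$ lie in different $\sim$-classes; whichever class is chosen as $\K_i^0$, the resulting $\K^\ast$ is categorical in every $\lambda > \mu^+$ (all same-sort models of a given size are isomorphic), while $K_\lambda$ has two models. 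So the implication ``$\K^\ast$ categorical in $\lambda$ $\Rightarrow$ $K_\lambda = K^\ast_\lambda$'' is not a consequence of Hypothesis \ref{structure-sec-hyp} and Definition \ref{k-ast-def} by themselves.

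What a correct argument must use at this point is categoricity of $\K$ itself (which is available in the paper's applications, cf.\ the footnote to Definition \ref{k-ast-def} and Theorem \ref{universal-class-structure}), not amalgamation. For instance, if $\K$ is categorical in some $\lambda_1 \ge \mu^+$, then any $\sim$-class containing a model of size $\ge \lambda_1$ must equal $\K_i^0$: apply the L\"owenheim--Skolem--Tarski axiom of $\K^0$ to get a $\leap{\K^0}$-substructure of size exactly $\lambda_1$ (which stays in the same $\sim$-class), note that $\K_i^0$ also has a model of size $\lambda_1$, and use categoricity of $\K$ in $\lambda_1$ together with isomorphism-closure of the classes. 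This yields $K_\lambda = K^\ast_\lambda$ for all $\lambda \ge \lambda_1$, which is how the problematic direction of compatibility is obtained at large cardinals; below $\lambda_1$ the claim is genuinely more delicate and certainly does not follow from your amalgamation sketch. So: three and a half of the four clauses are fine, but the ``slightly delicate point'' you identify is a real gap, and the specific route you propose for it would fail.
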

\begin{proof}
  Straightforward.
\end{proof}
\begin{notation}
We abuse notation and write $\nf$ for $\nf \rest \K^\ast$ (where again $\nf$ is from Fact \ref{axfr-fact}).
\end{notation}

\begin{lem}\label{basic-indep-props} \
  \begin{enumerate}
    \item\label{basic-indep-coher} If $\nfcl{M_0}{A}{B}{M_3}$ and $\bc \in \fct{<\omega}{A}$, then $\tp_{\text{qf}} (\bc / B; M_3)$ is $(\chi^+, \mu^+)$-averageable over $M_0$.
    \item\label{basic-indep-cl} $\cl$ is algebraic.
    \item\label{basic-indep-based} $\nf$ is $\LS (\K)$-based.
  \end{enumerate}
\end{lem}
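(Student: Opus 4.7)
The three parts are all essentially direct consequences of what has been set up, so I would handle them in order.

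For part (\ref{basic-indep-coher}), my plan is to unfold Definition \ref{nfm-def}: since $\nfcl{M_0}{A}{B}{M_3}$ holds, there are $M_3' \gea M_3$ and $M_1, M_2 \leap{\K^\ast} M_3'$ with $A \subseteq |M_1|$, $B \subseteq |M_2|$, and $\nfs{M_0}{M_1}{M_2}{M_3'}$. Given $\bc \in \fct{<\omega}{A} \subseteq \fct{<\omega}{|M_1|}$, the explicit description of $\nf$ in Fact \ref{axfr-fact} immediately yields that $\tp_{\text{qf}} (\bc / M_2; M_3')$ is $(\chi^+, \mu^+)$-averageable over $M_0$. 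By the monotonicity observations in Remark \ref{monot-average-rmk}, its restriction to $B \subseteq |M_2|$, namely $\tp_{\text{qf}} (\bc / B; M_3')$, is also $(\chi^+, \mu^+)$-averageable over $M_0$. Finally, because quantifier-free formulas are preserved under substructure and $M_3 \subseteq M_3'$ contains $\bc$ and $B$, we have $\tp_{\text{qf}} (\bc / B; M_3) = \tp_{\text{qf}} (\bc / B; M_3')$, which gives the conclusion.

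Part (\ref{basic-indep-cl}) is essentially by definition: $\cl^M(A)$ is the closure of $A$ under the function symbols of $M$, and when $M \subseteq N$ (i.e.\ $M$ is a substructure of $N$), these function symbols coincide with the corresponding symbols of $N$ on the set $|M|$. Since any $A \subseteq |M|$ is closed under those functions inside $|M|$, the iterative closure procedure produces the same set whether computed in $M$ or in $N$; hence $\cl^M(A) = \cl^N(A)$.

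For part (\ref{basic-indep-based}), my plan is to transfer the $\mu^+$-basedness of $\nf$ (established for $\K^0$ in Fact \ref{axfr-fact}) to $\K^\ast$, recalling from Lemma \ref{k-ast-prop}.(\ref{k-ast-prop-2}) that $\LS(\K^\ast) = \mu^+$. Given $M \leap{\K^\ast} M^\ast$ and $A \subseteq |M^\ast|$, I would rerun the inductive construction in the proof of Fact \ref{axfr-fact}: at stage $i$ we pick $M_i^0 \leap{\K^0} M$ and $M_i^1 \leap{\K^0} M^\ast$ of size at most $|A| + \mu^+$ satisfying the five enumerated conditions, and then take $N_0 := \bigcup_i M_i^0$, $N_1 := \bigcup_i M_i^1$. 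The only subtlety is keeping the constructed models inside $\K^\ast$, which will be the main (mild) obstacle: we pad each $M_i^0, M_i^1$ at selection time with arbitrarily chosen $\mu^+$-many elements (using the LST axiom for $\K^0$) so that each has size exactly $|A| + \mu^+ \ge \mu^+$; since $M, M^\ast \in \K^\ast$ and the padded models are $\leap{\K^0}$-comparable to them, they lie in the same joint-embedding component and hence in $\K^\ast$. The defining properties $\|N_1\| \le |A| + \mu^+$, $N_0 = M \cap N_1$, $A \subseteq |N_1|$, and $\nfs{N_0}{N_1}{M}{M^\ast}$ then transfer verbatim from the proof of Fact \ref{axfr-fact}, yielding $\mu^+$-basedness of $\nf$ in $\K^\ast$.
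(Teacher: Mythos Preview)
Your proof is correct and follows essentially the same approach as the paper, which dispatches all three parts in a few terse sentences by pointing back to Fact \ref{axfr-fact}. You have simply supplied the details the paper omits: the unwinding of Definition \ref{nfm-def} plus monotonicity of averageability for part (\ref{basic-indep-coher}), the observation that closure under functions is absolute between substructures for part (\ref{basic-indep-cl}), and the padding argument to force $N_0, N_1$ into $\K^\ast_{\ge \mu^+}$ for part (\ref{basic-indep-based}) --- exactly the ``straightforward check'' the paper alludes to.
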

\begin{proof}
  (\ref{basic-indep-coher}) follows directly from the definition of $\nf$. For the rest, $\cl$ is algebraic because $\cl$ satisfies this property in $\K^0$ (Fact \ref{axfr-fact}). Similarly in $\K^0$, $\nf$ is $\mu^+$-based (Fact \ref{axfr-fact}) and it is straightforward to check that this carries over to $\K^\ast$.
\end{proof}

Next, we study what happens if smoothness fails in $\K^\ast$. Recall that our goal is to see that this is incompatible with categoricity (in a high-enough cardinal). Shelah has shown \cite[V.C.2.6]{shelahaecbook2}, that failure of smoothness implies that $\K^\ast$ has $2^\lambda$-many nonisomorphic models at every high-enough \emph{regular} cardinal $\lambda$. So in particular $\K^\ast$ cannot be categorical in a regular cardinal. However we are also interested in the singular case. Shelah states as an exercise \cite[V.C.4.13]{shelahaecbook2} that $\K^\ast$ has (at least) $2^{<\lambda}$-many nonisomorphic models if $\lambda$ is singular. However we have been unable to prove it.

Instead, we aim to see that failure of smoothness implies that $\K^\ast$ has many \emph{types}, i.e.\ it is Galois unstable in some suitable cardinals. This will contradict Lemma \ref{stable-prop}. The argument is similar to \cite[V.E.3.15]{shelahaecbook2}, which shows that failure of superstability (in the sense that there is an increasing chain $\seq{M_i : i < \delta}$ and a type $p \in \gS (\bigcup_{i < \delta} M_i)$ that forks over every $M_i$, $i < \delta$) implies unstability at suitable cardinals. The extra difficulty here is that smoothness fails, but the hard work in constructing the tree has already been done in Theorem \ref{tree-constr-thm}.

First observe that any failure of smoothness must be witnessed by a small chain:

\begin{lem}\label{smoothness-upward}
  If $\K^\ast$ is $(\le \LS (\K^\ast), \le \LS (\K^\ast)^+)$-smooth (recall Definition \ref{smooth-def}), then $\K^\ast$ is smooth, i.e.\ it is an AEC.
\end{lem}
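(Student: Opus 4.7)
The plan is to invoke Fact \ref{smoothness-upward-fact} directly applied to $\K^\ast$. By Lemma \ref{k-ast-prop}.(\ref{k-ast-prop-4}), $(\K^\ast, \nf, \cl)$ satisfies $\Axfr$, so Fact \ref{smoothness-upward-fact} applies provided we can verify its two hypotheses for $\K^\ast$: namely, $(\le \LS(\K^\ast), \le \LS(\K^\ast)^+)$-smoothness and $\LS(\K^\ast)$-basedness of $\nf$.

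The first hypothesis is exactly the assumption of the lemma. For the second, Lemma \ref{basic-indep-props}.(\ref{basic-indep-based}) states that $\nf$ is $\LS(\K)$-based. Since $\LS(\K^\ast) = \mu^+ \ge \LS(\K)$ (by Lemma \ref{k-ast-prop}.(\ref{k-ast-prop-2})), and since it is immediate from Definition \ref{based-def} that $\chi$-basedness implies $\chi'$-basedness for any $\chi' \ge \chi$ (one can use the same witnesses $N_0, N_1$, noting that $\|N_1\| \le |A| + \chi \le |A| + \chi'$), we conclude that $\nf$ is $\LS(\K^\ast)$-based. Applying Fact \ref{smoothness-upward-fact} to $(\K^\ast, \nf, \cl)$ yields smoothness of $\K^\ast$, and hence $\K^\ast$ is an AEC. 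There is no substantive obstacle: the work has already been absorbed into Fact \ref{smoothness-upward-fact} (which is Shelah's V.D.1.2) and the verification of $\Axfr$ for $\K^\ast$ done in the preceding subsection.
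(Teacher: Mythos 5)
Your proposal is correct and is essentially the paper's own proof: the paper likewise deduces smoothness from Fact \ref{smoothness-upward-fact}, citing Lemma \ref{basic-indep-props}.(\ref{basic-indep-based}) for basedness (reading it as giving $\LS(\K^\ast)$-basedness) together with the assumed $(\le \LS(\K^\ast), \le \LS(\K^\ast)^+)$-smoothness. Your extra remark that $\chi$-basedness is monotone in $\chi$ cleanly bridges the $\LS(\K)$ versus $\LS(\K^\ast)$ wording but does not change the argument.
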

\begin{proof}
  By Lemma \ref{basic-indep-props}.(\ref{basic-indep-based}), $\K^\ast$ is $\LS (\K^\ast)$-based, so apply Fact \ref{smoothness-upward-fact}.
\end{proof}

We now show that failure of smoothness implies unstability at some not too high cardinal. A technical subtlety is that we can only show $(<\omega)$-unstability, i.e.\ there are many types of some fixed finite length. In this framework, we do not know whether this implies that there are also many types of length one (see also Remark \ref{omega-stability-rmk}).

\begin{thm}\label{smoothness-unstable}
  Assume that $\K^\ast$ is not $(\le \LS (\K^\ast), \le \LS (\K^\ast)^+)$-smooth. Let $\kappa \le \LS (\K^\ast)^+$ be least such that $(\le \LS (\K^\ast), \le \kappa)$-smoothness fails. If $\lambda \ge \LS (\K^\ast)^+$ is such that $\lambda = \lambda^{<\kappa}$ and $\lambda < \lambda^{\kappa}$, then $\K^\ast$ is $(<\omega)$-unstable in $\lambda$.
\end{thm}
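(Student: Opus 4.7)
The plan is to copy a minimal failure of smoothness into a wide independent tree via Theorem \ref{tree-constr-thm}, then show each branch produces a distinct Galois type, yielding more than $\lambda$ finite types over a single model of size $\lambda$.

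First, using the minimality of $\kappa$ together with Remark \ref{smooth-equiv-rmk}, I extract an increasing chain $\seq{M_i : i \le \kappa}$ in $\K^\ast$ with $\|M_i\| \le \LS(\K^\ast)$ for $i < \kappa$, continuous at every limit $< \kappa$ (since $(\le \LS(\K^\ast), \delta)$-smoothness holds for every $\delta < \kappa$), satisfying $M_i \leap{\K^\ast} M_\kappa$ for all $i < \kappa$ but $\bigcup_{i < \kappa} M_i \not\leap{\K^\ast} M_\kappa$. By Lemma \ref{k-ast-prop}.(\ref{k-ast-prop-4}) and Lemma \ref{basic-indep-props}.(\ref{basic-indep-cl}), I can apply Theorem \ref{tree-constr-thm} with $\delta := \kappa$ and the given cardinal $\lambda$ to obtain $\seq{M_\eta : \eta \in \fct{\le \kappa}{\lambda}}$, $\seq{f_\eta : \eta \in \fct{\le \kappa}{\lambda}}$, and an ambient $N \in \K^\ast$ satisfying its three clauses. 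For each leaf $\eta \in \fct{\kappa}{\lambda}$, set $M^-_\eta := \bigcup_{i < \kappa} M_{\eta \rest i}$; the coherence of the $f_{\eta \rest i}$ and continuity of the original chain below $\kappa$ imply $f_\eta$ sends $\bigcup_{i < \kappa} M_i$ to $M^-_\eta$, so $M^-_\eta \subseteq M_\eta$ with $M^-_\eta \not\leap{\K^\ast} M_\eta$. The definition of $\leap{\K^\ast} = \le^{\chi^+, \mu^+}$ (Definition \ref{average-order}) then provides a finite tuple $\bar a_\eta \in \fct{<\omega}{M_\eta}$ whose quantifier-free type over $M^-_\eta$ in $N$ is \emph{not} $(\chi^+, \mu^+)$-averageable over $M^-_\eta$.

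The set $A := \bigcup_{\xi \in \fct{<\kappa}{\lambda}} |M_\xi|$ has cardinality at most $\lambda^{<\kappa} \cdot \LS(\K^\ast) = \lambda$, so by the Löwenheim-Skolem-Tarski axiom for $\K^\ast$ I pick $M \leap{\K^\ast} N$ of size $\lambda$ with $A \subseteq |M|$; in particular $M^-_\eta \subseteq M$ for every leaf $\eta$. I claim that distinct $\eta, \nu \in \fct{\kappa}{\lambda}$ yield distinct $\gtp_{\K^\ast}(\bar a_\eta / M; N)$ and $\gtp_{\K^\ast}(\bar a_\nu / M; N)$; this furnishes $\lambda^\kappa > \lambda$ pairwise distinct elements of $\gS^{<\omega}_{\K^\ast}(M)$, witnessing $(<\omega)$-unstability in $\lambda$.

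The main (and essentially only) technical step is establishing the claim. Suppose for contradiction the Galois types are equal. By amalgamation in $\K^\ast$ (Lemma \ref{k-ast-prop}.(\ref{k-ast-prop-1})) together with the fact that $\leap{\K^\ast}$-embeddings are substructure-preserving (so preserve quantifier-free types), the quantifier-free types of $\bar a_\eta$ and $\bar a_\nu$ over $M$ agree, and restriction gives $\tp_{\text{qf}}(\bar a_\eta / M^-_\eta; N) = \tp_{\text{qf}}(\bar a_\nu / M^-_\eta; N)$. On the other hand, let $\alpha < \kappa$ be the least index where $\eta, \nu$ disagree; then Theorem \ref{tree-constr-thm}.(\ref{tree-constr-03}) together with symmetry of $\nf$ gives $\nfs{M_{\eta \rest \alpha}}{M_\nu}{M_\eta}{N}$, so Lemma \ref{basic-indep-props}.(\ref{basic-indep-coher}) yields that $\tp_{\text{qf}}(\bar a_\nu / M_\eta; N)$ is $(\chi^+, \mu^+)$-averageable over $M_{\eta \rest \alpha}$. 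Using the monotonicity of averageability (Remark \ref{monot-average-rmk}), the restriction $\tp_{\text{qf}}(\bar a_\nu / M^-_\eta; N)$ is $(\chi^+, \mu^+)$-averageable over $M_{\eta \rest \alpha} \subseteq M^-_\eta$, and hence (with the same witnessing sequence) over $M^-_\eta$. Combining with the type equality, $\tp_{\text{qf}}(\bar a_\eta / M^-_\eta; N)$ would be averageable over $M^-_\eta$, contradicting the choice of $\bar a_\eta$. The principal obstacle is precisely this last step: one must carefully transfer Galois-type equality in the weak AEC $\K^\ast$ back to quantifier-free-type equality, and then invoke the correct monotonicity of averageability to exploit the tree independence.
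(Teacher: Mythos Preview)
Your proposal is correct and follows essentially the same approach as the paper: build an independent tree via Theorem \ref{tree-constr-thm}, pick a witnessing finite tuple at each leaf, embed the interior of the tree into a model $M$ of size $\lambda$, and show the $\lambda^\kappa$ resulting Galois types are pairwise distinct by deriving averageability from the independence clause (\ref{tree-constr-03}).

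The only noteworthy difference is cosmetic. The paper fixes a single tuple $\bc$ in the original chain $M_\kappa$ and transports it uniformly via $\bc_\eta := f_\eta(\bc)$; it then exploits that nonforking ($\nfm$) is invariant under Galois-type equality (the Notation after Definition \ref{nfm-def}) to move the nonforking statement from $\bc_\eta$ to $\bc_\nu$, and finally pulls back along $f_\nu^{-1}$ to contradict the choice of $\bc$. You instead choose a (possibly different) $\bar a_\eta$ at each leaf and bypass the invariance-of-nonforking machinery: you reduce Galois-type equality to quantifier-free-type equality over $M$, and then use Lemma \ref{basic-indep-props}.(\ref{basic-indep-coher}) and Remark \ref{monot-average-rmk} directly. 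Both routes reach the same contradiction; the paper's is slightly more uniform (one tuple, one pullback), while yours is slightly more elementary (no appeal to the well-definedness of forking on Galois types).
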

\begin{proof}
  Fix an increasing sequence $\seq{M_i : i \le \kappa}$ such that $\|M_i\| \le \LS (\K^\ast)^+$ for all $i \le \kappa$ and $\bigcup_{i < \kappa} M_i \not \leap{\K^\ast} M_{\kappa}$. Without loss of generality (using minimality of $\kappa$) the sequence is continuous below $\kappa$, i.e.\ $M_i = \bigcup_{j < i} M_j$ for every $i < \kappa$. Let $N \in \K^\ast$ and $\seq{M_{\eta}, f_{\eta} \mid \eta \in \fct{\le \kappa}{\lambda}}$ be as given by Theorem \ref{tree-constr-thm} (where $\delta, \K$ there stands for $\kappa, \K^\ast$ here; note that $\cl$ is algebraic by Lemma \ref{basic-indep-props}.(\ref{basic-indep-cl}) so the hypotheses of the theorem hold). 

  By definition of $\leap{\K^\ast}$ (so really of $\leap{\K^0}$, see Definitions \ref{k-0-def} and \ref{average-order}), we have that $\bigcup_{i < \kappa} M_i \not \le^{\chi^+, \mu^+} M_\kappa$. By definition of $\le^{\chi^+, \mu^+}$, there exists $\bc \in \fct{<\omega}{|M_\kappa|}$ such that $q := \tp_{\text{qf}} (\bc / \bigcup_{i < \kappa} M_i; M_\kappa)$ is not $(\chi^+, \mu^+)$-averageable over $\bigcup_{i < \kappa} M_i$ in $M_{\kappa}$. For $\eta \in \fct{\kappa}{\lambda}$, let $\bc_{\eta} := f_{\eta} (\bc)$. 

  Note that by (\ref{tree-constr-01}) in Theorem \ref{tree-constr-thm}, for all $\eta \in \fct{\le \kappa}{\lambda}$, $\|M_{\eta}\| = \|M_{\ell (\eta)}\| \le \LS (\K^\ast)^+ \le \lambda$, so fix $M \leap{\K^\ast} N$ such that $\|M\| = \lambda$ and $\bigcup_{\eta \in \fct{<\kappa}{\lambda}} |M_{\eta}| \subseteq |M|$. For $\eta \in \fct{\kappa}{\lambda}$, let $p_{\eta} := \gtp_{\K^\ast} (\bc_{\eta} / M; N)$.

  Because $\lambda < \lambda^{\kappa}$, it is enough to prove the following: 

  \underline{Claim}: For $\eta, \nu \in \fct{\kappa}{\lambda}$, if $\eta \neq \nu$, then $p_{\eta} \neq p_{\nu}$.

  \underline{Proof of claim}: Let $\alpha < \kappa$ be least such that $\eta \rest (\alpha + 1) \neq \nu \rest (\alpha + 1)$. By (\ref{tree-constr-03}) in Theorem \ref{tree-constr-thm} and the monotonicity property of $\nfm$ (see Lemma \ref{nfcl-very-basic}) we have that $\nfcl{M_{\nu \rest \alpha}}{\bc_{\eta}}{M_{\nu}}{N}$. By monotonicity again, $\nfcl{M_{\nu \rest \alpha}}{\bc_{\eta}}{\bigcup_{\beta < \kappa} M_{\nu \rest \beta}}{N}$. Now assume for a contradiction that $p_{\eta} = p_{\nu}$. Then by monotonicity and invariance, $\nfcl{M_{\nu \rest \alpha}}{\bc_{\nu}}{\bigcup_{\beta < \kappa} M_{\nu \rest \beta}}{N}$ so $\nfcl{M_{\nu \rest \alpha}}{\bc_{\nu}}{\bigcup_{\beta < \kappa} M_{\nu \rest \beta}}{M_{\nu}}$. Applying $f_{\nu}^{-1}$ to this, we get that $\nfcl{M_\alpha}{\bc}{\bigcup_{i < \kappa} M_i}{M_{\kappa}}$. In particular, by Lemma (\ref{basic-indep-props}).(\ref{basic-indep-coher}), $q$ is $(\chi^+, \mu^+)$-averageable over $M_\alpha$ in $M_{\kappa}$. By Remark \ref{monot-average-rmk}, $q$ is $(\chi^+, \mu^+)$-averageable over $\bigcup_{i < \kappa} M_i$ in $M_{\kappa}$. This contradicts the choice of $\bc$. $\dagger_{\text{Claim}}$.
\end{proof}

\section{Categoricity in universal classes}

In this section, we derive the main theorem of this paper. First, we explain why, in a universal class, categoricity (in some $\lambda > \LS (\K)$) implies failure of the order property. Note that Shelah argues \cite[Claim V.B.2.6]{shelahaecbook2} that if $K$ has the order property, then it has $2^{\mu}$-many models of size $\mu$ (for any $\mu > \LS (\K)$). In particular, this violates categoricity but Shelah's construction of many models is very technical and when categoricity is assumed there is an easier proof. Note that we do not even need to work with Galois types and can use syntactic (first-order) quantifier-free types instead.

\begin{lem}\label{no-op}
  Assume that a universal class $\K$ is categorical in a $\lambda > \LS (\K)$. Then $\K$ does not have the order property (recall Definition \ref{op-def}).
\end{lem}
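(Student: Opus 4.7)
The plan is to case-split on whether $\K$ has arbitrarily large models, and in the nontrivial case to mirror the argument of Lemma~\ref{stable-prop} with quantifier-free first-order types in place of Galois types.

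If $\K$ does not have arbitrarily large models, then the order property is immediately ruled out: by Definition~\ref{op-def}, for every cardinal $\chi$ the witness sequences $\seq{\ba_i : i < \chi}$ and $\seq{\bb_i : i < \chi}$ must live inside a single model, forcing that model to have size at least $\chi$. So I may assume $\K$ has arbitrarily large models. By Fact~\ref{pres-thm}, fix an Ehrenfeucht--Mostowski blueprint $\Phi$ for $\K$.

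Suppose for contradiction that $\K$ has the order property, witnessed by a quantifier-free formula $\phi(\bx,\by,\bz)$. Set $\mu := \LS(\K)$; since $\lambda > \LS(\K)$, at least $\mu^+ \le \lambda$. Apply Fact~\ref{nice-order} with $\theta := \mu^+$ to obtain a linear order $I$ of size $\lambda$ with rich automorphism group, and let $N := \EM_\tau(I,\Phi) \in \K_\lambda$. By categoricity, $N$ is the unique model of size $\lambda$ up to isomorphism. Now imitate the pigeonhole argument of Lemma~\ref{stable-prop} verbatim, with quantifier-free first-order types replacing Galois types --- this substitution is legitimate since quantifier-free types are automatically preserved by any automorphism of the ambient model (so the proof goes through with no amalgamation input). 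The argument then shows that at most $\mu$ pairwise distinct quantifier-free types of finite tuples can be realized in $N$ over any substructure of the form $\EM_\tau(I_0, \Phi)$ with $|I_0| \le \mu$.

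On the other hand, the order property of length $\mu^+$ supplies, via the formula $\phi$ and the sequences $\seq{\ba_i, \bb_i : i < \mu^+}$ together with $\bc$, a family of $\mu^+$ pairwise distinct quantifier-free types of $\ba_i$'s realized over a parameter set of size $\mu^+$; a Ramsey-style extraction inside EM models indexed by a dense linear order compresses this into $\mu^+$ pairwise distinct cut types realized over a size-$\mu$ initial segment. Using downward L\"owenheim--Skolem--Tarski and categoricity, one arranges for both the parameters and the realizing tuples to sit inside $N$, yielding more than $\mu$ quantifier-free types over a size-$\mu$ subset of $N$ and hence the desired contradiction. The main obstacle will be precisely this Ramsey-style compression step --- the raw order property of length $\mu^+$ only gives $\mu^+$ types over a size-$\mu^+$ set, and one must use the EM machinery together with the arbitrary length of OP to force $\mu^+$ of them through a size-$\mu$ parameter set, thereby beating the pigeonhole bound.
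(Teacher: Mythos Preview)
Your approach is essentially the paper's: contrast ``few quantifier-free types over small sets'' (from categoricity via EM models) against ``many quantifier-free types over a small set'' (from the order property via Dedekind cuts). The paper is terser, citing Morley's standard argument for the first half and the proof of \cite[Fact~5.13]{bgkv-apal} for the second.

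One technical point worth flagging: your invocation of Fact~\ref{nice-order} with $\theta = \mu^+$ requires $\mu^+ < \lambda$, but the hypothesis only gives $\lambda > \LS(\K) = \mu$, so $\lambda = \mu^+$ is not excluded. You do not actually need Fact~\ref{nice-order} here. For quantifier-free types the plain Morley count already works: the qf-type in $N = \EM_\tau(I, \Phi)$ of a tuple generated by $\bar{\jmath} \in \fct{<\omega}{I}$ over $\EM_\tau(I_0, \Phi)$ depends only on the order-type of $\bar{\jmath}$ over $I_0$ (compute inside any $\EM_\tau(I', \Phi) \supseteq N$ in which the relevant order-automorphism exists; quantifier-free types are absolute under $\subseteq$, so no amalgamation or special linear order is needed). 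This also yields the type bound over \emph{any} size-$\mu$ subset of $N$, not only over sets of the form $\EM_\tau(I_0, \Phi)$, which is what you need to match against the Dedekind-cut witnesses produced from a different EM blueprint on the order-property side.
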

\begin{proof}
  If $\K$ does not have arbitrarily large models, then $\K$ does not have the order property. Now assume that $\K$ has arbitrarily large models. We can use Ehrenfeucht-Mostowski models and the standard argument (due to Morley, see \cite[Theorem 3.7]{morley-cip}) shows that if $M \in K_\lambda$, $\mu \in [\LS (\K), \lambda)$, and $A \subseteq |M|$ is such that $|A| \le \mu$, then $M$ realizes at most $\mu$-many first-order syntactic quantifier-free types over $A$. However if $\K$ had the order property, we would be able to build a set $A \subseteq |M|$ with $|A| \le \LS (\K)$ but with at least $\LS (\K)^+$ (syntactic quantifier-free) types over $A$ realized in $M$ (using Dedekind cuts, see e.g.\ the proof of \cite[Fact 5.13]{bgkv-apal}). This is a contradiction.
\end{proof}

Next, we deduce more structure from categoricity:

\begin{thm}\label{universal-class-structure}
  Let $\K$ be a universal class. If $\K$ is categorical in some $\lambda \ge \beth_{\hanf{\K}}$, then there exists $\K^\ast$ such that:

  \begin{enumerate}
    \item\label{structure-1} $\K^\ast$ is an AEC.
    \item\label{structure-2} $\LS (\K) \le \LS (\K^\ast) < \hanf{\K}$.
    \item\label{structure-3} $\K$ and $\K^\ast$ are compatible (recall Definition \ref{compatible-def}).
    \item\label{structure-4} $\K^\ast$ has amalgamation, joint embedding, and arbitrarily large models.
    \item\label{structure-5} $\K^\ast$ is $\LS (\K^\ast)$-tame.
  \end{enumerate}
\end{thm}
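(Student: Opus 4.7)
The plan is to take as $\K^\ast$ the class produced by the structure theory of Section \ref{uc-structure-sec}, and then verify each of the five clauses in turn, with the smoothness axiom being the hard one. First I would note that by Lemma \ref{no-op}, categoricity in $\lambda \ge \beth_{\hanf{\K}} > \LS(\K)$ forces $\K$ to fail the order property, so Fact \ref{op-length} supplies $\chi < \hanf{\K}$ (which we may enlarge so that $\chi \ge \LS(\K)$) such that $\K$ does not have the order property of length $\chi^+$. Set $\mu := 2^{2^\chi}$; since $\hanf{\K} = \beth_{\delta(\K)}$ is a strong limit cardinal (because $\delta(\K)$ is a limit ordinal), we have $\mu^+ < \hanf{\K}$. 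This places us inside Hypothesis \ref{structure-sec-hyp}, so we may define $\K^\ast$ as in Definition \ref{k-ast-def}; Lemma \ref{k-ast-prop} together with Lemma \ref{basic-indep-props} immediately yields that $\K^\ast$ is a weak AEC with $\LS(\K^\ast) = \mu^+ < \hanf{\K}$, is compatible with $\K$, has amalgamation, joint embedding, and arbitrarily large models, and that $(\K^\ast,\nf,\cl)$ satisfies $\Axfr$ with $\cl$ algebraic and $\nf$ being $\LS(\K^\ast)$-based. This delivers clauses (\ref{structure-2}), (\ref{structure-3}), (\ref{structure-4}).

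Clause (\ref{structure-5}), $\LS(\K^\ast)$-tameness, is then immediate from Lemma \ref{basic-indep-if-based}.(\ref{basic-indep-tame}), which converts $\LS(\K^\ast)$-basedness of $\nf$ into $\LS(\K^\ast)$-tameness.

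The main obstacle is clause (\ref{structure-1}), i.e., that $\K^\ast$ satisfies the smoothness axiom, so is a genuine AEC. By Lemma \ref{smoothness-upward} it suffices to show $(\le \LS(\K^\ast), \le \LS(\K^\ast)^+)$-smoothness, and I would argue by contradiction. Assume this fails and let $\kappa \le \LS(\K^\ast)^+$ be the minimal witnessing limit ordinal; cofinalizing any witnessing chain shows $\kappa$ must be a regular cardinal. Theorem \ref{smoothness-unstable} then implies that $\K^\ast$ is $(<\omega)$-unstable at every cardinal $\lambda'$ with $\lambda' \ge \LS(\K^\ast)^+$, $\lambda' = (\lambda')^{<\kappa}$, and $\lambda' < (\lambda')^\kappa$. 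Because $\kappa < \hanf{\K}$ and $\hanf{\K}$ is a strong limit cardinal well below the categoricity cardinal $\lambda$, there is ample room to choose a strong limit cardinal $\lambda'$ with $\LS(\K^\ast)^+ \le \lambda'$, $\cf(\lambda') = \kappa$, and $(\lambda')^+ < \beth_{\hanf{\K}} \le \lambda$; for such $\lambda'$, König's theorem gives $(\lambda')^{<\kappa} = \lambda'$ and $(\lambda')^\kappa > \lambda'$, so $\K^\ast$ is $(<\omega)$-unstable in $\lambda'$.

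On the other hand, by compatibility $\K_\lambda = \K^\ast_\lambda$, so $\K^\ast$ is categorical in $\lambda$. Applying Lemma \ref{stable-prop} with $\K^1, \K^2$ there in the roles of $\K, \K^\ast$ here (the hypotheses hold: $\K$ is an AEC with arbitrarily large models, $\K^\ast$ has amalgamation and joint embedding, and they are compatible), and using $(\lambda')^+ < \lambda$, we conclude that $\K^\ast$ is $(<\omega)$-stable in $\lambda'$. This contradicts the previous paragraph and finishes the proof of smoothness. The only genuinely delicate point is the cardinal arithmetic ensuring $(\lambda')^+ < \lambda$, and this is precisely where the hypothesis $\lambda \ge \beth_{\hanf{\K}}$ is used; everything else is packaging of the previously developed machinery.
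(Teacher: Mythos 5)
Your proposal is correct and takes essentially the same route as the paper's own proof: the same $\K^\ast$ from Definition \ref{k-ast-def}, clauses (\ref{structure-2})--(\ref{structure-5}) via Lemma \ref{k-ast-prop}, Lemma \ref{basic-indep-props} and Lemma \ref{basic-indep-if-based}, and smoothness by contradiction through Lemma \ref{smoothness-upward}, Theorem \ref{smoothness-unstable} and Lemma \ref{stable-prop}, with your abstractly chosen strong limit $\lambda'$ of cofinality $\kappa$ playing exactly the role of the paper's explicit $\lambda_0 = \beth_{\kappa}(\LS(\K^\ast))$. The only step you leave unremarked is that $\K$ has arbitrarily large models (needed both for Hypothesis \ref{structure-sec-hyp} and for the application of Lemma \ref{stable-prop}), which is immediate from categoricity in $\lambda \ge \beth_{\hanf{\K}} \ge \hanf{\K}$ together with Fact \ref{hanf-arb-large}.
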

\begin{proof}
  Let $\K$ be a universal class and let $\lambda \ge \beth_{\hanf{\K}}$ be such that $\K$ is categorical in $\lambda$. By Fact \ref{hanf-arb-large}, $\K$ has arbitrarily large models. By Lemma \ref{no-op}, $\K$ does not have the order property. By Fact \ref{op-length}, we can fix $\chi \in [\LS (\K), \hanf{\K})$ such that $\K$ does not have the order property of length $\chi^+$. Thus Hypothesis \ref{structure-sec-hyp} is satisfied, and so Shelah's structure theorem for universal classes (Fact \ref{axfr-fact}) applies. Let $\K^\ast$ be as in Definition \ref{k-ast-def}. We have to check that it has all the required properties. First, $\K^\ast$ is a weak AEC with amalgamation, joint embedding, and arbitrarily large models (Lemma \ref{k-ast-prop}.(\ref{k-ast-prop-1})). Moreover (Lemma \ref{k-ast-prop}.(\ref{k-ast-prop-2})), $\LS (\K) \le \LS (\K^\ast) = \mu^+ = \left(2^{2^{\chi}}\right)^+ < \hanf{\K}$. Also, $\K$ and $\K^\ast$ are compatible (Lemma \ref{k-ast-prop}.(\ref{k-ast-prop-3})). This takes care of (\ref{structure-2}), (\ref{structure-3}), and (\ref{structure-4}) in the statement of Theorem \ref{universal-class-structure}. Combining Lemma \ref{basic-indep-if-based}.(\ref{basic-indep-tame}) and Lemma \ref{basic-indep-props}, we obtain that $\K^\ast$ is $\LS (\K^\ast)$-tame, so (\ref{structure-5}) also holds. 

    It remains to see (\ref{structure-1}): $\K^\ast$ is an AEC, i.e.\ it satisfies the smoothness axiom. Suppose not. Then by Lemma \ref{smoothness-upward}, there is a small counter-example: $\K^\ast$ is not $(\le \LS (\K^\ast), \le \LS (\K^\ast)^+)$-smooth. Let $\kappa \le \LS (\K^\ast)^+$ be least such that $\K^\ast$ is not $(\le \LS (\K^\ast), \le \kappa)$-smooth. Note that $\kappa$ is regular. Let $\lambda_0 := \beth_{\kappa} (\LS (\K^\ast))$. Note:

        \begin{itemize}
        \item $\lambda_0 \ge \LS (\K^\ast)^+$.
        \item $\lambda_0 = \lambda_0^{<\kappa}$ and $\lambda_0 < \lambda_0^{\kappa}$ (because $\cf{\lambda_0} = \kappa$).
        \item Since $\kappa \le \LS (\K^\ast) < \hanf{\K}$, we have that $\lambda_0 \le \beth_{\LS (\K^\ast) + \kappa} < \beth_{\hanf{\K}} \le \lambda$. Similarly, $\lambda_0^+ < \lambda$.
        \end{itemize}

        By Lemma \ref{stable-prop} (where $\K^1, \K^2, \mu, \lambda$ there stand for $\K, \K^\ast, \lambda_0, \lambda$ here, note that we are using that $\lambda_0^+ < \lambda$), $\K^\ast$ is $(<\omega)$-stable in $\lambda_0$. However Theorem \ref{smoothness-unstable} (where $\lambda$ there stands for $\lambda_0$ here) says that $\K^\ast$ is $(<\omega)$-unstable in $\lambda_0$, a contradiction.
\end{proof}

Finally, we have all the results we need to prove the main theorem:

\begin{thm}\label{main-thm}
  Let $\K$ be a universal class. If $\K$ is categorical in \emph{some} $\lambda \ge \beth_{\hanf{\K}}$, then there exists $\chi < \beth_{\hanf{\K}}$ such that $\K$ is categorical in \emph{all} $\lambda' \ge \chi$. Moreover, $\K_{\ge \chi}$ has amalgamation.
\end{thm}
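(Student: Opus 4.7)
\textbf{Proof plan for Theorem~\ref{main-thm}.}

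The plan is to view $\K$ as one half of a compatible pair $(\K,\K^\ast)$, transfer categoricity inside the better-behaved $\K^\ast$, and then drop back to $\K$ via the class-equality lemma; this will simultaneously yield amalgamation on a tail.

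\emph{Step 1 (the auxiliary AEC).} Since $\K$ is categorical in some $\lambda\ge\beth_{\hanf{\K}}$, Fact~\ref{hanf-arb-large} gives that $\K$ has arbitrarily large models, so Theorem~\ref{universal-class-structure} applies and produces an AEC $\K^\ast$ that is compatible with $\K$, has amalgamation, joint embedding, and arbitrarily large models, is $\LS(\K^\ast)$-tame, and satisfies $\LS(\K)\le\LS(\K^\ast)<\hanf{\K}$. By compatibility, $K^\ast_\lambda=K_\lambda$, so $\K^\ast$ is categorical in $\lambda$ as well.

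\emph{Step 2 (categoricity transfer inside $\K^\ast$).} Because $\K$ is a universal class it has primes (Fact~\ref{univ-prime}), so I apply Theorem~\ref{abstract-compatible-categ} with $\K^1:=\K$ and $\K^2:=\K^\ast$; all hypotheses are met, and the conclusion is that $\K^\ast$ is categorical in every $\lambda'\ge\min(\lambda,\hanf{\LS(\K^\ast)})$. A routine check---using $\LS(\K^\ast)<\hanf{\K}=\beth_{\delta(\K)}$ with $\delta(\K)\ge\omega_1$ a limit, so that $(2^{\LS(\K^\ast)})^+<\hanf{\K}$---gives $\hanf{\LS(\K^\ast)}=\beth_{(2^{\LS(\K^\ast)})^+}<\beth_{\hanf{\K}}$. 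Thus $\K^\ast$ is categorical on $[\chi_0,\infty)$ for some $\chi_0<\beth_{\hanf{\K}}$.

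\emph{Step 3 (return to $\K$ and amalgamation).} I pick a categoricity cardinal $\lambda^\ast$ of $\K^\ast$ with $\chi_0\le\lambda^\ast<\beth_{\hanf{\K}}$ and $\lambda^\ast=(\lambda^\ast)^{\LS(\K^\ast)}$; this is possible because any sufficiently large $\beth$-fixed point below $\beth_{\hanf{\K}}$ (which exists by cofinality considerations on $\beth_{\hanf{\K}}$) is a categoricity cardinal of $\K^\ast$ satisfying the closure equation. Lemma~\ref{equal-aecs-prop}, applied to the compatible pair with $\K^1:=\K^\ast$ and $\K^2:=\K$, then gives $\K^\ast_{\ge\lambda^\ast}=\K_{\ge\lambda^\ast}$ as AECs (orderings included). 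Setting $\chi:=\lambda^\ast$, the class $\K_{\ge\chi}$ inherits from $\K^\ast_{\ge\chi}$ both categoricity in every cardinal $\ge\chi$ and the amalgamation property, as required.

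The main obstacle is conceptual rather than technical: the real work has already been absorbed into Theorem~\ref{universal-class-structure}, whose proof uses the tree-construction machinery of Section~\ref{enum-trees-sec} to upgrade the weak AEC coming from Shelah's averages into a genuine AEC via failure of smoothness $\Rightarrow$ instability. Once $\K^\ast$ is known to be an AEC with all the listed structural properties, Steps 2 and 3 are essentially black-box applications of the categoricity and compatibility theorems of Section~\ref{compat-sec}; the only subtlety is the $\beth$-arithmetic verification that the threshold stays below $\beth_{\hanf{\K}}$.
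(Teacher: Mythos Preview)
Your proposal is correct and follows essentially the same route as the paper: obtain $\K^\ast$ from Theorem~\ref{universal-class-structure}, apply Theorem~\ref{abstract-compatible-categ} with $\K^1=\K$ and $\K^2=\K^\ast$, and then use Lemma~\ref{equal-aecs-prop} to identify the two classes on a tail and read off amalgamation. The only cosmetic difference is that the paper takes $\chi:=\hanf{\LS(\K^\ast)}=\beth_{(2^{\LS(\K^\ast)})^+}$ directly, observing that this cardinal already satisfies $\chi^{\LS(\K^\ast)}=\chi$ (it is a strong limit of cofinality $(2^{\LS(\K^\ast)})^+>\LS(\K^\ast)$), so there is no need to hunt for a separate $\lambda^\ast$ with the closure property as in your Step~3.
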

\begin{proof}
  Let $\K^\ast$ be as given by Theorem \ref{universal-class-structure}. In particular, $\K^\ast$ is tame and has amalgamation. By Fact \ref{univ-prime}, $\K$ has primes, so we can use Theorem \ref{ap-categ-equal}, compatibility, and the categoricity transfer theorem for tame AECs with primes (Fact \ref{categ-facts}.(\ref{categ-facts-2b})). That is, by Theorem \ref{abstract-compatible-categ} (where $\K^1$, $\K^2$ there stand for $\K$, $\K^\ast$ here), $\K^\ast$ is categorical in all $\lambda' \ge \chi := \hanf{\LS (\K^\ast)}$. By compatibility (recalling that $\LS (\K) \le \LS (\K^\ast)$), $\K$ is also categorical in all $\lambda' \ge \chi$. Finally, since $\LS (\K^\ast) < \hanf{\K}$, we have that $\chi = \hanf{\LS (\K^\ast)} = \beth_{\left(2^{\LS (\K^\ast)}\right)^+} < \beth_{\hanf{\K}}$. 

  For the moreover part, note that $\chi^{\LS (\K, \K^\ast)} = \chi^{\LS (\K^\ast)} = \chi$ so by Lemma \ref{equal-aecs-prop}, $\K_{\ge \chi} = \K^\ast_{\ge \chi}$. Since the latter has amalgamation, so does the former. 
\end{proof}
\begin{remark}\label{main-thm-rmk}
  In fact, $\K_{\ge \chi}$ satisfies much more than amalgamation. This is because $\K_{\ge \chi}$ is a locally universal class (see \cite[Definition 2.20]{ap-universal-v9}). Thus it is fully $\chi$-tame and short (see \cite[Corollary 3.8]{ap-universal-v9}) and admits a global notion of independence (for types over arbitrary sets) that is similar to forking in a first-order superstable theory (see \cite[Appendix C]{ap-universal-v9}).
\end{remark}
\begin{proof}[Proof of Theorem \ref{abstract-thm} and Corollary \ref{abstract-thm-2}]
  Let $\psi$ be a universal $\Ll_{\omega_1, \omega}$ sentence. The class $\K$ of models of $\psi$ is a universal class (Fact \ref{univ-charact}) with $\hanf{\K} = \beth_{\omega_1}$ (see Remark \ref{univ-delta} and Fact \ref{delta-facts}). Now apply Theorem \ref{main-thm}.
\end{proof}
\begin{remark}
  By Fact \ref{univ-charact} and Remark \ref{univ-delta}, Theorem \ref{abstract-thm} and Corollary \ref{abstract-thm-2} apply more generally to any universal class in a countable vocabulary.
\end{remark}

\bibliographystyle{amsalpha}
\bibliography{universal-classes-2}

\end{document}